\definecolor{green1}{RGB}{0,107,28}
\definecolor{green2}{RGB}{17,85,35}
\definecolor{green3}{RGB}{0,77,20}
\definecolor{green4}{RGB}{33,166,68}
\definecolor{green5}{RGB}{60,166,88}
\definecolor{blue1}{RGB}{3,56,91}
\definecolor{blue2}{RGB}{16,51,73}
\definecolor{blue3}{RGB}{1,40,66}
\definecolor{blue4}{RGB}{35,109,157}
\definecolor{blue5}{RGB}{59,118,157}
\newcommand*{\boxwedge}{
  \mathbin{
    \mathpalette\@boxwedge{}
  }
}
\newcommand*{\@boxwedge}[2]{
  \sbox0{$#1\boxplus\m@th$}
  \dimen2=.5\dimexpr\wd0-\ht0-\dp0\relax 
  \dimen@=\dimexpr\ht0+\dp0\relax
  \def\lw{.06}
  \kern\dimen2 
  \tikz[
    line width=\lw\dimen@,
    line join=round,
    x=\dimen@,
    y=\dimen@,
  ]
  \draw
    (\lw/2,0) rectangle (1-\lw,1-\lw)
    (\lw,0) -- (.5,1-\lw-\lw/2) -- (1-\lw-\lw/2 ,0)
  ;
  \kern\dimen2 
}
\DeclareFontFamily{OT1}{pzc}{}
\DeclareFontShape{OT1}{pzc}{m}{it}{<-> s * [1.200] pzcmi7t}{}
\DeclareMathAlphabet{\mathpzc}{OT1}{pzc}{m}{it}
\def\mathcat{\mathpzc}
\def\Alg{\mathcat{Alg}}
\def\red{\mathrm{red}}
\def\CcC{\mathcat C}
\def\ev{\mathrm{ev}}
\def\LAL{\Lambda}
\def\hh{\mathfrak h}
\def\Ho{\mathrm H}
\def\gg{\mathfrak g}
\def\qq{\mathfrak q}
\def\GG{G}
\def\TT{\mathrm T}
\def\UU{\mathrm U}
\def\Vect{\mathrm{Vect}}
\def\RR{\mathbb R}
\def\ZZ{\mathbb Z}
\def\Hom{\mathrm{Hom}}
\def\Alt{\mathrm{Alt}}
\def\Der{\mathrm{Der}}
\def\TTT{\mathrm T}
\def\TTTc{\TTT^{\mathrm c}}
\def\Sigm{\mathrm S}
\def\LALc{\LAL^{\mathrm {c}}}
\def\bra{[\,\cdot\, , \,\cdot\,]}
\def\pbra{\{\,\cdot\, , \,\cdot\,\}}
\def\MH{\mathcal H}
\def\Lc{\Lambda^{\mathrm{c}}}
\def\Ho{\mathrm{H}}
\def\modu{\mathcat M}
\def\CC{\mathbb C}
\def\AA{\mathcal A}
\def\can{\mathrm{can}}
\def\fund{\mathrm{fund}}
\def\mrc{\mathrm c}
\def\ad{\mathrm{ad}}
\def\Id{\mathrm{Id}}
\def\hinn{\langle\,\cdot\, , \,\cdot \,\rangle}
\def\Rep{\mathrm{Rep}}
\def\Ad{\mathrm{Ad}}
\def\dR{\Omega}
\def\form{\,\bullet\,}
\def\fus{\mathrm{fus}}
\def\Form{\Omega}
\def\pr{\mathrm{pr}}
\def\mult{\mathrm{mult}}
\def\inv{\mathrm{inv}}
\def\Pii{F}
\def\diag{\mathrm{diag}}
\def\conj{\mathrm {conj}}
\def\bK{\mathbb K}
\def\bR{\mathcat R}
\def\oomega{\mathcat H}
\def\varotheta{{\psi^\odot}}
\def\psidot{{\psi^{\,\bullet}}}
\def\Map{\mathrm{Map}}
\def\eeta{\eta}
\def\twist{\mathrm{twist}}
\def\prin{\xi}
\def\Prin{\mathcal P}
\def\ovomega{\overline \omega}
\def\ovoomega{\widetilde \oomega}
\def\ofund{\widetilde \fund}
\def\oGG{\widetilde \GG}
\def\ogg{\widetilde \gg}
\def\omult{\widetilde \mult}
\def\oform{\widetilde \form}
\def\olambda{\widetilde \lambda}
\def\mmu{\Phi}
\def\fflat{\flat}
\def\pp{\mathbf p}
\def\ran{\mathrm{ran}}
\def\qqp{\mathbf q}
\def\pairi{\langle \,\cdot\, , \,\cdot\,\rangle}
\def\Gr{\mathrm {Gr}}
\newcommand{\Conn}{\mathcal A_{\prin}}
\newcommand{\cgroup}{G}
\newtheorem{thm}{Theorem}[section]
\newtheorem{cor}[thm]{Corollary}
\newtheorem{lem}[thm]{Lemma}
\newtheorem{prop}[thm]{Proposition}
\newtheorem{examp}[thm]{Example}
\newtheorem{rema}[thm]{Remark}
\newtheorem{concl}[thm]{Conclusion}
\newtheorem{compl}[thm]{Complement}
\long
\def\MSC#1\EndMSC{\def\arg{#1}\ifx\arg\empty\relax\else
      {\par\narrower\noindent
      2020 Mathematics Subject Classification. #1\par}\fi}
\long
\def\KEY#1\EndKEY{\def\arg{#1}\ifx\arg\empty\relax\else
    {\par\narrower\noindent
      Keywords and Phrases: #1\par}\fi}
\title[Quasi  structures and Poisson geometry of moduli spaces]
{Quasi Poisson structures, weakly quasi Hamiltonian structures, and
Poisson geometry of  various moduli spaces}
\author{Johannes Huebschmann  }
\address{
\newline
Universit\'e de Lille - Sciences et Technologies 
\\
D\'epartement de Math\'ematiques\\
\newline CNRS-UMR 8524,
Labex CEMPI (ANR-11-LABX-0007-01)
\\
\newline
59655 Villeneuve d'Ascq Cedex, France\\
\newline
Johannes.Huebschmann@univ-lille.fr
 }
\date{\today}
\numberwithin{equation}{section}
\begin{document}
\setcounter{page}{1}

\begin{abstract}
\noindent
Let $\GG$ be a Lie group and  $\gg$  its Lie algebra.
The aim here is to  develop  
a theory 
 of quasi Poisson structures
relative to a not necessarily non-degenerate $\Ad$-invariant 
 symmetric $2$-tensor in $\gg \otimes \gg$
and one of general not necessarily non-degenerate
quasi Hamiltonian structures relative to 
a not necessarily  non-degenerate
$\Ad$-invariant
symmetric bilinear form on $\gg$, a quasi Poisson structure being
given by a skew bracket of two variables
such that suitable data defined in terms of 
$\GG$ as symmetry group involving the $2$-tensor in the tensor square  
of the Lie algebra $\gg$
 measure how that bracket
fails to satisfy  the Jacobi identity.
The present approach  involves a novel concept of momentum mapping
and yields, in the non-degenerate case,
a bijective correspondence,
in terms of explicit algebraic expressions,
between 
non-degenerate quasi Poisson structures and
non-degenerate
quasi Hamiltonian structures.
The new theory applies to  various 
not necessarily non-singular moduli spaces
and yields thereupon, via reduction with respect to
an appropriately defined  momentum mapping, 
not necessarily non-degenerate ordinary
Poisson structures arising from the data including the
symmetric 2-tensor in $\gg\otimes \gg$.
Among these moduli spaces
are representation spaces, possibly twisted,
of the fundamental group of a Riemann surface,
possibly punctured, and
moduli spaces of semistable holomorphic vector bundles
as well as Higgs bundle moduli spaces.
In the non-degenerate case,
such a Poisson structure comes down to a stratified symplectic one
of the kind explored in the literature and recovers, e.g.,
the symplectic part of a K\"ahler structure
introduced by Narasimhan and Seshadri for moduli spaces
of stable holomorphic vector bundles on a curve.
In the algebraic setting,  these moduli spaces
arise as not necessarily non-singular
affine not necessarily non-degenerate Poisson varieties.
Along the way, a side result is an explicit equivalence
between extended moduli spaces
and quasi Hamiltonian spaces independently of gauge theory.
\end{abstract}

\maketitle

\MSC 

\noindent
Primary: 53D30 

\noindent
Secondary: 14D21 14L24 14H60 32S60 53D17 53D20 58D27 81T13
\EndMSC

\KEY
Quasi Poisson structure, weakly quasi Hamiltonian structure,
 extended moduli space,
momentum mapping in quasi setting,
representation space,
analytic representation variety,
algebraic representation variety,
analytic quasi Hamiltonian reduction,
algebraic quasi Hamiltonian reduction,
analytic quasi Poisson reduction,
algebraic quasi Poisson reduction, affine Poisson variety,
Dirac structure.

 \EndKEY

{\tableofcontents}
\section{Introduction}
Let $\GG$ be a Lie group and let $\gg$ denote its Lie algebra.
In the realm of $\GG$-manifolds,
extending an approach in \cite{MR1880957} 
for the case where $\GG$ is compact,
we develop  
a theory  of quasi Poisson structures
relative to a not necessarily non-degenerate $\Ad$-invariant 
 symmetric $2$-tensor in $\gg \otimes \gg$
and one of
general not necessarily non-degenerate
quasi Hamiltonian structures relative to 
a not necessarily  non-degenerate
$\Ad$-invariant 
symmetric bilinear form on $\gg$.
Our approach involves a novel concept of momentum mapping.
Using this theory,  we endow various 
not necessarily non-singular moduli spaces
with 
not necessarily non-degenerate
Poisson structures arising from the data including the
symmetric 2-tensor in $\gg \otimes \gg$. This generalizes
earlier constructions in the literature
for the special case where the 
symmetric bilinear form on $\gg$
(and hence the $2$-tensor on $\gg$
it determines) are  non-degenerate
and the Poisson structure is smooth and non-degenerate.

The papers \cite{MR1460627}, \cite{MR1370113}, \cite {MR1277051}, 
\cite{MR1470732}, 
building on \cite{MR1112494}
and \cite{MR1362845},
offer a purely finite-dimensional construction
of the moduli spaces of semistable holomorphic vector bundles
on a Riemann surface, possibly punctured,
and of generalizations thereof
including moduli spaces of principal bundles
as stratified symplectic spaces in the sense of
\cite{MR1127479}, realized,
according to the {\em Hitchin-Kobayashi\/} correspondence
for principal bundles on a Riemann surface,
 as spaces of twisted representations
of the fundamental group in a compact Lie group.
The construction
proceeds by ordinary symplectic reduction applied to
a finite-dimensional {\em extended moduli space\/}
arising from a  product of $2\ell$ copies of the Lie group
(the group $\UU(n)$ for the case of holomorphic  rank $n$  vector bundles)
where $\ell$ is the genus of the surface or,
in the presence of punctures, from a variant thereof.
This structure depends on the  Lie group, 
a choice of an invariant inner product on its Lie algebra,
and the topology of a corresponding bundle, 
but is independent of any complex structure on the Riemann surface. 
It also makes sense for a not necessarily compact  Lie group 
and an $\Ad$-invariant 
not necessarily non-degenerate
nor positive
symmetric bilinear form on the Lie algebra thereof.

The paper \cite{MR1638045} establishes a theory of  quasi Hamiltonian spaces
with respect to a compact Lie group $\GG$
and an $\Ad$-invariant positive definite symmetric bilinear form
on its Lie algebra  $\gg$
and  reworks  
the above extended moduli spaces
in terms of quasi Hamiltonian structures.
This does not recover
the complete story, however, 
since, in the presence of singularities, 
quasi Hamiltonian reduction does not recover
the entire moduli space resulting from
the extended moduli space;
see Remark \ref{qhreduction} below.
The paper \cite{MR1880957}
develops a theory of  quasi Poisson  spaces
with respect to a compact Lie group $\GG$
and an $\Ad$-invariant positive definite symmetric bilinear form
on the  Lie algebra  $\gg$ of $\GG$
and extends the classical
bijective correspondence between
symplectic structures and non-degenerate Poisson structures
to the quasi case with respect to a compact Lie group,
and  \cite{MR2642360} reworks the theory for
a Lie group $\GG$ with a non-degenerate $\Ad$-invariant
symmetric bilinear form on its Lie algebra $\gg$.

One aim of the present paper is
to build
a theory of 
general 
quasi Poisson structures,  a quasi Poisson structure being
given by a skew bracket of two variables
such that suitable data defined in terms of 
a symmetry group involving the $2$-tensor over the Lie algebra
 measure how that bracket
fails to satisfy  the Jacobi identity, and one of
general not necessarily non-degenerate
quasi Hamiltonian structures,
and to use this theory to rework those moduli space constructions 
in an algebraic manner
over a general algebraically closed field of
characteristic zero. 
Another aim is, as a reaction to
a referee request, 
 to make precise
 the equivalence between the extended moduli spaces
and  the corresponding constructions in
\cite{MR1638045} that lead
to moduli spaces or twisted representation spaces over Riemann surfaces.

This paper is addressed  to the expert.
We therefore explain some of the technical details here in the introduction.
We proceed as follows:

Let $\bK$ denote the base field,
a field of characteristic zero,
mainly the reals or the complex numbers,
but also the field of definition in an algebraic context.
View $\GG$ as a $\GG$-manifold with respect to conjugation.
In an algebraic context we take  $\GG$ to be algebraic.
Let $M$ be a $\GG$-manifold,
a non-singular  affine $\GG$-variety in an algebraic context.
Let $\fund_M \colon M \times \gg \to \TT M$ denote the
fundamental vector field map associated with the $\GG$-action
on $M$.
Consider a $\GG$-equivariant  map $\mmu\colon M \to \GG$,
let $\TT_\Phi \GG \to M$ 
be the vector bundle on $M$ which $\Phi$ induces from the tangent bundle
 of $\GG$ and factor 
the  derivative $d\Phi\colon \TT M \to \TT G$
through the resulting morphism  
$(d \Phi)_M\colon \TT M \to \TT_\Phi G$
of vector bundles on $M$.
Left and right translation in $\GG$ induce trivializations
$L_\Phi, R_\Phi \colon M \times \gg \to \TT_\Phi \GG$
of $\TT_\Phi \GG \to M$.
Write the tensor product of vector bundles on $M$ as
$\otimes_M$.
Let $\TT^2 M \to M$ denote the tensor square 
$\TT M \otimes_M \TT M \to M$
of the tangent bundle $\TT M \to M$ of $M$
and interpret, in the obvious way,
 $M \times (\gg \otimes \gg) \to M$,
 $\TT M \otimes \gg \to M$ and
$\TT_\Phi \GG \otimes \gg \to M$
as the respective vector bundles
$(M \times \gg) \otimes_M (M \times\gg) \to M$,
$\TT M \otimes_M(M \times \gg) \to M$, and
$\TT_\Phi \GG \otimes_M(M \times \gg) \to M$.

Let $\,\form\,$ be an $\Ad$-invariant not necessarily non-degenerate
symmetric bilinear form on  $\gg$.
Furthermore, let $\sigma$ be a $\GG$-invariant $2$-form on  $M$.
We define $\Phi$
to be a
$\GG$-{\em momentum mapping for\/} $\sigma$
{\em relative to\/} $\,\form\,$
when $(d \Phi)_M$  renders the diagram
\begin{equation}
\begin{gathered}
\xymatrixcolsep{6.5pc}
\xymatrix{
\gg \otimes \TT M  \ar[d]_{\Id \otimes_M (d \mmu)_M} \ar[r]^{\fund_M \otimes_M \Id} & \TT^2 M \ar[r]^{ \sigma} &\bK \ar@{=}[d]
\\
\gg \otimes (\TT_\mmu\GG) \ar[r]_{\Id \otimes_M
\tfrac 12
\left(L_\mmu^{-1} + R_\mmu^{-1} \right) \phantom{aaa}}
& M\times(\gg \otimes \gg) \ar[r]_{\phantom{aaa}\form} &\bK
}
\end{gathered}
\label{momintvar}
\end{equation}
commutative.
Here 
the notation $ \fund_M\otimes_M \Id $ signifies
\begin{equation}
\fund_M\otimes_M \Id \colon (M \times \gg) \otimes _M \TT M
\longrightarrow  \TT M \otimes _M \TT M,
\end{equation}
 $\Id \otimes (d \mmu)_M$ signifies
\begin{equation}
\Id \otimes (d \mmu)_M \colon (M \times \gg) \otimes_M \TT M 
\longrightarrow  (M \times \gg) \otimes_M \TT_\Phi \GG, 
\end{equation}
and $\Id \otimes (L^{-1}_\Phi+R^{-1}_\Phi)$
is a short hand notation for 
\begin{equation}
(M \times \gg) \otimes_M(\TT_\mmu\GG) 
\stackrel{\Id \otimes_M (L^{-1}_\Phi + R^{-1}_\Phi)} \longrightarrow
(M \times \gg)  \otimes _M(M \times \gg). 
\end{equation}
This definition does not involve the cotangent bundle of $M$ explicitly
and hence still applies to situations where
there is no canonical cotangent bundle,
e.g., in an infinite dimensional situation.
Diagram \eqref{momintvar} looks, perhaps, unnecessarily complicated;
indeed, a similar diagram 
with $d\Phi$ and $\TT \GG$ instead of
$(d\Phi)_M$ and $\TT_\Phi \GG$
characterizes the momentum property as well, but \eqref{momintvar}
renders the comparison with 
\eqref{PMPhi} below straightforward.
The reader will notice when we exchange the order of 
the two tensor factors
$\fund_M$ and $\Id$
in the upper row of \eqref{momintvar}
and make the corresponding change in the lower row,
the resulting diagram characterizes the negative of $\sigma$.

When $M$ is a conjugacy class $\CcC$ in $\GG$, viewed as
a $\GG$-manifold relative to conjugation,
 and $\Phi$ the inclusion,
diagram \eqref{momintvar}
characterizes a $\GG$-invariant
$2$-form $\sigma$ on $\CcC$, necessarily alternating 
since $\,\form\,$ is $\Ad$-invariant,
and the inclusion into $\GG$ is a $\GG$-momentum mapping
for  $\sigma$ relative to $\,\form\,$.
This $2$-form plays a major role in the theory
of moduli spaces; we explain this below.

The $2$-form $\,\form\,$ on $\gg$ 
determines a biinvariant $3$-form $\lambda$ on $\GG$
(the Cartan form). 
We define $\sigma$
 to be
$\Phi$-{\em quasi closed 
relative to\/} $\,\form\,$
when
$d \sigma= \Phi^* \lambda$.
A pair $(\sigma,\Phi)$ of this kind constitutes
a {\em weakly $\GG$-quasi Hamiltonian structure\/}
on $M$ {\em relative to\/}
$\,\form\,$ when
 $\sigma$ is $\Phi$-quasi closed and $\Phi$
 a $\GG$-momentum mapping for $\sigma$ relative to $\,\form\,$;
the pair $(\sigma,\Phi)$  is
a {\em $\GG$-quasi Hamiltonian structure\/}
on $M$ {\em relative to\/} $\,\form\,$
when it satisfies, furthermore, a non-degeneracy constraint.
We develop a theory of 
weak $\GG$-quasi Hamiltonian structures
generalizing that of
$\GG$-quasi Hamiltonian structures
in \cite{MR1638045} for the case where
$\GG$ is compact and $\,\form\,$ positive definite.

Let $\oomega$ be an $\Ad$-invariant not 
necessarily non-degenerate symmetric $2$-tensor in $\gg \otimes \gg$.
In a similar vein, we show the $\Ad$-invariant  symmetric $2$-tensor
$\oomega$ in $\gg \otimes \gg$
determines
a totally  antisymmetric $3$-tensor $\phi_\oomega$ in $\gg \otimes \gg$;
see Section \ref{quasipoiss} for details.
This $3$-tensor 
represents a class in the third Lie algebra homology group of $\gg$,
necessarily non-zero 
when $\oomega$ is non-trivial.
Say $\oomega$ is {\em non-degenerate\/}
when so is the $\Ad$-invariant symmetric bilinear form
on $\gg$ which $\oomega$ induces.
In this case, the 
$3$-tensor $\phi_\oomega$ is dual to the $3$-form due to E. Cartan.
However, there are interesting
cases where $\oomega$ is not non-degenerate:
Consider, e.g.,  a Lie algebra $\gg$ which decomposes as
the sum $\gg_1 \oplus \gg_2$ of two  ideals,
required to be $\Ad$-invariant when $\GG$ is not connected, and
take $\oomega$ non-degenerate over $\gg_1$ and
zero over $\gg_2$.
Another example is the Lie algebra of infinitesimal gauge transformations
of a principal bundle 
(interpreted naively, e.g., in the Fr\'echet topology, but not in terms
of suitable Sobolev structures)
such that the Lie algebra of its structure group
carries an $\Ad$-invariant  symmetric $2$-tensor
$\oomega$; even when $\oomega$ is non-degenerate,
the resulting  symmetric $2$-tensor on
the Lie algebra of infinitesimal gauge transformations
is no longer (naively) non-degenerate;
see Example \ref{examp3} below.
Perhaps one can 
use this structure to
rebuild 
gauge theory 
in the Fr\'echet topology.
This applies in particular to the loop group,
and we discuss possible consequences in Subsection \ref{loop}.

Let $P$ be an antisymmetric 
 $\GG$-invariant $2$-tensor on a $\GG$-manifold $M$.
We define a  $\GG$-equivariant 
map $\Phi \colon M \to \GG$ 
to be a $\GG$-{\em momentum mapping for\/}
$P$ 
{\em relative to\/} $\oomega$
when the diagram
\begin{equation}
\begin{gathered}
\xymatrixcolsep{4pc}
\xymatrix{
M \ar[r]^{P}\ar@{=}[d]  
& 
\TT^2 M\ar[r]^{(d \Phi)_M \otimes_M \Id\phantom{aaaa}}
& 
(\TT_\Phi \GG) \otimes_M \TT M 
\\
M \ar[r]_{\oomega\phantom {aaaa}}&M \times (\gg \otimes \gg) 
\ar[r]_{\phantom{a}\tfrac 12(L_\Phi+R_\Phi)\otimes_M \Id}&
(\TT_\Phi\GG) \otimes \gg 
\ar[u]|-{\Id \otimes_M \fund_M}
}
\end{gathered}
\label{PMPhi}
\end{equation}
commutes.
Here the notation 
$(L_\Phi+R_\Phi)\otimes_M \Id$
is a short hand notation for 
\begin{equation}
(M \times \gg) \otimes_M(M \times \gg)
\stackrel{(L_\Phi + R_\Phi)\otimes_M \Id} \longrightarrow
 \TT_\Phi \GG \otimes _M(M \times \gg),
\end{equation}
and we do not distinguish in notation between a $2$-tensor
on $M$ and the section of the vector bundle $\TT^2 M \to M$
it defines.
Again this definition does not involve the cotangent bundle of $M$ 
explicitly
and hence still applies to situations where
there is no canonical cotangent bundle.
The reader will notice the formal similarity
between \eqref{momintvar} and \eqref{PMPhi}.
Also, the notion of Poisson momentum mapping
can be cast in a similar diagrammatic description,
with the identity of the corresponding Lie algebra $\gg$,
viewed as a member of $\gg^* \otimes \gg$, playing the role of $\oomega$;
see Remark \ref{rema1} below.

When $M$ is the group $\GG$ itself, taken as a $\GG$-manifold
relative to conjugation, and $\Phi$ the identity,
diagram \eqref{PMPhi} characterizes a $\GG$-invariant
 $2$-tensor $P_\GG \in \TT^2 \GG$,
necessarily antisymmetric since $\oomega$ is $\Ad$-invariant,
and the identity of $\GG$ is a $\GG$-momentum mapping
for $P_\GG$ relative to $\oomega$.
It induces an $\GG$-invariant antisymmetric
 $2$-tensor 
on each conjugacy class in $\GG$
such that the inclusion
is a $\GG$-momentum mapping
for it  relative to $\oomega$.
These $2$-tensors play a major role in the theory.

We say the $\GG$-invariant antisymmetric $2$-tensor $P$ 
on the $\GG$-manifold $M$
is $\Phi$-{\em quasi Poisson relative to\/} 
$\oomega$
when
the Schouten square $[P,P]$ of $P$, by construction a $3$-tensor on  $M$, 
coincides with a (suitably defined) multiple of
the image of $\phi_\oomega$ 
under the  infinitesimal $\gg$-action on $M$, and we say
$(P,\Phi)$
is $\GG$-{\em Hamiltonian\/}
when $\Phi$ is a $\GG$-momentum mapping for $P$ relative to $\oomega$.
For this concept of
Hamiltonian
 $\GG$-quasi Poisson  structure,
we build a theory
generalizing that of
quasi Poisson  structures
in  \cite{MR1880957} 
for the case where
$\GG$ is compact;
in that paper,
the corresponding  $\Ad$-invariant  symmetric $2$-tensor
$\oomega$ in $\gg \otimes \gg$
is that which arises
from an  
$\Ad$-invariant positive
definite symmetric bilinear form
on $\gg$ in the obvious way.
The reasoning in  \cite{MR1880957} and  \cite{MR1638045} 
involves the positivity
of the  bilinear form
on $\gg$ explicitly.
The paper \cite{MR2103001} redevelops
quasi Poisson  structures 
in terms of Dirac structures
without the positivity assumption
and \cite{MR2642360}
pushes these ideas further 
but both papers 
start from an $\Ad$-invariant non-degenerate
symmetric bilinear form on $\gg$
and work with the resulting
$\Ad$-invariant symmetric $2$-tensor.
In our more general setting,
a completely new approach
is necessary, we
 cannot blindly exploit 
the constructions and observations 
\cite{MR1638045, MR1880957, 
MR2068969,
MR2103001,
MR2642360},
and the Dirac structure description is no longer available
when the $2$-tensor $\oomega$ is not non-degenerate.
Also, \cite{MR2068969} 
includes a characterization of
quasi Hamiltonian structures
(with respect to a non-degenerate 
$\Ad$-invariant 
symmetric bilinear form on $\gg$)
in terms of Dirac structures,
but this characterization does not extend to 
weakly quasi Hamiltonian structures
and in particular is not available
with respect to a degenerate 
$\Ad$-invariant 
symmetric bilinear form on $\gg$.
The paper \cite{MR2806566} reworks and extends
the relationship between Dirac  and quasi Poisson structures, 
still relying on the non-degeneracy of the corresponding
$2$-form on the Lie algebra under discussion, and there is little
overlap between that paper and the present one.

In Section \ref{reps} we recall, for later reference in this paper,  
a description of the moduli spaces
of interest here.
After some preparations in Section \ref{prelimi}
we develop, in Section \ref{qhs}, the theory of
weakly quasi Hamiltonian structures
in our general setting relative to a not necessarily
non-degenerate invariant symmetric bilinear form on the Lie algebra, 
with an eye towards
the comparison between
extended moduli spaces and weakly quasi Hamiltonian spaces.
We spell out explicitly the comparison as
Conclusions \ref{concl2} and \ref{concl3}.
In Sections \ref{quasipoiss} and \ref{fusion}, we build the
theory of quasi Poisson structures in our general setting
 relative to a general
invariant symmetric $2$-tensor over the Lie algebra.
In Section \ref{comparison}  we 
develop  notions of momentum duality
and non-degeneracy;
our notions of non-degeneracy
elaborate on the notions of non-degeneracy
in \cite{MR1638045} 
for quasi Hamiltonian structures
relative to a compact group
and in \cite{MR1880957} for quasi Poisson structures
relative to a compact group.
The main result in Section \ref{comparison},  Theorem \ref{existence},
 establishes,
in our general setting,
a bijective correspondence
in terms of explicit algebraic expressions
between (non-degenerate)
quasi Hamiltonian structures and
non-degenerate quasi Poisson structures,
and we refer to this correspondence as {\em momentum duality\/}.
This Theorem enables us to conclude that,
when the quasi Hamiltonian structure is algebraic,
its momentum dual quasi Poisson structure is algebraic, and vice versa,
cf. Corollary \ref{momdalg}.
The proof of Theorem \ref{existence}
substantially relies on material from the theory
of Dirac structures
\cite{MR2068969},
\cite{MR2103001},
\cite{MR2642360};
indeed Theorems \ref{dvsq1} and \ref{dvsq2} 
summarize versions of the crucial
relationships between twisted Dirac structures
and quasi Hamiltonian structures
\cite{MR2068969} and 
 between twisted Dirac structures
and Hamiltonian quasi Poisson structures
\cite{MR2103001}, respectively.
Theorem \ref{dvsq3} 
gives a Dirac theoretic version of  Theorem \ref{existence}.
In Section \ref{msrrev} we apply the results in previous sections to
the moduli spaces under discussion.
In particular, in the algebraic setting,
we obtain these moduli spaces as 
not necessarily non-singular
affine Poisson varieties, cf. Theorem \ref{ms3}.
Here the construction of the 
Poisson structure as an algebraic object is
the major issue.

Affine algebraic Poisson 
structures on \lq\lq wild character varieties\rq\rq
are in \cite{MR3126570};
see \cite{MR3931781} and the literature there.
 We explain the connections between such moduli spaces 
and our results
in Subsections \ref{alternateq} and \ref{stokesd} below.
Suffice it to mention here that our results go beyond those in
 \cite{MR3126570} in two ways: they also cover 
the singular \lq\lq wild character varieties\rq\rq\ 
of the kind $\Phi^{-1}(\CcC)// \mathbf H$
(notation as in  Subsection \ref{alternateq})
 and, moreover, 
 yield Poisson structures on \lq\lq wild character varieties\rq\rq\ 
more general than those in \cite{MR3126570}; see
 Subsection \ref{stokesd} for details.
It is, perhaps, worthwhile noting that the singular case is the typical case.
Also, Atiyah and Bott emphatically raised the issue
of understanding such singularities \cite{MR702806}.

Our exposition is in the spirit of a tradition that goes back 
to Saunders Mac Lane:
Favor  commutative diagrams over complicated formulas.
This is, perhaps, not the most concise approach
but has the advantage of being  categorical, at least in principle.
We apologize for the length of the paper;
working out the comparison between 
extended moduli spaces and quasi Hamiltonian structures
turns out to be a Sisyphean task, as is
the proof of Theorem \ref{existence}
since we cannot simply quote the requisite results from
\cite{MR2068969},
\cite{MR2103001},
\cite{MR2642360}.

\section{Representations of surface groups}
\label{reps}

The main application of the theory we build is to
moduli spaces of representations. In view of the Kobayashi-Hitchin 
correspondence and the non-abelian Hodge correspondence
\cite {MR965220, MR887285, MR887284, MR1179076, MR1307297, MR1320603},
this recovers moduli spaces of semistable holomorphic vector bundles
as well as Higgs bundles moduli spaces.
For ease of exposition, 
we recall the following notation and terminology from
\cite{MR3836789, MR1460627, MR1370113}: 

Consider the standard presentation
\begin{equation}
\mathcal P 
= \langle x_1,y_1,\dots,x_\ell,y_\ell, z_1,\ldots,z_n; r\rangle,
\quad r = \Pi [x_j,y_j] z_1 \cdots z_n,
\label{standpre2}
\end{equation}
of the fundamental group $\pi$ of a (real) compact surface
of genus $\ell \geq 0$ with $n\geq 0$ boundary 
circles, and suppose $n \geq 3$ when $\ell = 0$,
cf. \cite[(2.1)~p.~381]{MR1460627}. 
(An assumption of the kind  $\ell + n >0$ avoids inconsistencies, 
and the case $(\ell,n)=(0,2)$ is special and not interesting.)
In the literature, it is also common to consider
punctures rather than boundary circles.
Let
$\Pii$ be the free group on the generators
$x_1,y_1,\dots,x_\ell,y_\ell, z_1,\ldots,z_n$.

Let $\GG$ be an algebraic group, 
let ${\mathbf C} = \{\CcC_1,\ldots, \CcC_n\}$ 
be a family of $n$ conjugacy classes
in $\GG$, let $\Hom(F,\GG)_{\mathbf C}$
denote the space of homomorphisms from $F$ to
$\GG$ for which the value of the generator $z_j$
lies in $\CcC_j$ ($1 \leq j \leq n$),
and let $\Hom(\pi,\GG)_{\mathbf C}$
denote the space of homomorphisms from $\pi$ to
$\GG$ for which the value of the generator $z_j$
lies in $\CcC_j$ ($1 \leq j \leq n$).
The relator $r$ induces an  algebraic  map
\begin{equation}
r \colon \Hom(F,\GG)_{\mathbf C} \cong \cgroup^{2 \ell}
\times \CcC_1 \times \ldots \times \CcC_n \longrightarrow \cgroup,
\label{relmap2}
\end{equation}
the canonical projection $F \to \pi$ induces 
an embedding $\Hom(\pi,\GG)_{\mathbf C} \to \Hom(F,\GG)_{\mathbf C}$,
and this embedding realizes
 $\Hom(\pi,\GG)_{\mathbf C}$ as the  affine variety $r^{-1}(e)$ in  $\Hom(F,\GG)_{\mathbf C}$.
We then use the notation
$\mathrm{Rep}(\pi,\GG)_{\mathbf C}$
for a suitably defined $\GG$-quotient
of $\Hom(\pi,\GG)_{\mathbf C}$
 \cite[Section 9 p.~403]{MR1460627},
the ordinary orbit space, when $\GG$ is compact
(and the ground field is that of the reals), and a suitable 
affine algebraic quotient 
when $\GG$ is a reductive algebraic group.

Consider the special case $n=0$. The group $\pi$
is  now the fundamental group  of a closed Riemann surface $\Sigma$
of genus 
$\ell \geq 1$. Let $N$ be the normal closure
of $r$ in $F$, and let $\Gamma = F/[F,N]$.
The image $[r] \in \Gamma$ of $r$ generates a free 
cyclic central subgroup $\ZZ\langle[r]\rangle$ of $\Gamma$
\cite[\S 6] {MR702806}, \cite[(2.2)]{MR3836789}.
Let $X$ be a point of the center of $\gg$ such that
$\exp(X)$ lies in the center of $\GG$ and such that 
$r^{-1}(\exp(X))$ is non-empty. 
Let $\Hom_X(\Gamma,\GG)$ be the subspace of $\Hom(\Gamma,\GG)$
that consists of the homomorphisms $\chi \colon \Gamma \to \GG$
that enjoy the property $\chi([r]) =\exp(X) \in \GG$.
The canonical projection $F \to \Gamma$
induces an injection $\Hom(\Gamma,\GG) \to \Hom(F,\GG) \cong \GG^{2\ell}$,
and this injection
identifies the space
$\Hom_X(\Gamma,\GG)$
with the subvariety  $r^{-1}(\exp(X))$ of $\Hom(F,\GG)$.
Under these circumstances, we use the notation
$\mathrm{Rep}_X(\Gamma,\GG)$
for the $\GG$-quotient
of $\Hom_X(\Gamma,\GG)$
\cite[Section 6 p.~754]{MR1370113},
the ordinary orbit space, when $\GG$ is compact
(and the ground field is that of the reals), and a suitable 
affine algebraic quotient 
when $\GG$ is a reductive algebraic group.
As a space,
 $\Hom_X(\Gamma,\GG)$ and hence
$\mathrm{Rep}_X(\Gamma,\GG)$
depends only on the value $\exp(X) \in \GG$
(and hence it makes sense to take the affine algebraic quotient)
but, when we establish the link with principal bundles,
the member $X$ of the center of 
$\gg$ actually recovers a topological characteristic class
of the corresponding bundle.

Let $\widehat \Sigma$ arise from $\Sigma$ by removing an open disk.
The presentation
\begin{equation}
 \langle x_1,y_1,\dots,x_\ell,y_\ell, z; r\rangle,
\quad r = \Pi [x_j,y_j] z,
\end{equation}
of the fundamental group $\widehat \pi$ of $\widehat \Sigma$
 entails  an equivalent description of $\mathrm{Rep}_X(\Gamma,\GG)$:
Let $\CcC =\{\exp (X)\}\subseteq \GG$,
the conjugacy class of the member  $\exp (X)$ of the center of $\GG$.
The canonical projection $\widehat \pi \to \Gamma$
induces an isomorphism
of algebraic varieties from
$\Hom_X(\Gamma,\GG)$ to $\Hom(\widehat \pi,\GG)_{\{\CcC\}}$,
and hence the quotients
$\Rep_X(\Gamma,\GG)$ and
$\Rep(\widehat \pi,\GG)_{\{\CcC\}}$
essentially coincide as spaces.
The construction of the 
additional structure on
$\Rep_X(\Gamma,\GG)$ which we carry out in this paper
is considerably less involved than that
on a space of the kind
$\Rep(\widehat \pi,\GG)_{\{\CcC\}}$, however.

\section{Preliminaries}
\label{prelimi}

We write the ground field as $\bK$, a field of characteristic zero,
mostly the reals $\RR$ or the complex numbers
$\CC$ but, in the algebraic case,
we use the notation $\bK$ for the field of definition as well if need be.
At times we also work over a more general commutative ring
$\bR$, and we then suppose that $\bR$ is an algebra over the rationals.

Below, the terms manifold and group refer to a smooth manifold and a Lie 
group, or  to an analytic  manifold and a Lie group, taken as an analytic 
group, or  to an affine algebraic  manifold (non-singular affine variety) 
and an algebraic group,  defined over a not necessarily algebraically 
closed field of characteristic zero.
Consider a manifold $M$. We denote by $\mathcat A[M]$
the structure algebra of functions on $M$
(smooth, analytic or algebraic, as the case may be);
thus in the algebraic case,
$\mathcat A[M]$ refers to the coordinate ring of $M$.
We refer to $\mathcat A[M]$ as the 
{\em algebra of admissible functions\/} on $M$
and to the members of
$\mathcat A[M]$ as {\em admissible functions\/}. 
We use the notation
 $\Vect(M) $ for the vector fields
on $M$, viewed as a module over $\mathcat A[M]$ and, as usual, we identify
 $\Vect(M) $ with the derivations of 
$\mathcat A[M]$. 
We write a canonically arising map as $\can$.

For two $\bR$-modules $\modu_1$ and $\modu_2$, we use the notation
$\twist \colon \modu_1 \otimes \modu_2 \to \modu_2 \otimes \modu_1$
for the interchange map.
Recall a {\em coordinate system\/}
of an $\bR$-module $\modu$ consists of
a family $(e_j)$ of members of $\modu$ and  
a family $(\eeta^k)$ of members of the $\bR$-dual $\modu^*= \Hom(\modu, \bR)$
such that the canonical morphism
\begin{equation}
\modu^* \otimes \modu \longrightarrow \Hom(\modu,\modu)
\end{equation}
of $\bR$-modules 
sends $\eeta^k \otimes e_k$ to the identity of $\modu$.
An $\bR$-module 
has a coordinate system
if and only if it is finitely
generated and projective.

For a manifold $M$ and a vector space $V$, occasionally we do not distinguish
in notation between a $V$-valued differential operator on $M$ and
the corresponding map $\TT^* M \to V$.
For two vector bundles $\alpha_1 \colon E_1 \to M$
and $\alpha_2 \colon E_2 \to M$ on $M$,
we write their tensor product 
in the category of vector bundles on $M$
as
\begin{equation}
\alpha_1 \otimes_M \alpha_2\colon E_1\otimes_M E_2 \longrightarrow M
\end{equation}
and
their Whitney sum
as
\begin{equation}
\alpha_1 \oplus_M \alpha_2\colon E_1\oplus_M E_2 \longrightarrow M.
\end{equation}
For two manifolds $M$ and $N$, we write
$
(\TT M) \otimes (\TT N) 
$
for the total space
of the tensor product 
\begin{equation}
((\TT M) \times N) \otimes (M \times (\TT N))
\longrightarrow M \times N
\end{equation}
of the induced vector bundles on $M\times N$
as indicated by the notation.

The material in the present section is completely standard.
We spell out details to set the stage
and to fix signs.

\subsection{Lie group actions on manifolds} 
Let $\GG$ be a Lie group.
Denote by $\gg$
the Lie algebra of left 
invariant vector fields on $\GG$,
with the Lie bracket $\gg$ inherits from
the Lie bracket of vector fields on $\GG$.
Let $X \in \gg$.
Right and left translation 
induce
 the vector fields
$X^L$ and $X^R$, respectively, as
\begin{align}
X^L_q &= \tfrac d {dt}|_{t=0}(q \exp(t X)),\ q \in \GG,
\\
X^R_q &= \tfrac d {dt}|_{t=0}(\exp(t X)q),\ q \in \GG.
\end{align}
We write the resulting 
 linear 
maps  as
\begin{equation}
L \colon \gg \longrightarrow \Vect (\GG),\  L(X) =X^L,\quad
R \colon \gg \longrightarrow \Vect (\GG),\  R(X) =X^R
\label{resulting1}
\end{equation}
and, with an abuse of notation, we write the corresponding adjoints as
\begin{equation}
\begin{gathered}
L \colon \GG \times \gg \longrightarrow \TT \GG,\quad
R \colon \gg  \times \GG \longrightarrow \TT \GG\quad 
\text{or} \quad 
R \colon \GG  \times \gg \longrightarrow \TT \GG
\\
L(q,X) = q X,\ R(q,X) = Xq,\quad X \in \gg, \ q \in \GG.
\end{gathered}
\label{resulting2}
\end{equation}
Thus $X^L = X$ is the
left invariant 
and 
$X^R$ the right invariant vector field
which $X \in \gg$ generates,
and
$X_q^R = \Ad_q (X_q)$ at  the point $q$ of $\GG$.
The notation
$q X$ etc. ($X \in \gg$, $q \in \GG$) 
 is perfectly rigorous in terms of the semidirect product group
$ \gg \rtimes \GG$ (relative to the adjoint action of $\GG$ on $\gg$).
We also  write  the obvious extensions
of $L$ and $R$ to $\TT \GG$ as
\begin{equation}
L \colon \GG \times \TT \GG \longrightarrow \TT \GG,\quad
R \colon \GG  \times \TT \GG \longrightarrow \TT \GG .
\label{resulting3}
\end{equation}
These admit an obvious interpretation in terms of
the semidirect product group
$ \gg \rtimes \GG$.
Accordingly, for a homogeneous member $\beta$ of $\Lc[\gg]$,
we denote the left-invariant 
multivector field on $\GG$ it generates
by $\beta^L$ and the right-invariant
one by $\beta^R$.
We take a $\GG$-manifold to be a manifold $M$ together with
a $\GG$-action from the left.
Accordingly,
for $X \in \gg$,
 the {\em fundamental vector field\/} $X_M$ on $M$ 
which $X$ {\em generates\/}
is the vector field
\begin{equation}
X_{M,q} =\tfrac d {dt}|_{t=0}(\exp(-t X)q),\ q \in M.
\label{funda}
\end{equation}
Then the induced map $\gg \to \Vect(M)$
is a morphism of Lie algebras.
With these preparations out of the way,
the vector field $-X^R$
is the fundamental vector field
on $\GG$ which $X \in \gg$ generates relative to the left translation action
of $\GG$ on itself, and
the vector field $X^L$
is the fundamental vector field
on $\GG$ which $X \in \gg$ generates relative to the right translation action
$\GG \times \GG \to \GG$
viewed as a $\GG$-action from the left via
\begin{equation}
\GG \times \GG \longrightarrow \GG,\ (y,q) \mapsto q y^{-1}.
\end{equation} 
Accordingly, 
the vector field $\fund^\conj_\GG(X)=X^L -X^R$ is
the fundamental vector field on $\GG$
which $X \in \gg$ generates relative to the conjugation action
of $\GG$ on itself,
viewed as a $\GG$-action from the left.

\subsection{Vector bundle induced from the tangent bundle
of a Lie group as target}

Let $M$ be a $\GG$-manifold and $\Phi \colon M \to \GG$ an admissible 
map.
Consider  the vector bundle  $\tau_\Phi=\Phi^* \tau_\GG \colon \TT_\Phi \GG \to M$ 
on $M$ which $\Phi$ induces from the tangent bundle
$\tau_\GG \colon \TT \GG \to \GG$ of $\GG$.
The derivative  $d \Phi \colon \TT M \to \TT \GG$  
of $\Phi$ determines a morphism 
$(d \Phi)_M\colon  \TT M \to \TT_\Phi \GG$ of vector bundles  on $M$
such that this derivative 
factors 
as
\begin{equation}
\TT M \stackrel{(d \Phi)_M} \longrightarrow 
\TT_\Phi \GG \stackrel{\can}\longrightarrow \TT \GG. 
\end{equation}
While, in general, it does not make sense to write
$d \Phi(X)$ for a vector field $X$ on $M$,
the section 
$(d \Phi)_M(X) = (d \Phi)_M \circ X$
of $\tau_\Phi\colon \TT_\Phi \GG \to M$ makes perfect sense.
This enables us to write the replacement 
$(d \Phi)_M$ for the 
derivative of $\Phi$ in a purely algebraic fashion as
\begin{equation}
(d \Phi)_M \colon \Vect(M) \longrightarrow 
\mathcal A[M] \otimes_{\mathcal A[\GG]} \Vect(\GG). 
\end{equation}
This is the appropriate interpretation of
$(d \Phi)_M$ in the affine algebraic setting,
with 
$\mathcal A[M]= \bK[M]$ and $\mathcal A[\GG] = \bK[\GG]$
being the respective affine coordinate rings.

The operations $L,R\colon \GG \times \gg \to \TT \GG$
of left and right translation on $\TT\GG$ induce operations
$L_\Phi,R_\Phi\colon M \times \gg \to \TT_\Phi \GG$
of {\em left and right translation
for\/} $\TT_\Phi \GG$, as displayed in the commutative diagrams
\begin{equation}
\begin{gathered}
\xymatrix{
M \times \gg 
\ar@/_/[ddr]_{\pr_M}
\ar@{.>}[dr]|-{L_\Phi\phantom{}} \ar[r]^{\Phi \times \Id}& \GG \times \gg \ar[dr]^L&
\\
&\TT _\Phi \GG \ar[r]^\can\ar[d]^{\tau_\Phi} & \TT \GG\ar[d]^{\tau_\GG}
\\
&M \ar[r]_\Phi &\GG
}
\quad
\xymatrix{
M \times \gg 
\ar@/_/[ddr]_{\pr_M}
\ar@{.>}[dr]|-{R_\Phi\phantom{}} \ar[r]^{\Phi \times \Id}& \GG \times \gg \ar[dr]^R&
\\
&\TT _\Phi \GG \ar[r]^\can\ar[d]^{\tau_\Phi} & \TT \GG\ar[d]^{\tau_\GG}
\\
&M \ar[r]_\Phi &\GG ;
}
\end{gathered}
\label{display}
\end{equation}
these operations are necessarily isomorphisms of vector bundles on $M$
and hence trivialize $\tau_\Phi$.

\subsection{Product of two Lie groups}
\label{prodtg}
For later reference, we introduce some notational device:
Let $\GG^1$ and $\GG^2$ be Lie groups, and
write $\GG^ \times =\GG^1 \times \GG^2$ and
$\gg^\times = \gg^1 \oplus \gg^2$.
The total space  $\TT^2 \GG^\times$
of the second tensor square of the tangent bundle of
$\GG^\times$
decomposes as the Whitney sum
\begin{equation}
\TT^2 \GG^\times =(\TT^2 \GG^1) \times \GG^2
\oplus _{\GG^\times}
(\TT\GG^1) \otimes (\TT \GG^2)
\oplus_{\GG^\times} 
(\TT\GG^2) \otimes (\TT \GG^1)
\oplus_{\GG^\times} 
\GG^1 \times \TT^2 (\GG^2).
\label{decomp1}
\end{equation}
Let $M$ be a smooth 
$\GG^\times$-manifold.
We write
$\fund_M^1 \colon M \times \gg^1 \to \TT M$ and
$\fund_M^2 \colon M \times \gg^2\to \TT M$
for 
the fundamental vector field maps which the restrictions
of the $(\GG^1 \times \GG^2)$-action
to the first and second factor induce.
Accordingly, we use the notation
$L^1,R^1\colon \GG^1 \times \GG^2 \times \gg^1 \to \TT \GG^1 \times \TT \GG^2$
and
$L^2,R^2\colon \GG^1 \times \GG^2 \times \gg^2 \to \TT \GG^1 \times \TT \GG^2$.

Introduce the notation
\begin{align}
L^\times,R^\times &\colon \GG^\times \times \gg^\times \to \TT \GG^\times
\\
L^{1,\times} &\colon   \GG^1 \times \GG^2 \times \gg^1 \times \gg^2 
\stackrel{\pr}\longrightarrow
\GG^1 \times  \GG^2 \times \gg^1 
\stackrel{L^1} \longrightarrow \TT \GG^1 \times \TT\GG^2
\label{view1}
\\
L^{2,\times} &\colon   \GG^1 \times \GG^2 \times \gg^1 \times \gg^2 
\stackrel{\pr}\longrightarrow
\GG^1 \times  \GG^2 \times \gg^2 
\stackrel{L^2} \longrightarrow \TT \GG^1 \times \TT\GG^2
\label{view2}
\\
R^{1,\times} &\colon   \GG^1 \times \GG^2 \times \gg^1 \times \gg^2 
\stackrel{\pr}\longrightarrow
\GG^1 \times  \GG^2 \times \gg^1 
\stackrel{R^1} \longrightarrow \TT \GG^1 \times \TT\GG^2
\label{view3}
\\
R^{2,\times} &\colon   \GG^1 \times \GG^2 \times \gg^1 \times \gg^2 
\stackrel{\pr}\longrightarrow
\GG^1 \times  \GG^2 \times \gg^2 
\stackrel{R^2} \longrightarrow \TT \GG^1 \times \TT\GG^2 .
\label{view4}
\end{align}
In terms of this notation,
\begin{align}
L^\times &= L^{1,\times} + L^{2,\times}\colon \GG^1 \times \GG^2 
\times (\gg^1 \oplus \gg^2) \longrightarrow \TT \GG^1 \times \TT \GG^2
\label{Ltimes}
\\
R^\times &= R^{1,\times} + R^{2,\times}\colon \GG^1 \times \GG^2 
\times (\gg^1 \oplus \gg^2) \longrightarrow \TT \GG^1 \times \TT \GG^2 .
\label{Rtimes}
\end{align}
Let $\iota^1\colon \gg \to \gg^1 \oplus \gg^2$
and
$\iota^2\colon \gg \to \gg^1 \oplus \gg^2$
denote the injections
that identify
$\gg$ with the first and second copy of $\gg$
 in $\gg^\times = \gg^1 \oplus \gg^2$, respectively.
By \eqref{Ltimes} and \eqref{Rtimes},
\begin{align}
(L^\times + R^\times) \otimes \Id_{\gg^\times}
&=
\begin{cases}
\phantom{+}
(L^{1,\times} + R^{1,\times}) \otimes 
\iota^1
\\ 
+
(L^{2,\times} + R^{2,\times}) \otimes \iota^2 
\\
+
(L^{2,\times} + R^{2,\times}) \otimes 
\iota^1
\\ 
+
(L^{1,\times} + R^{1,\times}) \otimes \iota^2 .
\end{cases}
\label{constit1}
\end{align}

\subsection{Graded coalgebras and Hopf algebras}
\label{gracoh}
Over a general ring $\bR$ (which is supposed to be merely
an algebra over the rationals),
let $\modu$ be an $\bR$-module, and
consider the graded  tensor coalgebra
$\TTTc[\modu]$ cogenerated 
by the canonical epimorphism $\TTTc[\modu]\to \modu $.
Recall the diagonal
on its degree $n$ constituent
$\TT^{\mrc,n}[\modu] = \modu^{\otimes n}$
($n \geq 0$
is the sum of the canonical iosomorphisms
$\modu^{\otimes n} \to (\modu^{\otimes k}) \otimes (\modu^{\otimes n-k})$.
On $\TT^{\mrc,n}[\modu]$, the symmetric group
$S_n$ on $n$ letters acts in the obvious way
and by  signed transpositions
$x\otimes y \mapsto -y \otimes x$ ($x,y \in \modu$).
The invariants
 $\LAL^{\mrc,n}[\modu] = (\modu^{\otimes n})^{S_n}$
under
the second action 
constitute 
the 
homogeneous degree $n$ constituent
of the graded exterior coalgebra $\Lc[\modu]$
cogenerated
by the canonical epimorphism $\Lc[\modu]\to \modu $.
Indeed, under the canonical injection
 $\Lc[\modu] \to \TTTc[\modu]$,
the diagonal of
$\TTTc[\modu]$ induces a diagonal for
 $\Lc[\modu]$,
and the counit induces a counit
for $\Lc[\modu]$.
Moreover,
the operation of addition on $\modu$
induces multiplication maps 
on 
$\TTTc[\modu]$  and
 $\Lc[\modu]$
that turn each of these graded coalgebras  into a graded Hopf algebra,
with 
$\Lc[\modu]$ graded commutative and graded cocommutative.
For $n \geq 0$, 
 we refer to
the degree $n$ constituent
$\LAL^{\mrc,n}[\modu]$
of $\Lc[\modu]$
as the $n$th {\em exterior copower\/} of $\modu$.

We use the notation $\TT [\modu]$ and $\LAL[\modu]$
for  the ordinary graded tensor  and graded exterior
$\bR$-algebra over $\modu$, respectively,  and,
for $n \geq 0$,
we write
$\TT^n [\modu]$ and$\LAL^n[\modu]$
for the corresponding homogeneous degree $n$ constituent.
The diagonal map of $\modu$
induces diagonal maps for
$\TT [\modu]$ and $\LAL[\modu]$, 
the familiar {\em shuffle diagonal\/},
and these, together
with the canonical counits, turn
$\TT [\modu]$ and $\LAL[\modu]$
into graded $\bR$-Hopf algebras.
The universal property of the cogenerating morphism
$\mathrm{cog}\colon \Lc[\modu] \to \modu$ of $\Lc[\modu]$
and the canonical projection  $\pp_\modu\colon \LAL[\modu] \to \modu$
of graded $\bR$-modules
determine 
a canonical morphism
$\can \colon \LAL[\modu] \to \Lc[\modu]$
of graded $\bR$-coalgebras such that the composite
$\mathrm{cog} \circ \can \colon \LAL[\modu] \to \modu$
coincides with $\pp_\modu$.
Furthermore, the composite
$ \can \circ \mathrm{gen} \colon \modu \to \Lc[\modu]$
of $\can$ with 
the generating morphism
$\mathrm{gen}\colon \modu \to \LAL[\modu]$
of graded $\bR$-modules 
coincides with
 canonical injection
$\mathrm{inj}_\modu\colon \modu \to \Lc[\modu]$
of graded $\bR$-modules,
This implies that
$\can \colon \LAL[\modu] \to \Lc[\modu]$  is a
morphisms of graded 
$\bR$-Hopf algebras, necessarily an isomorphisms.
 Accordingly,
we write the multiplication operation
of $\Lc[\modu]$ as $\wedge$.

\begin{rema}
\label{aforms}
{\rm
Let $\modu$ be an $\bR$-module. 
The canonical projection $\TT[\modu] \to \LAL[\modu]$
induces an injection
$\Hom(\LAL[\modu],\bR) \to \Hom(\TT[\modu],\bR)$
which identifies
$\Hom(\LAL[\modu],\bR)$
with the graded 
$\bR$-module
$\Alt(\modu,\bR)$ of alternating $\bR$-valued multilinear forms
on $\modu$,
and the diagonal of $\LAL[\modu]$ induces 
the familiar
 multiplication
of forms on 
$\Alt(\modu,\bR)$
turning the latter into 
a graded commutative $\bR$-algebra.
When $\modu$ is finitely generated and projective,
with respect to the multiplication maps,
the canonical morphism
$\Lc[\modu^*] \to \Alt(\modu,\bR)$
of graded $\bR$-modules
is an isomorphism of graded $\bR$-algebras.
}
\end{rema}

We need an explicit description of
the second exterior copower $\LAL^{\mrc,2}[\modu^1 \oplus \modu^2]$
of the direct sum
$\modu^1 \oplus \modu^2$ of two $\bR$-modules.
The multiplication map
\begin{equation}
\wedge  \colon  
\modu^1 \otimes \modu^2 \longrightarrow 
\LAL^{\mrc,2}[\modu^1 \oplus \modu^2] \subseteq
\modu^1 \otimes \modu^2 \oplus \modu^2 \otimes \modu^1
\end{equation}
takes the form
\begin{equation}
\wedge =(\Id, -\twist) \colon  
\modu^1 \otimes \modu^2 \longrightarrow \modu^1 \otimes \modu^2 \oplus \modu^2 \otimes \modu^1
\end{equation}
and identifies 
$\modu^1 \otimes \modu^2$
with its isomorphic image
\begin{equation}
\modu^1 \boxwedge \modu^2 =\modu^1 \otimes \modu^2 \oplus \modu^2 \otimes \modu^1
\end{equation}
 in
the target, and it is entirely classical that
the second exterior copower $\LAL^{\mrc,2}[\modu^1 \oplus \modu^2]$
decomposes canonically as
\begin{equation}
\begin{aligned}
\LAL^{\mrc,2}[\modu^1 \oplus \modu^2] &= \LAL^{\mrc,2}[\modu^1]
\oplus \modu^1 \boxwedge \modu^2  \oplus \LAL^{\mrc,2}[\modu^2] 
\\
{}
&\subseteq 
\modu^1 \otimes \modu^1 \oplus
\modu^1 \otimes \modu^2 \oplus  \modu^2 \otimes \modu^1
 \oplus  \modu^2 \otimes \modu^2.
\end{aligned}
\label{canonically1}
\end{equation}

These structures extend to vector bundles in an obvious manner.
Accordingly, over a manifold $M$, for $ k \geq 0$,
we write the total spaces of the  corresponding vector bundles
arising from the tangent bundle
$\tau_M\colon \TT M \to M$ 
of $M$ as
 $\TT^k[ M]$, $\TT^{\mrc,k}[M]$,
$\LAL^k[M]$,
$\LAL^{\mrc,k}[M]$,
etc.
With the algebra 
$\mathcat A[M]$
of functions
on $M$ 
(smooth, analytic, or algebraic, as the case may be)
substituted for $\bR$,
we write the corresponding 
$\mathcat A[M]$-modules of
$k$-vector fields 
(spaces of sections of the corresponding vector bundle)
as well as as $\TT^k[ M]$, $\TT^{\mrc,k}[M]$,
$\LAL^k[M]$,
$\LAL^{\mrc,k}[M]$,
etc.,\
with a slight abuse of notation.

\subsection{Gerstenhaber brackets}
Recall a {\em Gerstenhaber bracket\/} on a graded commutative
algebra $A$
is a homogeneous bracket $\bra \colon A \otimes A \to A$
of the kind
$\bra \colon A^j \otimes A^k \to A^{j+k-1}$
($j+k-1 \geq 0$
that turns $A$, regraded down by $1$,
into a graded Lie algebra
and
is a derivation in each variable of $\bra$ in the sense that,
for a homogeneous member $\alpha$ of $A$,
the operation $[\alpha,\,\cdot\,]$ on $A$
is a graded derivation of degree $|\alpha|-1$.
Thus
\begin{align}
[\alpha,\beta]&=-(-1)^{(|\alpha|-1)(|\beta|-1)}[\beta,\alpha]
\label{ger1}
\\
[\alpha,[\beta,\gamma]]&= [[\alpha,\beta],\gamma] 
+(-1)^{(|\alpha|-1)(|\beta|-1)}[\beta,[\alpha,\gamma]]
\label{ger2}
\\
[\alpha,\beta \gamma]&=[\alpha,\beta] \gamma +(-1)^{(|\alpha|-1)|\beta|}
\beta[\alpha,\gamma] .
\label{ger3}
\end{align}
A {\em Gerstenhaber algebra\/}
is a graded commutative algebra together with a Gerstenhaber bracket.

Consider an $\bR$-Lie algebra $\gg$.
In view of \eqref{ger3},
the Lie bracket $\bra$ of $\gg$ extends to  a Gerstenhaber bracket
\begin{equation}
\bra \colon \LAL^j[\gg] \otimes \LAL^k[\gg] \to \LAL^{j+k-1}[\gg]
\quad
(j+k-1 \geq 0)
\end{equation}
on $\LAL[\gg]$.
The canonical isomorphism
 $\LAL^c[\gg] \to \LAL[\gg]$
of $\bR$-Hopf algebras
then carries this Gerstenhaber bracket
to a Gerstenhaber bracket
$\bra\colon 
\Lc[\gg] \otimes \Lc[\gg] \to \Lc[\gg]
$
on $\Lc[\gg]$ relative to its 
graded commutative
$\bR$-algebra structure.

\section{Quasi Hamiltonian structure}
\label{qhs}

The paper \cite{MR1638045} reinterpretes the extended moduli space
construction in \cite[Section 1]{MR1370113}, \cite {MR1277051}, 
\cite{MR1470732} of a twisted representation space---equivalently, that
of the corresponding moduli space---in terms of 
a quasi Hamiltonian space.

The quasi Hamiltonian setting makes sense for smooth,
analytic and algebraic manifolds.
Thus, in this section, $\GG$ denotes a group
and $\gg$ its Lie algebra,
either analytic or algebraic.
A $\GG$-manifold
is a smooth, analytic or algebraic $\GG$-manifold,
as the case may be.
Further,
$\,\form\,$ denotes an $\Ad$-invariant symmetric bilinear form
on $\gg$, not necessarily non-degenerate.
For a $\GG$-manifold $M$ and a member $X$ of $\gg$,
the notation $X_M$ refers to
the fundamental vector field on $M$ which $X$ generates,
cf. \eqref{funda}.
We denote by $\Form$  the de Rham functor and by
$\omega \in \Form^1(\GG, \gg)$
 the left invariant Maurer-Cartan form
on $\GG$. Let
\begin{align}
\lambda &=\tfrac 1 {12}[\omega,\omega]\form \omega  \in \Form^3(\GG).
\label{lambda}
\end{align}

\subsection{Weakly quasi Hamiltonian structures}
\label{wqH}
For our purposes it is convenient to
downplay the non-degeneracy constraint 
(B3) in the original definition
 \cite[Definition 2.2]{MR1638045},
see also \cite[Definition 10.1]{MR1880957}
and Subsection \ref{nondegeneracy} below,
of a  quasi Hamiltonian structure.

Let $M$ be a $\GG$-manifold and
$\sigma$  a $\GG$-invariant $2$-form on $M$.
We denote the adjoint of $\sigma$ with respect to the first variable
by $\sigma^\fflat\colon \TT M \to \TT^* M$, so that
$\sigma^\fflat(X)(Y)= \sigma(X,Y)$, for $X,Y \in  \TT M$, and
by $\TT M^{\sigma}$ the kernel of
$\sigma^\fflat$,
a distribution over $M$, not necessarily the total space of a vector bundle.

Let $\Phi \colon M \to \GG$ 
an admissible $\GG$-equivariant map and
$\,\form\,$ an $\Ad$-invariant symmetric bilinear form on $\gg$.
We define  the $2$-form $\sigma$ on $M$
to be $\Phi$-{\em quasi closed relative to\/} 
$\,\form\,$ when
\begin{align}
d\sigma &= \Phi^*\lambda .
\label{qh1}
\end{align}
Recall from the introduction that
$\mmu\colon M \to \GG$ 
is a
$\GG$-{\em momentum mapping for\/} $\sigma$
{\em relative to\/} $\,\form\,$
when it renders diagram
\eqref{momintvar} commutative.

\begin{prop}
\label{moms}
Let 
$\mmu \colon M \to \GG$ be a $\GG$-equivariant
admissible map.
For a $\GG$-invariant $2$-form $\sigma$ on $M$,
the following are equivalent.
\begin{enumerate}
\item
The map $\mmu \colon M \to \GG$
is a $\GG$-momentum mapping for $\sigma$
 relative to $\,\form\,$.

\item For any member $X$ of $\gg$,
\begin{equation}
\sigma(X_M,\,\cdot\,) 
= \tfrac 12 \mmu^*(X \form (\omega + \ovomega))).
\label{qh2var}
\end{equation}

 \item
The diagram
\begin{equation}
\begin{gathered}
\xymatrixcolsep{4pc}
\xymatrix{
\TT M \ar[d]_{-\sigma^\fflat}
 \ar[r]^{(d\mmu)_M} & \TT_\Phi \GG 
\ar[r]^{\tfrac 12 \left(L_\Phi^{-1} + R_\Phi^{-1}\right)} & M \times \gg
\ar[d]^{\Id \times \psidot}
\\
\TT^* M \ar[rr]_{\fund^*_M}
&
&
M \times \gg^* 
}
\end{gathered}
\label{PMu}
\end{equation}
is commutative.

\item
The diagram 
\begin{equation}
\begin{gathered}
\xymatrixcolsep{7pc}
\xymatrix{
\TT^* M 
& \ar[l]_{(d\mmu)^*_M}  \TT^*_\mmu \GG &
\ar[l]_{\tfrac 12\left(L^{*,-1}_\mmu + R^{*,-1}_\mmu\right)}
  M \times \gg^*
\\
\TT M\ar[u]^{\sigma^\fflat}&& \ar[ll]^{\fund_M}
M \times \gg\ar[u]_{\Id \times \psidot} 
}
\end{gathered}
\label{PMudual}
\end{equation}
is commutative.

\item
The diagram
\begin{equation}
\begin{gathered}
\xymatrixcolsep{4pc}
\xymatrix{
\TT^* M 
& \ar[l]_{(d\mmu)^*_M}  \TT^*_\mmu \GG &
\ar[l]_{L^{*,-1}_\mmu}
M \times \gg^* & \ar[l]_{\Id \times \psidot} M \times \gg
\\
\TT M\ar[u]^{\sigma^\fflat}&&& \ar[lll]^{\fund_M}
M \times \gg\ar[u]_{\tfrac 12 \Id_M \times (\Id_\gg + \Ad_\mmu^{-1} )} 
}
\end{gathered}
\label{PMudual2}
\end{equation}
is commutative. 
\end{enumerate}
\end{prop}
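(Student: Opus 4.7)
The plan is to deduce all equivalences from the pointwise identity~\eqref{qh2var} by systematic diagram chases, with the only non-formal ingredients being the Maurer--Cartan identity $L^{-1}_\mmu \circ (d\mmu)_M = \mmu^*\omega$, its right-invariant analogue $R^{-1}_\mmu \circ (d\mmu)_M = \mmu^*\ovomega$, and $\Ad$-invariance of $\,\form\,$.

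First I would unfold condition~(1). Evaluating the upper path of~\eqref{momintvar} on a decomposable $X \otimes Y_p \in \gg \otimes \TT_p M$ gives $\sigma(X_M, Y)_p$, while the lower path, after applying the two Maurer--Cartan identities above, yields $\tfrac12\, X \form (\mmu^*\omega + \mmu^*\ovomega)(Y)_p$. Since this must hold for all such $X, Y, p$, condition~(1) is equivalent to $\sigma(X_M,\,\cdot\,) = \tfrac12 \mmu^*(X \form (\omega + \ovomega))$ for every $X \in \gg$, which is~(2).

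Next I would read (3) and (4) as dual packagings of~(2). For~(3), antisymmetry of $\sigma$ gives $\sigma(X_M, Y) = -\sigma^\fflat(Y)(X_M)$; pairing with the $\gg$-variable via the dual $\fund_M^*$ of $\fund_M$ and composing the right-hand side with the musical map $\psidot\colon \gg \to \gg^*$ induced by $\,\form\,$ turns~(2) into the commutativity of~\eqref{PMu}. For~(4), I would instead keep $\sigma^\fflat(X_M)(Y) = \sigma(X_M, Y)$ and rewrite the right-hand side of~(2) by the transpose identity
\[
\tfrac12\, X \form (L_\mmu^{-1} + R_\mmu^{-1})((d\mmu)_M(Y))
= \bigl((d\mmu)_M^* \circ \tfrac12(L_\mmu^{*,-1} + R_\mmu^{*,-1}) \circ \psidot(X)\bigr)(Y);
\]
this is exactly~\eqref{PMudual}. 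Hence (2) $\Leftrightarrow$ (3) and (2) $\Leftrightarrow$ (4).

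Finally, (4) $\Leftrightarrow$ (5) reduces to a purely algebraic identity between the top-right edges of the two diagrams. I would use $R_\mmu^{-1} = \Ad_\mmu \circ L_\mmu^{-1}$, hence dually $R_\mmu^{*,-1} = L_\mmu^{*,-1} \circ \Ad_\mmu^*$, together with the consequence $\psidot \circ \Ad_\mmu^{-1} = \Ad_\mmu^* \circ \psidot$ of $\Ad$-invariance of $\,\form\,$, yielding
\[
\tfrac12(L_\mmu^{*,-1} + R_\mmu^{*,-1}) \circ \psidot
= L_\mmu^{*,-1} \circ \psidot \circ \tfrac12(\Id + \Ad_\mmu^{-1}),
\]
which is exactly the factorization displayed in~\eqref{PMudual2}. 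The principal obstacle throughout is bookkeeping: tracking directions of adjoints, pullback versus pushforward, and the minus sign that appears in~\eqref{PMu} but is absent from~\eqref{PMudual}; once the three displayed identities above are installed, each equivalence is a one-line diagram chase.
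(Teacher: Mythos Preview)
Your proposal is correct and follows the same approach as the paper, which simply records the proof as ``straightforward'' and singles out the one point you also flag---that the dual $\sigma^{\fflat,*}$ of $\sigma^\fflat$ coincides with $-\sigma^\fflat$, accounting for the sign in~\eqref{PMu} versus~\eqref{PMudual}. Your write-up just makes explicit the Maurer--Cartan and $\Ad$-invariance identities that the paper leaves to the reader.
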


\begin{proof}
This is straightforward.
We only note that  the dual
$\sigma^{\fflat,*}\colon \TT M \to \TT^* M$
of $\sigma^\fflat$ coincides with $-\sigma^\fflat$.
\end{proof}

We define a  {\em weakly\/}
$\GG$-{\em quasi Hamiltonian structure\/}
on a $\GG$-manifold $M$
relative to $\,\form\,$
to consist of a $\GG$-invariant $2$-form
$\sigma$ on $M$ and
a $\GG$-equivariant map
$\mmu \colon M \to \GG$
such that $\sigma$ is $\mmu$-quasi closed relative to $\,\form\,$
and that $\mmu$ is a $\GG$-momentum mapping for
$\sigma$ relative to $\,\form\,$.
A  weakly $\cgroup$-{\em quasi Hamiltonian\/} manifold
is a  $\cgroup$-manifold $M$ together with a 
weakly $\cgroup$-{\em quasi Hamiltonian\/} structure
(relative to some $\Ad$-invariant symmetric bilinear form
on the Lie algebra $\gg$ of $\GG$).

\begin{rema}
\label{sign1}
{\rm
In \cite[Definition 2.2]{MR1638045}, 
identity \eqref{qh1} occurs with a minus sign.
In \cite[Definition 10.1]{MR1880957}, \eqref{qh1}
does not carry a minus sign
and the sign of \eqref {qh2var} is maintained. 
The present signs are 
consistent with the those for the double, see
Subsection \ref{predouble} below.
}
\end{rema}

\subsection{Non-degeneracy}
\label{nondegeneracy}
We define a $\GG$-invariant $2$-form $\sigma$ on $M$
to be $\mmu$-{\em non-degenerate\/}
relative to an admissible $\GG$-equivariant
map $\mmu\colon M \to \GG$
when the morphism
\begin{equation}
\xymatrixcolsep{4pc}
\xymatrix{
\TT M \ar[r]^{(\sigma^\fflat,(d\mmu)_M) \phantom{aaaaa}}
& \TT^* M \oplus_M \TT_\mmu \GG
}
\label{inj1}
\end{equation}
of vector bundles
on $M$ is a monomorphism
or, equivalently, when
the morphism
\begin{equation}
\xymatrixcolsep{4pc}
\xymatrix{
\TT^* M& \ar[l]_{(\sigma^\fflat,(d\mmu)^*_M) \phantom{aaaaa}}
 \TT M \oplus_M \TT^*_\mmu \GG
}
\label{surj11}
\end{equation}
of vector bundles
on $M$ is an epimorphism.

Suppose that the $\Ad$-invariant symmetric bilinear form $\,\form\,$
on $\gg$ is non-degenerate.
Let $\sigma$ be a $\GG$-invariant $2$-form on $M$ and $\mmu \colon M \to \GG$
a $\GG$-momentum mapping for $\sigma$  relative to $\,\form\,$.
Recall that
$\TT M^{\sigma}$ denotes the kernel of the adjoint
$\sigma^\fflat\colon \TT M \to \TT^* M$ of $\sigma$.
From the commutative diagrams  \eqref{PMu} and   \eqref{PMudual2},
we concoct the commutative diagram
\begin{equation}
\begin{gathered}
\scalefont{0.95}
{
\xymatrix{
&\ker(\Id \times (\Id + \Ad_\mmu^{-1} ))
\ar[r]
\ar@{>->}[d]
&\TT M^{\sigma}\ar@{>->}[d]\ar[rr] 
&
&\ker(L_\mmu^{-1}+ R_\mmu^{-1})\ar@{>->}[d] 
\\
M \times \gg 
\ar[d]^{-\Id \times \psidot}
&
\ar[l]_{\tfrac 12 \left(\Id  \times (\Id  + \Ad_\Phi^{-1})\right)} 
M \times \gg 
\ar[r]^{\fund_M} 
&\TT M \ar[d]_{-\sigma^\fflat}
 \ar[rr]^{(d\mmu)_M} 
&
&
 \TT_\mmu \GG 
\ar[d]_{\tfrac 12 (L_\mmu^{-1}+ R_\mmu^{-1})} 
\\
M \times \gg^*\ar[r]_{L^{*,-1}_\Phi}
&\TT_\Phi^* \GG
\ar[r]_{(d \mmu)_M^*}
&\TT^* M \ar[r]_{\fund^*_M}
&
M \times \gg^*
\ar[r]_{\Id \times \psidot^{-1}}
&
M \times \gg ,
}
}
\end{gathered}
\label{PMukkk}
\end{equation}
the lower right-hand rectangle being a variant of \eqref{PMu}
and the upper left-hand rectangle being commutative
since so is \eqref{PMudual2}.

\begin{prop}
\label{tech}
The $\Ad$-invariant symmetric 
$2$-form $\,\form\,$ on $\gg$ being non-degenerate,
the restriction of
$
2 L_\mmu \colon M \times \gg \longrightarrow  \TT_\mmu\GG 
$
to $\ker(\Id_M \times (\Id_\gg + \Ad_\mmu^{-1} ))$
yields the
upper row of {\em \eqref{PMukkk}}.
Hence
$\fund_M|\colon 
\ker(\Id_M \times (\Id_\gg + \Ad_\mmu^{-1} ))
\to \TT M^{\sigma}$
is a monomorphism of distributions
on $M$ and
$(d\mmu)_M|\colon 
 \TT M^{\sigma} \to \ker(L_\mmu^{-1}+ R_\mmu^{-1})$
is an epimorphism of distributions on $M$.
\end{prop}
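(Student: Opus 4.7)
The plan is to reduce everything to a single algebraic identity, namely $(d\mmu)_M \circ \fund_M = 2L_\mmu$ on the subbundle $\ker(\Id_M \times (\Id_\gg + \Ad_\mmu^{-1}))$. The monomorphism/epimorphism statements and the upper row of \eqref{PMukkk} will then both fall out of this identity together with the momentum condition \eqref{qh2var}.

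First I would compute $(d\mmu)_M \circ \fund_M$ without restriction. Since $\mmu$ is $\GG$-equivariant for conjugation, $\mmu(\exp(-tX)\cdot p) = \exp(-tX)\mmu(p)\exp(tX)$, and differentiating at $t=0$ gives $(d\mmu)(\fund_M(p,X)) = \mmu(p)X - X\mmu(p) = (L_\mmu - R_\mmu)(p,X)$ in the semidirect-product notation of \eqref{resulting2}. The elementary identity $R_\mmu = L_\mmu \circ \Ad_{\mmu^{-1}}$, which encodes $Xq = q\,\Ad_{q^{-1}}X$, then yields $(d\mmu)_M \circ \fund_M = L_\mmu \circ (\Id - \Ad_{\mmu^{-1}})$. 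On $\ker(\Id + \Ad_{\mmu^{-1}})$ the operator $\Id - \Ad_{\mmu^{-1}}$ acts as multiplication by $2$, producing the asserted identity.

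Next I would verify that the two restricted maps land in their nominal targets. For the left arrow, given $X \in \ker(\Id + \Ad_{\mmu^{-1}})$, the momentum condition \eqref{qh2var} gives $\sigma(\fund_M(X), v) = \tfrac 12\, X \form (L_\mmu^{-1} + R_\mmu^{-1})(d\mmu)_M v$. Writing $L_\mmu^{-1} + R_\mmu^{-1} = (\Id + \Ad_\mmu) L_\mmu^{-1}$ and exploiting the $\Ad$-invariance of $\form$, this rewrites as $\tfrac 12\,((\Id + \Ad_{\mmu^{-1}})X) \form L_\mmu^{-1}(d\mmu)_M v = 0$, so $\fund_M X \in \TT M^{\sigma}$. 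For the right arrow, $(L_\mmu^{-1} + R_\mmu^{-1})(L_\mmu X) = (\Id + \Ad_\mmu) X$, which vanishes because $\ker(\Id + \Ad_\mmu) = \ker(\Id + \Ad_{\mmu^{-1}})$; hence $2L_\mmu X \in \ker(L_\mmu^{-1} + R_\mmu^{-1})$.

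The consequences then follow formally. Injectivity of $\fund_M$ on $\ker(\Id + \Ad_{\mmu^{-1}})$ reduces to injectivity of the composite $(d\mmu)_M \circ \fund_M = 2L_\mmu$ there, which holds since $L_\mmu$ is a bundle isomorphism. Surjectivity of $(d\mmu)_M \colon \TT M^{\sigma} \to \ker(L_\mmu^{-1} + R_\mmu^{-1})$ is obtained by exhibiting, for any $u = L_\mmu X$ with $X \in \ker(\Id + \Ad_\mmu)$, the element $v := \tfrac 12 \fund_M X$, which lies in $\TT M^{\sigma}$ by the previous paragraph and satisfies $(d\mmu)_M v = u$. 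The principal obstacle is the sign bookkeeping in the first step: the minus sign in \eqref{funda} defining the fundamental vector field, the conjugation convention for $\GG$-equivariance of $\mmu$, and the conversion between left and right trivializations via $\Ad_\mmu$ must all align to produce the factor of $2$ rather than $0$. Non-degeneracy of $\form$ plays no direct role in these computations, but is implicitly required for the bottom half of \eqref{PMukkk} (via $\psidot^{-1}$) to be meaningful.
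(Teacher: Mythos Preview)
Your proof is correct and follows essentially the same route as the paper's own proof. The paper establishes the single identity $(d\mmu)_M\circ \fund_M = L_\mmu - R_\mmu$ from the $\GG$-equivariance of $\mmu$ and then simply asserts ``This implies the claim''; you spell out the implicit steps (rewriting $L_\mmu - R_\mmu = L_\mmu\circ(\Id - \Ad_{\mmu^{-1}})$, specializing to the kernel, checking the targets, and reading off injectivity/surjectivity from the fact that $2L_\mmu$ restricts to a bundle isomorphism onto $\ker(L_\mmu^{-1}+R_\mmu^{-1})$). Your closing remark that non-degeneracy of $\,\form\,$ is needed only for the bottom half of \eqref{PMukkk} is also accurate.
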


\begin{proof}
Since $\mmu$ is $\GG$-equivariant, 
necessarily 
\begin{equation}
(d\mmu)_M\circ \fund_M=  L_\mmu - R_\mmu\colon M \times \gg \to \TT_{\mmu} \GG.
\end{equation}
This implies the claim.
\end{proof}

\begin{prop}
\label{nondegqh}
The $2$-form $\,\form\,$ on $\gg$ being non-degenerate, the 
following are equivalent.
\begin{enumerate}
\item
The morphism 
$\fund_M|\colon 
\ker(\Id_M \times (\Id_\gg + \Ad_\mmu^{-1} ))
\to \TT M^{\sigma}$
is an epimorphism of distributions on $M$.
\item
The morphism 
$\fund_M|\colon 
\ker(\Id_M \times (\Id_\gg + \Ad_\mmu^{-1} ))
\to \TT M^{\sigma}$
is an isomorphism of distributions on $M$.
\item
The intersection $\TT M^\sigma \cap \ker(d\mmu)$ is trivial.
\item
The $2$-form $\sigma$ is $\mmu$-non-degenerate.
\item
The morphism 
$d\mmu_M|\colon 
 \TT M^{\sigma} \to \ker(L_\mmu^{-1} + R_\mmu^{-1})$
is a monomorphism of distributions on $M$.
\item
The morphism $d\mmu_M|\colon 
 \TT M^{\sigma} \to \ker(L_\mmu^{-1} + R_\mmu^{-1})$
is an isomorphism of distributions on $M$.
\end{enumerate}
\end{prop}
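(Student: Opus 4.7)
The plan is to reduce the six-way equivalence to three pairs of easy equivalences together with one diagram chase, all of which are fiber-wise facts about short sequences of vector bundle morphisms over $M$.

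First, I would dispose of the ``doubling'' equivalences. By Proposition \ref{tech}, the restriction $\fund_M|\colon \ker(\Id_M \times (\Id_\gg + \Ad_\mmu^{-1} )) \to \TT M^{\sigma}$ is already a monomorphism, so (1) $\Leftrightarrow$ (2) is immediate. Dually, $(d\mmu)_M|\colon \TT M^{\sigma} \to \ker(L_\mmu^{-1}+ R_\mmu^{-1})$ is already an epimorphism, so (5) $\Leftrightarrow$ (6) is immediate.

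Next, I would observe that (3) $\Leftrightarrow$ (4) is a direct unpacking of definitions. The kernel of $(\sigma^\fflat,(d\mmu)_M) \colon \TT M \to \TT^* M \oplus_M \TT_\mmu \GG$ is $\ker(\sigma^\fflat) \cap \ker((d\mmu)_M) = \TT M^{\sigma} \cap \ker(d\mmu)$, where the last equality uses that the canonical map $\TT_\mmu \GG \to \TT \GG$ is fiber-wise injective, so that $\ker((d\mmu)_M)=\ker(d\mmu)$. Thus $\sigma$ is $\mmu$-non-degenerate if and only if this intersection is trivial. The same identification gives (3) $\Leftrightarrow$ (5): the kernel of $(d\mmu)_M|_{\TT M^{\sigma}}$ is $\TT M^{\sigma}\cap\ker(d\mmu)$, so this restriction is a monomorphism exactly when (3) holds.

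The one remaining link, (2) $\Leftrightarrow$ (6), is where I would use the upper row of \eqref{PMukkk} produced in Proposition \ref{tech}. Writing, at a point $m \in M$, $V = \ker(\Id + \Ad_\mmu^{-1})|_m$, $W = \TT_m M^\sigma$, $U = \ker(L_\mmu^{-1}+R_\mmu^{-1})|_m$, and denoting $f = \fund_M|\colon V \to W$ and $g = (d\mmu)_M|\colon W \to U$, we know from Proposition \ref{tech} that $f$ is injective, $g$ is surjective, and their composite is $2L_\mmu$ restricted to $V$. Since left-translation is a fiber-wise linear isomorphism $M\times\gg \to \TT_\mmu\GG$ carrying $V$ isomorphically onto $U$ (because $\Ad_\mmu X = -X$ is equivalent to $X + \Ad_\mmu X = 0$, which is $L_\mmu^{-1}(L_\mmu X) + R_\mmu^{-1}(L_\mmu X) = 0$), the composite $g \circ f$ is an isomorphism. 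From $\dim V = \dim U$ combined with $f$ injective and $g$ surjective, one has $\dim V \le \dim W$ and $\dim U \le \dim W$, with equality on either side if and only if the corresponding map is an isomorphism; and the two equalities occur together. Hence $f$ is an isomorphism iff $g$ is, proving (2) $\Leftrightarrow$ (6).

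The only genuinely substantive step is the last one, and the main obstacle there is to be sure that the map $L_\mmu$ really does send $V$ isomorphically onto $U$; but this is a direct fiber-wise computation already packaged in Proposition \ref{tech} via the identity $(d\mmu)_M\circ \fund_M = L_\mmu - R_\mmu$ and the relation $R_\mmu^{-1}L_\mmu = \Ad_\mmu$. Everything else is bookkeeping, and the non-degeneracy assumption on $\,\form\,$ enters only through its use in Proposition \ref{tech} to guarantee the monomorphism/epimorphism claims that set up the diagram.
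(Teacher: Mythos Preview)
Your proposal is correct and follows essentially the same approach as the paper's proof, which is equally terse: the paper records that (3), (4), (5) are pairwise equivalent ``immediately'' and that (1), (2), (5), (6) are pairwise equivalent by Proposition~\ref{tech}. Your write-up simply unpacks these implications in more detail, in particular making explicit the dimension-count argument behind $(2)\Leftrightarrow(6)$ via the composite $g\circ f = 2L_\mmu|_V$ being an isomorphism---a step the paper leaves to the reader. One tiny cosmetic point: your description of $V$ uses $\Ad_\mmu X = -X$ while the statement defines $V$ via $\Ad_\mmu^{-1}$, but of course $\ker(\Id+\Ad_\mmu) = \ker(\Id+\Ad_\mmu^{-1})$ since $\Ad_\mmu$ is invertible, so nothing is lost.
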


\begin{proof}
It is immediate that 
(3), (4), and (5)  
are pairwise equivalent.
By Proposition \ref {tech},
 (1), (2), 
(5) and (6)
are pairwise equivalent.
\end{proof}

Define  a weakly $\GG$-quasi Hamiltonian structure
$(\sigma,\mmu)$ 
relative to an $\Ad$-invariant non-degenerate symmetric 
bilinear form $\,\form\,$ on $\gg$
to be {\em non-degenerate\/}
when $\sigma$ is $\mmu$-non-degenerate.
By Proposition \ref{nondegqh},
 a weakly $\GG$-quasi Hamiltonian structure
$(\sigma,\mmu)$ 
relative to  $\,\form\,$
is non-degenerate
if it enjoys one and hence each of the six equivalent properties
(1) -- (6) in that Proposition.
A $\GG$-{\em quasi Hamiltonian structure\/}
relative to an $\Ad$-invariant non-degenerate symmetric 
bilinear form $\,\form\,$ on $\gg$
is a non-degenerate weakly
$\GG$-quasi Hamiltonian structure
relative to $\,\form\,$.
A $\GG$-{\em quasi Hamiltonian\/} manifold
is a $G$-manifold $M$ together with a
$\GG$-quasi Hamiltonian structure
relative to some
 $\Ad$-invariant non-degenerate symmetric 
bilinear form $\,\form\,$ on $\gg$.

\begin{rema}
{\rm
Property (1) in Proposition \ref{nondegqh}  is 
\cite[(B3) Definition 2.2]{MR1638045}
for $\bK = \RR$
with respect to
a positive $ \Ad$-invariant  symmetric 
bilinear form on $\gg$
(and hence $\GG$ compact 
or more generally of compact type
\cite[Corollary 21.6]{MR0163331})
and
\cite[Definition 5.1, minimal degeneracy condition]{MR2642360}
for the case of a general
$\Ad$-invariant non-degenerate symmetric 
bilinear form on $\gg$; the notation in
\cite[Definition 5.1]{MR2642360}
for this form is $B$ (introduced in the Introduction
and discussed in Section 3.1).
}
\end{rema}

The following extends
\cite[Proposition 4.6]{MR1638045} to the present general case.

\begin{prop}
\label{4.6}
Let  $(M,\sigma,\mmu)$ be a $\GG$-quasi Hamiltonian manifold.
For every $\GG$-invariant $\bK$-valued admissible
function $f$ on $M$, 
there is a unique vector field 
$X_f$ on $M$ which under {\rm \eqref{inj1}}
goes to $(df, 0) \in \TT^* M \oplus \TT_\Phi\GG$.
The vector field $X_f$ is $\GG$-invariant and preserves
$\sigma$ and $\mmu$.
\end{prop}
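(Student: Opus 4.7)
The plan is to solve, in two steps, the two conditions characterizing $X_f$, namely $\sigma^\fflat(X_f)=df$ and $(d\mmu)_M(X_f)=0$, and then to read off the equivariance and preservation properties from those conditions combined with the quasi-closedness \eqref{qh1}.

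Uniqueness is immediate from the injectivity of \eqref{inj1} guaranteed by item~(3) of Proposition~\ref{nondegqh}. For existence, I would first construct an auxiliary $v \in \Vect(M)$ satisfying $\sigma^\fflat(v)=df$ and then correct $v$ by an element of $\TT M^\sigma$ to enforce $(d\mmu)_M(v)=0$. The $\GG$-invariance of $f$ implies $df(X_M)=0$ for every $X\in\gg$; by item~(2) of Proposition~\ref{nondegqh}, every element of $\TT M^\sigma$ has the form $X_M$ for some $X\in\ker(\Id+\Ad_\mmu^{-1})$, so $df$ annihilates $\TT M^\sigma=\ker(\sigma^\fflat)$. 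Because $\sigma$ is alternating, the image of $\sigma^\fflat$ in $\TT^*M$ is the annihilator of $\TT M^\sigma$, and therefore an admissible $v$ with $\sigma^\fflat(v)=df$ exists.

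Pairing the identity $\sigma^\fflat(v)=df$ with $X_M$ and invoking the momentum mapping property~\eqref{qh2var} yields, for every $X \in \gg$,
\begin{equation*}
0 \;=\; df(X_M) \;=\; -\sigma(X_M, v) \;=\; -\tfrac{1}{2}\,\form\bigl(X,\, (L_\mmu^{-1}+R_\mmu^{-1})((d\mmu)_M(v))\bigr);
\end{equation*}
non-degeneracy of $\,\form\,$ then forces $(d\mmu)_M(v) \in \ker(L_\mmu^{-1}+R_\mmu^{-1})$. By item~(6) of Proposition~\ref{nondegqh}, there is a unique $w\in\TT M^\sigma$ with $(d\mmu)_M(w)=(d\mmu)_M(v)$, and $X_f:=v-w$ satisfies both defining conditions. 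The auxiliary assertions then follow quickly: $\GG$-equivariance of the defining equations together with uniqueness of $X_f$ gives $\GG$-invariance; preservation of $\mmu$ is the identity $(d\mmu)(X_f)=0$ by construction; and preservation of $\sigma$ is the computation
\begin{equation*}
\mathcal{L}_{X_f}\sigma \;=\; d\,\iota_{X_f}\sigma + \iota_{X_f}\,d\sigma \;=\; d(df) + \iota_{X_f}\mmu^*\lambda \;=\; \mmu^*(\iota_{d\mmu(X_f)}\lambda) \;=\; 0,
\end{equation*}
where $d\sigma=\mmu^*\lambda$ is \eqref{qh1}.

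I expect the main obstacle to lie in globalizing the fiberwise existence of $v$, i.e.\ realizing the image of $\sigma^\fflat$ as a vector subbundle of $\TT^*M$ in the admissible (smooth, analytic, or algebraic) category. This should follow from the fact that \eqref{inj1} is a vector bundle monomorphism of constant rank $\dim M$ onto its image, together with the isomorphism statements in items~(2) and~(6) of Proposition~\ref{nondegqh} ensuring the requisite constant-rank behaviour of $\TT M^\sigma$ and of $\ker(L_\mmu^{-1}+R_\mmu^{-1})$; admissibility of $X_f$ is then automatic, as the unique preimage of the admissible section $(df,0)$ of the image subbundle.
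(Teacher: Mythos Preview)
Your proof is correct and is precisely the kind of argument the paper has in mind: the paper's own proof merely invokes \cite[Proposition~4.6]{MR1638045} and asserts that the positivity hypothesis there can be dropped, leaving the verification to the reader; you have supplied that verification. Your use of items~(2) and~(6) of Proposition~\ref{nondegqh} in place of the compact-group argument is exactly the point, and your final paragraph correctly identifies and resolves the globalization issue via the constant-rank property of the bundle monomorphism~\eqref{inj1}.
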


\begin{proof}
\cite[Proposition 4.6]{MR1638045}
establishes the existence of the vector field $X_f$
associated with a $\GG$-invariant function $f$
for the case where the $\Ad$-invariant symmetric bilinear form
$\,\form\,$ on $\gg$ is positive.
A closer look reveals one can get away with
the positivity constraint.
We leave the details to the interested reader.
\end{proof}

\begin{rema}
{\rm Under the circumstances of Proposition \ref{4.6},
by Theorem \ref{existence} below,
choosing
$(X_0,\alpha) \in \TT M \oplus_M \TT_\Phi \GG$ 
such that
$df = \sigma^\fflat(X_0) + (d \mmu)_M^*(\alpha)$
(which is possible since \eqref{surj11} is an epimorphism of vector bundles on $M$)
and setting
\begin{equation}
X_f 
=X_0 + \fund_M
\left(
\tfrac 12 (L^{-1}_\Phi + R^{-1}_\Phi)(\psi_\Phi^{\form,-1}) (\alpha)
-\tfrac 14 (L^{-1}_\Phi - R^{-1}_\Phi)(d \mmu)_M(X_0)
\right)
\end{equation}
characterizes
the vector field 
$X_f$ on $M$.
}
\end{rema}

\begin{thm}
\label{unpois}
Let  $(M,\sigma,\mmu)$ be a $\GG$-quasi Hamiltonian manifold.
Setting, for two $\GG$-invariant admissible $\bK$-valued functions $f$ and $h$
on $M$,
\begin{equation}
\{f,h\}= X_f(h)= \sigma(X_h,X_f)
\label{unpois1}
\end{equation}
yields a Poisson bracket $\pbra$
on the algebra  $\mathcat A[M]^\GG$
of $\GG$-invariant admissible functions on $M$.
\end{thm}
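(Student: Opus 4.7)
The plan is to verify the four defining properties of a Poisson bracket on $\mathcat A[M]^\GG$: that $\pbra$ takes values in $\mathcat A[M]^\GG$, is skew-symmetric, obeys the Leibniz rule in each argument, and satisfies the Jacobi identity. Everything except Jacobi reduces to formal manipulation once the two outputs of Proposition \ref{4.6} are unpacked: the vector field $X_f$ uniquely associated to a $\GG$-invariant function $f$ is $\GG$-invariant, and, by virtue of its characterization via \eqref{inj1}, it satisfies both $\sigma^\fflat(X_f) = df$ and $(d\mmu)_M(X_f) = 0$.

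Concretely, $\{f,h\} = X_f(h)$ is $\GG$-invariant whenever $f$ and $h$ are, since $X_f$ is; hence $\pbra$ lands in $\mathcat A[M]^\GG$. The two expressions in \eqref{unpois1} agree via $X_f(h) = dh(X_f) = \sigma^\fflat(X_h)(X_f) = \sigma(X_h, X_f)$, whence the antisymmetry of $\sigma$ yields $\{f,h\} = -\{h,f\}$. The Leibniz rule in the second argument is the derivation property of $X_f$, and skew-symmetry transfers it to the first argument.

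For Jacobi I would invoke the Cartan coboundary formula
\begin{equation*}
d\sigma(X_f, X_g, X_h) = \sum_{\mathrm{cyc}} X_f\,\sigma(X_g, X_h) - \sum_{\mathrm{cyc}} \sigma([X_f, X_g], X_h),
\end{equation*}
the cyclic sums ranging over cyclic permutations of $(f,g,h)$. Expanding each term by way of $\sigma(X_g, X_h) = dg(X_h) = X_h(g) = \{h,g\}$ and $\sigma([X_f, X_g], X_h) = -[X_f, X_g](h) = -\{f,\{g,h\}\} + \{g,\{f,h\}\}$, then collecting terms with the aid of the skew-symmetry just established, the right-hand side reduces to a non-zero scalar multiple of the Jacobiator $\sum_{\mathrm{cyc}} \{f,\{g,h\}\}$. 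Thus the Jacobi identity is equivalent to $d\sigma(X_f, X_g, X_h) = 0$. But quasi-closedness yields $d\sigma = \mmu^*\lambda$, and by its very construction as $\tfrac{1}{12}[\omega,\omega]\form\omega$ the $3$-form $\lambda$ is tensorial in $\omega$; hence $\mmu^*\lambda$ annihilates any triple of tangent vectors on which $(d\mmu)_M$ vanishes, and $X_f, X_g, X_h$ form such a triple by construction.

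The principal --- and rather mild --- obstacle is that the argument turns entirely on the vanishing of $(d\mmu)_M$ on the Hamiltonian vector fields: this is precisely the feature built into the second component of \eqref{inj1} that neutralizes the $\mmu^*\lambda$ correction to $d\sigma$. No further use is made of the non-degeneracy of $\,\form\,$ or of the explicit formula for $X_f$ recorded after Proposition \ref{4.6}, so the proof proceeds uniformly in the smooth, analytic and algebraic categories.
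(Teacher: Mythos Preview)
Your proof is correct and follows essentially the same approach as the paper's: both reduce the Jacobi identity to the vanishing of $d\sigma(X_f,X_g,X_h)=\mmu^*\lambda(X_f,X_g,X_h)$, which holds because $(d\mmu)_M$ annihilates each $X_f$ by the defining property in Proposition~\ref{4.6}. You simply spell out what the paper calls ``a standard calculation'' via the Cartan formula and make explicit the checks (closure under $\pbra$, skewness, Leibniz) that the paper declares immediate.
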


\begin{proof}
It is immediate that
$\pbra$ is a derivation in each variable and that it is
skew. 
Moreover,
a standard calculation shows
\begin{equation} 
\{\{f,h\}, k\}  + \{\{h,k\},f\}\}+\{\{k, f\}, h\}\}
= d \sigma(X_f,X_h,X_k).
\end{equation}
Since $\sigma$ is $\mmu$-quasi closed and since, by Proposition \ref{moms},
$d\mmu(X_f)$ is zero,
\begin{equation*}
d \sigma(X_f,X_h,X_k) = \mmu^*\lambda (X_f,X_h,X_k) =0. \qedhere
\end{equation*}
\end{proof}

\subsection{Double}
\label{predouble}

As before, let $\,\form\,$ be an $\Ad$-invariant symmetric bilinear form
on the Lie algebra $\gg$ of $\GG$, not necessarily
non-degenerate.
Consider the product group $\GG \times \GG$.
We need two distinct copies of this product group.
As before, for book keeping purposes,
write the first copy of $\GG$ as $\GG^1$ and the second one as $\GG^2$;
then $\GG^1 \times \GG^2$ refers to the first copy of 
the product group to be discussed below,
and we write $\GG^\times$ for this copy for short
when there is no risk of confusion.
Next, write the first copy of $\GG$ as $\GG$ and the second one 
as $\oGG$;
then $\GG \times \oGG$ refers to the second copy of 
the product group under discussion below, and we always write this copy as
$\GG \times \oGG$.
We denote the Lie algebra of $\oGG$ 
by $\ogg$  
and
by $\,\oform\,$
the corresponding $\Ad$-invariant symmetric bilinear form on $\ogg$.
Then $\,\form\, + \,\oform\,$ is an 
 $\Ad$-invariant symmetric bilinear form on $\gg \oplus \ogg$
in an obvious way.

The actions
\begin{align}
\GG &\times (\GG^1 \times \GG^2)
\longrightarrow
\GG^1 \times \GG^2, \quad
(x,q_1,q_2) \longmapsto (x q_1 ,  q_2 x^{-1})
\label{act31}
\\
\oGG&\times (\GG^1 \times \GG^2)
\longrightarrow
\GG^1 \times \GG^2,\quad 
(y,q_1,q_2) \longmapsto (q_1 y^{-1},  y q_2)
\label{act32}
\end{align}
turn $\GG^\times$ into a  $\GG$- and into a $\oGG$-manifold.
Keeping in mind that
$\gg = \ogg = \gg^1 = \gg^2$, 
in terms of the notation $L^1$, $R^1$, $L^2$, $R^2$
introduced in Subsection \ref{prodtg},
we spell out
 the respective infinitesimal actions as
\begin{equation}
\begin{aligned}
\fund_{\GG^\times}=  L^2 - R^1
&\colon
\GG^1 \times \GG^2 \times \gg 
\longrightarrow \TT \GG^1 \times \TT \GG^2 
\\
(q_1,q_2,X)& \longmapsto \frac d {dt}\big|_{t=0}\left( \exp(-tX)q_1, q_2 \exp(tX)\right)
\\
(q_1,q_2,X)&\longmapsto  (q_1,q_2X)-(Xq_1,q_2) 
\label{mapsto1c}
\\
\ofund_{\GG^\times}= L^1 - R^2
&\colon
\GG^1 \times \GG^2 \times  \ogg
\longrightarrow \TT \GG^1 \times \TT \GG^2 .
\end{aligned}
\end{equation}
The two actions combine to an action
\begin{equation}
\begin{aligned}
(\GG \times \oGG)&\times (\GG^1 \times \GG^2)
\longrightarrow
\GG^1 \times \GG^2
\end{aligned}
\label{act3}
\end{equation}
that turns $\GG^\times$ into a
$(\GG \times \oGG)$-manifold
having the sum
\begin{equation}
\begin{aligned}
\fund_{\GG^\times} + \ofund_{\GG^\times}
&\colon
\GG^1 \times \GG^2 \times (\gg \oplus \ogg)
\longrightarrow \TT \GG^1 \times \TT \GG^2 
\end{aligned}
\end{equation}
as infinitesimal $(\gg \oplus \ogg)$-action.
Use the notation $\mult \colon \GG^1 \times \GG^2 \to \GG$
for the multiplication map of $\GG$ and let
$\omult$ denote the composite
\begin{equation}
\omult \colon \GG^1 \times \GG^2 
\stackrel{\inv \times \inv} \longrightarrow
 \GG^1 \times \GG^2 
\stackrel{\mult} \longrightarrow
\oGG.
\end{equation}

With respect to the decomposition \eqref{decomp1} of $\TT^2 \GG^\times$,
we use the notation 
\begin{equation}
L^{-1}_1 \wedge R^{-1}_2 \colon \TT^2 \GG^\times \longrightarrow 
\GG^\times \times (\gg \otimes \gg)
\end{equation}
for the  sum of
\begin{align*}
L^{-1}_1 \otimes R^{-1}_2 &\colon \TT \GG^1 \otimes \TT \GG^2 
\to \GG^\times \times (\gg^1 \otimes \gg^2)
=\GG^\times \times (\gg \otimes \gg)
\\
-R^{-1}_2 \otimes L^{-1}_1   &\colon \TT \GG ^2\otimes \TT \GG^1
\to \GG^\times \times (\gg^2 \otimes \gg^1)=\GG^\times \times (\gg \otimes \gg)
\end{align*}
and
\begin{equation}
R^{-1}_1 \wedge L^{-1}_2 \colon \TT^2 \GG^\times \longrightarrow 
\GG^\times \times (\gg \otimes \gg)
\end{equation}
for the  sum of
\begin{align*}
R^{-1}_1 \otimes L^{-1}_2 &\colon \TT \GG^1 \otimes \TT \GG^2 
\to \GG^\times \times (\gg^1 \otimes \gg^2)=\GG^\times \times (\gg \otimes \gg)
\\
-L^{-1}_2 \otimes R^{-1}_1   &\colon \TT \GG ^2\otimes \TT \GG^1
\to \GG^\times \times (\gg^2 \otimes \gg^1)=\GG^\times \times (\gg \otimes \gg) ,
\end{align*}
with the understanding that
$L^{-1}_1 \wedge R^{-1}_2$
and
$R^{-1}_1 \wedge L^{-1}_2$ are zero on the two other summands of
$\TT^2 \GG ^\times$.
To the reader, it might look more consistent to identify
$\GG^\times \times (\gg^2 \otimes \gg^1)$
with $
\GG^\times \times (\gg \otimes \gg)
$
through an additional interchange map
$\gg^2 \otimes \gg^1 \to \gg^1 \otimes \gg^2$
but, for our purposes,  this is not necessary 
since $\,\form\,$ is symmetric.

 The maps $\mult\colon \GG^1 \times \GG^2 \to \GG$ and
$\omult \colon \GG^1 \times \GG^2 \to \oGG$ 
induce the
 vector bundles
$\TT_\mult \GG \longrightarrow \GG^\times$
 and
$\TT_{\omult} \oGG \longrightarrow \GG^\times$
on $\GG^\times$
from the tangent bundles of  $\GG$ and $\oGG$, respectively,
and $(\mult,\omult) \colon \GG^\times \to \GG \times \oGG$ 
induces  the vector bundle 
$\TT _{(\mult,\omult)}(\GG\times \oGG) \to \GG^\times$
on $\GG^\times$ 
 from the tangent bundle of  $\GG \times \oGG$.
The following extends
\cite[Proposition 3.2]{MR1638045}
(for the case where $\GG$ is compact)
to the present general setting.

\begin{prop}
\label{momfusGs}
With respect to the action {\rm \eqref{act3}}
of the product group $ \GG\times \oGG$ on 
$\GG^1\times \GG^2$,
\begin{equation}
(\mult,\omult) \colon \GG^1 \times \GG^2 
\longrightarrow
\GG \times \oGG
\label{momfusGGoldd}
\end{equation}
is a $(\GG \times \oGG)$-momentum mapping relative to  
the $2$-form $\,\form\,  + \,\oform\,$ on $\gg \oplus \ogg$
for 
the  $2$-form  
\begin{equation}
\xymatrixcolsep{4.9pc}
\xymatrix{
 \sigma^\times_{\form}
 \colon\TT^2 \GG^\times
\ar[r(1.65)]^{\phantom{aaaaaaa} -\tfrac 12 
\left(L^{1,-1} \wedge R^{2,-1} + R^{1,-1} \wedge L^{2,-1}\right)}
&&\GG^\times \times (\gg \otimes \gg) 
\ar[r(0.6)]^{\phantom{aaaaaaa}\form}& 
\negthickspace
\negthickspace
\negthickspace
\negthickspace
\negthickspace
\negthickspace
\negthickspace
\negthickspace
\negthickspace
\negthickspace
\negthickspace
\negthickspace
\negthickspace
\negthickspace
\negthickspace
\negthickspace
\bK
}
\label{sigmatimes}
\end{equation}
on $\GG^\times$, 
the action of $\GG \times \oGG$ on itself being by conjugation,
and
the $2$-form  $\sigma^\times_{\form}$
is  $(\mult,\omult)$-quasi closed 
relative to  $\,\form\,  + \,\oform\,$.
When $\,\form\,$ is non-degenerate,
the $2$-form  $\sigma^\times_{\form}$ is
$(\mult,\omult)$-non-degenerate.
\end{prop}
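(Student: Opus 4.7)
The plan has three parts, corresponding to the three assertions: (a) the $(\GG\times\oGG)$-momentum property of $(\mult,\omult)$ for $\sigma^\times_\form$, (b) the $(\mult,\omult)$-quasi closedness of $\sigma^\times_\form$, and (c) its $(\mult,\omult)$-non-degeneracy when $\form$ is non-degenerate. At each stage the strategy is to exploit the bookkeeping of the two copies $\gg^1,\gg^2$ of $\gg$ fixed in Subsection \ref{prodtg} and to split the verification along the two constituents of the $(\GG\times\oGG)$-action, so as to reduce matters to identities on one factor at a time.

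For (a), I invoke the equivalence (1)$\Leftrightarrow$(2) of Proposition \ref{moms} to rewrite the momentum property in the single-variable form \eqref{qh2var}. By linearity and the decomposition $\gg\oplus\ogg$, the condition splits into an identity for $X\in\gg$, involving $\fund_{\GG^\times}(X)=L^2(X)-R^1(X)$ and the pullback along $\mult$, and a companion identity for $\bar X\in\ogg$, involving $\ofund_{\GG^\times}(\bar X)=L^1(\bar X)-R^2(\bar X)$ and the pullback along $\omult$. Substituting the explicit formula \eqref{sigmatimes} for $\sigma^\times_\form$ into the left-hand side and the standard pullback formulas for $\mult^*\omega,\mult^*\ovomega$ (and analogues for $\omult$) into the right-hand side, each identity reduces to a matching of four $\form$-pairings, which follows by $\Ad$-invariance and symmetry of $\form$.

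For (b), I compute $d\sigma^\times_\form$ by applying $d$ to the four summands of \eqref{sigmatimes} via the Maurer--Cartan equations for $L_i^{-1},R_i^{-1}$ ($i=1,2$) on each factor, yielding a $3$-form on $\GG^\times$ expressed in terms of $\form$-pairings of commutators. Separately, I expand $\mult^*\lambda+\omult^*\lambda_{\oGG}$ by means of the same pullback formulas used in (a) together with \eqref{lambda}. A term-by-term comparison, relying once again only on symmetry and $\Ad$-invariance of $\form$, shows the two $3$-forms coincide. The argument follows the template of \cite[Proposition 3.2]{MR1638045}; the positivity assumption made there is never used, so the proof carries over verbatim to our setting.

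For (c), I invoke the equivalent characterization (3) of Proposition \ref{nondegqh}: non-degeneracy amounts to the triviality of $\TT(\GG^\times)^{\sigma^\times_\form}\cap\ker d(\mult,\omult)$. The kernel $\ker d(\mult,\omult)_{(q_1,q_2)}$ is cut out by two linear equations on $(v_1,v_2)\in\TT_{q_1}\GG^1\oplus\TT_{q_2}\GG^2$ coming from the derivatives of $\mult$ and $\omult$, so it has dimension $\dim\GG$. A tangent vector simultaneously in this kernel and in the radical of $\sigma^\times_\form$ must satisfy further pairing relations forced by \eqref{sigmatimes} which, by non-degeneracy of $\form$, collapse to $(v_1,v_2)=0$. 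The main obstacle is the sign bookkeeping in (b): the four summands of the decomposition \eqref{decomp1} of $\TT^2\GG^\times$ give rise to eight summands on which the comparison must be carried out, and only a careful and patient matching will confirm the cancellations expected from the compact case.
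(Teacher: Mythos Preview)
Your proposal is correct and follows essentially the same route as the paper. For (a) the paper packages the splitting into the two factors as a separate diagrammatic lemma (Proposition~\ref{techn}), but the content of that lemma is exactly the explicit verification you outline via \eqref{qh2var}; for (b) the paper streamlines your term-by-term comparison by invoking the ready-made Maurer--Cartan identity $\tfrac12 d(\omega_1\form\ovomega_2)=\lambda_2-\mult^*\lambda+\lambda_1$ from \cite[(3.3)]{MR1362845} and its companion for $\omult$, which makes the cancellation of the $\lambda_i$ terms transparent; and for (c) the paper likewise defers to \cite[Proposition~3.2]{MR1638045} (or alternatively to Proposition~\ref{d.4} via Proposition~\ref{compar32}(1)).
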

The weakly  $(\GG \times \oGG)$-quasi Hamiltonian 
manifold $(\GG^ \times, \sigma^\times_{\form},(\mult,\omult))$ 
 relative to  
$\,\form\,  + \,\oform\,$ is the external
{\em weakly quasi Hamiltonian
 double\/} of $(\GG,\form)$,
when $\,\form\,$ is non-degenerate,
the external {\em quasi Hamiltonian
 double\/} of $(\GG,\form)$.
 
One can adapt the proof of \cite[Proposition 3.2]{MR1638045}
to the present situation.
We give a proof in the spirit of our approach.
To this end, we spell out the following:

\begin{prop}
\label{techn}
The diagram
\begin{equation}
\begin{gathered}
\scalefont{0.9}
{
\xymatrixcolsep{5.9pc}
\xymatrix{
\gg \otimes \TT \GG^\times  
\ar@/_6pc/[ddd]|-{\Id \otimes_{\GG^\times} (d \mult)_M}
\ar[d]|-{\Id \otimes_{\GG^\times} (d {\inv})} \ar[r]^{\fund_{\GG^\times}\otimes_{\GG^\times} \Id} & \TT^2 \GG^\times 
\ar[d]|-{d \inv}
\ar[r]^{-L^{1,-1} \wedge R^{2,-1} - R^{1,-1} \wedge L^{2,-1}\phantom{a}}
&
 \GG^\times\times(\gg \otimes \gg) 
 \ar[d]^{\inv \times \Id}
\\
\ogg \otimes \TT \GG^\times  
\ar[d]|-{\Id \otimes_{\GG^\times} (d {\omult})_{\GG^\times}} \ar[r]^{\ofund_{\GG^\times}\otimes_{\GG^\times} \Id} & \TT^2 \GG^\times 
\ar[r]^{-L^{1,-1} \wedge R^{2,-1} - R^{1,-1} \wedge L^{2,-1}\phantom{a}}
&
 \GG^\times\times(\gg \otimes \gg) 
 \ar[d]^{\form}
\\
 \ogg \otimes (\TT_{\omult}\GG) 
\ar[r]_{\phantom{aaa} \Id \otimes_{\GG^\times} \left(L_{\omult}^{-1} + R_{\omult}^{-1}\right)
\phantom{aaa}}
\ar[d]|-{\Id \otimes (d \inv)_M}
&(\ogg \otimes \ogg) \otimes \GG^\times \ar[r]_{\phantom{aaa}\oform} 
\ar[d]|-{\Id \otimes \inv}
&\bK \ar@{=}[d]
\\
 \gg \otimes (\TT_{\mult}\GG) 
\ar[r]_{\phantom{aaa} \Id \otimes_{\GG^\times} \left(L_{\mult}^{-1} + R_{\mult}^{-1}\right)
\phantom{aaa}}
&(\gg \otimes \gg) \otimes \GG^\times \ar[r]_{\phantom{aaa}\form} &\bK .
}
}
\end{gathered}
\label{momintomultvarr}
\end{equation}
is commutative.
\end{prop}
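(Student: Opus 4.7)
The plan is to decompose the outer diagram into three horizontal rectangles—rows $1$--$2$, rows $2$--$3$, and rows $3$--$4$—and verify each separately; the curved outer arrow in the leftmost column then follows by transitivity. Three structural inputs drive the argument: (i) the derivative of inversion satisfies $(d\inv)_q = -R_{q^{-1}}\circ L_q^{-1} = -L_{q^{-1}}\circ R_q^{-1}$ on $\TT_q\GG$, so $d\inv$ swaps left and right trivializations up to a sign; (ii) $\oform = \form$ once $\ogg$ is identified with $\gg$; (iii) $\omult = \mult\circ(\inv\times\inv)$.

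For the top rectangle, the left-hand square relating $\fund_{\GG^\times} = L^2 - R^1$ to $\ofund_{\GG^\times} = L^1 - R^2$ via $d\inv$ follows from (i) together with the explicit formulas for the two actions \eqref{act31} and \eqref{act32}. The right-hand square asserts that the $2$-tensor trivialization $-L^{1,-1}\wedge R^{2,-1} - R^{1,-1}\wedge L^{2,-1}$ is invariant under $d\inv$ on $\TT^2\GG^\times$ combined with $\inv\times\Id$ on the target; by (i) inversion swaps $L^{i,-1}$ with $-R^{i,-1}$ in each factor of $\TT\GG^1\otimes\TT\GG^2$, and the two sign flips multiply out to leave the expression unchanged.

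The middle rectangle is precisely the momentum property for $\omult$ with respect to the $\oGG$-action \eqref{act32} and the form $\oform$, in the coefficient convention of the top row. I would verify it pointwise at $(q_1,q_2)\in\GG^\times$ on $X\otimes\eta$ with $X\in\ogg$ and $\eta = \eta_1+\eta_2\in\TT_{q_1}\GG\oplus\TT_{q_2}\GG$. On the upper route, $\ofund_{\GG^\times}(X) = (L^1-R^2)(X)$ and pairing with $\eta$ through $-L^{1,-1}\wedge R^{2,-1} - R^{1,-1}\wedge L^{2,-1}$ followed by $\form$ yields a specific scalar. On the lower route, the Leibniz rule applied to $\omult = \mult\circ(\inv\times\inv)$ expresses $(d\omult)(\eta)$ as a sum of two terms in $\TT_{\omult(q_1,q_2)}\oGG$; applying $L_\omult^{-1}+R_\omult^{-1}$ and pairing with $X$ via $\oform$, the $\Ad$-invariance of $\oform$ reduces the result to the same scalar.

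The bottom rectangle is formal: using (ii) to identify $\oform$ with $\form$ and (i) to relate $L_\mult^{-1}, R_\mult^{-1}$ to $L_\omult^{-1}, R_\omult^{-1}$ via $(d\inv)_M$, together with the relation between $\mult$ and $\omult$ on the $\GG^\times$-base coming from (iii), the square closes up after careful sign matching. The main obstacle throughout is sign bookkeeping: each occurrence of $d\inv$ introduces a minus via $(d\inv)_e = -\Id$ and swaps $L^{i,-1}$ with $-R^{i,-1}$, and these signs must cancel consistently across all three rectangles so that the coefficients $-1$ in $-L^{1,-1}\wedge R^{2,-1} - R^{1,-1}\wedge L^{2,-1}$ are compatible with the signs implicit in the differences $L^i - R^j$ defining the fundamental vector fields.
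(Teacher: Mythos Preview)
Your decomposition into three horizontal strips is sound, and the verifications you sketch for the top and bottom rectangles via the $d\inv$-swap $L^{i,-1}\leftrightarrow -R^{i,-1}$ are exactly right. But your route is the mirror image of the paper's. The paper does \emph{not} verify the middle rectangle (the $\omult$-momentum property) by a direct pointwise computation; instead it directly computes the \emph{outermost} diagram---which, up to the harmless $\inv\times\Id$ in the right column, is the $\mult$-momentum property for the $\GG$-action \eqref{act31}---and then invokes the $\inv$-intertwining of the actions \eqref{act31} and \eqref{act32} to conclude that every subdiagram except the innermost commutes, whence the innermost must commute as well. In short: the paper computes $\mult$ and deduces $\omult$; you compute $\omult$ and deduce $\mult$.

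Both strategies work and rest on the same two ingredients (one explicit momentum computation plus the $\inv$-symmetry). The paper's organization is marginally more economical: once the outer rectangle is verified, a single appeal to the conjugacy of the two actions under $\inv\times\inv$ dispatches the top rectangle, the bottom rectangle, and the left-column compatibility (curved arrow versus composite) in one stroke, and the inner rectangle drops out. In your approach you must also check that the curved arrow $\Id\otimes(d\mult)_M$ agrees with the three-fold composite down the left column---this is not literally ``transitivity'' but an honest (easy) identity $\omult\circ(\inv\times\inv)=\mult$ together with the correct reading of the bundle map $(d\inv)_M$---so be explicit about that step rather than absorbing it into the word ``transitivity''.
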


\begin{proof}
First we show the outermost diagram is commutative:
Let  $(q_1, q_2) \in \GG^\times = \GG_1 \times \GG_2$
and
$(X_{q_1}, X_{q_2}) \in \TT\GG_1 \times \TT \GG_2 \cong \TT \GG^\times$,
and let $X \in \gg$.
Since
\begin{equation*}
d \mult(X_{q_1}, X_{q_2})= X_{q_1} q_2  + q_1X_{q_2} \in \TT_{q_1 q_2}\GG,
\end{equation*}
necessarily
\begin{align*}
\left(L_{\mult}^{-1}  \circ d \mult\right)
(X_{q_1}, X_{q_2})&= 
\left(q_1,q_2,q_2^{-1} q_1^{-1} X_{q_1} q_2  + q_2^{-1}X_{q_2}\right)
\in \GG^\times \times \gg
\\
\left( R_{\mult}^{-1} \circ d \mult\right)
(X_{q_1}, X_{q_2})&= 
\left(q_1,q_1,
X_{q_1} q_1^{-1} + q_1 X_{q_2}q_2^{-1}q_1^{-1}\right) \in \GG^\times\times\gg.
\end{align*}
In view of \eqref{mapsto1c},
\begin{align*}
\left( \fund_{\GG^\times}\otimes_{\GG^\times} \Id\right)
(X \otimes (X_{q_1}, X_{q_2}))
&=\left (-Xq_1, q_2 X\right) \otimes \left(X_{q_1}, X_{q_2}\right) 
\\
&=
\begin{cases}
- X q_1 \otimes X_{q_1} 
+  q_2 X \otimes X_{q_1}
\\
- X q_1 \otimes X_{q_2} 
+ q_2X \otimes X_{q_2}.
\end{cases}
\end{align*}
Hence
\begin{align*}
\left(
\left(L^{1,-1} \wedge R^{2,-1}\right)\circ
\left( \fund_{\GG^\times}\otimes_{\GG^\times} \Id\right)\right)
(X \otimes (X_{q_1}, X_{q_2}))
&=
\begin{cases}
\phantom{+}\left(L^{1,-1} \otimes R^{2,-1}\right) (-X q_1 \otimes X_{q_2})
\\
-
\left(R^{2,-1} \otimes L^{1,-1}\right)(q_2 X \otimes X_{q_1})
\end{cases}
\\
&= \begin{cases}
-(q_1^{-1}X{q_1}) \otimes (X_{q_2} q_2^{-1})
\\
-
(q_2 Xq_2^{-1}) \otimes (q_1^{-1}X_{q_1})
\end{cases}
\\
\left( \left(R^{1,-1} \wedge L^{2,-1}\right)\circ
\left( \fund_{\GG^\times}\otimes_{\GG^\times} \Id\right)\right)
(X \otimes (X_{q_1}, X_{q_2}))
& =\begin{cases}
\phantom{+}
\left(R^{1,-1} \otimes L^{2,-1}\right) (-X q_1 \otimes X_{q_2})
\\
-
\left( L^{2,-1} \otimes R^{1,-1}\right)(q_2 X \otimes X_{q_1})
\end{cases}
\\
&=\begin{cases}
- X \otimes  q_2^{-1}X_{q_2}
\\
-X \otimes  X_{q_1}q_1^{-1} .
\end{cases}
\end{align*}
Since $\,\form\,$ is $\Ad$-invariant,
\begin{align*}
\begin{cases}
\phantom{+}
(q_1^{-1}X{q_1}) \form (X_{q_2} q_2^{-1})
\\
+
(q_2 Xq_2^{-1}) \form (q_1^{-1}X_{q_1})
\\
+ X \form  q_2^{-1}X_{q_2}
\\
+X \form  X_{q_1}q_1^{-1}
\end{cases}
&=\begin{cases}
\phantom{+}
X \form (q_1 X_{q_2} q_2^{-1} q_1^{-1})
\\
+
 X \form (q_2^{-1}q_1^{-1}X_{q_1}q_2)
\\
+ X \form  q_2^{-1}X_{q_2}
\\
+X \form  X_{q_1}q_1^{-1}
\end{cases}
=
\begin{cases}
\phantom{+}
X \form q_2^{-1} q_1^{-1} X_{q_1} q_2
\\
+ X \form q_2^{-1}X_{q_2}
\\
+ X \form X_{q_1} q_1^{-1} 
\\
+X \form
q_1 X_{q_2}q_2^{-1}q_1^{-1} .
\end{cases}
\end{align*}
This shows  the outermost diagram  is commutative.

Relative to the actions \eqref{act31} and \eqref{act32}, the diagram
\begin{equation*}
\begin{gathered}
\xymatrix{
\oGG \times (\GG^1 \times \GG^2) 
\ar[d]_{\Id \times (\inv \times \inv)}
\ar[r]^{\eqref{act32}} & \GG^1 \times \GG^2
\ar[d]^{\inv \times \inv}
\\
\oGG \times (\GG^1 \times \GG^2) 
\ar[r]^{\eqref{act31}} & \GG^1 \times \GG^2
}
\end{gathered}
\end{equation*}
is commutative. 
This implies that every subdiagram of
\eqref{momintomultvarr} except the innermost rectangle
is commutative.
Consequently the
innermost rectangle
is commutative as well.
\end{proof}

\begin{proof}[{Proof of Proposition {\rm \ref{momfusGs}}}]
The reader will readily verify 
that the 
map  
$(\mult,\omult)$
is $(\GG \times \oGG)$-equivariant.
The outermost diagram
of \eqref{momintomultvarr} 
being commutative says that the map
$\mult \colon \GG^\times \to \GG$
is a $\GG$-momentum mapping for
$\sigma^\times_{\form}$ relative to $\,\form\,$
and the innermost diagram
of \eqref{momintomultvarr} 
being commutative says that
$\omult \colon \GG^\times \to \oGG$
is a $\oGG$-momentum mapping for
$\sigma^\times_{\form}$ relative to $\,\oform\,$.
Consequently
\eqref{momfusGGoldd} 
is a $(\GG \times \oGG)$-momentum mapping for $\sigma^\times_{\form}$
relative to $\form + \oform$.
By construction
\begin{align*}
 \sigma^\times_{\form} &= 
-\tfrac 12 (\omega_1 \form \ovomega_2 + \ovomega_1 \form \omega_2)
\end{align*}
and, by equivariant Maurer-Cartan calculus, cf. \cite[(3.3)]{MR1362845},
\begin{align}
\tfrac 12 d(\omega_1 \form \ovomega_2)&=
\lambda_2 -\mult^* \lambda + \lambda_1
\label{cn1}
\\
\tfrac 12 d (\ovomega_1 \form \omega_2)&=
-\olambda_2 -\omult^* \olambda - \olambda_1 .
\end{align}
The tilde-notation being merely a notational device to distinguish two copies
of $\GG$, plainly
$\lambda _1 = \olambda_1 \in \GG^1$ and
$\lambda _2 = \olambda_2 \in \GG^2$.
Hence
the $2$-form  $\sigma^\times_{\form}$
is  $(\mult,\omult)$-quasi closed 
relative to  $\,\form\,  + \,\oform\,$.

When $\,\form\,$ is non-degenerate, 
the argument in the proof of
\cite[Proposition 3.2]{MR1638045}
shows that the $2$-form $\sigma^\times_{\form}$
is $(\mult,\omult)$-non-degenerate.
In view of Proposition \ref{compar32} (1) below,
the non-degeneracy claim is also a consequence of Proposition \ref{d.4}
below.
\end{proof}

\begin{rema}
\label{sign2}
{\rm
Consider  the special case where $\GG$ is compact
and the $2$-form$\,\form\,$ on $\gg$ positive.
The definition $\omega_D= 
\tfrac 12(a^* \theta, b^* \overline \theta) + \tfrac 12(a^* \overline \theta, b^* \theta)$
just before 
\cite[Proposition 3.2]{MR1638045}
yields the negative of the present $\sigma^\times$.
This is consistent with identity \eqref{qh1}
occuring in \cite[Def. 2.2 (B1)]{MR1638045}
with a minus sign, cf. Remark \ref{sign1}.
In \cite[Definition 10.1]{MR1880957}, \eqref{qh1}
does not come with a minus sign, 
what corresponds
in  \cite[Example 10.5]{MR1880957} 
 to $\omega_D$ carries a minus sign
and hence coincides with the present $\sigma^\times$,
and there is no minus sign in 
\eqref{qh2var} precisely as in the present approach.
}
\end{rema}

\subsection{Fusion} 
\label{fusq}
We extend this operation in
\cite[Section 6]{MR1638045}
for $\GG$-compact and positive $2$-form on its Lie algebra
to our general setting.

Consider the product group $\GG^\times = \GG \times \GG$
and, as before, write the first copy of $\GG$ as $\GG^1$ and
the second copy as $\GG^2$.
Let $\,\form\,$ be an $\Ad$-invariant symmetric bilinear form
on the Lie algebra $\gg$ of $\GG$, and let
$\form^{\negthinspace\times} = \form^{\negthinspace 1} 
+ \form^{\negthinspace 2}$ denote the 
corresponding $\Ad$-invariant symmetric bilinear form
on the Lie algebra $\gg^\times = \gg^1 \oplus \gg^2$ of $\GG^\times$.
The proof of \cite[Theorem 6.1]{MR1638045}, with signs adjusted,
establishes the following.
\begin{prop}
\label{fusmoms}
Let $M$ be a $\GG^\times$-manifold and
$\sigma^\times $   a $\GG^\times$-invariant $2$-form
on 
$M$. Further,  let 
$(\mmu^1,\mmu^2)\colon M \to \GG^1 \times \GG^2 =\GG^\times$ be an admissible
$\GG^\times$-equivariant map, and let
\begin{equation}
\sigma_\fus = \sigma^\times - \tfrac 12 (\mmu^1,\mmu^2)^*(\omega_1 \form \overline \omega_2) .
\label{sigmafus}
\end{equation}
\begin{enumerate}
\item
When $(\mmu^1,\mmu^2)$ is a 
$\GG^\times$-momentum mapping for $\sigma^\times $ 
relative to $\form^\times$, 
with respect to  the diagonal $\GG$-action on $M$, the product 
 $\mmu^1\mmu^2\colon M \to \GG$ is a $\GG$-momentum mapping
for $\sigma_\fus$ relative to $\,\form\,$.
\item
When $\sigma^\times$ is $(\mmu^1,\mmu^2)$-quasi closed
relative to $\form^\times$,
with respect to
the
diagonal $\GG$-action on $M$, the $2$-form
$\sigma_\fus$
is $\mmu^1 \mmu^2$-quasi closed relative to $\,\form\,$.
\item When $\sigma^\times$ is $(\mmu^1,\mmu^2)$-non-degenerate,
the $2$-form $\sigma_\fus$ is $\mmu^1\mmu^2$-non-degenerate. \qed
\end{enumerate}
\end{prop}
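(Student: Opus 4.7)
The plan is to reduce everything to two computational identities on $\GG^\times$. The first is the $\GG^\times$-equivariant Maurer--Cartan identity already invoked in the proof of Proposition~\ref{momfusGs}, namely
\begin{equation*}
\tfrac 12 d(\omega_1 \form \ovomega_2) = \lambda_1 + \lambda_2 - \mult^*\lambda
\end{equation*}
on $\GG^\times$, where $\lambda_i$ denotes the Cartan $3$-form on the $i$-th factor. The second is the product rule for the multiplication map,
\begin{equation*}
(\mmu^1\mmu^2)^*\omega = \Ad_{\mmu^{2,-1}}\bigl((\mmu^1)^*\omega\bigr) + (\mmu^2)^*\omega,\qquad
(\mmu^1\mmu^2)^*\ovomega = (\mmu^1)^*\ovomega + \Ad_{\mmu^1}\bigl((\mmu^2)^*\ovomega\bigr),
\end{equation*}
which I would deduce from $d(\mmu^1\mmu^2) = d\mmu^1 \cdot \mmu^2 + \mmu^1 \cdot d\mmu^2$.

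For part~(2) the verification is then immediate. Assuming $d\sigma^\times = (\mmu^1,\mmu^2)^*(\lambda_1+\lambda_2)$, I would pull the first identity back along $(\mmu^1,\mmu^2)$ and read off
\begin{equation*}
d\sigma_\fus = (\mmu^1,\mmu^2)^*(\lambda_1 + \lambda_2) - (\mmu^1,\mmu^2)^*\bigl(\lambda_1+\lambda_2-\mult^*\lambda\bigr) = (\mmu^1\mmu^2)^*\lambda,
\end{equation*}
which is $\mmu^1\mmu^2$-quasi closedness relative to $\form$.

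For part~(1), I would verify the form \eqref{qh2var} of the momentum condition. The fundamental vector field on $M$ for the diagonal action of $X \in \gg$ is $X_M = X^1_M + X^2_M$. Applying the hypothesis on $(\mmu^1,\mmu^2)$ gives
\begin{equation*}
\sigma^\times(X_M,\,\cdot\,) = \tfrac 12 (\mmu^1)^*\bigl(X \form (\omega+\ovomega)\bigr) + \tfrac 12 (\mmu^2)^*\bigl(X\form(\omega+\ovomega)\bigr).
\end{equation*}
The correction term contributes $-\tfrac12\, \iota_{X_M}(\mmu^1,\mmu^2)^*(\omega_1\form\ovomega_2)$, and I would expand this using the product rule above together with Cartan's magic formula and the identities $\iota_{X^1_M}(\mmu^1)^*\omega = X$ (similarly for $\ovomega$, $R$ versus $L$), which encode equivariance of $\mmu^i$. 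The $\Ad$-invariance of $\form$ then regroups the four resulting contributions precisely into $\tfrac12(\mmu^1\mmu^2)^*(X\form(\omega+\ovomega))$, confirming \eqref{qh2var} for $(\sigma_\fus,\mmu^1\mmu^2)$. This algebraic bookkeeping is where I expect the main labor to lie; the combinatorics are entirely parallel to Proposition~\ref{techn} and Proposition~\ref{momfusGs}, so the same $\Ad$-invariance manipulations should carry through.

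For part~(3), the non-degeneracy claim, I would argue by comparing kernels. By Proposition~\ref{nondegqh} applied to the product, $(\mmu^1,\mmu^2)$-non-degeneracy of $\sigma^\times$ is equivalent to $\TT M^{\sigma^\times} \cap \ker d(\mmu^1,\mmu^2) = 0$. Since $\sigma_\fus$ differs from $\sigma^\times$ by the pullback of a $2$-form on $\GG^\times$, any vector in $\TT M^{\sigma_\fus}\cap\ker d(\mmu^1\mmu^2)$ contained moreover in $\ker d(\mmu^1,\mmu^2)$ automatically lies in $\TT M^{\sigma^\times}$, hence is zero. The remaining case requires showing that the correction from $(\mmu^1,\mmu^2)$ to the diagonal $\mmu^1\mmu^2$ is absorbed by the twist in $\sigma_\fus$; here I would invoke the characterization of non-degeneracy via the morphism \eqref{inj1} and the explicit form of the correction, mirroring the last paragraph of the proof of Proposition~\ref{momfusGs}. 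Alternatively, once Theorem~\ref{existence} is in place one may transport the statement through the momentum-dual quasi Poisson picture, where fusion is a manifestly non-degeneracy-preserving operation.
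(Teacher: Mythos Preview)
Your overall approach mirrors what the paper does: it defers to \cite[Theorem 6.1]{MR1638045} with signs adjusted, and your part~(2) computation is exactly the illustration the paper spells out after the statement. So for parts~(1) and~(2) you are on the right track, but one detail in part~(1) is misstated. The identity $\iota_{X^1_M}(\mmu^1)^*\omega = X$ is not correct: since $\mmu^1$ is equivariant for \emph{conjugation}, the fundamental vector field on $\GG^1$ is $X^L - X^R$, and contracting with the left Maurer--Cartan form yields $X - \Ad_{(\mmu^1)^{-1}}X$, not just $X$. The analogous contractions with $\ovomega$ and for $X^2_M$ pick up similar $\Ad$-twists. This does not wreck the argument, but the bookkeeping is more intricate than your sketch suggests; the $\Ad$-invariance of $\form$ is then genuinely needed to cancel the cross terms, not merely to regroup.

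Part~(3) has a real gap. You correctly observe that any $v \in \TT M^{\sigma_\fus} \cap \ker d(\mmu^1,\mmu^2)$ must lie in $\TT M^{\sigma^\times}$ and hence vanish. But the ``remaining case''---a vector $v$ with $d(\mmu^1\mmu^2)(v) = 0$ yet $(d\mmu^1(v), d\mmu^2(v)) \neq 0$---is precisely where the substance lies, and you have not indicated how to handle it. Invoking \eqref{inj1} and ``the explicit form of the correction'' is not a proof; the actual argument in \cite[Theorem 6.1]{MR1638045} proceeds by a direct computation showing that $\sigma_\fus^\flat(v) = 0$ and $d(\mmu^1\mmu^2)(v) = 0$ together force $v$ into the range of $\fund_M$ restricted to $\ker(\Id + \Ad_{(\mmu^1\mmu^2)^{-1}})$, then uses the $\GG^\times$-non-degeneracy to conclude. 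Your alternative via Theorem~\ref{existence} and the quasi Poisson picture is forward-referencing: Proposition~\ref{6.3}, which would link the two fusions, is itself stated without proof and logically sits downstream of the present proposition.
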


For illustration, 
suppose that $\sigma^\times$
is $(\mmu^1,\mmu^2)$-quasi closed. Then
\begin{align*}
d\sigma_\fus&= (\mmu^1,\mmu^2)^*(\lambda^1 + \lambda^2) - \tfrac 12 d(\mmu^1,\mmu^2)^*( \omega_1 \form \ovomega_2)
\\
&
= \mmu_1^* \lambda + \mmu_2^* \lambda - \tfrac 12(\mmu^1,\mmu^2)^*d( \omega_1 \form \ovomega_2)
\\
(\mmu^1\mmu^2)^* \lambda &= (\mmu^1,\mmu^2)^* \mult^* \lambda
= (\mmu^1,\mmu^2)^*(\lambda_1 + \lambda^2 - \tfrac 12 d ( \omega_1 \form \ovomega_2)) \ \text{by} \ \eqref{cn1}
\end{align*}
whence $\sigma_\fus$ is  $(\mmu^1\mmu^2)$-quasi closed.

\begin{rema}
{\rm
The kind of reasoning in the proof of Theorem \ref{fusmom} (1)
below yields a
 \lq\lq categorical\rq\rq\  proof
of  Proposition \ref{fusmoms} (1) as well.
}
\end{rema}

\subsection{Exponentiation}
\label{exponentiation}This operation is a crucial tool in the papers 
\cite{MR1460627}, \cite{MR1370113}, \cite{MR1670408}, \cite{MR1815112}, 
\cite {MR1277051}, \cite{MR1470732}.
Is also occurs in
 \cite[\S 3.3]{MR1638045},
\cite[\S 10 p.~23]{MR1880957}. 

Let $M$ be a $\GG$-manifold and $\mmu\colon M \to \GG$
a $\GG$-equivariant admissible map.
We refer to a point $q$ of $M$ such that $\mmu(q)$
lies in the center of $\GG$ as a $\mmu$-central point of $M$,
and we define a weakly $\GG$-quasi Hamiltonian structure
$(\sigma,\mmu)$ on $M$
to be {\em weakly non-degenerate\/}
when the $2$-form $\sigma_q$ on $\TT_q(M)$
is non-degenerate for every $q$-central point.
Since for an admissible $\GG$-equivariant map
$\mmu \colon M \to \GG$ the vector space 
$\ker(\Id + \Ad_{\mmu(q)}^{-1})$ is zero at every $\mmu$-central point 
$q$ of $M$,
a $\GG$-quasi Hamiltonian structure
 is necessarily weakly  non-degenerate.

Let $(M,\sigma,\mmu)$ be a weakly $\GG$-quasi Hamiltonian manifold.
Let $X$ be a point of the center of $\gg$ such that
$\exp(X)$ lies in the center of $\GG$ 
and such that
$\mmu^{-1}(\exp(X))$ is non-empty.
When $\GG$ is connected, for $X$ in the center
of $\gg$, the value
$\exp(X)$ necessarily lies in the center of $\GG$.
Further, when the center of $\GG$ is connected,
a point in the center of $\GG$ necessarily has
a pre-image under $\exp$.

Let $O\subseteq \gg$ be an open (in the classical topology) 
$\GG$-invariant neighborhood of $\gg$
in $X$ where the exponential mapping
from $\gg$ to $\GG$
is an analytic diffeomorphism onto its image.
Define the space
$\MH(M,\GG,\mmu)$
by requiring that
\begin{equation}
\begin{CD}
\MH(M,\GG,\mmu)
@>\mmu_O>> O
\\
@V{\eta}VV
@VV{\mathrm{exp}}V
\\
M
@>>\mmu> \GG
\end{CD}
\label{PB}
\end{equation}
be a pullback diagram,
cf.
\cite[(17) p.~744]{MR1370113} and
\cite[(5.2) p.~390]{MR1460627}; here we denote by $\eta$ and $\mmu_O$
the induced maps.
Depending on the situation,
the space
$\MH(M,\GG,\mmu)$
is a smooth or analytic $\GG$-manifold
and the induced map
$\eta$ from
$\MH(M,\GG,\mmu)$
to
$M$
is a 
$\GG$-equivariant
smooth or analytic injective codimension zero immersion whence
$\MH(M,\GG,\mmu)$ has the same dimension as $M$.

Maintaining notation in
\cite[Section 1]{MR1370113},
let $\rho=\exp^*(\lambda) \in 
\dR^2(\gg)$, let
$h$
be the (adjoint action invariant) integration operator
on $\dR^*(\gg)$
so that, in degrees $\geq 1$,
\begin{equation}
d h + hd = \Id,
\end{equation}
and let $\beta = h(\rho)$; then
$d \beta = \rho =\exp^*(\lambda) \in \dR^3(\gg)$.

The following theorem
reproduces a version of
\cite[Theorem 2 p.~748]{MR1370113}; it
summarizes a construction crucial in 
\cite{MR1460627}, \cite{MR1370113}, \cite{MR1670408}, \cite{MR1815112}, 
\cite {MR1277051}, \cite{MR1470732}.

\begin{thm}
\label{summarize}
The $2$-form
$\omega_{\sigma,\lambda} = \eta^*\sigma- \mmu_O^*\beta$
on 
$\MH(M,\GG,\mmu)$ is 
$\GG$-invariant and
closed, 
and
 the adjoint 
$\mmu^\sharp_{\psidot} \colon \gg \to \Form^0(\MH(M,\GG,\mmu))$
of the
composite
\begin{equation}
\mmu_{\psidot} \colon
\MH(M,\GG,\mmu)
\stackrel{\mmu_O} \longrightarrow O \subseteq \gg
\stackrel{\psidot}\longrightarrow  \gg^* 
\label{eq}
\end{equation}
is an  equivariantly closed extension 
of  $\omega_{\sigma,\lambda}$.
When the weakly $\GG$-quasi Hamiltonian structure $(\sigma,\mmu)$
on the $\GG$-manifold $M$ 
is weakly non-degenerate,
the $2$-form
$\omega_{\sigma,\lambda}$
is non-degenerate
at every $\mmu$-central point
of
$\MH(M,\GG,\mmu)$, 
and hence, up to sign, 
the
open $\GG$-subspace 
$\mathcal M(M,\GG,\mmu)$
of
$\MH(M,\GG,\mmu)$
where the
$2$-form 
$\omega_{\sigma,\lambda}$ is non-degenerate
together with the restrictions of 
$\omega_{\sigma,\lambda}$
and $\mmu_{\psidot}$ is an ordinary Hamiltonian $\GG$-manifold.
\end{thm}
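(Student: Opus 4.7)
The plan is to establish $\GG$-invariance and closedness of $\omega_{\sigma,\lambda}$ first, then verify the equivariant extension via Maurer--Cartan calculus on $\gg$, and finally analyze non-degeneracy at $\mmu$-central points.

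First, the pullback diagram \eqref{PB} yields $\mmu\circ\eta=\exp\circ\mmu_O$. Combining this with quasi-closedness $d\sigma=\mmu^*\lambda$ and the defining identity $d\beta=\rho=\exp^*\lambda$ gives
\begin{equation*}
d\omega_{\sigma,\lambda}=\eta^*\mmu^*\lambda-\mmu_O^*\exp^*\lambda=0.
\end{equation*}
The form $\sigma$ is $\GG$-invariant by hypothesis, $\beta=h\rho$ is $\Ad$-invariant since $h$ and $\rho$ both are, and $\eta,\mmu_O$ are $\GG$-equivariant; hence $\omega_{\sigma,\lambda}$ is $\GG$-invariant.

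Second, to produce the equivariantly closed extension it suffices, given closedness of $\omega_{\sigma,\lambda}$ and equivariance of $\mmu_{\psidot}$, to verify the ordinary momentum identity
\begin{equation*}
\iota_{X_{\MH(M,\GG,\mmu)}}\omega_{\sigma,\lambda}=d(X\form\mmu_O),\quad X\in\gg.
\end{equation*}
Equivariance of $\eta$ and $\mmu_O$ rewrites the left-hand side as $\eta^*(\iota_{X_M}\sigma)-\mmu_O^*(\iota_{X_\gg}\beta)$, where $X_\gg$ is the fundamental vector field of the adjoint $\GG$-action on $\gg$. Substituting $\iota_{X_M}\sigma=\tfrac12\mmu^*(X\form(\omega+\ovomega))$ from Proposition \ref{moms}(2) and using $\mmu\circ\eta=\exp\circ\mmu_O$, the claim reduces to the identity on $\gg$
\begin{equation*}
\iota_{X_\gg}\beta=\tfrac12 X\form\exp^*(\omega+\ovomega)-d(X\form\vartheta),
\end{equation*}
where $\vartheta$ denotes the canonical $\gg$-valued $1$-form (the identity) on $\gg$. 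By Cartan's formula, $\Ad$-invariance of $\beta$ gives $d\iota_{X_\gg}\beta=-\iota_{X_\gg}\rho$; the Maurer--Cartan equations for $\omega,\ovomega$ together with the invariance of $\form$ match this against the differential of the right-hand side. The integrated form follows because $h$ is invariant radial integration from $0\in\gg$, fixing the primitive and making both sides vanish at the origin.

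Third, at a $\mmu$-central point $q\in\MH(M,\GG,\mmu)$ the codimension-zero immersion $\eta$ induces an isomorphism $\TT_q\MH(M,\GG,\mmu)\cong\TT_{\eta(q)}M$, and weak non-degeneracy of $(\sigma,\mmu)$ makes $\eta^*\sigma|_q$ non-degenerate. The correction $-\mmu_O^*\beta|_q$ involves $\beta$ at the central value $X_0=\mmu_O(q)$; because $\ad_{X_0}=0$, the differential $d\exp$ along $tX_0$ reduces to the identity, and the radial formula $\beta_{X_0}(Y,Z)=\int_0^1\rho_{tX_0}(X_0,Y,Z)\,dt$ collapses to an $\Ad$-invariant alternating form that does not introduce a kernel in $\omega_{\sigma,\lambda}|_q$. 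Hence $\omega_{\sigma,\lambda}$ is non-degenerate at $q$, and by openness of non-degeneracy the subset $\mathcal M(M,\GG,\mmu)$ where it remains so is a $\GG$-invariant open neighbourhood of the $\mmu$-central locus. Combined with the equivariantly closed extension from the previous step, this realises $\mathcal M(M,\GG,\mmu)$ as an ordinary Hamiltonian $\GG$-manifold, up to the overall sign fixed by the convention in \eqref{qh1} (compare Remark \ref{sign1}).

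The principal obstacle is the identity on $\gg$ in the second step: one cannot merely differentiate and match, since primitives of exact forms differ by closed forms, so the explicit radial nature of $h$ must be used rather than an abstract inversion of $d$. The non-degeneracy verification at central points is equally delicate, requiring the explicit collapse of $\beta_{X_0}$ along centralizer directions rather than a soft dimensional argument.
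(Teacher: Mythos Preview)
The paper does not prove this theorem; it is stated as a reproduction of \cite[Theorem~2 p.~748]{MR1370113} and merely summarizes a construction from the earlier literature. So there is no in-paper proof to compare against, and I comment on your argument on its own merits.

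Your treatment of closedness and $\GG$-invariance is correct and standard. For the equivariant extension you reduce to the right identity on $\gg$, and you correctly flag that matching differentials alone is insufficient; the explicit radial homotopy must be invoked. That is fine as a sketch, but as written it remains a sketch.

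The genuine gap is in the non-degeneracy step. You assert that $\beta_{X_0}$ ``collapses to an $\Ad$-invariant alternating form that does not introduce a kernel'', but you give no reason why adding such a form to a non-degenerate one cannot create a kernel. In fact the correct statement is stronger and cleaner: $\beta_{X_0}=0$ whenever $X_0$ is central. From the radial formula $\beta_{X_0}(Y,Z)=\int_0^1 t^2\,\rho_{tX_0}(X_0,Y,Z)\,dt$ and the fact that, for $X_0$ central, $d\exp_{tX_0}$ intertwines with left translation so that $\rho_{tX_0}(X_0,Y,Z)$ is a constant multiple of the Cartan triple product $[X_0,Y]\form Z + [Y,Z]\form X_0 + [Z,X_0]\form Y$; the first and third terms vanish since $[X_0,\,\cdot\,]=0$, and the middle term vanishes by $\Ad$-invariance of $\,\form\,$, since $[Y,Z]\form X_0 = Z\form[X_0,Y]=0$. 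Hence $(\mmu_O^*\beta)_q=0$ at every $\mmu$-central point $q$, so $\omega_{\sigma,\lambda}|_q=\eta^*\sigma|_q$, which is non-degenerate by weak non-degeneracy of $(\sigma,\mmu)$. Without this vanishing, your argument does not close.
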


\subsection{Weakly quasi Hamiltonian reduction}
\label{redsp}
Let $\GG^1$ and $\GG^2$ be Lie groups,
let
$\,\form^{\negthinspace 1}\,$ be an $\Ad$-invariant symmetric bilinear form
on the Lie algebra of  $\GG^1$ and
$\,\form^{\negthinspace 2}\,$  an $\Ad$-invariant symmetric bilinear form
on the Lie algebra of  $\GG^2$,
and let 
$\,\form^{\negthinspace\times} =\form^{\negthinspace 1}+\form^{\negthinspace 2}$
denote the resulting
 $\Ad$-invariant symmetric bilinear form
on the Lie algebra of the product group $\GG^1 \times \GG^2$.
Consider a weakly $(\GG^1 \times \GG^2)$-quasi Hamiltonian structure
$(\sigma,(\mmu^1,\mmu^2))$ on a $(\GG^1 \times \GG^2)$-manifold $M$
relative to $\,\form^{\negthinspace \times}\,$.
Let $y$ be a point of  $\GG^1$ 
and let $Z_y$ denote the centralizer of $y$ in $\GG^1$.
The following extends
\cite[Theorem 5.1]{MR1638045} to the present general situation.

\begin{prop}
\label{wqhr}
Suppose the pre-image  $\mmu^{1,-1}(y) \subseteq M$
of the point $y$ of $\GG^1$
is a smooth, analytic, or affine  algebraic (as the case may be)
submanifold of $M$
and suppose the orbit space $\mmu^{1,-1}(y)/Z_y$
is, accordingly, a  smooth, analytic, or affine algebraic manifold.
Then the restriction to
$\mmu^{1,-1}(y)$
of the $2$-form $\sigma$ descends to a
$2$-form $\sigma_\red$ on
$\mmu^{1,-1}(y)/Z_y$,
and  $\mmu^2$ 
induces a map
$\mmu^2_\red \colon \mmu^{1,-1}(y)/Z_y \to \GG^2$
in such a way that
$( \sigma_\red, \mmu^2_\red)$
is a weakly $\GG$-quasi hamiltonian structure 
on  $\mmu^{1,-1}(y)/Z_y $ relative to $\,\form^{\negthinspace 2}\,$.
When  
$\sigma$ is $(\mmu^1,\mmu^2)$-quasi non-degenerate,
i.e., $(\sigma,(\mmu^1,\mmu^2))$ is a genuine
 $(\GG^1 \times \GG^2)$-quasi Hamiltonian structure on $M$,
the $2$-form $\sigma_\red$
is
$\mmu^2_\red$-quasi non-degenerate,
that is,
$(\sigma_\red,\mmu_\red^2)$ is a genuine
 $\GG^2$-quasi Hamiltonian structure
on $\mmu^{1,-1}(y)/Z_y $.
\end{prop}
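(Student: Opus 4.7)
\smallskip

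The plan is to carry out the reduction in three stages: descent, verification of the quasi Hamiltonian axioms for $(\sigma_\red,\mmu_\red^2)$, and the non-degeneracy statement. Throughout, write $\iota\colon \mmu^{1,-1}(y) \hookrightarrow M$ for the inclusion and $\pi\colon \mmu^{1,-1}(y) \to \mmu^{1,-1}(y)/Z_y$ for the orbit map.

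First I would establish that $\iota^*\sigma$ descends. Because $\mmu^1$ is $\GG^1$-equivariant and $y$ lies in its own $Z_y$-orbit (a single point), $Z_y$ preserves $\mmu^{1,-1}(y)$; $\GG^1$-invariance of $\sigma$ then gives $Z_y$-invariance of $\iota^*\sigma$. To show horizontality, pick $X\in\zz_y$; the fundamental vector field $X_M$ is tangent to $\mmu^{1,-1}(y)$ because $(d\mmu^1)(X_M)= L_{\mmu^1}X-R_{\mmu^1}X$ vanishes on that fibre. For any $Y\in T_q\mmu^{1,-1}(y)=\ker(d\mmu^1_q)$, the momentum identity \eqref{qh2var} applied to the $\GG^1$-factor gives $\sigma(X_M,Y)=\tfrac12\mmu^{1,*}\bigl(X\form^{\negthinspace 1}(\omega+\ovomega)\bigr)(Y)$, which vanishes since $d\mmu^1(Y)=0$. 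Hence $\iota^*\sigma$ descends to a well-defined $2$-form $\sigma_\red$ on $\mmu^{1,-1}(y)/Z_y$ characterized by $\pi^*\sigma_\red = \iota^*\sigma$. Since $\GG^1$ acts trivially on $\GG^2$, the map $\mmu^2$ is $\GG^1$-invariant, so $\iota^*\mmu^2$ is $Z_y$-invariant and descends to $\mmu_\red^2$. The commuting $\GG^2$-action on $M$ preserves the fibre $\mmu^{1,-1}(y)$ and commutes with $Z_y$, hence descends to a $\GG^2$-action on the quotient rendering $\mmu^2_\red$ $\GG^2$-equivariant and $\sigma_\red$ $\GG^2$-invariant.

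Next I would verify the two weak quasi Hamiltonian axioms for $(\sigma_\red,\mmu_\red^2)$ relative to $\form^{\negthinspace 2}$. Since $\mmu^1$ is constant equal to $y$ on $\mmu^{1,-1}(y)$, we have $\iota^*\mmu^{1,*}\lambda^1=0$, and the quasi-closedness of $\sigma$ upstairs yields
\begin{equation*}
\pi^*(d\sigma_\red)=d(\iota^*\sigma)=\iota^*(\mmu^1,\mmu^2)^*(\lambda^1+\lambda^2)=\iota^*\mmu^{2,*}\lambda^2=\pi^*\mmu^{2,*}_\red\lambda^2,
\end{equation*}
so $d\sigma_\red=\mmu^{2,*}_\red\lambda^2$ by injectivity of $\pi^*$. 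For the momentum property, pick $X\in\gg^2$ with fundamental field $X_{M_\red}$ on the quotient; its lift to $\mmu^{1,-1}(y)$ is the restriction of the $\GG^2$-fundamental field $X_M$, which is already tangent to $\mmu^{1,-1}(y)$ as $\mmu^2$ is $\GG^1$-invariant and hence $d\mmu^1(X_M)=0$. Applying \eqref{qh2var} for the $\GG^2$-factor to a pair of lifts gives, after pulling back along $\pi$,
\begin{equation*}
\pi^*\bigl(\sigma_\red(X_{M_\red},\,\cdot\,)\bigr)=\iota^*\sigma(X_M,\,\cdot\,)=\tfrac12\iota^*\mmu^{2,*}\bigl(X\form^{\negthinspace 2}(\omega+\ovomega)\bigr)=\pi^*\bigl(\tfrac12\mmu^{2,*}_\red(X\form^{\negthinspace 2}(\omega+\ovomega))\bigr),
\end{equation*}
which is the required identity for $\mmu^2_\red$.

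The main obstacle is the non-degeneracy assertion. I would argue via characterization (3) of Proposition \ref{nondegqh}: it suffices to show that at each $[q]\in\mmu^{1,-1}(y)/Z_y$ the intersection $T_{[q]}(M_\red)^{\sigma_\red}\cap\ker(d\mmu_\red^2)$ is trivial. Suppose $[v]\in T_{[q]}(M_\red)$ is such a vector and lift it to $v\in T_q\mmu^{1,-1}(y)\subseteq\ker(d\mmu^1)$; $[v]\in\ker(d\mmu^2_\red)$ means $d\mmu^2(v)\in\fund^2_M(\zz_y)$, which after subtracting an appropriate element of $\fund^2_M(\zz_y)$ (tangent to the $Z_y$-orbit, hence in $\ker d\mmu^2$ modulo $\fund^2_M(\zz_y)$ — adjust to arrange $d\mmu^2(v)=0$). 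Then $d(\mmu^1,\mmu^2)(v)=0$, while $\sigma_\red^\fflat[v]=0$ together with the defining relation $\pi^*\sigma_\red=\iota^*\sigma$ forces $\iota^*(\sigma^\fflat(v))=0$, i.e.\ $\sigma^\fflat(v)$ annihilates $T_q\mmu^{1,-1}(y)=\ker(d\mmu^1_q)$, hence factors through $d\mmu^1$. Combining, $v$ lies in the kernel of the map $(\sigma^\fflat,(d(\mmu^1,\mmu^2))_M)$ from \eqref{inj1} upstairs; since $(\sigma,(\mmu^1,\mmu^2))$ is non-degenerate for the product group, $v\in\fund_M(\gg^1\oplus\gg^2)$, and a careful bookkeeping using Proposition \ref{tech} shows the corresponding Lie algebra element lies in the $Z_y$-direction so that $[v]=0$. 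This is the crucial diagram chase, and is the only step where the full force of both momentum identities and the upstairs non-degeneracy is used.
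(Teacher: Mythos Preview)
The paper itself gives no detailed argument for this proposition: the entire proof is the sentence ``The proof of \cite[Theorem 5.1]{MR1638045} carries over.'' Your descent argument and your verification of the two weak quasi Hamiltonian axioms are essentially what that carried-over proof amounts to, and they are correct apart from a small slip (in the momentum verification you write ``$\mmu^2$ is $\GG^1$-invariant and hence $d\mmu^1(X_M)=0$'' when what you need is that $\mmu^1$ is $\GG^2$-invariant).

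Your non-degeneracy argument, however, contains a genuine error. You correctly obtain a lift $v\in\ker(d\mmu^1_q)\cap\ker(d\mmu^2_q)$ with $\sigma^\fflat(v)$ vanishing on $\ker(d\mmu^1_q)$, but the assertion that ``$v$ lies in the kernel of the map $(\sigma^\fflat,(d(\mmu^1,\mmu^2))_M)$ from \eqref{inj1}'' is false: $\sigma^\fflat(v)$ factoring through $d\mmu^1$ does not make it zero. (And even if the premise held, non-degeneracy upstairs says precisely that the kernel of \eqref{inj1} is trivial, so the conclusion would be $v=0$, not $v\in\fund_M(\gg^1\oplus\gg^2)$.) The missing step is to invoke the $\GG^1$-momentum identity once more: for every $X\in\gg^1$ one has $\sigma(\fund_M^1(X),v)=\tfrac12\,X\form^{\negthinspace 1}(L^{-1}_y+R^{-1}_y)(d\mmu^1(v))=0$, so $\sigma^\fflat(v)$ actually vanishes on $\ker(d\mmu^1_q)+\fund_M^1(\gg^1)$. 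One then needs this sum to be all of $\TT_qM$ (which is where the regularity implicit in the manifold hypotheses enters, exactly as in \cite[Theorem 5.1]{MR1638045}) to conclude $v\in\TT_qM^\sigma$. From there $v\in\TT_qM^\sigma\cap\ker(d(\mmu^1,\mmu^2)_q)=\{0\}$ by the upstairs non-degeneracy, and $[v]=0$ follows. Your parenthetical about ``$d\mmu^2(v)\in\fund^2_M(\zz_y)$'' is also confused (type mismatch: $d\mmu^2(v)$ lives in $\TT\GG^2$, not $\TT M$); in fact $d\mmu^2(v)=d\mmu^2_\red([v])=0$ directly, and no adjustment of the lift is needed.
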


\begin{proof}
The proof of \cite[Theorem 5.1]{MR1638045} 
carries over.
\end{proof}

Under the circumstances of Proposition \ref{wqhr},
we use the notation
$M_{y,\red}^1=\mmu^{1,-1}(y)/Z_y $ and,  the roles of
$\mmu^1$  and $\mmu^2$ being interchanged, we also write
$M_{y,\red}^2=\mmu^{2,-1}(y)/Z_y $ with respect to $y \in \GG^2$, and we refer to each of these spaces as the corresponding
{\em reduced space\/}.
We refer to the passage from
$(M,\sigma,(\mmu^1,\mmu^2))$ 
to
$(M_{y,\red}^1, \sigma_\red, \mmu^2_\red)$
(to 
$(M_{y,\red}^2, \sigma_\red, \mmu^1_\red)$)
as {\em (weakly) quasi Hamiltonian reduction\/}
relative to $y$ with respect to $\mmu^1$ (to $\mmu^2$).

\begin{rema}
{\rm 
Under the circumstances of Proposition \ref{wqhr},
we can also write the reduced space as the $\GG$-orbit space
$\mmu^{1,-1}(\CcC_y)/\GG $
of the pre-image $\mmu^{1,-1}(\CcC_y)$
in $M$ of the conjugacy class $\CcC_y$ in $\GG^1$
which the point $y$ of $\GG^1$ generates. 
}
\end{rema}

Consider the special case where
 $\GG^2$ is the trivial group, and write
$\GG = \GG^1$ and $\mmu = \mmu^1$. Then
the $2$-form $\sigma_\red$ is necessarily closed.
Suppose, furthermore, that $y$ is in the center of $\GG = \GG^1$
and suppose $\GG$ compact, so that we are working over the reals,
and that $\sigma$ is $\mmu$-quasi non-degenerate. 
Then, cf.
\cite[Theorem 5.1]{MR1638045},
when $y$ is a regular value of $\mmu$,
the ordinary orbit space
$\mmu^{-1}(y)/\GG$
acquires the structure of a symplectic orbifold.

Let, furthermore, $X \in \gg$ such that $\exp(X) = y$, and
consider the resulting 
ordinary Hamiltonian $\GG$-manifold
$\left(
\mathcal M(M,\GG,\mmu),\omega_{\sigma,\lambda},\mmu_{\psidot}
\right)$
in Theorem \ref{summarize}.
It is immediate that
the map $\eta \colon\mathcal M(M,\GG,\mmu) \to M$, cf. \eqref{PB},
then determines an identification of the reduced spaces
as
symplectic orbifolds.

\begin{examp}
\label{conjwqhr}
{\rm The following extends
\cite[Example 5.1]{MR1638045}: Relative to 
 an $\Ad$-invariant symmetric bilinear form $\,\form\,$ 
on the Lie algebra of the Lie group  $\GG$,
consider the external weakly quasi Hamiltonian
 double $(\GG^ \times, \sigma^\times_{\form},(\mult,\omult))$ 
 of $(\GG,\form)$ relative to  
$\,\form\,  + \,\oform\,$, cf. Proposition \ref{momfusGs}.
Let $q$ be a point of $\GG = \oGG$.
The regularity constraints automatically hold, and
weakly quasi Hamiltonian reduction relative to $q$
with respect to each of
$\mult$ and $\omult$ yields  
the conjugacy class in $\GG$ which $q^{-1}$ generates.
More precisely,
consider the conjugacy class $\CcC_{q^{-1}} \subseteq \GG$
of $q^{-1}$ in $\GG$,
use the notation $\widetilde {\CcC}_{q^{-1}} \subseteq \oGG$ for the
very same conjugacy class as well,
and write the
inclusions as $\iota \colon \CcC_{q^{-1}} \to \GG$ 
and $\widetilde \iota \colon \widetilde \CcC_{q^{-1}} \to \oGG$;
further, let
$(\GG^\times_{q,\red}, \sigma_\red, {\omult}_\red)$
denote  the reduced space with respect to $\mult\colon \GG^\times \to \GG$
and
$(\oGG^\times_{q,\red}, \sigma_\red, \mult_\red)$
that with respect to $\omult \colon \GG^\times \to \oGG$,
with a slight abuse of the notation 
$\sigma_\red$.
The maps $\mult \colon \GG^\times \to \GG$
and $\omult \colon \GG^\times \to \oGG$
induce identifications
\begin{align}
\mult^\sharp \colon&(\oGG^\times_{q,\red}, \sigma_\red, \mult_\red) \longrightarrow
(\CcC_{q^{-1}}, \tau_{\CcC_{q^{-1}}}, \iota)
\\
\omult^\sharp \colon&(\GG^\times_{q,\red}, \sigma_\red, \omult_\red) \longrightarrow
(\widetilde \CcC_{q^{-1}}, \tau_{\widetilde \CcC_{q^{-1}}}, \widetilde\iota)
\end{align}
of weakly quasi Hamiltonian spaces.
}
\end{examp}

\subsection{Comparison with the extended moduli space formalism}
\label{comparex}

I am not aware of an explicit comparison in the literature
of
the extended moduli space approach
\cite{MR1460627}, \cite{MR1370113}, \cite {MR1277051},  
\cite{MR1670408}, 
\cite{MR1815112}, 
\cite{MR1470732}
with the quasi Hamiltonian approach \cite{MR1638045}
to the construction of moduli spaces.
The present Subsection offers such a comparison.
I am indebted to a referee for having requested such a comparison.
In \cite{MR1638045}, there is only a comparison 
of the quasi Hamiltonian approach
with the gauge theory approach
(valid for compact structure group).
To carry out the comparison, we now show how
the quasi Hamiltonian formalism 
straightforwardly
results from the approach in
\cite{MR1460627}, \cite[Section 1]{MR1370113}, \cite {MR1277051}, 
\cite{MR1470732}.
The notation 
$((\,\cdot\, , \,\cdot\,),\theta,\chi,\ovomega,\xi,\omega)$ 
in \cite{MR1638045} corresponds to
$(\,\bullet\,,\omega,\lambda,\beta,X,\tau)$ in the present paper.

\subsubsection{Forms on a product of finitely many copies of $\GG$}
\label{forms}

Let $F$ denote a finitely generated free (discrete) group. Evaluation
\begin{align*}
E &\colon \Pii^2  \times \Hom(F,\GG)
\longrightarrow \GG^2,
\end{align*}
induces a linear map
$E^*\colon\Form^2(\GG^2)
\to C^2(\Pii) \otimes \Form^2 (\Hom(F,\GG))$,
and  pairing with $2$-chains in $C_2(F)$  yields a linear map
\begin{equation}
\hinn  \colon C_2(\Pii) \otimes C^2(\Pii) \otimes  \Form^2(\Hom(F,\GG))
\longrightarrow  \Form^2 (\Hom(F,\GG))
\end{equation}
and hence the pairing
\begin{align*}
 C_2(\Pii) \otimes \Form^2(\GG^2)
&\longrightarrow  \Form^2 (\Hom(F,\GG)),
\ (c,\alpha) \mapsto \langle c, E^* \alpha\rangle .
\end{align*}

\subsubsection{Quasi Hamiltonian structure preceeding its explicit recognition}
\label{recognition}

Return to the circumstances of Section \ref{reps}
with $n=0$ and retain the notation established there.
By \cite[Lemma 2 p.~746]{MR1370113},
there is a $2$-chain $c \in C_2(\Pii)$
having boundary
\begin{equation}
\partial c = [r] \in C_1(\Pii).
\label{bd1}
\end{equation}
See, e.g., \eqref{twoc} below.
Let
$\ovomega \in \Form^1(\GG, \gg)$
denote the  right invariant Maurer-Cartan form on $\GG$.
For a differential form $\alpha$ on $\GG$, 
for $j = 1,2$, we write
as $\alpha_j$ 
 the differential form on $\GG \times \GG$
that arises from the projection $\GG \times \GG \to \GG$
to the $j$'th component.
Let
$\omega_c = \langle c, \tfrac 12  E^* (\omega_1 \ovomega_2)\rangle$, 
by construction
a $2$-form on $\Hom(F,\GG)$, and hence,
under the identification
$\Hom(F,\GG) \to \GG^{2\ell}$ which the choice of generators
$x_1,y_1,\dots,x_\ell,y_\ell$ of $\Pii$ induces, a $2$-form
on $\GG^{2\ell}$.

The following summarizes
the reasoning in
\cite{MR1370113}, 
\cite {MR1277051}, \cite{MR1470732},
see
\cite[(18) p.~ 747]{MR1370113},
\cite[Theorem 2 p.~ 748]{MR1370113},
\cite[Lemma 1 p.~246]{MR1277051},
\cite[Theorem 3 p.~247]{MR1277051}
but, of course, the terminology 
\lq weakly quasi Hamiltonian\rq\ 
was not in use when these papers were written.

\begin{prop}
\label{prop1l}
The $2$-form $\omega_c$ on
$\GG^{2 \ell}$ and the word map
$r\colon \GG^{2 \ell} \to \GG$
constitute a weakly $\GG$-quasi Hamiltonian structure on
$\GG^{2 \ell}$ relative to $\,\form\,$. \qed
\end{prop}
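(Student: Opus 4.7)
The plan is to verify in turn each of the four ingredients of a weakly $\GG$-quasi Hamiltonian structure, namely $\GG$-equivariance of $r$, $\GG$-invariance of $\omega_c$, the quasi closedness identity $d\omega_c = r^*\lambda$, and the momentum property \eqref{qh2var} for $r$. The first two are formal: the word map $r$ is $\GG$-equivariant because conjugation on $\GG^{2\ell}$ corresponds to simultaneous conjugation of the generator values, under which any group word transforms by conjugation; and $\omega_c$, being built by evaluating $\tfrac 12 E^\ast(\omega_1\form\ovomega_2)$ against a fixed $2$-chain $c$, inherits $\GG$-invariance from the $\Ad$-invariance of $\,\form\,$ together with the conjugation invariance of $\omega$ and $\ovomega$.

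The heart of the argument is the quasi closedness identity $d\omega_c = r^\ast\lambda$. Here I would use the standard equivariant Maurer-Cartan calculus, in particular the identity
\begin{equation*}
\tfrac 12 d(\omega_1 \form \ovomega_2) = \lambda_1 + \lambda_2 - \mult^\ast\lambda
\end{equation*}
already recorded as \eqref{cn1} in the proof of Proposition \ref{momfusGs}. Evaluating this against the $2$-chain $c$ with $\partial c = [r] \in C_1(F)$ and using that $d$ commutes with the pairing $\langle c,\,\cdot\,\rangle$ while $\partial$ on the chain side converts interior contributions of $\mult^\ast\lambda$ into a telescoping sum along the edges of $c$, all bulk terms $\lambda_j$ cancel and what survives is exactly the pull-back $r^\ast\lambda$ of $\lambda$ under the boundary word $r$. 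Concretely, one proceeds by induction on the number of commutator factors in $r = \prod_{j=1}^{\ell}[x_j,y_j]$, each new commutator being glued in via a standard $2$-cell and contributing the appropriate Maurer-Cartan coboundary, with the telescoping producing $r^\ast\lambda$ in the end.

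For the momentum mapping property, I would pair the defining diagram \eqref{momintvar} against an arbitrary $X\in\gg$ and rewrite it in the equivalent form \eqref{qh2var}, namely $\omega_c(X_{\GG^{2\ell}},\,\cdot\,) = \tfrac 12 r^\ast(X\form(\omega+\ovomega))$. The fundamental vector field $X_{\GG^{2\ell}}$ for the conjugation action decomposes component-wise as $X^L - X^R$ on each factor $\GG$ (cf. the identity $\fund^\conj_\GG(X) = X^L - X^R$ in Subsection \ref{wqH}), so its contraction with $E^\ast(\omega_1\form\ovomega_2)$ on each edge of $c$ is computable in closed form. Summing these contractions against the $2$-chain $c$, the $\Ad$-invariance of $\,\form\,$ causes interior telescoping and leaves only the boundary contribution along $\partial c = [r]$, which is precisely $\tfrac 12 r^\ast(X\form\omega + X\form\ovomega)$.

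The main obstacle is the combinatorial bookkeeping of the $2$-chain $c$: one must choose $c$ explicitly enough (for instance as a standard fundamental polygon decomposition of the closed surface of genus $\ell$) that the cancellations from $\Ad$-invariance and from the Maurer-Cartan structure equations can be carried out transparently. Once a convenient $c$ is fixed, both the quasi closedness and the momentum identity reduce to the universal identity \eqref{cn1} applied along the edges, and no genuinely new computation is required beyond those already carried out in \cite{MR1370113, MR1277051, MR1470732}; the novelty is only in the repackaging as a weakly quasi Hamiltonian structure.
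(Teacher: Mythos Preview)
Your proposal is correct and matches the approach of the cited references \cite{MR1370113, MR1277051, MR1470732}, to which the paper simply defers (the proposition carries a \qed with no proof given here). The key mechanism you identify---pairing the Maurer--Cartan coboundary identity \eqref{cn1} against the $2$-chain $c$ and using $\partial c = [r]$ to telescope down to $r^\ast\lambda$, and likewise for the momentum contraction---is exactly the content of \cite[(18), Theorem~2]{MR1370113} and \cite[Lemma~1, Theorem~3]{MR1277051}; one small correction is that the identity $\fund^\conj_\GG(X) = X^L - X^R$ lives in Section~\ref{prelimi}, not Subsection~\ref{wqH}.
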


The following reproduces
\cite[Corollary 6.3 p.~ 393]{MR1460627},
\cite[Proposition 3.1]{MR1638045} for the case where $\GG$ is compact
and $\,\form\,$ positive;
see also \cite[Example 5.5]{MR2642360}.

\begin{prop}
\label{prop1c}
For a conjugacy class $\CcC$ in $\GG$,
 the $2$-form $\tau_\CcC$ on $\CcC$
which diagram \eqref{momintvar} with $(\CcC, \tau_\CcC)$ 
substituted for $(M,\sigma)$ characterizes
and the inclusion
$\iota \colon \CcC \subseteq \GG$ constitute a weakly $\GG$-quasi Hamiltonian
structure on $\CcC$ relative to $\,\form\,$. 
\end{prop}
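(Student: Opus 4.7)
The plan is to construct $\tau_\CcC$ pointwise by the formula that \eqref{momintvar}, with $(M,\sigma,\mmu) = (\CcC,\tau_\CcC,\iota)$, forces upon it, namely
\begin{equation*}
\tau_\CcC(\fund_\CcC(X)_q, V)
= \tfrac 12 X \form \bigl(L_q^{-1} V + R_q^{-1} V\bigr)
\qquad (X \in \gg,\ V \in \TT_q \CcC),
\end{equation*}
and then to verify three things in turn: (a) the prescription is well-defined and yields a $\GG$-invariant alternating smooth $2$-form on $\CcC$; (b) the inclusion $\iota$ is a $\GG$-momentum mapping for $\tau_\CcC$ relative to $\form$; (c) the quasi-closedness relation $d \tau_\CcC = \iota^* \lambda$ holds.

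For (a), since the fundamental vector field map $\fund_\CcC \colon \CcC \times \gg \to \TT \CcC$ is a smooth surjection, I need only check that the right-hand side above vanishes whenever $\fund_\CcC(X)_q = qX - Xq = 0$, i.e.\ when $X$ lies in the stabilizer Lie subalgebra $\zz_q = \{X \in \gg : \Ad_q X = X\}$. Writing a general $V \in \TT_q\CcC$ as $V = q \xi - \xi q$ with $\xi \in \gg$, a short computation gives $L_q^{-1} V + R_q^{-1} V = \Ad_q \xi - \Ad_{q^{-1}} \xi$, whence $\Ad$-invariance of $\form$ yields $X \form (\Ad_q \xi - \Ad_{q^{-1}} \xi) = (\Ad_{q^{-1}} X - \Ad_q X) \form \xi = 0$ as soon as $\Ad_q X = X$. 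The same manipulation produces the explicit symmetrized expression
\begin{equation*}
\tau_\CcC(\fund_\CcC(X)_q, \fund_\CcC(Y)_q) = \tfrac 12 X \form \bigl(\Ad_q Y - \Ad_{q^{-1}} Y\bigr),
\end{equation*}
from which the alternating property and $\GG$-invariance follow by one more application of $\Ad$-invariance. Property (b) is then tautological, since \eqref{momintvar} is precisely the commuting diagram that defines $\tau_\CcC$.

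The hard part will be (c). I plan to pair $d \tau_\CcC$ against a triple $(\fund_\CcC(X), \fund_\CcC(Y), \fund_\CcC(Z))$ via the Cartan formula, using the bracket identity $[\fund_\CcC(X),\fund_\CcC(Y)] = \fund_\CcC([X,Y])$, the derivative formula $\tfrac{d}{dt}|_{t=0} \Ad_{\exp(-tX) q \exp(tX)} = \Ad_q \ad_X - \ad_X \Ad_q$ for the conjugation flow, and repeated use of the invariance identity $Y \form [X,Z] = -[X,Y] \form Z$. Separately, $\iota^* \lambda = \tfrac{1}{12} \iota^* ([\omega,\omega] \form \omega)$ evaluated at $(qX-Xq, qY-Yq, qZ-Zq)$ expands into a sum of iterated brackets of $X,Y,Z$ twisted by $\Ad_q^{\pm 1}$; the same $\Ad$-invariance manipulations rearrange the Cartan expansion into exactly this shape. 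This computation has already been carried out, in essence, in \cite[Corollary 6.3]{MR1460627} and \cite[Proposition 3.1]{MR1638045} for compact $\GG$ with positive definite $\form$; the crucial observation is that it never invokes positivity or non-degeneracy of $\form$, only its symmetry and $\Ad$-invariance, so the argument transfers verbatim to the general setting of the present paper.
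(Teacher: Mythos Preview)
Your proposal is correct and follows the same approach as the paper's own proof, just with considerably more detail filled in: the paper simply asserts that the momentum property is immediate and that a calculation establishes $\iota$-quasi closedness, citing the same sources \cite[Corollary~6.3]{MR1460627} and \cite[Proposition~3.1]{MR1638045} you invoke. Your explicit formula $\tau_\CcC(\fund_\CcC(X)_q,\fund_\CcC(Y)_q)=\tfrac12\,X\form(\Ad_q Y-\Ad_{q^{-1}}Y)$ agrees with the one the paper later uses in the proof of Proposition~\ref{d.5}, and your observation that the cited computations nowhere use positivity or non-degeneracy of $\form$ is exactly the point.
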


\begin{proof}
It is immediate that the inclusion is a $\GG$-momentum mapping 
relative to $\,\form\,$.
A calculation shows that $\tau_\CcC$ is $\iota$-quasi closed.
\end{proof}

Now we 
return to the circumstances of Section \ref{reps}
with general $n\geq 0$ and retain the notation established there.
Choose a $2$-chain $c \in C_2(\Pii)$
having boundary
\begin{equation}
\partial c = [r] - [z_1] - \ldots - [z_n] \in C_1(\Pii),
\label{5.4}
\end{equation}
cf. \cite[(5.4)~p.~981]{MR1460627}.
As for the existence of $c$, see
the reasoning after
 \cite[(5.4)~ p.~ 981]{MR1460627}.

Recall the choice of $n$ conjugacy classes
$\{\CcC_1,  \ldots,  \CcC_n\}$ in $\GG$ made
in  Section \ref{reps}.
The choice of generators
$x_1,y_1,\dots,x_\ell,y_\ell, z_1,\ldots,z_n$ of $\Pii$ 
induces
an  identification
\begin{equation}
\Hom(F,\GG)_{\mathbf C} \longrightarrow \GG^{2\ell}\times \CcC_1 \times \ldots \times \CcC_n.
\end{equation}
Thus the restriction of the $2$-form 
$\langle c, E^* \Omega\rangle$
on $\Hom(\Pii,\GG)$
to 
$\Hom(F,\GG)_{\mathbf C}$ induces
a $2$-form
$\omega_c$
on $\GG^{2\ell}\times \CcC_1 \times \ldots \times \CcC_n $.
We denote 
the restriction to $\GG^{2\ell}\times \CcC_1 \times \ldots \times \CcC_n$
of the word map $r \colon \GG^{2\ell+n} \to \GG$
by $r \colon \GG^{2\ell}\times \CcC_1 \times \ldots \times \CcC_n  \to \GG$
as well and, for
$1 \leq j \leq n$, 
we interpret the projection
from $\GG^{2\ell}\times \CcC_1 \times \ldots \times \CcC_n$  to $\CcC_j$
as the word map $z_j \colon
\GG^{2\ell}\times \CcC_1 \times \ldots \times \CcC_n  \to \CcC_j$.
The following summarizes  
 \cite[(5.6) p.~391, (6.3.1) p.~393, Theorem 7.1 p.~396]{MR1460627}:

\begin{prop}
\label{prop12}
The $2$-form $\omega_c + z_1^* \tau _1 + \ldots +  z_n^* \tau_n$ 
on $\GG^{2\ell}\times \CcC_1 \times \ldots \times \CcC_n $
and the word map 
$
r \colon 
\GG^{2\ell}\times \CcC_1 \times \ldots \times \CcC_n 
\longrightarrow
\GG
$
constitute a weakly $\GG$-quasi Hamiltonian structure on
 $\GG^{2\ell}\times \CcC_1 \times \ldots \times \CcC_n $
relative to $\,\form\,$. \qed
\end{prop}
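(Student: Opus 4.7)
The plan is to mimic the proof of Proposition \ref{prop1l}, adjusting the boundary of the $2$-chain to account for the presence of the $n$ conjugacy classes, and then glue in the contributions of the $2$-forms $\tau_{\CcC_j}$ supplied by Proposition \ref{prop1c}. Concretely, I will verify the two defining conditions of a weakly $\GG$-quasi Hamiltonian structure, namely $r$-quasi closedness relative to $\,\form\,$, and the momentum property \eqref{momintvar} for $r$, separately, and show that the contribution from $\omega_c$ combines with the contributions from the $z_j^*\tau_{\CcC_j}$ exactly as required.

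For quasi closedness, the key identity is $\partial c = [r] - [z_1] - \ldots - [z_n]$ in $C_1(\Pii)$. Applying equivariant Maurer--Cartan calculus in the spirit of \eqref{cn1}, pairing against the $2$-chain $c$ turns the evaluation of $\tfrac12 E^*(\omega_1 \ovomega_2)$ into a $2$-form whose exterior derivative computes, on the boundary $\partial c$, the difference of the Cartan $3$-forms pulled back along the corresponding word maps. Carried out on $\GG^{2\ell} \times \CcC_1 \times \ldots \times \CcC_n$ this gives
\[
d\omega_c \;=\; r^*\lambda - z_1^*\lambda - \ldots - z_n^*\lambda .
\]
By Proposition \ref{prop1c}, each $\tau_{\CcC_j}$ is $\iota_j$-quasi closed relative to $\,\form\,$, so $d\tau_{\CcC_j} = \iota_j^*\lambda$, whence $d(z_j^*\tau_{\CcC_j}) = z_j^*\lambda$. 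Summing, the extra terms cancel against the boundary contributions in $d\omega_c$, and $d(\omega_c + \sum_j z_j^*\tau_{\CcC_j}) = r^*\lambda$, as required.

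For the momentum property I verify that the diagram \eqref{momintvar} commutes with $M = \GG^{2\ell} \times \CcC_1 \times \ldots \times \CcC_n$, $\Phi = r$, $\sigma = \omega_c + \sum_j z_j^*\tau_{\CcC_j}$. The argument has two sources. First, the $\GG$-action on $\Hom(F,\GG)_{\mathbf C}$ is by simultaneous conjugation, and the fundamental vector field associated with $X \in \gg$ decomposes along the product as the sum of the fundamental vector fields on the individual factors. Second, pairing $c$ against $\tfrac12 E^*(\omega_1 \ovomega_2)$ and inserting a fundamental vector field $X_M$ yields, via the cap product computation encoded in the boundary relation \eqref{5.4}, a term along the boundary $[r] - \sum_j [z_j]$. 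The $[r]$ contribution is precisely $\tfrac12 r^*(X \form (\omega + \ovomega))$, i.e., \eqref{qh2var} applied to $r$. The $[z_j]$ contributions are cancelled by the momentum terms $\iota_{\CcC_j}$ arising from Proposition \ref{prop1c} applied to each factor $\CcC_j$, since those give $\tfrac12 z_j^*(X \form (\omega+\ovomega))$ with the opposite sign. What remains is exactly $\tfrac12 r^*(X\form(\omega + \ovomega))$, so $r$ is a $\GG$-momentum mapping for $\sigma$ relative to $\,\form\,$ by Proposition \ref{moms}(2).

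The main obstacle is the careful bookkeeping in this second step: one must track how the infinitesimal conjugation action interacts with the evaluation map $E$ and with the chain pairing $\hinn$ of Subsection \ref{forms}, so that the terms supplied by the interior endpoints of $c$ match precisely, with correct signs, the momentum contributions of each $\tau_{\CcC_j}$. This is exactly the kind of Maurer--Cartan computation carried out in \cite{MR1460627, MR1370113, MR1277051, MR1470732}; once organized along $\partial c = [r] - \sum [z_j]$, it reduces to Propositions \ref{prop1l} and \ref{prop1c} applied to the constituents.
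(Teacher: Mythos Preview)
Your plan is correct and follows the same route the paper implicitly relies on: Proposition~\ref{prop12} carries a \qed\ because it merely summarizes \cite[(5.6), (6.3.1), Theorem~7.1]{MR1460627}, and your reconstruction---deriving $d\omega_c = r^*\lambda - \sum_j z_j^*\lambda$ from the boundary relation $\partial c = [r]-\sum_j[z_j]$ via Maurer--Cartan calculus, then cancelling against the $d(z_j^*\tau_{\CcC_j})$ from Proposition~\ref{prop1c}, and handling the momentum identity analogously---is exactly the argument those references carry out.
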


\subsubsection{Extended moduli space and twisted 
representation spaces}
\label{extendedm}
Return to the situation of Subsection \ref{recognition}
for the special case $n=0$.
With regard to the presentation \eqref{standpre2}
of the fundamental group $\pi$
of an orientable  closed surface, via the Schur-Hopf formula,
the relator $r$ determines
a homology class
$[r]$ in the infinite cyclic group $\Ho_2(\pi,\ZZ) \cong \Ho_2(\Sigma)$,
by construction a generator.
Consider 
an $r$-central point 
$\varphi$ of $\GG^{2\ell}\cong \Hom(\Pii,\GG)$.
The adjoint action of $\GG$ then induces a
$\pi$-module structure on $\gg$, and we write
the resulting $\pi$-module as $\gg_\varphi$.
The bilinear form
$\,\form\,$ on $\gg$ and the homology class $[r]$
determine the alternating bilinear form
\begin{equation}
\omega_{[r],\form,\varphi}\colon
\Ho^1(\pi,\gg_\varphi)
\otimes
\Ho^1(\pi,\gg_\varphi)
\stackrel{\cup}\longrightarrow
\Ho^2(\pi,\bK)
\stackrel{\cap [r]}
\longrightarrow
\bK
\end{equation}
on 
$\Ho^1(\pi,\gg_\varphi)$.
Diagram 
 \cite[(4.2) p.~749]{MR1370113}
identifies the 
cochain complex
$(\mathbf C_\varphi,\delta)$
that underlies the resulting momentum complex
$(\mathbf C_\varphi,\delta, \omega_{c,\varphi})$
with the familiar small cochain complex
computing the group cohomology $\Ho^*(\pi,\gg_\varphi)$
of $\pi$ with coefficients in $\gg_\varphi$.
By \cite[Theorem 4 p.~750]{MR1370113},
under this identification, the 2-form $[\omega_{c,\varphi}]$
on $\Ho^1(\mathbf C_\varphi,\delta)$,
cf. Proposition {classical} (4 (c)),
corresponds to
$\omega_{[r],\form,\varphi}$.
When $\,\form\,$ is non-degenerate,
the alternating bilinear form
$\omega_{[r],\form,\varphi}$ 
on 
$\Ho^1(\pi,\gg_\varphi)$ is non-degenerate by Poincar\'e duality
(in the cohomology of $\pi$).

Let $X$ be a point of the center
of $\gg$ such that $\exp(X)$ lies in the center of $\GG$
and such that $r^{-1}(\exp(X))$ is non-empty.
By Proposition \ref{summarize},
applying exponentiation to the weakly $\GG$-quasi Hamiltonian 
structure $(\omega_c,r)$
on the
$\GG$-manifold
$\GG^{2\ell}$
in Proposition \ref{prop1l}
yields the $\GG$-manifold $\MH(\GG^{2\ell},\GG,r)$
together with the $\GG$-invariant
$2$-form 
$\omega_{c,\lambda}= \eta^* \omega_c - r_O^*\beta$
on 
$\MH(\GG^{2\ell},\GG,r)$
and its equivariantly closed extension
$r_{{\psidot}}\colon \MH(\GG^{2\ell},\GG,r) \to \gg^*$.

Suppose $\,\form\,$ non-degenerate.
Then $\omega_{c}$ is non-degenerate
at every $r$-central point $\varphi$
of $\Hom(F,\GG) \cong\GG^{2\ell}$
and $\omega_{c,\lambda}$ is non-degenerate
at every $r$-central point $\widehat \varphi$
of $\MH(\GG^{2\ell},\GG,r)$.
The open subspace 
$\mathcal M(\GG^{2\ell},\GG,r)$
of
$\MH(\GG^{2\ell},\GG,r)$
where the
$2$-form 
$\omega_{c,\lambda}$ is non-degenerate, together with the restrictions
of
$\omega_{c,\lambda}$
and
$\mmu_{c,\lambda}$ to $\mathcal M(\GG^{2\ell},\GG,r)$,
cf. Theorem \ref{summarize},
is the extended moduli space
in \cite[Section 5 p.~752/53]{MR1370113},
by construction an ordinary Hamiltonian $\GG$-manifold, written there
as
\begin{equation}
(\mathcal M(\mathcal P,\GG),\omega_{c,\mathcal P},\mu).
\label{exten2}
\end{equation}
In particular, 
$\omega_{c,\mathcal P}$ is an ordinary symplectic structure.

In the same vein,
with regard to the presentation \eqref{standpre2}
of the fundamental group $\pi$
of a compact surface with $n$  boundary circles
and to the choice of $n$ conjugacy classes
$\{\CcC_1,  \ldots,  \CcC_n\}$ in $\GG$,
cf.  Section \ref{reps},
applying the same kind of reasoning to the non-degenerate weakly $\GG$-quasi
Hamiltonian structure $(\omega_c + z_1^* \tau_1 + \ldots z_n^*\tau_n,r)$
on the $\GG$-manifold
$\GG^{2\ell} \times \CcC_1 \times \ldots \times \CcC_n$
in Proposition \ref{prop12},
we arrive at
the ordinary  Hamiltonian $\GG$-manifold
\begin{equation}
\left (\mathcal M(\GG^{2\ell}\times \CcC_1 \times \ldots \times \CcC_n ,\GG,r),
\eta^*(\omega_c + z_1^* \tau_1 + \ldots z_n^*) - r_O^*\beta, r_{\psidot}\right).
\end{equation}
This is the extended moduli space 
in \cite[Theorem 8.12 p.~402]{MR1460627}, written there as
\begin{equation}
(\mathcal M(\mathcal P,\GG)_{\mathbf C},\omega_{c,\mathcal P,\mathbf C},\mu).
\label{exten3}
\end{equation}
In particular, 
$\omega_{c,\mathcal P,\mathbf C}$ is an ordinary symplectic structure.

\subsubsection{Comparison in the torus case via the
internally fused double}

Apply fusion to
the external weakly quasi Hamiltonian double
$(\GG^\times,\sigma^\times_{\form}, (\mult,\omult))$
of $(\GG,\form)$, cf.  Subsection \ref{predouble}.
This  yields, with respect to diagonalwise conjugation,
the weakly $\GG$-quasi Hamiltonian structure 
\begin{equation}
(\sigma_1,\mmu_1)
 = \left( \sigma^\times_{\form}  -\tfrac 12 (\mult,\omult)^*(\omega_1 \form \ovomega_2),
\mult\cdot\omult \colon \GG \times \GG \to \GG
\right)
\end{equation}
on $\GG \times \GG$
relative to $\,\form\,$.
The pieces of structure $\sigma_1$ and $\mmu_1$ yield 
the {\em internally fused double\/}
of $\GG$
in the realm of weakly quasi Hamiltonian spaces,
see {\rm (\cite[Example 6.1]{MR1638045})}
for the case where $\GG$ is compact and $\,\form\,$ positive.

Consider the standard presentation
$\mathcal P = \langle x,y; r\rangle$
($r = [x,y]$)
of the
fundamental group $\pi$ of a (real) torus.
The word map $r\colon \GG \times \GG \to \GG$
which $r$ induces coincides with $\mmu_1$ and
\begin{align*}
\omega_{[x|y]}&=\tfrac 12 \omega_1 \form \ovomega_2,
\\
 \omega_{[x^{-1}|y^{-1}]} &=\tfrac 12 \ovomega_1 \form \omega_2,
\\
\omega_{[xy|x^{-1}y^{-1}]}&=
\tfrac 12(\mult,\omult)^*( \omega_1 \form \ovomega_2),
\\
\sigma^\times_{\form}&= 
-\tfrac 12 (\omega_1 \form \ovomega_2 + \ovomega_1 \form \omega_2),
\\
\sigma_1 &=
\sigma^\times_{\form} -\tfrac 12(\mult,\omult)^*( \omega_1 \form \ovomega_2= 
-\omega_{[x|y]} -\omega_{[x^{-1}|y^{-1}]} -\omega_{[xy|x^{-1}y^{-1}]} .
\end{align*}
The $2$-chain 
\begin{equation}
c= -[x|y]-[x^{-1}|y^{-1}] -[xy|x^{-1}y^{-1}] + [x|x^{-1}] + [y|y^{-1}]
\in C_2(F)
\label{twoc}
\end{equation}
has boundary
$\partial(c) = [xyx^{-1} y^{-1}]$,
i.e., satisfies \eqref{bd1} in the case at hand,
and $\omega_{[x|x^{-1}]} = 0 = \omega_{[y|y^{-1}]}$.
Consequently the $2$-forms
$
\omega_c
$ 
and
$\sigma_1$ on $\GG \times \GG$
coincide.
This choice of $c$ renders the $2$-chain $c\in C_2(\Pii)$ in
\cite[Lemma 2 p.~746]{MR1370113}
explicit, cf. Subsection \ref{recognition}.
Thus the 
internally fused double structure, i.e., 
weakly $\GG$-quasi Hamiltonian structure
$(\sigma_1,\mmu_1)$ on $\GG \times \GG$,
coincides with the
weakly $\GG$-quasi Hamiltonian structure
$(\omega_c,r)$ on $\GG \times \GG$
given in Proposition \ref{prop1l}, for the special case $\ell = 1$.
Hence:

\begin{concl}
\label{concl1}
Suppose
the $2$-form  $\form\,$ on $\gg$ is non-degenerate.
Then applying
 exponentiation, cf. Subsection {\rm \ref{exponentiation}}, 
 to 
 the weakly non-de\-ge\-nerate
weakly $\GG$-quasi Hamiltonian
$\GG$-manifold
$\left(\GG \times \GG,\sigma_1,\mmu_1\right)$
yields
the extended moduli space 
{\rm \eqref{exten2}}
in
\cite[Section 5 p.~752]{MR1370113} 
for the genus $1$ case
relative to a suitably chosen $2$-chain $c \in C_2(F)$.
\end{concl}

\subsubsection{Comparison in the general case}
Consider the presentation
\eqref{standpre2}
of the fundamental group $\pi$
of an orientable  closed surface of genus $\ell \geq 1$ so that,
in particular, $n=0$.
Fusing $\ell$ copies of $(\GG \times \GG, \sigma_1,\mmu_1)$
yields the weakly $\GG$-quasi Hamiltonian structure
$(\sigma_\ell,\mmu_\ell)$ on $\GG^{2 \ell}$, cf.
\cite[Section 9.3]{MR1638045}. 
As in the previous subsection,
for a suitable choice of
the $2$-chain $c\in C_2(\Pii)$ 
with 
$\partial(c)=[r]$,
the weakly $\GG$-quasi Hamiltonian structure
$(\sigma_\ell,\mmu_\ell)$ on $\GG^{2 \ell}$
coincides with the
the weakly $\GG$-quasi Hamiltonian structure
$(\omega_c,r)$
given in Proposition \ref{prop1l},
both weakly non-degenerate 
(even non-degenerate but this is not important here)
when so is 
the $2$-form  $\,\form\,$ on $\gg$.
Hence:

\begin{concl}
\label{concl2}
Suppose
the $2$-form  $\,\form\,$ on $\gg$ non-degenerate.
Then applying
 exponentiation in the sense of Subsection {\rm \ref{exponentiation}} 
 to 
 the weakly non-de\-ge\-nerate
weakly $\GG$-quasi Hamiltonian
$\GG$-manifold
$\left(\GG^{2\ell},\sigma_\ell,\mmu_\ell\right)$
yields
the extended moduli space 
{\rm \eqref{exten2}}
in
\cite[Section 5 p.~752]{MR1370113} 
for the genus $\ell$ case
relative to a suitably chosen $2$-chain $c \in C_2(F)$.
\end{concl}

In the same vein,
under the circumstances of Proposition \ref{prop12},
with regard to the presentation
\eqref{standpre2} and the conjugacy classes $\CcC_1$, ... , $\CcC_n$ in $\GG$,
including in the fusion process
the conjugacy classes as well,
for a suitable choice of
the $2$-chain $c\in C_2(\Pii)$ 
with 
$\partial(c)=[r]$,
the resulting weakly $\GG$-quasi Hamiltonian structure
$(\sigma_{\ell,n},\mmu_{\ell,n})$ on $\GG^{2 \ell} \times \CcC_1 \times \ldots \times \CcC_n$
coincides with
the weakly $\GG$-quasi Hamiltonian structure
$(\omega_c,r)$
given in Proposition \ref{prop12},
both non-degenerate when so is 
the $2$-form  $\,\form\,$ on $\gg$.
Hence:

\begin{concl}
\label{concl3}
Suppose
the $2$-form  $\,\form\,$ on $\gg$ is non-degenerate.
Then applying
 exponentiation in the sense of Subsection {\rm \ref{exponentiation}} 
 to 
 the weakly non-de\-ge\-nerate
weakly $\GG$-quasi Hamiltonian
$\GG$-manifold
$\left(\GG^{2\ell}\times \CcC_1 \times \ldots \times \CcC_n,\sigma_{\ell,n},\mmu_{\ell,n}\right)$
yields
the extended moduli space 
{\rm \eqref{exten3}}
in
 \cite[Theorem 8.12 p.~402]{MR1460627}
for the genus $\ell$ case with $n$ boundary circles
relative to a suitably chosen $2$-chain $c \in C_2(F)$.
\end{concl}

\begin{rema}
\label{geneq}
{\rm
Under the circumstances of
\eqref{concl1}, \eqref{concl2},
\eqref{concl3},
when we carry out 
the construction  
in 
Proposition {\rm \ref{prop1l}} or
Proposition {\rm \ref{prop12}}
of
the requisite 
weakly $\GG$-quasi Hamiltonian structure
with a general $2$-chain $\widetilde c \in C_2(F)$
subject to \eqref{bd1},
viewed as a $2$-chain 
in 
$C_2(\pi)$,
this $2$-chain is homologous to
a $2$-chain of the kind $c \in C_2(F)$
\eqref{concl1}, \eqref{concl2},
\eqref{concl3}, 
viewed as a $2$-chain 
in 
$C_2(\pi)$.
By \cite[(16) p.~745]{MR1370113},
the restrictions 
of the $2$-forms
$\omega_{c,\mathcal P}$
and
$\omega_{\widetilde c,\mathcal P}$
or, as the case may be,
$\omega_{c,\mathcal P,\mathbf C}$ and
$\omega_{\widetilde c,\mathcal P,\mathbf C}$,
to the preimage
$\mmu^{-1}(X)$ for suitable $X \in \gg$,
coincide. Hence
the structures on the reduced level
coincide.
In this sense,
the weakly quasi Hamiltonian approach is equivalent
to the extended moduli space approach, 
whatever choice of the $2$-chain $c\in C_2(F)$
subject to \eqref{bd1} or \eqref{5.4}.

}
\end{rema}

\begin{rema}
{\rm
In the quasi Hamiltonian picture,
the relationship with
the cohomology of the fundamental group
of the underlying surface
with or without peripheral structure, as the case may be,
cf. Subsection \ref{extendedm}
for the case where the underlying surface is closed
and  \cite[Section 3 p.~385 ff]{MR1460627} for the general case,
 is only visible via an observation of the kind spelled out in
Theorem \ref{visible} below.
}
\end{rema}

\begin{rema}
\label{qhreduction}
Symplectically
 reducing
an extended moduli space of the kind {\rm \eqref{exten2}}
or {\eqref{exten3}}, 
the quotient space being suitably defined when $\GG$ is not compact,
e.g., as a categorical quotient,
yields 
a stratified symplectic structure in the sense of
\cite{MR1127479} (a Poisson structure which on each stratum restricts to
a symplectic Poisson structure)
 on a
twisted representation space
of the kind $\mathrm{Rep}_X(\Gamma,\GG)$
\cite[Section 6 p.~754]{MR1370113}
and $\mathrm{Rep}(\pi,\GG)_{\mathbf C}$
 \cite[Theorem 9.1 p.~403]{MR1460627} 
respectively.
This includes not necessarily non-singular 
moduli spaces of semistable vector bundles
over a complex curve,  with or without parabolic structure.
At present, there is no machinery available
that explains such a stratified symplectic structure
in general directly in terms of quasi Hamiltonian reduction.
Quasi Hamiltonian reduction relative to a compact
group as developed in
\cite[Section 5]{MR1638045}, cf. Subsection {\rm \ref{redsp}},
yields a globally defined  reduced space 
only in the regular case. 
\end{rema}
 
\begin{rema}
{\rm
Suppose $\,\form\,$ non-degenerate.
Then the weakly quasi Hamiltonian structures in
Propositions \ref{prop1l},
\ref{prop1c}, and \ref{prop12}
are non-degenerate, not just weakly non-degenerate, i.e.,
quasi Hamiltonian.
The argument in \cite{MR1638045}
for the case where $\GG$ is compact and $\,\form\,$ positive
carry over to the general case.
For an alternate argument, see
Proposition \ref{alternate} below.
}
\end{rema}

\subsubsection{Alternate quasi Hamiltonian approach to moduli spaces}
\label{alternateq}
This approach starts from a $\GG$-manifold
of the kind $\GG^{2\ell} \times  \GG^n$ rather than one of the
kind
 $\GG^{2\ell} \times \CcC_1 \times \ldots  \times \CcC_1$
for conjugacy classes $\CcC_1, \ldots,  \CcC_n$ in $\GG$; cf. 
Conclusion \ref{concl3} for the notation.

Return to the situation of Subsection \ref{fusq}
and maintain the notation $\,\form\,$
for an $\Ad$-invariant
symmetric bilinear form  on the Lie algebra $\gg$ of the Lie  group $\GG$.
Further, let $H$ be a Lie group with an $\Ad$-invariant
symmetric bilinear form  on its Lie algebra and, with an abuse of notation,
we denote this bilinear form 
by $\,\form\,$ as well.
Endow the Lie algebra $\gg \oplus \hh$
of $\GG \times H$ with the corresponding
 $\Ad$-invariant
symmetric bilinear form  
$\form + \form$
on $\gg \oplus \hh$ and the Lie algebra
$\gg^\times \oplus \hh$
of $\GG^\times \times H$ with the corresponding
 $\Ad$-invariant
symmetric bilinear form  
$\form^{\negthinspace \times} + \form$
on $\gg^\times \oplus \hh$.
The operation of fusion, cf. Proposition \ref{fusmoms},
is available more generally for a 
$(\GG^\times \times H)$-manifold
with a weakly 
$(\GG^\times \times H)$-quasi Hamiltonian
structure
\begin{equation}
(\sigma^\times, ((\mmu_1,\mmu_2, \mmu_3)\colon M \to \GG^ \times \times H))
\label{wtimesH}
\end{equation}
on $M$ relative to
$\form^{\negthinspace \times} + \form$ and leads to the
 weakly 
$(\GG \times H)$-quasi Hamiltonian
structure
\begin{equation}
(\sigma_\fus, ((\mmu_1\mmu_2, \mmu_3)\colon M \to \GG \times H))
\end{equation}
on $M$ relative to
$\form+ \form$, a genuine 
$(\GG \times H)$-quasi Hamiltonian
structure
when so is \eqref{wtimesH}.
 \cite[Theorem 6.1]{MR1638045} establishes this fact for $\GG$
and $H$ compact with positive $\Ad$-invariant symmetric bilinear forms
on their Lie algebras, and the reasoning carries over
to the general case.

For $k \geq 1$, endow the Lie algebra
$\gg^{\oplus k}$ of $\GG^{\times k}$
with the 
$\Ad$-invariant
symmetric bilinear form  
which arises as the sum of the forms on the summands.
Fusing $\ell\geq 1$ copies of the internally fused double 
$(\GG \times \GG,\sigma_1,\Phi_1)$
with $n \geq 0$ copies of the
externally fused double 
$(\GG \times \GG,\sigma_1,(\mult, \omult))$,
for each such copy the operation of fusion being carried out 
with respect to the second copy $\oGG$ of $\GG$ in
$\GG \times \GG = \GG \times \oGG$,
yields a weakly $\GG^{n+1}$-quasi Hamiltonian structure 
$(\sigma^{\ell,n},\Phi^{\ell,n})$ on
the product $\GG^{2(\ell +n)}$ of
$2(\ell +n)$ copies of $\GG$
relative to the corresponding 
$\Ad$-invariant
symmetric bilinear form  on $\gg^{\oplus (n+1)}$
which arises as the sum of the forms on the summands.
In view of
Example \ref{conjwqhr},
with respect to the copy of $\GG^n$ in $\GG^{n+1}= \GG \times\GG^n$,
the Lie algebra of $\GG^n$ being endowed with the sum of the
$\Ad$-invariant
symmetric bilinear forms on the summands,
the reduction procedure in Proposition \ref{wqhr}
relative to $n$ suitable conjugacy classes 
$\widetilde \CcC_1,  \ldots ,\widetilde \CcC_n$
in $\GG$
yields a weakly $\GG$-quasi Hamiltonian manifold
of the kind
$\left(\GG^{2\ell}\times \CcC_1 \times \ldots \times \CcC_n,
\sigma_{\ell,n},\mmu_{\ell,n}\right)$
in Conclusion \ref{concl3};
more precisely, for $1 \leq j \leq n$, we must take $\widetilde \CcC_j$
to be the conjugacy class of the point $q^{-1}$
for a point $q$ of $\CcC_j$.
There is no regularity constraint here.

On the other hand,
relative to $n+1$ suitable conjugacy classes 
$\widetilde\CcC_0, \widetilde \CcC_1,  \ldots ,\widetilde \CcC_n$
in $\GG$,
the weakly quasi Hamiltonian  reduction procedure in Proposition \ref{wqhr}
applies only under certain regularity assumptions
and then leads, for 
$\widetilde\CcC_0 =\{e\}$,
to a moduli space of the kind
$\Rep(\pi,\GG)_{\mathbf C}$ and,
for n=0 and $\widetilde\CcC_0 =\{z\}$, the point $z$ being 
in the center of $\GG$, to a 
 moduli space of the kind
$\Rep_X(\Gamma,\GG)$, cf. Remark \ref{qhreduction},
but this procedure does not recover
the full moduli space in the non-regular case.

Relative to a connected
complex algebraic group $\GG$ and a
non-degenerate $\Ad$-invariant
symmetric bilinear form $\,\form\,$ on its Lie algebra,
\cite{MR3126570} uses the approach we are presently discussing
for the construction of 
\lq\lq wild character varieties\rq\rq\
as
algebraic Poisson varieties.
In \cite[Theorem 2.3]{MR3126570},
what then corresponds to the space 
$\GG^{2(\ell +n)}$ that underlies the algebraic 
$\GG^{n+1}$-quasi Hamiltonian manifold
$(\GG^{2(\ell +n)},\sigma^{\ell,n},\Phi^{\ell,n})$
is written there as $\Hom(\Pi, \GG)$,
with the notation $g$ and $m$ 
playing the role of the present $\ell$ and $n+1$.
Such a  variety arises as follows:

Take $\Sigma$ to be a complex curve with $m >0$ punctures.
Let $C_\GG(Q)\subseteq \GG$ denote the stabilizer of a \lq\lq type\rq\rq\  $Q$
\cite[Section 7 p.~31]{MR3126570}
(definition not reproduced here), a connected complex reductive group.
Consider $m$ types  $Q_1, \ldots, Q_m$,
accordingly, let $\Hom_{\mathbb S}(\Pi,\GG) \subseteq \Hom(\Pi,\GG)$ 
denote the space of Stokes representations
as defined just before \cite[Theorem 8.2 p.~42]{MR3126570},
and let $\mathbf H = C_\GG(Q_1) \times \ldots \times C_\GG(Q_m) \subseteq \GG^m$,
a connected reductive subgroup of $\GG^m$.
By 
that theorem,
$\Hom_{\mathbb S}(\Pi,\GG)$ acquires 
a canonical $\mathbf H$-quasi Hamiltonian structure
$(\sigma,\mmu)$.
By Proposition \ref{4.6} and Theorem \ref{unpois},
the affine algebraic
quotient
$\mathbf M_B(\Sigma) =\Hom_{\mathbb S}(\Pi,\GG)// \mathbf H$
acquires the structure of an algebraic Poisson variety.
This is
\cite[Corollary 8.3 p.~43]{MR3126570}.

While the corresponding analytic Poisson variety
associated with such an 
algebraic Poisson variety can as well be constructed
from an associated extended moduli space,
for the construction
as an algebraic Poisson variety the quasi Hamiltonian approach is essential
since an extended moduli space is a merely analytic object.

Choose  $m$ conjugacy classes
$\CcC_1\subseteq C_{\GG(Q_1)}, \ldots,\CcC_m\subseteq C_{\GG(Q_m)}$;
the 
affine algebraic
quotient
$\mmu^{-1}(\CcC_1 \times \ldots \times \CcC_m)// \mathbf H$
canonically embeds into
$\Hom_{\mathbb S}(\Pi,\GG)// \mathbf H$, as a symplectic leaf
when both are non-singular (affine) varieties, and hence
the Poisson structure
thereupon descends to one on 
$\mmu^{-1}(\CcC_1 \times \ldots \times \CcC_m)// \mathbf H$.
In the presence of singularities,
quasi Hamiltonian reduction is not available here and in particular
does not lead to a Poisson structure on
the algebraic quotient 
$\mmu^{-1}(\CcC_1 \times \ldots \times \CcC_m)// \mathbf H$, however,
while, in terms of  the corresponding extended moduli space,
standard techniques show that the Poisson structure on
$\Hom_{\mathbb S}(\Pi,\GG)// \mathbf H$
restricts to an analytic  stratified symplectic Poisson structure
on 
the quotient 
$\mmu^{-1}(\CcC_1 \times \ldots \times \CcC_m)// \mathbf H$.
When this quotient is non-singular,
\cite[Theorem 1.1]{MR3126570} 
implies that this Poisson structure arises from an algebraic one.

The quasi Poisson technology which we develop in the rest of the paper
yields, in particular, a more
direct construction of such an algebraic  Poisson variety,
also covers the case where the underlying variety
is non-singular,
and 
includes (not necessarily non-singular) algebraic Poisson varieties
that do not necessarily arise
from a quasi Hamiltonian structure
relative to a
non-degenerate $\Ad$-invariant
symmetric bilinear form on the  Lie algebra of the target group
written here as $\GG$.
We come back to this situation in Subsection \ref{stokesd} below.

\begin{thm}
\label{visible}
Let $\Sigma$ be a compact, connected, and oriented (real) topological surface
of genus $\ell$
with $n+1$ boundary circles $(n\geq 0)$,
suppose $\GG$ compact and connected and the requisite
 $\Ad$-invariant symmetric bilinear form
on its Lie algebra positive,
let
$\prin \colon P \to \Sigma$ be a principal $\GG$-bundle on $\Sigma$,
necessarily trivial (for topological reasons), and
let $\mathcal \GG^{(n+1)}$ denote the group of gauge transformations
of $\prin$ that are the identity  at each boundary circle.
Relative to a suitable Sobolev topology,
even in the Fr\'echet topology,
the assignment to a flat connection of suitable holonomies yields a 
diffeomorphism
from the space $\mathrm{Flat}_\xi/\mathcal \GG^{(n+1)}$
of 
$\mathcal \GG^{(n+1)}$-orbits of flat connections on $\xi$
to  $\GG^{2(\ell +n)}$
as $\GG^{n+1}$-quasi Hamiltonian spaces.
\end{thm}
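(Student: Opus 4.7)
The plan is to build the holonomy map as an explicit diffeomorphism and then to identify the two quasi Hamiltonian structures by a building-block decomposition of $\Sigma$.

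First I would trivialize $\prin$ and choose basepoints $p_0,p_1,\dots,p_n$, one on each boundary circle. I would then select a standard system of generators for the fundamental groupoid of $\Sigma$ based at $\{p_0,\dots,p_n\}$: handle loops $a_1,b_1,\dots,a_\ell,b_\ell$ based at $p_0$, connecting paths $\gamma_j$ from $p_0$ to $p_j$ for $1\leq j\leq n$, and boundary loops $\delta_j$ based at $p_j$ parametrizing the respective boundary circles. The surface relation
\[
\delta_0=\prod_{i=1}^{\ell}[a_i,b_i]\prod_{j=1}^{n}(\gamma_j\delta_j\gamma_j^{-1})
\]
eliminates $\delta_0$, leaving $2\ell+2n$ independent generators, and holonomy along them defines a map $\mathrm{hol}\colon\mathrm{Flat}_{\prin}\to \GG^{2(\ell+n)}$. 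Since $\mathcal G^{(n+1)}$ consists of gauge transformations that are the identity on all of $\partial\Sigma$, in particular at each $p_j$, the map $\mathrm{hol}$ descends to $\overline{\mathrm{hol}}\colon \mathrm{Flat}_{\prin}/\mathcal G^{(n+1)}\to \GG^{2(\ell+n)}$.

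Second, I would show $\overline{\mathrm{hol}}$ is a diffeomorphism. Bijectivity combines the classical parametrization of flat $\GG$-connections by groupoid representations with the fact that the identity boundary condition rigidifies any residual gauge freedom at the basepoints. The inverse is produced, given a tuple in $\GG^{2(\ell+n)}$, by realizing it as the holonomies of a flat connection supported on a tubular neighborhood of a $1$-skeleton graph dual to the generating set, and extending to the complementary cells after gauge fixing; the standard slice theorem in either a Sobolev or the Fr\'echet setting then upgrades this to a diffeomorphism of infinite-dimensional quotients onto the finite-dimensional target. Next I would verify that the residual action of $\mathcal G/\mathcal G^{(n+1)}\cong \GG^{n+1}$, by the values of gauge transformations at $p_0,\dots,p_n$, corresponds under $\overline{\mathrm{hol}}$ to the $\GG^{n+1}$-action on $\GG^{2(\ell+n)}$ of Subsection \ref{alternateq}, and that the boundary holonomies $(\delta_0,\delta_1,\dots,\delta_n)$ realize the momentum map $\Phi^{\ell,n}$ there.

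Finally, to identify the 2-forms I would decompose $\Sigma$ into elementary pieces: $\ell$ one-holed tori glued to an $(n+1)$-holed sphere. Gluing two surfaces along a common boundary circle corresponds, on the holonomy side, to fusion of quasi Hamiltonian spaces in the sense of Proposition \ref{fusmoms}; each handle contributes an internally fused double of $\GG$ (Conclusion \ref{concl1}), and each additional boundary component is incorporated via an externally fused double (cf.\ Example \ref{conjwqhr}). The matching of the Atiyah-Bott--type 2-form built from $\,\form\,$ with the fusion 2-form $\sigma^{\ell,n}$ then reduces, on each block, to a Stokes computation expressing a $\Sigma$-integral of $\,\form\,$ paired with curvature and Maurer-Cartan data in terms of the boundary holonomies. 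This is the main obstacle, and is exactly what is carried out in \cite[\S 9]{MR1638045} in the compact-positive case; since the argument relies only on the Stokes theorem and on biinvariance of $\,\form\,$ through the Maurer-Cartan calculus underpinning $\sigma^{\times}_{\form}$ and $\sigma_1$, it applies here verbatim, positivity being used only to set up the analytic framework.
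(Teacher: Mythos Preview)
The paper does not actually prove Theorem~\ref{visible}: immediately after the statement it says ``Since we do not use this theorem we do not prove it here nor do we make the $\GG^{n+1}$-quasi Hamiltonian structure of $\mathrm{Flat}_\xi/\mathcal G^{(n+1)}$ precise,'' and then refers to \cite[Theorem~9.3]{MR1638045} for the Sobolev case and to the techniques of \cite{MR3836789} for the Fr\'echet case. So there is no proof in the paper to compare against.

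Your sketch is a reasonable outline of precisely the argument in \cite[\S 9]{MR1638045} that the paper cites: holonomy along a standard generating system, bijectivity plus a slice argument for the diffeomorphism, identification of the residual $\GG^{n+1}$-action and of the boundary holonomies as the momentum map, and a Stokes/Maurer--Cartan computation block by block to match the Atiyah--Bott $2$-form with the fusion $2$-form. Since you explicitly invoke \cite[\S 9]{MR1638045} for the core computation, your proposal is not an independent proof but rather a reconstruction of the cited reference, which is consistent with how the paper handles the result. One point to be careful about: the paper's $\GG^{2(\ell+n)}$ in Subsection~\ref{alternateq} is built by fusing $\ell$ internally fused doubles with $n$ \emph{externally} fused doubles (not $n$ conjugacy classes), so the $2n$ extra coordinates are the pairs coming from those external doubles; your identification of the inverse via a graph neighborhood and your description of the $\GG^{n+1}$-action should be checked against that specific fusion recipe rather than against the $\CcC_1\times\cdots\times\CcC_n$ model.
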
 

Since we do not use this theorem
we do not prove it  here nor do we make the 
$\GG^{n+1}$-quasi hamiltonian
structure of $\mathrm{Flat}_\xi/\mathcal \GG^{(n+1)}$ precise.
Suffice it to note that 
the momentum mapping to $\GG^{n+1}$ arises from the monodromies
with respect to the boundary circles.
For a suitable Sobolev topology,
the theorem is precisely
 \cite[Theorem 9.3]{MR1638045}.
The techniques in \cite{MR3836789}
show it is valid
in the Fr\'echet topology.
It is, perhaps, illuminating to recall
that the
$\mathcal \GG^{(n+1)}$-action on the space of connections on $\xi$
is free and that the assignment to a gauge transformation
on $\xi$
of its values on the boundary circles
(perhaps better: at  corresponding punctures)
determines a surjection from the group of gauge transformations
to $\GG^{n+1}$ having $\mathcal \GG^{(n+1)}$
as its kernel. 
Indeed, it might be more appropriate to argue in terms of a punctured
surface and to play it off against its oriented real blow up
(which substitutes an oriented  boundary circle for each puncture)
but we spare the reader and ourselves these added troubles here.

The reader should be warned that in the proof
of \cite[Theorem 2.3]{MR3126570}, the reference
to
 \cite[Theorem 9.1]{MR1638045} is misleading and only heuristically 
appropriate, since the gauge theoretic description
\cite[Theorem 9.1]{MR1638045},
cf. Theorem \ref{visible} above,  is not available for non-compact 
structure group, see Subsection \ref{8.2} below.
In the proof
of \cite[Theorem 2.3]{MR3126570}, the reference to the
corresponding fusion product suffices to validate the claim.

\section{Quasi Poisson structures} 
\label{quasipoiss}
Let $P$ be a bivector
on a manifold $M$.
With a slight abuse of notation,
we denote by $P$ the bidifferential operator 
on $M$ which the bivector $P$ induces,
and we use the notation $P^\sharp$ for the adjoint
\begin{equation}
P^\sharp \colon \TT^* M \longrightarrow \TT M,\ 
P^\sharp(df)(h) = P(df,dh),\ f,h \colon M \to \bK,
\label{Psharp}
\end{equation}
a morphism of vector bundles on $M$,
and the notation  $\TT^* M^P$ for the kernel of 
$P^\sharp$,
a distribution over $M$, not necessarily the total space of a vector bundle.

\subsection{Quasi Jacobi identity}

Define a bracket $\pbra$ on $\mathcat A[M]$ by setting
\begin{equation}
\{a,b\} =\langle P, d a \wedge d b\rangle,
\ a,b \in \mathcat A[M].
\label{pb2}
\end{equation}
This bracket is skew and a derivation in each variable.

Let $\gg$ be a Lie algebra and 
$\gg \to \Vect(M)$ an infinitesimal action
of $\gg$ on $M$,
i.e., a morphism of Lie algebras.
Suppose that
$P$ is invariant under $\gg$.
Then the bracket \eqref{pb2}
induces a bracket
$\pbra \colon \mathcat A[M]^\gg \otimes \mathcat A[M]^\gg \to \mathcat A[M]^\gg$
on the subalgebra $\mathcat A[M]^\gg$ of $\gg$-invariants.

\begin{prop}
\label{prop1}
Let $\phi \in \LAL^{\mrc,3}[\gg]$,
let $\phi_M$ be the image of $\phi$ in
$\Vect(M)$, and
suppose that $[P,P] = \phi_M$.
Then the  restriction of the bracket $\pbra$
to the subalgebra $\mathcat A[M]^\gg$ of $\gg$-invariants
satisfies the Jacobi identity.
\end{prop}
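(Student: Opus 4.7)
The plan is to identify the Jacobiator of $\pbra$ with (a constant multiple of) the Schouten square $[P,P]$ evaluated on the three relevant differentials, and then to exploit $\gg$-invariance to kill this expression on $\mathcat A[M]^\gg$.

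More concretely, I would proceed as follows. First, recall the standard identity relating the Schouten bracket of a bivector to the failure of Jacobi: for any bivector $P$ on $M$ and admissible functions $a,b,c$,
\begin{equation}
\{\{a,b\},c\} + \{\{b,c\},a\} + \{\{c,a\},b\} = \tfrac 12\, [P,P](da,db,dc),
\label{plan-schouten}
\end{equation}
where $[P,P]$ is the Gerstenhaber (Schouten) bracket of $P$ with itself in $\LAL[M]$, viewed as a trivector on $M$, and the right-hand side means the natural pairing of this trivector with the exterior product of the three $1$-forms. (This is a purely local computation using that $\pbra$ is a biderivation; the skewness and derivation properties guaranteed by \eqref{pb2} are all one needs, together with the Gerstenhaber identities \eqref{ger1}--\eqref{ger3}.) Under the hypothesis $[P,P] = \phi_M$, the right-hand side becomes $\tfrac 12\, \phi_M(da,db,dc)$.

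Next, I would unpack $\phi_M$. Since $\phi\in \LAL^{\mrc,3}[\gg]$, one may write it (at least locally, or by passing through the canonical isomorphism $\can\colon\LAL[\gg]\to\Lc[\gg]$ recalled in Subsection \ref{gracoh}) as a finite sum $\phi = \sum_i X_i \wedge Y_i \wedge Z_i$ of decomposable elements, and then
\begin{equation}
\phi_M = \sum_i X_{i,M} \wedge Y_{i,M} \wedge Z_{i,M} \in \LAL^3[M],
\end{equation}
where $X_M$ denotes the fundamental vector field on $M$ generated by $X\in\gg$ as in \eqref{funda}. Evaluating this trivector on $da\wedge db\wedge dc$ produces an alternating sum of terms of the form $X_{i,M}(a)\,Y_{i,M}(b)\,Z_{i,M}(c)$.

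Finally, I would use invariance. For any $X\in\gg$ and any $a\in\mathcat A[M]^\gg$, one has $X_M(a)=0$ by the very definition of $\mathcat A[M]^\gg$. Hence every term in the expansion of $\phi_M(da,db,dc)$ vanishes whenever $a,b,c\in\mathcat A[M]^\gg$, so \eqref{plan-schouten} yields the Jacobi identity for the restriction of $\pbra$ to $\mathcat A[M]^\gg$. Note that $\gg$-invariance of $P$ ensures that this restricted bracket lands in $\mathcat A[M]^\gg$ in the first place, so the statement makes sense.

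The only mildly non-trivial step is \eqref{plan-schouten} itself; I would either cite it as the standard characterization of the Schouten bracket of a bivector (it is part of the very definition/normalization of $\bra$ on $\LAL[M]$ extending the Lie bracket of vector fields), or verify it in one line from \eqref{ger1}--\eqref{ger3} together with the observation that $\{a,\,\cdot\,\}$ is $[P,a]$ up to sign. Beyond fixing a sign convention, there is no real obstacle: the result is essentially formal once the Schouten-squared interpretation of the Jacobiator is in place, the whole point being that fundamental vector fields kill invariants.
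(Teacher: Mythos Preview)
Your proposal is correct and follows essentially the same route as the paper: express the Jacobiator as (a constant times) $[P,P]$ paired with the three differentials, replace $[P,P]$ by $\phi_M$, expand $\phi$ as a sum of decomposables $X\wedge Y\wedge Z$, and observe that fundamental vector fields annihilate $\gg$-invariant functions. The only discrepancy is the numerical constant in \eqref{plan-schouten}: the paper's normalization gives the Jacobiator as $2\,\phi_M(df_1,df_2,df_3)$ rather than $\tfrac12$, but as you note this is purely a convention issue and does not affect the argument.
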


\begin{proof}
For $f_1,f_2,f_3 \in \mathcat A[M]$,
\begin{equation}
\{\{f_1,f_2\},f_3\} +\{\{f_2,f_3\},f_1\} +\{\{f_3,f_1\},f_2\} 
=2\phi_M(d f_1, d f_2, d f_3).
\end{equation}
Suppose $f_1, f_2, f_3\in \mathcat A[M]$ invariant under $\gg$,
that is,
for $X_1, X_2, X_3 \in \gg$,
\begin{equation}
X_1(f_1)=d f_1(X_1) =0,  
\
X_2(f_2)= d f_2(X_2)=0,
\ 
X_3(f_3) =d f_3(X_3) = 0. 
\end{equation}
The $3$-vector
$\phi$ is a sum of terms of the kind
$X_1 \wedge X_2 \wedge X_3$ with $X_1,X_2,X_3 \in \gg$.
Evaluating
$X_1 \wedge X_2 \wedge X_3$ at
$(d f_1, d f_2, d f_3)$
gives zero.
\end{proof}

\begin{rema}
\label{interp1}
{\rm 
This proposition  offers an interpretation of the claim
\lq\lq Since $\phi_M$ vanishes on invariant forms, the space 
$C^\infty(M,\RR)^\GG$ of $\GG$-invariant functions is a Poisson
algebra under $\pbra$\rq\rq\ 
just before 
\cite[Theorem 6.1]{MR1880957}.
(I do not see why, under the circumstances of that theorem,
$\phi_M$ vanishes on general invariant forms,
not of the kind $df_1 \wedge df_2 \wedge df_3$.)
}
\end{rema}

\subsection{Symmetric $2$-tensor and totally antisymmetric $3$-tensor}
\label{symtot}
In this Subsection we work over a general ground ring
$\bR$ that is an algebra over the rationals.
Consider an $\bR$-Lie algebra $\gg$
whose underlying $\bR$-module 
 is a finitely generated and projective
and suppose the corresponding Lie group $\GG$ is well defined.
We do not make this precise.
A typical example is the group of gauge transformations of a principal
bundle and the Lie algebra of infinitesimal gauge transformations
in the Fr\'echet topology;
as a module over the functons, this Lie algebra is 
finitely generated and projective.
See Example \ref{examp3} below.

Let $\oomega$ be an 
$\Ad$-invariant symmetric $2$-tensor in $\gg \otimes \gg$, that is,
a
member of the $\GG$-invariants
$\Sigm^{\mrc,2}[\gg]^\GG$ of the symmetric cosquare 
$\Sigm^{\mrc,2}[\gg] 
= (\gg \otimes \gg)^{C_2}
\subseteq \gg \otimes \gg $ of $\gg$
(invariants under the twist action of the group $C_2$ with $2$ elements).
For book keeping purposes, write
the first copy of $\gg$ in  $\gg \oplus \gg$ as 
$\gg^1$ and the second one as $\gg^2$ if need be.
Relative to
 the  identification
\begin{align}
\wedge \colon \gg^1 \otimes \gg^2  &\longrightarrow  \gg^1 \boxwedge \gg^2, \
u \otimes v \longmapsto u \wedge v,
\ u,v \in \gg,
\label{identi1}
\end{align}
cf. Subsection \ref{gracoh}, let
\begin{equation}
\chi_{\oomega} = \tfrac 12 \wedge (\oomega) \in 
\gg^1 \boxwedge \gg^2 \subseteq\LAL^{\mrc,2}[\gg^1 \oplus \gg^2]
\label{chiH}
\end{equation}
and, with respect to the map
\begin{equation}
\xymatrixcolsep{5pc}
\xymatrix{
\gg \otimes \gg \otimes \gg \otimes \gg
\ar[r]^{\Id \otimes \bra \otimes \Id}
&
\gg \otimes \gg \otimes \gg,
}
\label{compo2}
\end{equation}
let
\begin{equation}
\phi_\oomega = \tfrac 12
(\Id \otimes \bra \otimes \Id)(\oomega \otimes \oomega)
\in \gg \otimes \gg \otimes \gg.
\label{phiH}
\end{equation}
We say the symmetric $2$-tensor 
$\oomega$ in $\gg \otimes \gg$ is 
$\GG$-{\em quasi Poisson convenient\/}
when $\phi_\oomega$
is totally antisymmetric, i.e.,
lies in
$\LAL^{\mrc,3}[\gg] \subseteq \gg \otimes \gg \otimes \gg$.
The factor $\tfrac 12$ is a matter of convenience, so that there
are no coefficients in \eqref{calc11} below.

When $\oomega$ is $\GG$-quasi Poisson convenient,
we  refer to $\phi_\oomega$ as the {\em Cartan element\/}
which $\oomega$ and $\bra$ determine;
the Cartan element
yields a non-trivial class $[\phi_{\oomega}]$
in $\Ho_3(\gg)$ (= $\Ho_3(\gg, \bR)$).
This observation is classical
when $\gg$ is semisimple over a field
and $\oomega$ arises from the trace form.

Recall the canonical identification  
$\LAL[\gg] \to \Lc[\gg]$ of $\bR$-Hopf algebras,
cf. Subsection \ref{gracoh}.
We prove the following  proposition after Lemma \ref{calc} below.
\begin{prop}
\label{cartan}
Suppose $\oomega$ is $\GG$-quasi Poisson convenient.
Then, relative to the diagonal map
$\Delta \colon \LAL[\gg] \to  \LAL[\gg^1]\otimes \LAL[\gg^2]$
of $\LAL[\gg]$
and the Gerstenhaber bracket
$\bra$ on $\LAL[\gg^1 \oplus \gg^2]$,
\begin{equation}
[\chi_\oomega,\chi_\oomega] = \Delta(\phi_\oomega) - \phi_\oomega^1 - \phi_\oomega^2 .
\label{calc11}
\end{equation}
\end{prop}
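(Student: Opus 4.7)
The plan is to verify the identity by direct computation in $\LAL[\gg^1\oplus\gg^2]$. Write $\oomega=\sum_i a_i\otimes b_i$ and, for the second copy appearing in $\phi_\oomega$, $\oomega=\sum_j c_j\otimes d_j$, so that $\chi_\oomega=\tfrac{1}{2}\sum_i a_i^{(1)}\wedge b_i^{(2)}\in\LAL^2[\gg^1\oplus\gg^2]$, where superscripts denote the images under the injections $\iota^1,\iota^2\colon\gg\to\gg^1\oplus\gg^2$. Since $\gg^1$ and $\gg^2$ are commuting ideals in $\gg^1\oplus\gg^2$, the cross-brackets $[a_i^{(1)},d_j^{(2)}]$ and $[b_i^{(2)},c_j^{(1)}]$ vanish, so applying the Leibniz rule \eqref{ger3} twice collapses $[a_i^{(1)}\wedge b_i^{(2)},c_j^{(1)}\wedge d_j^{(2)}]$ to exactly two surviving terms and produces
\begin{equation*}
[\chi_\oomega,\chi_\oomega] = \tfrac{1}{4}\sum_{i,j}\bigl([a_i,c_j]^{(1)}\wedge b_i^{(2)}\wedge d_j^{(2)} + a_i^{(1)}\wedge c_j^{(1)}\wedge[b_i,d_j]^{(2)}\bigr),
\end{equation*}
with the two summands of bidegree $(1,2)$ and $(2,1)$ in $\LAL[\gg^1]\otimes\LAL[\gg^2]$.

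For the right-hand side, the Hopf algebra diagonal $\Delta$ of $\LAL[\gg]$ is induced by the Lie algebra diagonal map $\iota\colon\gg\to\gg^1\oplus\gg^2$, $x\mapsto x^{(1)}+x^{(2)}$; it is bihomogeneous and total-degree preserving, and on a degree-$3$ element the $(3,0)$ and $(0,3)$ bidegree contributions of $\Delta(\phi_\oomega)$ coincide with $\phi_\oomega^1$ and $\phi_\oomega^2$ respectively, where $\phi_\oomega^k$ denotes the image of $\phi_\oomega$ under $\iota^k$. Hence $\Delta(\phi_\oomega)-\phi_\oomega^1-\phi_\oomega^2$ reduces to the sum of the $(2,1)$ and $(1,2)$ bidegree parts of $\Delta(\phi_\oomega)$, which is the bidegree type of the bracket computed above.

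It remains to match the two sides bidegree by bidegree. Under the canonical isomorphism $\LAL[\gg]\to\Lc[\gg]$ of Hopf algebras, $\phi_\oomega$ corresponds to a scalar multiple of $\sum_{k,l} a_k\wedge[b_k,c_l]\wedge d_l$ in $\LAL^3[\gg]$, and the shuffle expansion of $\Delta$ on a simple wedge $u\wedge v\wedge w$ yields three summands in each mixed bidegree. The total antisymmetry of $\phi_\oomega$ granted by the quasi Poisson convenient hypothesis is then used to collapse these three summands into a single expression, which is matched to the corresponding summand in $[\chi_\oomega,\chi_\oomega]$ by invoking the infinitesimal $\Ad$-invariance identity $\sum_i\bigl([x,a_i]\otimes b_i+a_i\otimes[x,b_i]\bigr)=0$, which transports a Lie bracket from one tensor position to another.

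The main obstacle is the careful bookkeeping of signs and normalization constants: the factors of $\tfrac{1}{2}$ in the definitions of $\chi_\oomega$ and $\phi_\oomega$ are chosen precisely so that both sides acquire a common coefficient, but one must correctly combine the shuffle coproduct of $\LAL[\gg]$, the normalization linking $\LAL^3[\gg]$ and $\Lc^3[\gg]$ via $\can$, and the repeated use of the symmetry of $\oomega$ together with the antisymmetry of $\phi_\oomega$ to see that these coefficients and the corresponding signs match.
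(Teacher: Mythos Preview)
Your approach is correct and is essentially the same as the paper's: the paper isolates exactly this computation as Lemma~\ref{calc}, carried out in a basis $(e_j)$ of $\gg$ with structure constants $\eeta^k_{u,v}$ rather than in your Sweedler-style notation, and then observes that $\phi_\oomega=\tfrac12\eeta^{j,k,s}e_j\otimes e_k\otimes e_s=\tfrac1{12}\eeta^{j,k,s}e_j\wedge e_k\wedge e_s$ to deduce Proposition~\ref{cartan}. The two computations line up step for step: your bracket formula is the paper's identity $4[\chi,\chi]=\eeta^{j,k,s}(e_j^1\wedge e_k^2\wedge e_s^2+e_j^2\wedge e_k^1\wedge e_s^1)$, and your shuffle expansion of $\Delta$ on a triple wedge is the paper's explicit eight-term formula for $\Delta(e_j\wedge e_k\wedge e_s)$.

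Two remarks on the final matching step you leave open. First, the bookkeeping comes out cleanly: the three mixed shuffle terms in each bidegree, after invoking total antisymmetry of $\eeta^{j,k,s}$, contribute a factor $3$, and $\tfrac{3}{12}=\tfrac14$ matches the $\tfrac14$ on the bracket side. Second, your appeal to the infinitesimal $\Ad$-invariance identity $\sum_i([x,a_i]\otimes b_i+a_i\otimes[x,b_i])=0$ is unnecessary: the paper uses only the total antisymmetry of $\eeta^{j,k,s}$ (i.e.\ the quasi Poisson convenient hypothesis) to rearrange indices, and that alone suffices to identify the $(2,1)$ part of $[\chi_\oomega,\chi_\oomega]$ with the $(2,1)$ part of $\Delta(\phi_\oomega)$.
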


Evaluation induces a canonical morphism $\ev\colon
\gg \otimes \gg \otimes \gg^* \to \gg$
of $\GG$-modules. Hence, under the canonical identification
of the $\Ad$-invariant symmetric $2$-tensor $\oomega \in \gg \otimes\gg$
with a $\GG$-equivariant morphism $\bR \to \gg \otimes \gg$,
the adjoint $\bR$-module morphism
\begin{equation}
\xymatrix
{
\oomega^\sharp\colon \gg^* \ar[r]^{\oomega \otimes \Id \phantom{aa}} &\gg \otimes \gg \otimes \gg^* 
\ar[r]^{\phantom{aaaa}\ev}&
\gg
}
\end{equation}
is a morphism of $\GG$-modules.
Hence the image   $\oomega^\sharp(\gg^*) \subseteq \gg$
is a $\GG$-submodule of $\gg$, therefore a 
 $\gg$-submodule relative to the adjoint representation
of $\gg$ on itself,
and thence a $\GG$-invariant Lie ideal $\gg_\oomega$ in $\gg$.

Let $\odot$ be  the $\Ad^*$-invariant symmetric bilinear form
on $\gg^*$ which $\oomega$ determines via evaluation, not necessarily
non-degenerate.

\begin{prop}
\label{arise}
Suppose that the $\bR$-module that underlies $\gg_\oomega$
is finitely generated and projective so that, in particular,
the canonical morphisms 
$\gg_\oomega^{\otimes n} \to \gg^{\otimes n}$
of $\bR$-modules
are injections for $n \geq 0$.

\begin{enumerate}

\item 
The symmetric bilinear form
$\odot$
on $\gg^*$
induces a non-degenerate
$\Ad$-invariant symmetric bilinear form
$\,\form\,$
on $\gg_\oomega$,
and the adjoint $\psidot\colon \gg_\oomega \to \gg_\oomega^*$
thereof is a $\GG$-equivariant isomorphism.

\item The $2$-tensor $\oomega$ lies in
$\gg_\oomega \otimes \gg_\oomega \subseteq \gg \otimes \gg$.

\item
Under the $\GG$-equivariant isomorphism 
\begin{equation}
(\psidot)^{\otimes 3} \colon 
\gg_\oomega \otimes \gg_\oomega \otimes \gg_\oomega \longrightarrow  
 (\gg_\oomega^*) \otimes (\gg_\oomega^*) \otimes (\gg_\oomega^*) \stackrel{\cong}\longrightarrow
\Hom(\gg_\oomega \otimes \gg_\oomega \otimes \gg_\oomega ,\bR),
\end{equation} 
the $3$-tensor
which the image of $\oomega \otimes \oomega$
under \eqref{compo2}
(with $\gg_\oomega$ substituted for $\gg$)
goes to 
the negative of the $3$-tensor
which the triple product associated with
$\bra$ and $\,\form\,$ induces.

\item
Consequently
the image of $\oomega \otimes \oomega$ under {\rm \eqref{compo2}}
lies in $\LAL^{\mrc,3}[\gg_\oomega] 
\subseteq \gg_\oomega^{\otimes 3}\subseteq \gg^{\otimes 3}$,
that is,
 the $2$-tensor $\oomega$ is $\GG$-quasi Poisson convenient.
\end{enumerate}

\end{prop}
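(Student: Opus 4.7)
\medskip

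\noindent
\textbf{Proof plan.} For (1), the form $\odot$ on $\gg^*$ is, by construction, given by $\xi\odot\eta = \oomega(\xi,\eta)=\eta(\oomega^\sharp(\xi))$, so its radical coincides with $\ker\oomega^\sharp$. Hence $\odot$ descends along the surjection $\oomega^\sharp\colon\gg^*\twoheadrightarrow\gg_\oomega$ to a well-defined non-degenerate symmetric form $\,\form\,$ on $\gg_\oomega$ via $\form(\oomega^\sharp(\xi),\oomega^\sharp(\eta))=\oomega(\xi,\eta)$; this is $\Ad$-invariant because $\oomega^\sharp$ is $\GG$-equivariant and $\odot$ is $\Ad^*$-invariant, and $\psidot$ is a $\GG$-equivariant isomorphism by non-degeneracy.

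For (2), the plan is to factor $\oomega^\sharp\colon\gg^*\to\gg$ through $\gg_\oomega^*$. Write $\iota\colon\gg_\oomega\hookrightarrow\gg$ for the inclusion; since $\gg_\oomega$ is finitely generated projective and a submodule of the projective module $\gg$, it is a direct summand, so $\iota^*\colon\gg^*\twoheadrightarrow\gg_\oomega^*$ is a split surjection. The required factorization amounts to showing $\ker\iota^*\subseteq\ker\oomega^\sharp$: for $\xi\in\ker\iota^*$ and any $\eta\in\gg^*$, one has $\eta(\oomega^\sharp(\xi))=\oomega(\xi,\eta)=\xi(\oomega^\sharp(\eta))=0$ by symmetry of $\oomega$ and the fact that $\oomega^\sharp(\eta)\in\gg_\oomega$, hence $\oomega^\sharp(\xi)=0$ by reflexivity of $\gg$. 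Translating this factorization back through the canonical identifications $\gg\otimes\gg\cong\Hom(\gg^*,\gg)$ and $\gg_\oomega\otimes\gg_\oomega\cong\Hom(\gg_\oomega^*,\gg_\oomega)$ (both valid by finite projectivity) then shows $\oomega$ lies in the image of $\gg_\oomega\otimes\gg_\oomega\hookrightarrow\gg\otimes\gg$.

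For (3) and (4), work entirely inside $\gg_\oomega$, where $\form$ is non-degenerate, and observe that under the identifications of (1) and (2) the restricted map $\oomega^\sharp\colon\gg_\oomega^*\to\gg_\oomega$ is precisely the inverse $(\psidot)^{-1}$. A short direct computation, either in coordinates using a basis $(e_\alpha)$ with $\oomega=\form^{\alpha\beta}e_\alpha\otimes e_\beta$ and $\form^{\alpha\beta}$ the inverse Gram matrix of $\form$, or categorically via the composition
\[
\gg_\oomega\otimes\gg_\oomega\otimes\gg_\oomega\xrightarrow{(\psidot)^{\otimes 3}}(\gg_\oomega^*)^{\otimes 3}\cong\Hom(\gg_\oomega^{\otimes 3},\bR),
\]
transports $(\Id\otimes\bra\otimes\Id)(\oomega\otimes\oomega)$ to the trilinear form $(X,Y,Z)\mapsto-\form([X,Y],Z)$ on $\gg_\oomega$. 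This is the content of (3). Claim (4) then follows at once since the standard argument using $\Ad$-invariance of $\form$ together with its symmetry yields $\form([X,Y],Z)=-\form(Y,[X,Z])=-\form([X,Z],Y)$, so the trilinear form $(X,Y,Z)\mapsto\form([X,Y],Z)$ is skew in each adjacent pair of variables and therefore totally antisymmetric.

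The main obstacle is (2): ensuring that the element $\oomega\in\gg\otimes\gg$ really does live in the submodule $\gg_\oomega\otimes\gg_\oomega$ requires both the projectivity hypothesis on $\gg_\oomega$ (to get the splitting of $\iota$ and the reflexivity identifications) and the symmetry of $\oomega$ (to force the kernel inclusion). Once this is established, items (1), (3), and (4) are essentially straightforward applications of the resulting non-degenerate $\Ad$-invariant pairing on $\gg_\oomega$.
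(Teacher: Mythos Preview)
Your approach is essentially the paper's, recast in coordinate-free language: the paper carries out (2) and (3) via an explicit coordinate system $(e_j,\eta^k)$ for $\gg_\oomega$, writing $\form$ as a Gram matrix $\eta_{j,k}$, inverting it to obtain $\widetilde\oomega=\eta^{j,k}e_j\otimes e_k\in\gg_\oomega\otimes\gg_\oomega$, and checking $\widetilde\oomega=\oomega$ in $\gg\otimes\gg$; it then verifies (3) by the index computation $\eta_{j,a}\eta_{k,b}\eta_{s,c}\eta^{j,k,s}=\eta_{a,c,b}$.

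There is one genuine gap in your argument for (2). The assertion ``since $\gg_\oomega$ is finitely generated projective and a submodule of the projective module $\gg$, it is a direct summand'' is false in general (e.g.\ $t\,\bK[t]\subset\bK[t]$), and you use it to get the split surjection $\iota^*\colon\gg^*\twoheadrightarrow\gg_\oomega^*$ on which your factorization argument rests. The conclusion is nonetheless true here, but for a different reason: from your construction in (1), the definition of $\form$ gives $\psidot(\oomega^\sharp(\xi))(\oomega^\sharp(\eta))=\xi\odot\eta=\iota^*(\xi)(\oomega^\sharp(\eta))$ for all $\xi,\eta\in\gg^*$, and since $\oomega^\sharp$ surjects onto $\gg_\oomega$ this yields $\psidot\circ\oomega^\sharp=\iota^*$. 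As $\psidot$ is an isomorphism and $\oomega^\sharp\colon\gg^*\to\gg_\oomega$ is surjective by definition, $\iota^*$ is surjective; projectivity of $\gg_\oomega^*$ then gives the splitting. Equivalently, you can bypass the splitting entirely: the identity $\oomega^\sharp=(\psidot)^{-1}\circ\iota^*$ exhibits the factorization $\gg^*\to\gg_\oomega^*\to\gg_\oomega$ directly, and comparing under $\gg\otimes\gg\cong\Hom(\gg^*,\gg)$ shows $(\iota\otimes\iota)(\widetilde\oomega)=\oomega$ with $\widetilde\oomega$ the tensor corresponding to $(\psidot)^{-1}$. With this correction your argument goes through and matches the paper's.
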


\begin{proof}
Since $\gg_\oomega$ is $\bR$-projective,
the canonical $\bR$-module morphism 
$\gg_\oomega \otimes \gg_\oomega \to \gg \otimes \gg$ being an injection,
the $2$-form $\odot$ on $\gg^*$ factors through an
$\Ad$-invariant symmetric bilinear form
$\,\form\,$
on $\gg_\oomega$
such that the composite
\begin{equation}
\gg^* \otimes \gg^* \stackrel{\oomega^\sharp \otimes \oomega^\sharp}
\longrightarrow \gg_\oomega \otimes \gg_\oomega \stackrel{\form}
\longrightarrow \bR
\end{equation}
coincides with $\odot$ and $\,\form\,$ is non-degenerate.

Since the $\bR$-module that underlies $\gg_\oomega$ 
is finitely generated and projective,
the symmetric bilinear form $\,\form\,$ on $\gg_\oomega$ determines
a $2$-tensor 
$\oomega^* \in \gg_\oomega^* \otimes\gg_\oomega^*$.
Let $(e_j, \eeta^k)_{1 \leq j,k \leq r}$
be a coordinate system for $\gg_\oomega$ and
write $\oomega^*$ as
$\oomega^* = \eeta_{j,k} \eeta^j \otimes \eeta^k$
 for suitable members 
 $\eeta_{j,k}$ of $\bR$.
Accordingly
\begin{align*}
e_j \form e_k &= \eeta_{j,k}, \quad 1 \leq j,k \leq r,
\end{align*}
and the adjoint 
$\psidot=\oomega^{*\sharp} \colon \gg_\oomega \to \gg_\oomega^*$
of $\,\form\,$ reads
\begin{align}
\psidot(e_j)=\oomega^{*\sharp} (e_j)&=\eeta_{j,k} \eeta^k .
\label{reads}
\end{align}
Since $\,\form\,$ is non-degenerate, 
the adjoint $\oomega^{*\sharp}$ is an isomorphism.
Hence there are members
$\eeta^{j,k}$ ($ 1 \leq j,k \leq r$) of
$\bR$ that define a symmetric $2$-tensor
$\widetilde \oomega= \eeta^{j,k} e_j \otimes e_k$
over $\gg_\oomega$
such that
\begin{equation}
\widetilde \oomega^\sharp \colon  \gg_\oomega^* \longrightarrow \gg_\oomega,
\quad
\widetilde \oomega^\sharp (\eeta^j)=\eeta^{j,k} e_k
\end{equation}
yields the inverse
of the adjoint $\oomega^{*\sharp}$,
and $\eeta^{j,k} \eeta_{k,s} = \delta^j_s$.
Viewed as a member of $\gg \otimes \gg$,
the $2$-tensor $\widetilde \oomega$ coincides with $\oomega$.

For the rest of the proof, 
to simplify the exposition,
we may assume $\gg_\oomega = \gg$.
Thus $\,\form\,$ is
an $\Ad$-invariant symmetric bilinear form
on  $\gg$. 
This form and the Lie bracket
induce the triple product
\begin{equation}
\gg \otimes \gg \otimes \gg \longrightarrow \bR,\ (x,y,z) \mapsto
[x,y]\form z,\ x,y,z \in \gg,
\label{alternate11}
\end{equation}
an alternating $\bR$-valued  $3$-form
on $\gg$.

Write the Lie bracket of $\gg$ as a $3$-tensor
$\bra^\flat \in \gg^* \otimes \gg^* \otimes \gg $.
Under the canonical isomorphism
$\LALc[\gg^*] \to \Alt^*(\gg,\bK)$,
the alternating
$3$-form \eqref{alternate11} corresponds to a member of
$\LAL^{\mrc,3}[\gg^*]$,
that is, to a totally antisymmetric $3$-tensor over $\gg^*$,
 and
the image of $\bra^\flat \in \gg^* \otimes \gg^* \otimes \gg $
under
\begin{equation}
\xymatrixcolsep{3.5pc}
\xymatrix{
\gg^* \otimes \gg^* \otimes \gg \ar[r]^{\Id \otimes \Id \otimes {\psidot}}
&\gg^* \otimes \gg^* \otimes \gg^* 
}
\end{equation}
yields that very same  member of
$\LAL^{\mrc,3}[\gg^*]$.

Under 
$\psidot\colon\gg \to \gg^*$,
the image of $\oomega \otimes \oomega$ under 
\eqref{compo2}
goes to a  member of
$\gg^* \otimes \gg^* \otimes \gg^*$
and hence defines an $\bR$-valued $3$-linear
form on $\gg$.
Up to sign, this form coincides with the familiar $3$-form
arising from the triple product.

Indeed, in terms of the chosen coordinate system, 
define the \lq\lq structure constants\rq\rq\ 
$\eeta^k_{u,v} \in \bR$ by
\begin{equation}
[e_u,e_v]= \eeta^k_{u,v} e_k , 
\end{equation}
and let
$\eeta^{j,k,s} =\eeta^{j,u}  \eeta^k_{u,v}  \eeta^{v,s}$.
In terms of this notation, the image
of $\oomega \otimes \oomega$ under \eqref{compo2}
is the member
$\eeta^{j,k,s}e_j  \otimes e_k \otimes e_s $ of
$\gg \otimes \gg\otimes \gg $.

On the other hand, 
$\bra^\flat =\eeta^k_{u,v} \eeta^u \otimes \eeta^v \otimes e_k$ whence,
with the notation
$\eeta_{u,v,s}=\eeta^k_{u,v} \eeta_{k,s}$ ($1 \leq u,v,s \leq r$),
the right-hand side of
\begin{align*}
(\Id \otimes \Id \otimes \psidot)(\bra^\flat) &=
\eeta_{u,v,s} \eeta^u \otimes \eeta^v \otimes \eeta^s
\end{align*}
recovers
the triple product on $\gg$.

In view of \eqref{reads},
\begin{equation*}
(\psidot \otimes \psidot \otimes \psidot)
(e_j \otimes e_k \otimes e_s) = 
\eeta_{j,u} \eeta_{k,v} \eeta_{s,w}\eeta^u \otimes \eeta^v \otimes \eeta^w.
\end{equation*}
Since $ \eeta_{r,w}\eeta^{w,v} = \delta_r^v$,
\begin{align}
\eeta_{j,a} \eeta_{k,b} \eeta_{s,c}\eeta^{j,k,s} &
=\delta^u_a \delta^w_b \delta^v_c \eeta_{u,v,w} =\eeta_{a,c,b}
\label{cal101}
\end{align}
whence
\begin{equation*}
(\psidot \otimes \psidot \otimes \psidot)
(\eeta^{j,k,s}e_j \otimes e_k \otimes e_s) = 
\eeta_{u,w,v}\eeta^u \otimes \eeta^v \otimes \eeta^w=
-\eeta_{u,v,w}\eeta^u \otimes \eeta^v \otimes \eeta^w. \qedhere
\end{equation*}
\end{proof}

\begin{cor}
\label{every}
For an ordinary finite dimensional Lie 
group $\GG$ with Lie
algebra $\gg$ 
over a field $\bK$, 
every $\Ad$-invariant
symmetric $2$-tensor in $\gg \otimes \gg$
is
$\GG$-quasi Poisson convenient,
and every such
$2$-tensor on $\gg$ arises
from an $\Ad$-invariant  non-degenerate symmetric 
bilinear form on a $\GG$-ideal $\hh$ in $\gg$
as the corresponding
 $\Ad$-invariant  symmetric 
$2$-tensor
 on $\hh$ and hence on $\gg$. \qed
\end{cor}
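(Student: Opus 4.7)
The plan is to deduce Corollary \ref{every} as an essentially immediate consequence of Proposition \ref{arise}, the only point to check being that the projectivity hypothesis on $\gg_\oomega$ is automatic in the finite dimensional over-a-field setting. Concretely, given an $\Ad$-invariant symmetric $2$-tensor $\oomega \in \gg \otimes \gg$, I would set $\hh = \gg_\oomega = \oomega^\sharp(\gg^*) \subseteq \gg$, exactly the submodule introduced just before Proposition \ref{arise}. Since $\bR = \bK$ is a field and $\gg$ is finite dimensional, $\hh$ is automatically a finite dimensional $\bK$-vector space, hence finitely generated and projective (in fact free) over $\bK$, so the hypothesis of Proposition \ref{arise} is satisfied without further work.

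Once this is in place, the first assertion is immediate from Proposition \ref{arise}(4), and the structural description promised in the second assertion is read off from Proposition \ref{arise}(1) and (2): part (1) yields an $\Ad$-invariant non-degenerate symmetric bilinear form $\,\form\,$ on $\hh$, and part (2) tells us that $\oomega$ already lies in $\hh \otimes \hh \subseteq \gg \otimes \gg$, so $\oomega$ is precisely the symmetric $2$-tensor on $\hh$ associated with $\,\form\,$, viewed as a tensor on $\gg$ through the inclusion $\hh \subseteq \gg$. The fact that $\hh$ is a $\GG$-invariant Lie ideal of $\gg$ is established in the paragraph preceding Proposition \ref{arise}, since $\oomega^\sharp$ is a morphism of $\GG$-modules, so $\hh$ is the $\GG$-ideal required by the statement.

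Regarding obstacles: there is essentially none. The only subtlety worth flagging in the write-up is to emphasize that Proposition \ref{arise} was formulated over a general ring $\bR$ with a projectivity hypothesis put in by hand, and that over a field this hypothesis is vacuous, so the proof reduces to an invocation of that proposition together with the preceding discussion of $\gg_\oomega$. I would therefore keep the argument very short, essentially one paragraph, simply pointing to the relevant parts (1), (2), (4) of Proposition \ref{arise} and noting the automatic projectivity.
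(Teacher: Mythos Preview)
Your proposal is correct and matches the paper's approach: the corollary carries a \qed\ with no written proof, signaling that it follows directly from Proposition~\ref{arise} once one observes that over a field every finite-dimensional subspace is automatically finitely generated and projective. Your identification of $\hh = \gg_\oomega$ and your invocation of parts (1), (2), and (4) of that proposition, together with the $\GG$-ideal property established just before it, is exactly the intended argument.
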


\begin{lem}
\label{calc}
As before, for book keeping purposes, write the first copy of $\gg$ in
$\gg \oplus \gg$ as $\gg^1$ and the second one as $\gg^2$,
let
$(e_j)$ be a family of members of $\gg$
that span a Lie subalgebra,
and let $\eeta^k_{u,v}\in \bR$ be the corresponding structure constants
so that
$[e_u,e_v]= \eeta^k_{u,v} e_k$.
Further, let
\begin{equation}
 \eeta^{j,k}  e_j^1 \otimes e_k^2\in \gg^1 \otimes \gg^2
\end{equation}
be a symmetric $2$-tensor in $\gg \otimes \gg$, suppose that
$\eeta^{j,k,s}= \eeta^{j,u} \eeta^k_{u,v} \eeta^{v,s}$
is totally antisymmetric in $j,k,s$,
and let
\begin{align}
\chi&=\tfrac {\eeta^{j,k}}2  e_j^1 \wedge e_k^2 
\in \gg^1 \boxwedge \gg^2 \subseteq \LAL^2[\gg^1 \oplus \gg^2]
\label{chi}
\\
\phi&= \tfrac 1 {12}\eeta^{j,k,s} e_j \wedge e_k \wedge e_s
\in \gg \otimes \gg \otimes \gg .
\label{twelvephi4}
\end{align}
Then, 
relative to the diagonal map
$\Delta \colon \LAL[\gg] \to  \LAL[\gg^1]\otimes \LAL[\gg^2]$
of $\LAL[\gg]$
and the Gerstenhaber bracket
$\bra$ on $\LAL[\gg^1 \oplus \gg^2]$,
\begin{equation}
[\chi,\chi] = \Delta(\phi) - \phi^1 - \phi^2 .
\label{calc1}
\end{equation}
\end{lem}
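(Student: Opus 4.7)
The plan is to compute $[\chi,\chi]$ by iterating the Gerstenhaber Leibniz rule \eqref{ger3}, to expand $\Delta(\phi) - \phi^1 - \phi^2$ using the shuffle diagonal, and then to match the two expressions by comparing coefficients with the aid of the assumed total antisymmetry of $\eeta^{j,k,s}$.

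First I would write $[\chi,\chi] = \tfrac 14\,\eeta^{j,k}\eeta^{u,v}\,[e_j^1 \wedge e_k^2,\, e_u^1 \wedge e_v^2]$ and apply \eqref{ger3} twice to the inner bracket. The crucial simplification is that $\gg^1$ and $\gg^2$ lie in distinct direct summands of $\gg^1 \oplus \gg^2$, so every term of the form $[e_a^1, e_b^2]$ vanishes; the eight-term Leibniz expansion thereby collapses to two surviving contributions, one proportional to $\eeta^s_{j,u}\, e_s^1 \wedge e_k^2 \wedge e_v^2$ and one to $\eeta^s_{k,v}\, e_j^1 \wedge e_u^1 \wedge e_s^2$. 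In particular $[\chi,\chi]$ lives entirely in the $(\LAL^{\mrc,2}[\gg^1] \otimes \gg^2) \oplus (\gg^1 \otimes \LAL^{\mrc,2}[\gg^2])$ summand of $\LAL^{\mrc,3}[\gg^1 \oplus \gg^2]$ in the decomposition arising from \eqref{canonically1}; the pure-$\gg^i$ pieces are automatically zero, matching exactly what the right-hand side $\Delta(\phi) - \phi^1 - \phi^2$ must look like.

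Next I would compute $\Delta(\phi)$ using the diagonal induced by $e_j \mapsto e_j^1 + e_j^2$, yielding $\Delta(\phi) = \tfrac 1{12}\eeta^{j,k,s}(e_j^1 + e_j^2)\wedge(e_k^1 + e_k^2)\wedge(e_s^1 + e_s^2)$. The expansion produces two pure terms reproducing $\phi^1$ and $\phi^2$, plus six mixed terms organized into three two-$1$-one-$2$ and three one-$1$-two-$2$ summands. Exploiting the total antisymmetry of $\eeta^{j,k,s}$ together with the antisymmetry of the wedge to relabel dummy indices, the three mixed contributions in each type consolidate into three identical summands, producing an overall factor $3/12 = 1/4$ and an expression of the shape $\tfrac 14\,\eeta^{a,b,c}(e_a^1 \wedge e_b^1 \wedge e_c^2 + e_a^1 \wedge e_b^2 \wedge e_c^2)$.

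It remains to match coefficients in the two mixed pieces. After suitable relabeling, and using the antisymmetry of the structure constants $\eeta^s_{\cdot,\cdot}$ together with the symmetry of the tensor $\eeta^{\cdot,\cdot}$, this reduces to identifying the triple contraction $\eeta^{j,k}\eeta^{u,v}\eeta^s_{j,u}$ produced by the Gerstenhaber expansion with $\eeta^{s,k,v}$ as defined via \eqref{twelvephi4}. Both sides are automatically antisymmetric in the relevant pair of indices via a dummy-swap argument, and the assumed total antisymmetry of $\eeta^{j,k,s}$ supplies the remaining symmetries required for the identification. The main obstacle is precisely this coefficient matching: several superficially different triple contractions of the symmetric tensor $\oomega$ through the Lie bracket produce totally antisymmetric three-tensors on $\gg$, and one must verify by careful index manipulation---together with keeping track of the signs coming from the Leibniz rule and the wedge antisymmetry---that the contraction arising from the Gerstenhaber expansion agrees with the one encoded in \eqref{twelvephi4}. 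Once this is established, the identity \eqref{calc1} follows.
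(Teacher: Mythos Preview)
Your proposal is correct and follows essentially the same route as the paper: expand $[\chi,\chi]$ via the Gerstenhaber Leibniz rule, discard the cross brackets $[\gg^1,\gg^2]$, expand $\Delta(\phi)$ via the shuffle diagonal, and match the mixed $(2,1)$ and $(1,2)$ pieces using the total antisymmetry of $\eeta^{j,k,s}$. One small slip: the Leibniz expansion of $[e_j^1\wedge e_k^2,\,e_u^1\wedge e_v^2]$ produces four terms, not eight, of which two survive---but this does not affect the argument.
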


\begin{proof}
From
\begin{align*}
[e^1_j \wedge e^2_j, e^1_k \wedge e^2_k]&=
\begin{cases}
\phantom{+}
[e^1_j, e^1_k] \wedge e^2_j \wedge e^2_k
\\
-
[e^1_j, e^2_k] \wedge e^2_j \wedge e^1_k 
\\
-[e^2_j, e^1_k] \wedge e^1_j  \wedge e^2_k
\\
+[e^2_j,  e^2_k]  \wedge
e^1_j \wedge e^1_k
\end{cases}
=
\eeta^s_{j,k} \left(e^1_s \wedge e^2_j \wedge e^2_k
+e^2_s  \wedge e^1_j \wedge e^1_k \right)
\end{align*}
we deduce
\begin{equation}
4[\chi,\chi] 
= 
\eeta^{j,k,s}(e^1_j \wedge e^2_k \wedge e^2_s +e^2_j \wedge e^1_k \wedge e^1_s).
\label{ident22}
\end{equation}
In the same vein, 
\begin{align*}
\Delta(e_j \wedge e_k \wedge e_s)&=
\begin{cases}
\phantom{+}(e_j \wedge e_k \wedge e_s) \otimes 1
\\
+
(e_j \wedge e_k)  \otimes e_s+
e_s \otimes (e_j \wedge e_k) 
\\
+
e_j \otimes ( e_k \wedge  e_s)+
 (e_k \wedge  e_s) \otimes e_j
\\
-(e_j \wedge e_s) \otimes e_k
-e_k \otimes (e_j \wedge e_s)
\\
+
1 \otimes (e_j \wedge e_k \wedge  e_s)
\end{cases}
\\
&=
\begin{cases}
\phantom{+}
e^1_j \wedge e^1_k \wedge e^1_s
\\
+
e^1_j \wedge e^1_k  \wedge e^2_s+
e^1_s \wedge  e^2_j \wedge e^2_k 
\\
+
e^1_j \wedge e^2_k \wedge  e^2_s+
e^1_k \wedge  e^1_s \wedge e^2_j
\\
-e^1_j \wedge e^1_s \wedge e^2_k
-e^1_k \wedge e^2_j \wedge e^2_s
\\
+
e^2_j \wedge e^2_k \wedge  e^2_s
\end{cases}
=
\begin{cases}
\phantom{+}
e^1_j \wedge e^1_k \wedge e^1_s
\\
+
e^1_j \wedge e^1_k  \wedge e^2_s
+
e^1_j \wedge e^2_k \wedge  e^2_s
\\
+
e^1_k \wedge  e^1_s \wedge e^2_j+e^1_k \wedge e^2_s \wedge e^2_j
\\
+e^1_s \wedge e^1_j \wedge e^2_k
+
e^1_s \wedge  e^2_j \wedge e^2_k \quad
\\
+
e^2_j \wedge e^2_k \wedge  e^2_s .
\end{cases}
\end{align*}
Consequently
\begin{align}
\Delta(\eeta^{j,k,s} e_j \wedge e_k \wedge  e_s)
&=
\begin{cases}
\phantom{+}\eeta^{j,k,s}
e^1_k \wedge  e^1_k \wedge e^1_s 
+
\eeta^{j,k,s}
e^2_j \wedge  e^2_k \wedge e^2_s 
\\
+3\eeta^{j,k,s}\left(e^1_j \wedge e^1_k  \wedge e^2_s
+
e^1_j \wedge e^2_k \wedge  e^2_s\right).
\end{cases}
\label{ident33}
\end{align}
\eqref{ident22} and \eqref{ident33} together imply 
\eqref{calc1}.
\end{proof}

\begin{proof}[Proof of Proposition {\rm \ref{cartan}}]
In terms of the notation of Lemma \ref{calc},
\begin{equation*}
\phi_\oomega = \tfrac 12 \eeta^{j,k,s} e_j \otimes e_k \otimes e_s =
\tfrac 1{12} \eeta^{j,k,s} e_j \wedge e_k \wedge e_s. \qedhere
\end{equation*}
\end{proof}

\begin{examp}
\label{fund}
{\rm
Over a field,
\cite{MR823849} exhibits, on Lie algebras that
are not necessarily semisimple or reductive, 
examples of 
 $\Ad$-invariant non-degenerate symmetric
bilinear forms that do not arise
as trace forms.

Let $\oomega$ be a non-trivial 
$\Ad$-invariant symmetric $2$-tensor in $\gg \otimes \gg$
such that the adjoint
$\varotheta \colon \gg^* \to \gg$
of the 
$\GG$-invariant symmetric bilinear form $\odot$  on $\gg^*$
which $\oomega$ induces
factors as
$\gg^* \to \hh^* \to \hh$
through an isomorphism $\hh^* \to \hh$
of $\GG$-modules.
Then $\oomega$ is $\GG$-quasi Poisson convenient.
For example, this happens when
$\gg$ is the direct sum
 $\gg = \hh \oplus \qq $
of two $\GG$-ideals with
$\hh$ semisimple when we take $\oomega$ to be the
symmetric $2$-tensor in  $\hh \otimes \hh$ and hence in $\gg \otimes \gg$
which an $\Ad$-invariant  non-degenerate
symmetric bilinear form on $\hh$ induces.
}

\end{examp}

\subsection{Quasi Poisson structure}
\label{qps}
From  now on, $\GG$ is an ordinary
(finite-dimensional) Lie group
and $\gg$ its Lie algebra.
Let $\oomega \in \gg \otimes \gg$
be an $\Ad$-invariant symmetric $2$-tensor,
by Corollary \ref{every}
necessarily $\GG$-quasi Poisson convenient.

For a $\GG$-manifold $M$, 
the 
$\Ad$-invariant
totally antisymmetric $3$-vector $\phi_\oomega$ (relative to the 
$\Ad$-invariant symmetric $2$-tensor 
$\oomega$ in $\gg \otimes \gg$), cf. Proposition \ref{cartan}, induces,
via
the infinitesimal action
$\gg \to \Vect(M)$ of the Lie algebra
$\gg$ on $M$, a $\GG$-invariant totally antisymmetric $3$-tensor
$\phi_{\oomega,M}$ on $M$.
We define a $\GG$-{\em quasi Poisson\/} structure
on a $\GG$-manifold $M$ {\em relative to\/} $\oomega$
to be a $\GG$-invariant bivector $P$ on $M$ such that
\begin{equation}
[P,P] = \phi_{\oomega,M} .
\end{equation}
Occasionally we simplify the notation somewhat and write
$\phi_M$ rather than $\phi_{\oomega,M}$.
A  $\GG$-{\em quasi Poisson\/} manifold
is a $\GG$-manifold $M$ 
together with a 
$\GG$-quasi Poisson structure $P$. 
When $\GG$ is compact and $\oomega$  arises from 
a  non-degenerate  positive definite
$\Ad$-invariant symmetric bilinear form on $\gg$,
the present definition recovers  \cite[Definition 2.1 p.~5]{MR1880957}.

\begin{examp}
\label{examp2}
{\rm
Let the first copy of $\GG \times \GG$ act on $\GG$
by left translation and the second one by right translation.
This turns $\GG$ into a $(\GG \times \GG)$-manifold.
Let $\phi^L_{\GG,\oomega},\phi^R_{\GG,\oomega}\in \LAL^{\mrc,3}[\GG]$
denote the respective image of $\phi_{\oomega}\in \LAL^{\mrc,3}[\gg]$
in $\LAL^{\mrc,3}[\GG]$ under  left translation
and right translation.
(N.B. $\phi^L_{\GG,\oomega}$ is the image of
$\phi_{\oomega}$ under the infinitesimal right translation action of $\gg$ on  $\GG$
and 
$-\phi^R_{\GG,\oomega}$ that of
$\phi$ under the infinitesimal left translation action.) 
Then $\phi_{\GG,\oomega} = \phi^L_{\GG,\oomega} - \phi^R_{\GG,\oomega}$.
Since $\phi$ is $\Ad$-invariant,
the $3$-vectors  $\phi^L_{\GG,\oomega}$ and $\phi^R_{\GG,\oomega}$
on $\GG$ coincide.
Hence the zero structure is a $(\GG \times \GG)$-quasi Poisson structure
on $\GG$ relative to the symmetric $2$-tensor $\oomega$ in $\gg \otimes \gg$.

This kind of reasoning applies to any  $(\GG \times \GG)$-manifold
$M$ such that the sum 
$\phi^1_M + \phi^2_M$
of the respective  $3$-vectors 
$\phi^1_M \in \LAL^{\mrc,3}[M]$ 
and $\phi^2_M\in \LAL^{\mrc,3}[M]$
relative to the action of the first and second copy of $\GG$
is zero.

}
\end{examp}

\begin{examp}
\label{examp1}
{\rm
Since $\oomega$ is symmetric,
the composite
\begin{equation}
\xymatrixcolsep{5pc}
\xymatrix{
P_{\GG,\oomega}\colon \GG \ar[r]^{\oomega \phantom{aaaaaa}} & 
\GG \times (\gg \otimes \gg) \ar[r]^{\tfrac 12 (L + R) \otimes (L-R)}
&\TT \GG \otimes_\GG \TT \GG
}
\label{PG}
\end{equation}
or, equivalently,
the composite
\begin{equation}
\xymatrixcolsep{5pc}
\xymatrix{
P_{\GG,\oomega}\colon \GG \ar[r]^{\oomega \phantom{aaaaaa}} & 
\GG \times (\gg \otimes \gg) \ar[r]^{-\tfrac 12 (L \wedge  R)}
&\TT \GG \otimes_\GG \TT \GG
}
\label{PG2}
\end{equation}
characterizes a skew-symmetric $2$-tensor in $\gg \otimes \gg$.
Let
 $\phi_{\GG,\oomega} \in \LAL^{\mrc,3}[\GG]$
denote the image 
of $\phi_\oomega \in \LAL^{\mrc,3}[\gg]$
relative to the  conjugation action of $\GG$ on itself.
\begin{prop}
With respect to conjugation,
the bivector field
$P_{\GG,\oomega}$ on $\GG$ is a $\GG$-quasi Poisson structure
on $\GG$ relative to the symmetric $2$-tensor $\oomega$
in $\gg \otimes \gg$, that is
\begin{equation}
[P_{\GG,\oomega},P_{\GG,\oomega}] = \phi_{\GG,\oomega} \in \LAL^{\mrc,3}[\GG].
\label{qPG}
\end{equation}
\end{prop}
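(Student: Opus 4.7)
I will deduce \eqref{qPG} from the purely Lie-algebraic identity \eqref{calc1} of Lemma \ref{calc} by transferring it to the manifold $\GG$ via the fundamental vector field map of a suitable action of $\GG \times \GG$ on $\GG$.

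Consider the left action of $\GG \times \GG$ on $\GG$ defined by $(g_1, g_2) \cdot q = g_2 q g_1^{-1}$. A direct application of the definition \eqref{funda} shows that its infinitesimal action $\rho\colon \gg^1 \oplus \gg^2 \to \Vect(\GG)$ sends $(X,0) \mapsto X^L$ and $(0,Y) \mapsto -Y^R$. Because the fundamental vector field map of any left group action is a morphism of Lie algebras (as one verifies directly from $[X^L,Y^L]=[X,Y]^L$, $[X^R,Y^R]=-[X,Y]^R$, and $[X^L,Y^R]=0$), it extends uniquely to a morphism $\rho_*\colon \LAL[\gg^1 \oplus \gg^2] \to \LAL[\Vect(\GG)]$ of Gerstenhaber algebras, preserving both the wedge product and the Schouten bracket.

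The next step is to identify the images under $\rho_*$ of the three terms appearing in \eqref{calc1}. Using the symmetry $\eeta^{j,k} = \eeta^{k,j}$ together with the antisymmetry of $\wedge$, a short computation gives $\rho_*(\chi_\oomega) = -\tfrac{1}{2}\eeta^{j,k} e_j^L \wedge e_k^R = P_{\GG,\oomega}$. For the diagonal term, the composite $\rho \circ \Delta\colon \gg \to \Vect(\GG)$ sends $X$ to $X^L - X^R = \fund^{\conj}_\GG(X)$, which is the conjugation fundamental vector field; extending multiplicatively, $\rho_*(\Delta(\phi_\oomega)) = \phi_{\GG,\oomega}$ by the very definition of the latter. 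Lastly, $\rho_*(\phi^1_\oomega)$ is the image of $\phi_\oomega$ under $X \mapsto X^L$, i.e., the left-invariant extension $\phi^L_\oomega$, while $\rho_*(\phi^2_\oomega) = -\phi^R_\oomega$ (the sign being the cube of the sign in $\rho(Y^2) = -Y^R$); the $\Ad$-invariance of $\phi_\oomega$ forces its left- and right-invariant extensions on $\GG$ to coincide, so $\rho_*(\phi^1_\oomega + \phi^2_\oomega) = 0$.

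Combining these identifications, applying $\rho_*$ to both sides of \eqref{calc1} yields
\[
[P_{\GG,\oomega}, P_{\GG,\oomega}] = \rho_*([\chi_\oomega, \chi_\oomega]) = \rho_*\bigl(\Delta(\phi_\oomega) - \phi^1_\oomega - \phi^2_\oomega\bigr) = \phi_{\GG,\oomega},
\]
which is exactly \eqref{qPG}. The principal technical concern is sign bookkeeping: one must keep close track of the conventions from the preceding subsections when computing $\rho_*(\chi_\oomega)$ and when verifying the cancellation $\phi^L_\oomega = \phi^R_\oomega$ on $\GG$. Each of these verifications is routine but unforgiving of errors.
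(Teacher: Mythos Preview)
Your proof is correct and follows essentially the same approach as the paper: push Lemma~\ref{calc}'s identity \eqref{calc1} to $\GG$ via the fundamental vector field map of the two-sided $(\GG\times\GG)$-action, identify $\chi_\oomega$ with $\pm P_{\GG,\oomega}$ and $\Delta(\phi_\oomega)$ with $\phi_{\GG,\oomega}$, and use $\Ad$-invariance of $\phi_\oomega$ to cancel $\phi^L_\oomega - \phi^R_\oomega$. The only differences are cosmetic: you swap the labeling of the two copies of $\gg$ relative to the paper's Example~\ref{examp2} (so your $\gg^1$ acts by right translation rather than left), and you frame the transfer explicitly as a Gerstenhaber algebra morphism, whereas the paper leaves that implicit.
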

\begin{proof}

The action of $\GG \times \GG$  on $\GG$
in Example \ref{examp2}
has fundamental vector field map
\begin{equation}
L^2 - R^1 \colon\GG \times (\gg^1 \oplus \gg^2) \longrightarrow \TT \GG.
\label{fundLR}
\end{equation}
Let $\Delta(\phi_\oomega)_\GG\in \LAL^{\mrc,3}[\GG]$
denote the  image of 
$\Delta(\phi_\oomega)\in \LAL^{\mrc,3} (\gg \oplus \gg)$
under the map
\begin{equation}
\GG \times \LAL^{\mrc,3} (\gg \oplus \gg) \longrightarrow  
\LAL^{\mrc,3}[\TT \GG]
\end{equation}
which \eqref{fundLR} induces.
Since combining the  $(\GG \times \GG)$-action
on $\GG$ with the diagonal $\GG \to \GG \times \GG$
yields the conjugation action of $\GG$ on itself, the value
 $\Delta(\phi_\oomega)_\GG \in \LAL^{\mrc,3}[\GG]$
coincides with  $\phi_{\GG,\oomega}$. 
The $3$-vectors
 $\phi^L_{\GG,\oomega}$ and $-\phi^R_{\GG,\oomega}$ being identical,
identity \eqref{calc1} implies \eqref{qPG}.
\end{proof}
In terms of a basis
$e_1,\ldots,e_d$ of $\gg$, with
$\oomega = \eeta^{j,k} e_j \otimes  e_k$ for $\eeta^{j,k} \in \bK$,
\begin{align}
P_{\GG,\oomega}&= \tfrac 12 \eeta^{j,k} e_j^R \wedge e_k^L. 
\label{tB1}
\end{align}
}
\end{examp}
Henceforth we simplify the notation
somewhat and write $P_\GG$ rather than 
$P_{\GG,\oomega}$ etc. when
the $2$-tensor $\oomega$ is clear from the context.

\begin{examp}
\label{examp3}
{\rm
Let $\prin\colon \Prin \to M$ be a principal $\GG$-bundle 
and
 $\Conn$ the affine space of connections on
$\prin$.
The tangent space 
$\TT_A\AA_\xi$
at a connection $A$
is the space 
$\AA^1(M, \ad(\xi))$
of $\ad(\xi)$-valued $1$-forms on $M$.
The group $\mathcat G_\xi$ of gauge transformations of $\xi$ is the group
of $\GG$-equivariant diffeomorphisms $\Prin \to \Prin$
over the identity of $M$.
Identify $\mathcat G_\xi$ with $\Map_\GG(\Prin,\GG)$ and its Lie algebra
$\gg_\xi$ with $\Map_\GG(\Prin,\gg)$.
Since $\oomega$ is $\Ad$-invariant,
(with a slight abuse of the notation
$\oomega$,)
the constant map $\oomega \colon \Prin \to \gg \otimes \gg$
is $\GG$-equivariant, indeed,
factors through the constant map $\oomega \colon \Prin/\GG \to \gg \otimes \gg$,
and $\oomega$, viewed as a member of
$\Map_\GG(\Prin,\gg \otimes \gg)$,
is $\Ad$-invariant
with respect to the adjoint action of
$\Map_\GG(\Prin,\GG)$ on $\Map_\GG(\Prin,\gg \otimes \gg)$.
Moreover, (with a slight abuse of the notation
$\phi_{\oomega}$,)
the constant map
$\phi_{\oomega}\colon \Prin \to \LAL^{\mrc,3}[\gg]$
is $\GG$-equivariant, indeed,
factors likewise through the constant map 
$\phi_{\oomega} \colon \Prin/\GG \to \LAL^{\mrc,3}[\gg]$, and we view
$\phi_{\oomega}\colon \Prin \to \LAL^{\mrc,3}[\gg]$ 
as a totally antisymmetric  $3$-tensor
over
$\Map_\GG(\Prin, \gg)$.

The operations 
\begin{equation}
L,R \colon \Map_\GG(\Prin,\GG) \times\Map_\GG(\Prin,\gg) \longrightarrow
\Map_\GG(\Prin,\TT \GG) 
\end{equation}
make sense:
For $u \in  \Map_\GG(\Prin,\GG)$ and  $v \in\Map_\GG(\Prin,\gg)$, define
$L_u(v) \in\Map_\GG(\Prin,\TT \GG) $
by
\begin{equation}
L_u(v)(q)= L_u(q) v(q) \in \TT_{u(q)} \GG.
\end{equation} 
Hence the tensor
\begin{equation}
P_{\Map_\GG(\Prin,\GG)}= \tfrac 12 (R \wedge L)(\oomega)\colon  \Map_\GG(\Prin, \GG) \to \Map_\GG(\Prin, \TT^2\GG)
\end{equation}
is available and, indeed,
yields a $\Map_\GG(\Prin, \GG)$-quasi Poisson structure
on  $\Map_\GG(\Prin, \GG)$ relative to the $3$-tensor
$\phi_{\oomega}\colon \Prin \to \LAL^{\mrc,3}[\gg]$ over
$\Map_\GG(\Prin, \gg)$.

Let $q$ be a point of $M$ and consider the evaluation map
$\ev_q\colon \Map_\GG(\Prin,\GG) \to \GG$. 
In the smooth setting,
in the Fr\'echet topology,
the evaluation map $\ev_q$ is a smooth epimorphism 
of Lie groups. By construction, $\ev_q$ 
is compatible with the quasi Poisson structures.

In the algebraic setting,
when $\GG$ is an algebraic group defined over $\bK$
with coordinate Hopf algebra $\bK [\GG]$
 and $\prin \colon \Prin \to M$ an
algebraic principal $\GG$-bundle 
over an affine variety $M$ so that the coordinate ring $\bK[M]$ makes sense,
we view
\begin{equation}
\Map_\GG(\Prin,\GG) \cong \Hom_{\Alg}(\bK [\GG], \bK[\Prin])^\GG 
\end{equation}
as the group of $(\bK[M])$-points of $\GG$
and $\Map_\GG(\Prin,\gg)$ as the Lie algebra of $(\bK[M])$-points of $\gg$, 
twisted via $\prin$.
When $\prin$ is trivial,
this comes down to
 the ordinary group of $(\bK[M])$-points of $\GG$.
In the general case, the underlying 
$(\bK[M])$-module
of 
$\Map_\GG(\Prin,\gg)$ 
is finitely generated and projective.
In the smooth setting, suitably interpreted,
this fact holds as well.
}
\end{examp}

\subsection{Momentum  mapping}
\label{momentum}

Let  $P$ be a $\GG$-invariant bidifferential operator 
(equivalently bivector field or skew-symmetric $2$-tensor)
on $M$.
Recall from the introduction that we refer to
an admissible   $\GG$-equivariant 
map $\Phi \colon M \to \GG$ 
(with respect  to the conjugation action of $\GG$ on itself)
as a $\GG$-{\em momentum mapping for\/}
$P$ 
{\em relative to\/} $\oomega$
when it renders  diagram \eqref{PMPhi}
 commutative.
The following is immediate.

\begin{prop}
\label{momG}
The identity of $\GG$ is a $\GG$-momentum mapping for $P_\GG$,
see {\rm \eqref{PG}} and {\rm \eqref{PG2}},
 relative
to $\oomega$. 
Furthermore, the restriction $P_{\CcC}$ of the bivector field $P_\GG$
to a conjugacy class ${\CcC}$ in $\GG$ is tangent to ${\CcC}$ whence, with respect to conjugation,
 $P_{\CcC}$ is a
$\GG$-quasi Poisson structure on ${\CcC}$, and  
the  inclusion $\iota \colon {\CcC} \to \GG$
is a $\GG$-momentum mapping for $P_{\CcC}$ relative to $\oomega$.
\qed
\end{prop}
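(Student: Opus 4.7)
My plan has two parts, matching the two assertions.

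For the claim that the identity of $\GG$ is a $\GG$-momentum mapping for $P_\GG$ relative to $\oomega$, the argument is a direct unfolding of diagram \eqref{PMPhi} in the case $M = \GG$, $\Phi = \Id_\GG$. The top row sends $q \in \GG$ to $P_{\GG,q}$ itself because $(d\Id)_M = \Id$. The bottom row computes, at $q$, the composite $(\Id \otimes \fund^\conj_\GG) \circ (\tfrac{1}{2}(L_q + R_q) \otimes \Id)(\oomega)$. Since $\fund^\conj_\GG(X) = X^L - X^R$, this composite is exactly $\tfrac{1}{2}(L_q + R_q) \otimes (L_q - R_q)(\oomega)$, which coincides with $P_{\GG,q}$ by the defining formula \eqref{PG}. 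Commutativity of \eqref{PMPhi} is thus built into the definition of $P_\GG$.

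The second assertion reduces to showing tangency: $P_{\GG,q} \in \TT_q\CcC \otimes \TT_q\CcC$ for every $q \in \CcC$. I would combine two observations. First, $P_\GG$ is genuinely antisymmetric: the symmetric part under swap of $\tfrac{1}{2}(L+R) \otimes (L-R)(\oomega)$ equals $\tfrac{1}{2}[(L\otimes L) - (R \otimes R)](\oomega)$, and the identity $R_q = L_q \circ \Ad_{q^{-1}}$ combined with $\Ad$-invariance of $\oomega$ gives $(R_q \otimes R_q)(\oomega) = (L_q \otimes L_q)(\Ad_{q^{-1}} \otimes \Ad_{q^{-1}})(\oomega) = (L_q\otimes L_q)(\oomega)$, killing this symmetric part. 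Second, \eqref{PG} puts the second tensor slot of $P_{\GG,q}$ in the image of $L_q - R_q \colon \gg \to \TT_q\GG$; but $(L_q - R_q)(X) = qX - Xq = \fund^\conj_\GG(X)|_q$, whose image as $X$ ranges over $\gg$ is exactly $\TT_q\CcC$. Thus $P_{\GG,q} \in \TT_q\GG \otimes \TT_q\CcC$, and the elementary fact $(W \otimes V) \cap (V \otimes W) = V \otimes V$, applied to an antisymmetric element of $W^{\otimes 2}$ already lying in $W \otimes V$, upgrades this to $P_{\GG,q} \in \TT_q\CcC \otimes \TT_q\CcC$. Consequently $P_\GG$ restricts to a $\GG$-invariant bivector $P_\CcC := P_\GG|_\CcC$ on $\CcC$.

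The remaining assertions then follow by straightforward restriction. By \eqref{qPG}, $[P_\CcC, P_\CcC] = [P_\GG,P_\GG]|_\CcC = \phi_{\GG,\oomega}|_\CcC$, and $\phi_{\GG,\oomega}$ is itself tangent to $\CcC$ at every point, being the image under the infinitesimal conjugation action of the totally antisymmetric Cartan element $\phi_\oomega \in \LAL^{\mrc,3}[\gg]$; its restriction is manifestly $\phi_{\CcC,\oomega}$, so $P_\CcC$ is a $\GG$-quasi Poisson structure on $\CcC$. Finally, the momentum-mapping diagram \eqref{PMPhi} for $(\CcC, P_\CcC, \iota)$ is obtained by pulling back along $\iota$ each of the ingredients $(d\iota)_M$, $L_\iota$, $R_\iota$, $\fund^\conj_\CcC$ in the already-verified diagram for $(\GG, P_\GG, \Id_\GG)$, the commutativity being inherited from the first part.

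The one genuine piece of work is the tangency step. The clean resolution above via antisymmetry plus the one-sided tangency built into \eqref{PG} is the key: it sidesteps any direct $\hh$-$\mm$ decomposition of $\gg$, which would be problematic when $\Ad_q$ fails to be semisimple, and avoids any explicit argument with the adjoint $\oomega^\sharp$ and its equivariance.
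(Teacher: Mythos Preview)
Your proof is correct and follows the same direct-verification approach the paper intends: the paper simply declares the proposition ``immediate'' and gives no further argument, and your unfolding of diagram \eqref{PMPhi} for $\Phi=\Id_\GG$ together with the antisymmetry-plus-one-sided-tangency argument is precisely the kind of check that makes it immediate. The only thing to note is that the antisymmetry of $P_\GG$ is already asserted in the paper right after \eqref{PG2}, so you could cite that rather than recompute the symmetric part; otherwise your write-up is a faithful and complete expansion of what the paper leaves implicit.
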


\begin{cor}
\label{cor1}
The diagram
\begin{equation}
\begin{gathered}
\xymatrixcolsep{3pc}
\xymatrixcolsep{4pc}
\xymatrix{
\TT^* \GG\ar[d]_{2P_\GG^\sharp}
\ar[r]^{L^* + R^*} 
& \GG \times \gg^* \ar[d]^{\Id \times \psi^\oomega} 
\\
\TT \GG
&\ar[l]^{L-R} \GG \times \gg 
}
\end{gathered}
\label{CD03}
\end{equation}
is commutative,
and the diagram being commutative characterizes $P$.
\end{cor}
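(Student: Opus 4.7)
My plan is to read Corollary~\ref{cor1} as the covector-side (adjoint) reformulation of the content already established in Proposition~\ref{momG}, combined at the end with the $\Ad$-invariance of $\oomega$ to eliminate cross-terms. First, I specialize the quasi Poisson momentum diagram \eqref{PMPhi} to $M=\GG$ and $\Phi=\Id$ under the conjugation action: then $(d\Phi)_M=\Id$ and the fundamental vector field is $\fund_\GG=L-R$, so the commutativity of \eqref{PMPhi} that Proposition~\ref{momG} provides is precisely the bivector identity
\[
P_\GG=\tfrac12\big((L+R)\otimes_\GG(L-R)\big)(\oomega),
\]
recovering \eqref{PG}. Using the definition \eqref{Psharp} of $P_\GG^\sharp$, the coordinate formula \eqref{tB1}, and the symmetry of $\oomega$, this yields
\[
2P_\GG^\sharp(\alpha)=L\big(\psi^\oomega(R^*\alpha)\big)-R\big(\psi^\oomega(L^*\alpha)\big)
\]
for every covector $\alpha$ on $\GG$, where $\psi^\oomega\colon\gg^*\to\gg$ is the adjoint of $\oomega$.

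Second, I expand the outer composite of \eqref{CD03}: distributing $\psi^\oomega\circ(L^*+R^*)$ and then $L-R$ gives the four-term expression
\[
L\psi^\oomega L^*\alpha+L\psi^\oomega R^*\alpha-R\psi^\oomega L^*\alpha-R\psi^\oomega R^*\alpha.
\]
Subtracting the formula for $2P_\GG^\sharp(\alpha)$ above leaves the residual $L\psi^\oomega L^*\alpha-R\psi^\oomega R^*\alpha$. Using $R_q=L_q\circ\Ad_{q^{-1}}$ and hence $R_q^*=\Ad_{q^{-1}}^*\circ L_q^*$, the $\Ad$-invariance of $\oomega$ (rewritten as $\psi^\oomega=\Ad_{q^{-1}}\circ\psi^\oomega\circ\Ad_{q^{-1}}^*$) implies $L\circ\psi^\oomega\circ L^*=R\circ\psi^\oomega\circ R^*$, so this residual vanishes identically. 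This is the only step that goes beyond bookkeeping, and it is the main (mild) obstacle in the proof.

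Third, the characterization clause is formal. An antisymmetric bivector $P$ on $\GG$ is uniquely determined by its adjoint $P^\sharp\colon\TT^*\GG\to\TT\GG$, and the commutative diagram \eqref{CD03} prescribes $2P^\sharp$ as a specific morphism of vector bundles on $\GG$, namely $(L-R)\circ(\Id\times\psi^\oomega)\circ(L^*+R^*)$. Hence $P_\GG$ is the unique bivector on $\GG$ fitting into \eqref{CD03}, concluding the proof.
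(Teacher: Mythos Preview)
Your proof is correct. The paper gives no explicit proof of Corollary~\ref{cor1}; it is meant to be immediate from Proposition~\ref{momG} together with the very definition \eqref{PG} of $P_\GG$.

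One remark on efficiency: you take the detour through the manifestly antisymmetric expression \eqref{tB1}, which forces you to confront and then cancel the residual $L\psi^{\oomega}L^*\alpha-R\psi^{\oomega}R^*\alpha$ via $\Ad$-invariance. A shorter route is to take $\sharp$ directly from \eqref{PG}: writing $2P_\GG=((L+R)\otimes_\GG(L-R))(\oomega)$ and using that for any $2$-tensor $Q=(A\otimes B)(\oomega)$ one has $Q^\sharp=B\circ\psi^{\oomega}\circ A^*$, you get
\[
2P_\GG^\sharp=(L-R)\circ\psi^{\oomega}\circ(L^*+R^*)
\]
in one line, which is exactly \eqref{CD03}. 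The $\Ad$-invariance is then hidden in the paper's earlier remark (just before \eqref{PG}) that \eqref{PG} indeed defines a \emph{skew}-symmetric tensor; your computation of the residual is precisely the verification of that skewness at the level of adjoints. So both routes use the same ingredient, but yours unpacks it rather than invoking it.
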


The following proposition characterizes the momentum property in other ways.
This will enable us
to reconcile our momentum  property with a corresponding  one in the 
literature.

\begin{prop}
\label{equival1}
For  a $\GG$-invariant bivector $P$
on a $\GG$-manifold $M$ relative to $\oomega$
and a $\GG$-equivariant map
$\Phi \colon M  \to \GG$,
the following are equivalent:
\begin{enumerate}
\item The $\GG$-equivariant map
$\Phi \colon M  \to \GG$ is a $\GG$-momentum mapping for $P$
relative to $\oomega$.

\item
The operators $L_\Phi^*$ and $R_\Phi^*$ dual to
the operators in {\rm \eqref{display}}
render the diagram
\begin{equation}
\begin{gathered}
\xymatrixcolsep{4pc}
\xymatrix{
\TT^* M\ar[d]_{2P^\sharp}  &
\ar[l]_{(d \Phi)_M^*} \TT_\Phi^*\GG  \ar[r]^{L_\Phi^* + R_\Phi^*}&
M \times \gg^*
\ar[d]^{\Id \times \psi^\oomega}
\\
\TT M 
&&\ar[ll]^{\fund_M} M \times \gg  
}
\end{gathered}
\label{CD01}
\end{equation}
commutative.

\item
The operators $L_\Phi$ and $R_\Phi$ 
in {\rm \eqref{display}}
render the diagram
\begin{equation}
\begin{gathered}
\xymatrixcolsep{4pc}
\xymatrix{
\TT M
\ar[r]^{d \Phi_M}& \TT_\Phi\GG & \ar[l]_{L_\Phi + R_\Phi}
M \times \gg
\\
\TT^* M \ar[u]^{-2P^\sharp}  
\ar[rr]_{\fund^*_M}&& M \times \gg^*  
\ar[u]_{\Id \times \psi^\oomega}
}
\end{gathered}
\label{CD01d}
\end{equation}
commutative.

\item
For every $\bK$-valued admissible function $f$ on $\GG$,
in terms of the basis $e_1,\ldots, e_d$ of $\gg$
such that $\oomega = \eeta^{j,k} e_j \otimes e_k$,
\begin{equation}
2 P^\sharp(df \circ d \Phi)
=\eeta^{j,k}\left( (e_j^L + e_j^R)(f)\circ \Phi  \right)e_{k,M}.
\label{momapG44}
\end{equation}
\item
For every $\bK$-valued admissible function $f$ on $\GG$,
the vector field
$(P^\sharp_\GG (df) \circ \Phi)_M$
 on $M$
which the composite
$(P^\sharp_\GG (df)) \circ \Phi \colon M \to \gg$
induces
through the infinitesimal $\gg$-action on $M$
satisfies the identity
\begin{equation}
P^\sharp(d (f  \circ \Phi)) = (P^\sharp_\GG (df) \circ \Phi)_M.
\label{momapP1}
\end{equation}
\end{enumerate}
\end{prop}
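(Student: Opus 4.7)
The plan is to prove the five assertions equivalent along the chain (1)$\Leftrightarrow$(2)$\Leftrightarrow$(3), (2)$\Leftrightarrow$(4), (4)$\Leftrightarrow$(5). All four steps are essentially bookkeeping exercises in the various adjunctions between tensor contraction and composition of morphisms of vector bundles; the only substantive geometric input is the $\Ad$-invariance of $\oomega$, which enters only in the last step.

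For (1)$\Leftrightarrow$(2) I would contract diagram \eqref{PMPhi} with a covector $\alpha\in\TT_\Phi^*\GG$ placed in the left tensor slot of each arrow. The upper composite, applied to the bivector $P$, then yields $P^\sharp((d\Phi)_M^*\alpha)\in\TT M$, while the lower composite yields $\tfrac 12\fund_M(\psi^\oomega((L_\Phi^*+R_\Phi^*)\alpha))$, the $\tfrac 12$ being the prefactor already present in \eqref{PMPhi}. Clearing denominators recovers exactly \eqref{CD01}, and the argument is reversible since admissible covectors generate $\TT_\Phi^*\GG$. For (2)$\Leftrightarrow$(3) I would take the adjoint of every morphism in \eqref{CD01}: the antisymmetry of $P$ gives $(P^\sharp)^*=-P^\sharp$, and the symmetry of $\oomega$ gives $(\psi^\oomega)^*=\psi^\oomega$, so the dual of \eqref{CD01} is \eqref{CD01d} up to a global sign absorbed in the factor $-2P^\sharp$.

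For (2)$\Leftrightarrow$(4) I would test commutativity of \eqref{CD01} on cotangent vectors of the form $df$ with $f$ admissible on $\GG$, since such covectors generate $\TT_\Phi^*\GG$ as an $\mathcat A[M]$-module in each of the smooth, analytic, and algebraic settings. Unwinding the lower row in a basis $e_1,\dots,e_d$ of $\gg$ with $\oomega=\eeta^{j,k}e_j\otimes e_k$, and using $(d\Phi)_M^*(df)=d(f\circ\Phi)$, one finds that the image of $df$ under the lower composite of \eqref{CD01} equals $\eeta^{j,k}\bigl((e_j^L+e_j^R)f\circ\Phi\bigr)e_{k,M}$, which is precisely the right-hand side of \eqref{momapG44}.

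Finally, for (4)$\Leftrightarrow$(5) I would verify the identity of vector fields on $\GG$
\begin{equation*}
2P_\GG^\sharp(df)=\eeta^{j,k}\bigl((e_j^L+e_j^R)f\bigr)(e_k^L-e_k^R),
\end{equation*}
in which $e_k^L-e_k^R=\fund^\conj_\GG(e_k)$; since $\Phi$ is $\GG$-equivariant for the conjugation action, the right-hand side pulls back along $\Phi$ to the right-hand side of \eqref{momapG44}, so this identity lets one rewrite \eqref{momapP1} as \eqref{momapG44} and conversely. Expanding \eqref{tB1} directly gives $2P_\GG^\sharp(df)=\eeta^{j,k}(e_j^R f\cdot e_k^L-e_j^L f\cdot e_k^R)$, and the discrepancy with the target expression is $\eeta^{j,k}(e_j^L f\cdot e_k^L-e_j^R f\cdot e_k^R)$. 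This last expression vanishes pointwise on $\GG$: at $q\in\GG$ one rewrites $(e_j^R)_q$ as the value of the left-invariant field $(\Ad_{q^{-1}}e_j)^L$ at $q$, and the $\Ad$-invariance $(\Ad_{q^{-1}}\otimes\Ad_{q^{-1}})\oomega=\oomega$ converts the $(e^R,e^R)$-summand to the $(e^L,e^L)$-summand. The main obstacle throughout is keeping the factors of $\tfrac 12$, the antisymmetrisation signs, and the distinction $(L_\Phi,R_\Phi)$ versus $(L_\Phi^*,R_\Phi^*)$ straight; the only truly non-formal step is the use of $\Ad$-invariance in this last identity.
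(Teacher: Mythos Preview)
Your proof is correct and matches the paper's approach; the paper simply declares the proposition ``straightforward'' and records only the one fact you also use, namely that $(P^\sharp)^*=-P^\sharp$, so your write-up is a faithful expansion of what the author left implicit. One minor remark: your verification of $2P_\GG^\sharp(df)=\eeta^{j,k}\bigl((e_j^L+e_j^R)f\bigr)(e_k^L-e_k^R)$ via $\Ad$-invariance is exactly the content of Corollary~\ref{cor1} (diagram~\eqref{CD03}), which sits immediately before the proposition, so you could also simply cite that.
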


\begin{proof}
This is straightforward.
We only note that, 
since $P^\sharp$ is skew,
its dual $P^{\sharp,*} \colon \TT^* M \to \TT M$
coincides with $-P^\sharp$.
\end{proof}

\begin{rema}
\label{reconcile}
{\rm When
 $\GG$ is a compact Lie group (so that $\bK = \RR$)
 and $\oomega$  arises from 
a  non-degenerate $\Ad$-invariant positive definite
 symmetric bilinear form on $\gg$,
 identity \eqref{momapG44} 
 recovers  (6) in
\cite[Definition 2.2 p.~6]{MR1880957}.
Thus, in this case,
our definition
of a momentum mapping is equivalent to that
in \cite[Definition 2.2 p.~6]{MR1880957}.
For a general Lie group $\GG$ and 
non-degenerate $2$-form on its Lie algebra $\gg$,
our definition  is also equivalent to the definition in
\cite[\S 3.5.1 p.~17]{MR2103001}
and that in
\cite[\S 5.4]{MR2642360}.
}
\end{rema}

\begin{rema}
\label{rema1}
{\rm
For $\alpha \in \gg^*$, let
$\alpha^{\bra}$ denote the member of $ \gg^ * \otimes \gg^*$
which the composite 
$\gg \otimes \gg \stackrel \bra \to \gg \stackrel \alpha \to \bK$ 
characterizes. 
On  
the affine $\GG$-manifold $M_\gg^*$ that underlies $\gg^*$,
the map
\begin{equation}
P_{\bra}\colon M_{\gg^*} 
\longrightarrow
M_{\gg^*} \times (\gg^* \otimes \gg^*),\ P_{\bra}(\alpha) = (\alpha, \alpha^{\bra}),
\end{equation}
recovers the Lie-Poisson tensor.
Let $\mathrm I \in \gg^* \otimes \gg$
be the fundamental tensor, that is, the tensor
which under the canonical isomorphism
$\gg^* \otimes \gg \to \Hom(\gg,\gg)$
goes to the identity. The diagram
\begin{equation}
\begin{gathered}
\xymatrix
{
M_{\gg^*} \ar[r]^{P_{\bra}\phantom{aaaaaa}}
\ar@{=}[d]
&
M_{\gg^*} \times (\gg^* \otimes \gg^*) 
\\
M_{\gg^*} \ar[r]^{\mathrm I\phantom{aaaaaa}} & M_{\gg^*} \times ( \gg^* \otimes \gg) \ar[u]_{\Id \otimes_{M_{\gg^*}} \fund_{M_{\gg^*}}}
}
\end{gathered}
\end{equation}
is commutative.

Consider an ordinary $\GG$-invariant Poisson tensor $P$ on a
$\GG$-manifold $M$. A   $\GG$-equivariant map
$\Phi \colon M \to \gg^*$
is a momentum mapping for $P$
when it satisfies one of the two equivalent conditions below:

\begin{enumerate}
\item
Every $X \in \gg$
satisfies the identity
\begin{equation}
P^{\sharp}(d (X \circ \Phi)) = X_M.
\end{equation}

\item
The diagram
\begin{equation}
\begin{gathered}
\xymatrixcolsep{4pc}
\xymatrix
{
M \ar[r]^P
\ar@{=}[d]
&
\TT^2 M \ar[r]^{(d\Phi)_M \otimes_M \Id \phantom{aaaaaa}} &(\TT_\Phi \gg^*) \otimes_M \TT M
\\
M \ar[r]^{\mathrm I\phantom{aaaaaa}} & M \times ( \gg^* \otimes \gg) 
\ar[r]_{(d\Phi)_M \otimes_M \Id } 
& (\TT_\Phi\gg^*) \otimes \gg\ar[u]^{\Id \otimes \fund_M}
}
\end{gathered}
\label{CD0Pois}
\end{equation}
is commutative.
\end{enumerate}
}
\end{rema}

\subsection{Quasi Poisson reduction}
\label{qpr}

\begin{thm}
\label{qprt}
Let $(M,P)$
be a $\GG$-quasi Poisson manifold 
(smooth, analytic, algebraic)
relative to the symmetric $\Ad$-invariant
$2$-tensor $\oomega \in \gg \otimes \gg$
and let $\Phi \colon M \to \GG$
be a (smooth, analytic, algebraic) $\GG$-momentum mapping 
for $P$ relative to $\oomega$.
\begin{enumerate}
\item The bracket
{\rm \eqref{pb2}}, viz.
$\{f,h\} = \langle P, df \wedge d h\rangle$, for
$f,h \in \mathcat A[M]$,
yields a Poisson bracket $\pbra$ on
the algebra $\mathcat A[M]^\GG$
of $\GG$-invariant admissible functions on $M$.

\item 
Let $\CcC$ be a conjugacy class in $\GG$
in the image
of $\Phi$.
Then the ideal $I_\CcC$ of  admissible functions
in  $\mathcat A[M]^\GG$
(ideal of $\GG$-invariant admissible functions in $\mathcat A[M]$)
that vanish on
$\Phi^{-1}(\CcC) \subseteq M$
is a Poisson ideal in 
$\mathcat A[M]^\GG$.
Consequently the data determine a Poisson bracket on the quotient algebra
$\left(\mathcat A[M]^\GG\right)/I_\CcC$.
\begin{enumerate}
\item For $\GG$ compact, this yields a Poisson
 algebra of continuous functions
on the orbit space $\Phi^{-1}(\CcC)/\GG$.

\item For $M$ a complex manifold,  $\GG$ complex reductive,
and $\Phi$ holomorphic,
 this yields a Poisson algebra of analytic functions  
on an analytic quotient
of  $\Phi^{-1}(\CcC)$ by $\GG$.

\item
For $M$ a real analytic manifold 
subject to suitable additional hypotheses and $\GG$ real reductive,
 this yields a Poisson  algebra 
on a  quotient
of $\Phi^{-1}(\CcC)$  by $\GG$
of the kind explored in
{\rm \cite{MR364272, MR423398, MR1087217}}.

\item
In the algebraic case,
over an algebraically closed field $\bK$,
for $M$ a non-singular affine variety, $\GG$ reductive,
and $\Phi$ algebraic,
this turns the
affine  categorical quotient
of the kind  $\Phi^{-1}(\CcC)//\GG$
into an affine  Poisson variety
in the sense that
the data determine a Poisson structure on
the affine coordinate ring
$\left(\mathcat A[M]^\GG\right)/I_\CcC$
of $\Phi^{-1}(\CcC)//\GG$.
\end{enumerate}

\end{enumerate}

\end{thm}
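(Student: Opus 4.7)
The plan is to handle part (1) as a direct application of Proposition \ref{prop1}, and then to establish the first assertion of part (2) by showing that Hamiltonian vector fields of $\GG$-invariant functions are tangent to the fibres of $\Phi$, a fact I will extract cleanly from the momentum property in diagram \eqref{CD01d}. The identifications in cases (a)--(d) are then standard invariant-theoretic bookkeeping.

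For part (1), I note first that the bracket $\pbra$ defined by \eqref{pb2} is skew and a biderivation because $P$ is a skew bivector, and it restricts to a bracket on $\mathcat A[M]^\GG$ because $P$ is $\GG$-invariant. The defining identity $[P,P] = \phi_{\oomega,M}$ of a $\GG$-quasi Poisson structure, together with $\phi_\oomega \in \LAL^{\mrc,3}[\gg]$, is exactly the hypothesis of Proposition \ref{prop1}; that proposition therefore yields the Jacobi identity on $\gg$-invariants. Since every $\GG$-invariant admissible function is automatically $\gg$-invariant, Jacobi descends to $\mathcat A[M]^\GG$, and part (1) is established.

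The heart of part (2) is the Poisson-ideal property of $I_\CcC$. Let $f \in I_\CcC$ and $h \in \mathcat A[M]^\GG$. Then $\{f,h\}$ is $\GG$-invariant by the $\GG$-invariance of $P$, $f$, and $h$; what remains is to show $\{f,h\}$ vanishes on $\Phi^{-1}(\CcC)$. I will apply diagram \eqref{CD01d} with $\alpha = dh$: for any admissible $h$ on $M$,
\begin{equation*}
(d\Phi)_M \bigl(-2 P^\sharp(dh)\bigr) = (L_\Phi + R_\Phi) \bigl(\psi^\oomega \circ \fund^*_M(dh)\bigr).
\end{equation*}
When $h$ is $\GG$-invariant, $dh$ annihilates every fundamental vector field on $M$, hence $\fund^*_M(dh) = 0$; the Hamiltonian vector field $X_h := P^\sharp(dh)$ therefore lies in $\ker (d\Phi)_M$ and is in particular tangent to the fibres of $\Phi$. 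At a point $q \in \Phi^{-1}(\CcC)$, $X_h(q)$ is tangent to $\Phi^{-1}(\Phi(q)) \subseteq \Phi^{-1}(\CcC)$, and since $f$ vanishes on $\Phi^{-1}(\CcC)$, I conclude $X_h(f)(q) = df_q(X_h(q)) = 0$. Up to a sign this equals $\{f,h\}(q)$, so $\{f,h\} \in I_\CcC$ and $\pbra$ descends to the quotient algebra $(\mathcat A[M]^\GG)/I_\CcC$.

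For cases (a)--(d) the remaining task is to identify $(\mathcat A[M]^\GG)/I_\CcC$ with an algebra of appropriate functions on the appropriate quotient of $\Phi^{-1}(\CcC)$ by $\GG$. Case (d) is handled by standard affine GIT: for $\GG$ reductive and $\CcC$ (hence $\Phi^{-1}(\CcC)$) closed, the coordinate ring of the categorical quotient $\Phi^{-1}(\CcC)//\GG$ equals the $\GG$-invariants in the coordinate ring of $\Phi^{-1}(\CcC)$, which is precisely $(\mathcat A[M]^\GG)/I_\CcC$. Case (a) relies on Schwarz's theorem on continuous invariants for compact Lie group actions, and cases (b) and (c) are handled by the analytic quotient theories in the references cited. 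I expect the main obstacle will not be the Poisson calculus, which is disposed of cleanly by diagram \eqref{CD01d}, but rather the bookkeeping needed to recognise $\Phi^{-1}(\CcC)$ as sufficiently well-behaved (a closed subvariety in the algebraic case, an appropriate analytic subspace in the analytic cases) for the relevant quotient theory to apply.
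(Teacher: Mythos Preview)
Your treatment of part (1) matches the paper's. For part (2), your derivation of $(d\Phi)_M(X_h)=0$ from diagram \eqref{CD01d} is clean and in fact slightly more direct than the paper's route via \eqref{momapP1}, which reaches the equivalent statement $X_f(h\circ\Phi)=0$ for all admissible $h$ on $\GG$ by recognising the right-hand side as a fundamental vector field applied to an invariant function.

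The gap is the step that follows. From $X_h(q)\in\ker(d\Phi_q)$ you jump to $df_q(X_h(q))=0$ for any $f$ vanishing on $\Phi^{-1}(\CcC)$. That implication is only automatic when the fibre $\Phi^{-1}(\Phi(q))$ is smooth at $q$ with tangent space equal to $\ker(d\Phi_q)$, i.e.\ at regular points of $\Phi$. At a critical point, $\ker(d\Phi_q)$ can be strictly larger than any reasonable tangent space to the level set, and a function vanishing on the level set need not have its differential annihilate the whole kernel. Since the theorem is specifically meant to cover singular level sets (this is the point stressed throughout the paper, cf.\ Remark \ref{qhreduction}), the gap is not a side case.

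The paper closes it with an integral-curve argument that uses the \emph{global} vanishing of $d\Phi(X_h)$ rather than the pointwise condition: since $d\Phi(X_h)\equiv 0$, every admissible $h\circ\Phi$ is constant along integral curves of $X_h$, hence so is $\Phi$ itself (admissible functions separate points); the integral curve through $q\in\Phi^{-1}(\CcC)$ therefore stays in $\Phi^{-1}(\Phi(q))\subseteq\Phi^{-1}(\CcC)$, the function $f$ is identically zero along it, and differentiating at $t=0$ gives $X_h(f)(q)=0$. No smoothness of the level set is needed. In the purely algebraic setting over a general field the paper extends scalars to $\CC$ to gain access to integral curves; a genuinely algebraic argument would have to show that a derivation killing the ideal generated by $\Phi^*(J_\CcC)$ also preserves its radical, which is a nontrivial step (the paper itself points to Gabber's theorem in the remark following the proof) and which you have not supplied either.
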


\begin{proof}
Claim (1) is Proposition \ref{prop1} above.

To establish (2),
let $f$  be a $\GG$-invariant admissible function
on $M$, and
let  $X_f= \{f,\,\cdot \,\}$, the
{\em quasi Hamiltonian vector field\/}
associated with $f$.
Since $\Phi \colon M \to \GG$ is
a $\GG$-momentum mapping for $P$ relative to $\oomega$,
by \eqref{momapP1},
for $h \colon \GG \to \bK$,
\begin{equation}
X_f(h \circ \Phi)=
P^\sharp(d f) (h \circ \Phi)= 
-
(P^\sharp_\GG (dh) \circ \Phi)_M(f)
\label{show1}
\end{equation}
and, since
$(P^\sharp_\GG (dh) \circ \Phi)_M$ factors through
the fundamental vector field map
$M \times \gg \to \TT M$
and since $f$ is $\GG$-invariant,
the right-hand side of \eqref{show1} vanishes.
Hence, for admissible
$h \colon \GG \to \bK$, the function
$X_f(h \circ \Phi)$ vanishes.

We now suppose $\bK = \RR$ or $\bK = \CC$
and place ourselves
in the classical (smooth or analytic) setting.
The function
$h \circ \Phi$
is then constant along the integral curves of
$X_f$ 
for any admissible function
$h \colon \GG \to \bK$.

Let $p$ be a point of $\GG$
and  $q$  a point of $M$ with $\Phi(q) = p$.
Consider
the integral curve
$t \mapsto \varphi^f_q(t)$ of $X_f$ in $M$
having
$ \varphi^f_q(0) =q$
($t$ in a neighborhood of $0 \in \bK$).
For any admissible function
$h \colon \GG \to \bK$,
since the function
$X_f(h \circ \Phi)$ vanishes,
the $\bK$-valued function
$ h\circ \Phi \circ \varphi^f_q$  
is constant and has
constant value $h (\Phi(q))= h (p) \in \bK$.
Since the admissible functions separate points,
the curve
$\Phi \circ\varphi^f_q$
in $\GG$ is constant and
has constant value $p \in \GG$.
Consequently
the integral curve
$ \varphi^f_q$ of $X_f$ 
lies in
$\Phi^{-1} \Phi(q)= \Phi^{-1}(p)$
whence, for
an admissible  function $F \colon M \to \bK$ that vanishes
on the level subspace $\Phi^{-1}(p)$ of $M$,
the $\bK$-valued function
$ F \circ \varphi^f_q$  
is constant.
Differentiating with respect to the variable $t$ and evaluating at
$t=0$
we find
\begin{equation*}
\{f, F\}(q) =  (X_f F)(q) = 0 .
\end{equation*}

The pre-image $\Phi^{-1}(\CcC)$ is the union of the pre-images
$\Phi^{-1}(p)$ as $p$ ranges over $\CcC$.
Hence a $\GG$-invariant admissible function $F$ 
on $M$ that vanishes at the point $q$ of $M$  
with $\Phi(q)=p \in \CcC$  vanishes 
on  $\Phi^{-1}(\CcC)$.
Consequently, for a $\GG$-invariant  admissible  function 
$F \colon M \to \bK$ that vanishes
on $\Phi^{-1}(\CcC)$,
\begin{equation}
\{f, F\}(q) =  (X_f F)(q) = 0
\label{show}
\end{equation}
whenever $\Phi(q) \in \CcC$.

In the purely algebraic setting we must be more circumspect:
Thus consider a
non-singular affine variety over $\bK$
(not necessarily $\RR$ or $\CC$) 
and let
$\mathcat A= \mathcat A[M]$
be its coordinate ring. Then
the $\mathcat A$-module $\Vect(M)$
of derivations $\Der(\mathcat A)$
is finitely generated and projective as an $\mathcat A$-module.
Suppose $\GG$ is an algebraic group defined over $\bK$
and that the $\GG$-momentum mapping $\Phi \colon M \to \GG$
relative to $\oomega$
is a morphism of $\bK$-varieties.
As above, let $f$ and $F$ be members of
$\mathcat A^\GG$.
The vector field $X_f$ is a member of 
 $\Der(\mathcat A)$.
Extending scalars,
view $M$ as an affine variety over  $\CC$.
The canonical extensions of the functions $f$ and $F$ 
are then admissible
relative to the classical topology
as is the extension of the momentum mapping $\Phi$,
and  we can argue in terms of integral curves as before.
\end{proof}

\begin{rema}
{\rm
The argument in the above proof shows that $X_f$ 
is tangent to the level subspaces of $\Phi$
at any point where being tangent makes sense,
in particular, when we are in the regular case. 

}
\end{rema}

\begin{rema}
{\rm
By a theorem in \cite{MR618321}, 
the radical of an ideal of polynomials
closed under Poisson bracket
is also closed under Poisson bracket.
Perhaps one can use this fact
 to concoct a purely algebraic proof of
Theorem \ref{qprt}
in the algebraic setting.
}
\end{rema}

\begin{rema}
{\rm
In the regular case, for a compact group $\GG$
and an $\Ad$-invariant  non-de\-ge\-ne\-rate symmetric bilinear form on $\gg$,
a similar notion of quasi Poisson reduction is in
\cite[Theorem 6.1 p.~16]{MR1880957} .
}
\end{rema}

\section{Quasi Poisson fusion}
\label{fusion}

In this section we extend the operation of fusion in
\cite[Section 5]{MR1880957}
(for $\GG$ compact and $\oomega$ arising from a positive
$2$-form on $\gg$)
and that in \cite[Section 5]{MR2642360}
(for general  $\GG$  and $\oomega$ arising from a 
non-degenerate
$2$-form on $\gg$)
to our more general setting.

\subsection{Preparations}
\label{fusionprep}

We extend the notation in Subsection \ref{prodtg}:
Thus $\GG^1$ and $\GG^2$ are  Lie groups, and
$\GG^\times =\GG^1 \times \GG^2$ and
$\gg^\times = \gg^1 \oplus \gg^2$.
Let $\oomega^1 \in \gg^1 \otimes \gg^1$,
$\oomega^2 \in \gg^2 \otimes \gg^2$,
be symmetric $\Ad$-invariant
$2$-tensors, and let
\begin{equation}
\oomega^\times = \oomega^1+\oomega^2\in 
\gg^1 \otimes \gg^1 \oplus \gg^2 \otimes \gg^2
\subseteq
(\gg^1 \oplus \gg^2)\otimes (\gg^1 \oplus \gg^2) .
\label{oomegatime}
\end{equation}
In view of \eqref{view1} - \eqref{view4}, the diagrams
\begin{equation}
\begin{gathered}
\xymatrixcolsep{5pc}
\xymatrix{
\GG^\times
\ar[r]^{\oomega^\times\phantom{aaaaa} }
\ar[dr]_{\Id \times \oomega^1 \phantom{aa}}
&{\GG^\times} \times ({\gg^\times} \otimes {\gg^\times}) 
\ar[d]^{\Id \times \pr_{\gg^1 \otimes \gg^1}}
\ar[r]^{\phantom{aaa}(L^\times +R^\times)\otimes \Id }
&
(\TT {\GG^\times}) \otimes \gg^\times 
\ar[d]^{\pr \times  \pr_{\gg^1}}
\\
&
{\GG^\times} \times ({\gg^1} \otimes {\gg^1}) 
\ar[r]_{\phantom{}((L^1 +R^1)\times \Id)\otimes \Id\phantom{aa} }
&
((\TT {\GG^1})\times \GG^2) \otimes \gg^1
\\
\GG^\times 
\ar[r]^{\oomega^\times\phantom{aaaaa} }
\ar[dr]_{\Id \times \oomega^2 \phantom{aa}}
&{\GG^\times} \times ({\gg^\times} \otimes {\gg^\times}) 
\ar[d]^{\Id \times \pr_{\gg^2 \otimes \gg^2}}
\ar[r]^{\phantom{aaa}(L^\times +R^\times)\otimes \Id}
&
\TT {\GG^\times} \otimes \gg^\times 
\ar[d]^{\pr \otimes \pr_{\gg^2}}
\\
&
{\GG^\times} \times ({\gg^2} \otimes {\gg^2}) 
\ar[r]_{(\Id \times (L^2 +R^2))\otimes \Id \phantom{aa}}
&
(\GG^1 \times (\TT {\GG^2})) \otimes \gg^2
}
\end{gathered}
\label{CDs}
\end{equation}
are commutative.

Let $\GG^1 = \GG = \GG^2$ and let $\mult \colon \GG^1 \times \GG^2 \to \GG$
denote the group multiplication. The diagrams
\begin{gather}
\xymatrixcolsep{4pc}
\begin{gathered}
\xymatrix{
\TT_{\Phi^1} \GG^1\ar[r]^{\left(\Id, \tau_{\Phi^1}\right)}\ar[d]_{\tau_{\Phi^1}}
&(\TT_{\Phi^1} \GG^1) \times M \ar[r]^{\can\times\Phi^2}
\ar[d]^{\tau_{\Phi^1} \times\Id\phantom{}} 
&\TT \GG^1 \times \GG^2\ar[r]^{R^2} \ar[d]_{\tau_{\GG^1} \times \Id}
& \TT \GG \ar[d]^{\tau_\GG}
\\
M\ar[r]_{\diag}&M \times M  \ar[r]_{\Phi^1 \times \Phi^2} 
&\GG^1 \times \GG^2 \ar[r]_{\mult}
& \GG
}
\end{gathered}
\label{display25}
\\
\xymatrixcolsep{4pc}
\begin{gathered}
\xymatrix{
\TT_{\Phi^2} \GG^2\ar[r]^{\left(\tau_{\Phi^2},  \Id\right)}\ar[d]_{\tau_{\Phi^2}}
&M \times (\TT_{\Phi^2} \GG^2)  \ar[r]^{\Phi^1 \times \can}
\ar[d]^{\Id \times \tau_{\Phi^2}\phantom{}} 
&\GG^1 \times (\TT \GG^2)\ar[r]^{L^1} \ar[d]_{\Id\times \tau_{\GG^2}}
& \TT \GG \ar[d]^{\tau_\GG}
\\
M\ar[r]_{\diag}&M \times M  \ar[r]_{\Phi^1 \times \Phi^2} 
&\GG^1 \times \GG^2 \ar[r]_{\mult}
& \GG
}
\end{gathered}
\label{display26}
\end{gather}
are commutative. Exploiting this commutativity,
we now extend the operations of right and left translation in
{\rm \eqref{display}} as follows:

\begin{prop}
Right and left translation induce isomorphims
\begin{equation}
R_{\Phi^2}\colon \TT_{\Phi^1}\GG^1 \longrightarrow \TT_{\Phi^1\Phi^2} \GG,
\ 
L_{\Phi^1}\colon \TT_{\Phi^2}\GG^2 \longrightarrow \TT_{\Phi^1\Phi^2} \GG
\end{equation}
of vector bundles on $M$
as displayed in 
\begin{gather}
\xymatrixcolsep{4pc}
\begin{gathered}
\xymatrix{
\TT_{\Phi^1} \GG^1
\ar[r]^{\left(\Id, \tau_{\Phi^1}\right)}
\ar@{.>}[dr]|-{R_{\Phi^2}}
\ar@/_/[ddr]_{\tau_{\Phi^1}}
&(\TT_{\Phi^1} \GG^1) \times M \ar[r]^{\can\times\Phi^2}
&\TT \GG^1 \times \GG^2\ar[dr]^{R^2} 
& 
\\
&\TT_{\Phi^1 \Phi^2} \GG \ar[rr]^\can\ar[d]^{\tau_{\Phi^1\Phi^2}}  
& 
& \TT \GG \ar[d]^{\tau_\GG}
\\
&M \ar[rr]_{\Phi^1 \Phi^2} 
&
& \GG
}
\end{gathered}
\label{display27}
\\
\xymatrixcolsep{4pc}
\begin{gathered}
\xymatrix{
\TT_{\Phi^2} \GG^2
\ar[r]^{\left(\tau_{\Phi^2}, \Id\right) \phantom{aaa}}
\ar@{.>}[dr]|-{L_{\Phi^1}}
\ar@/_/[ddr]_{\tau_{\Phi^2}}
&M \times (\TT_{\Phi^2} \GG^2) \ar[r]^{\Phi^1 \times \can}
&\GG^1 \times (\TT \GG^2)\ar[dr]^{L^1} 
& 
\\
&\TT_{\Phi^1 \Phi^2} \GG \ar[rr]^\can\ar[d]^{\tau_{\Phi^1\Phi^2}}  
& 
& \TT \GG \ar[d]^{\tau_\GG}
\\
&M \ar[rr]_{\Phi^1 \Phi^2} 
&
& \GG 
}
\end{gathered}
\label{display28}
\end{gather}
in such a way that
\begin{align}
L_{\Phi^1} \circ L_{\Phi^2} =L_{\Phi^1\Phi^2} &
\colon M \times \gg \longrightarrow \TT_{\Phi^1\Phi^2} \GG
\\
R_{\Phi^2} \circ R_{\Phi^1} =R_{\Phi^1\Phi^2} &
\colon M \times \gg \longrightarrow \TT_{\Phi^1\Phi^2} \GG . \qed
\end{align}

\end{prop}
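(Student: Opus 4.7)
The plan is to invoke the universal property of the pullback bundle. Recall that $\TT_{\Phi^1\Phi^2}\GG \to M$ is, by definition, the fibered product of $\tau_\GG\colon \TT\GG \to \GG$ with $\Phi^1\Phi^2\colon M \to \GG$. So to produce the map $R_{\Phi^2}\colon \TT_{\Phi^1}\GG^1 \to \TT_{\Phi^1\Phi^2}\GG$, I would combine the two structure morphisms $\tau_{\Phi^1}\colon \TT_{\Phi^1}\GG^1 \to M$ and the composite $R^2\circ(\can \times \Phi^2)\circ(\Id,\tau_{\Phi^1})\colon \TT_{\Phi^1}\GG^1 \to \TT\GG$. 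The commutativity of \eqref{display25} says precisely that these two maps agree after composing, respectively, with $\Phi^1\Phi^2$ and with $\tau_\GG$, so the universal property of the pullback produces a unique map $R_{\Phi^2}$ with $\tau_{\Phi^1\Phi^2}\circ R_{\Phi^2} = \tau_{\Phi^1}$ and $\can\circ R_{\Phi^2} = R^2\circ(\can\times\Phi^2)\circ(\Id,\tau_{\Phi^1})$, i.e.\ making \eqref{display27} commute. The map $L_{\Phi^1}$ is obtained analogously from \eqref{display26} and \eqref{display28}.

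Next I would verify that $R_{\Phi^2}$ and $L_{\Phi^1}$ are isomorphisms of vector bundles. Both $\TT_{\Phi^1}\GG^1$ and $\TT_{\Phi^1\Phi^2}\GG$ are vector bundles on $M$ of rank $\dim\gg$, and over a point $q \in M$ the induced map on fibres is
\[
\TT_{\Phi^1(q)}\GG \longrightarrow \TT_{\Phi^1(q)\Phi^2(q)}\GG,\qquad v \longmapsto v\cdot \Phi^2(q),
\]
i.e.\ right translation by the fixed group element $\Phi^2(q)$, which is a linear isomorphism; dually for $L_{\Phi^1}$. Since the map is an isomorphism on each fibre and covers $\mathrm{Id}_M$, it is an isomorphism of vector bundles. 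That $R_{\Phi^2}$ and $L_{\Phi^1}$ are admissible (smooth, analytic, or algebraic, as the case may be) follows from the corresponding property of $R^2$, $L^1$, $\Phi^1$, $\Phi^2$ and from the universality of the pullback construction in each of the three categories.

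Finally, for the composition identities, both sides are morphisms of vector bundles $M \times \gg \to \TT_{\Phi^1\Phi^2}\GG$ over $\mathrm{Id}_M$, so by the uniqueness clause of the pullback property it suffices to compare them after postcomposition with $\can\colon \TT_{\Phi^1\Phi^2}\GG \to \TT\GG$. On a point $(q,X) \in M\times\gg$, $L_{\Phi^2}$ sends $(q,X)$ to the tangent vector $\Phi^2(q)X$ sitting at $\Phi^2(q)\in\GG^2$, and applying $L_{\Phi^1}$ translates it on the left by $\Phi^1(q)$, producing $\Phi^1(q)\Phi^2(q)X \in \TT_{\Phi^1(q)\Phi^2(q)}\GG$; this is exactly the value of $L_{\Phi^1\Phi^2}(q,X)$ after $\can$. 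The identity $R_{\Phi^2}\circ R_{\Phi^1} = R_{\Phi^1\Phi^2}$ is checked symmetrically.

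The argument is essentially diagram-chasing once the correct pullback viewpoint is adopted; the only mild subtlety is to resist unfolding everything pointwise too early, and instead to keep the bundle-theoretic formulation so that the construction is manifestly admissible in each of the smooth, analytic and affine algebraic settings. I do not anticipate any obstacle beyond this bookkeeping.
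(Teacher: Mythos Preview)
Your proposal is correct and is precisely the intended argument: the paper omits the proof entirely (the statement carries a \qed\ with no proof environment), treating the construction as immediate from the universal property of the pullback bundle together with the commutativity of the preceding diagrams \eqref{display25} and \eqref{display26}. Your write-up simply makes explicit what the author leaves to the reader.
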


Let $(\TT_{\Phi^1} \GG^1) \times_M (\TT_{\Phi^2} \GG^2)$
denote the fiber product
of $\TT_{\Phi^1} \GG^1$ and $\TT_{\Phi^2} \GG^2$
over $M$ and $(\TT_{\Phi^1} \GG^1) \oplus_M (\TT_{\Phi^2} \GG^2)$
the total space of the Whitney sum of the
two vector bundles on $M$ under discussion.
In view of completely formal properties 
of the pullback construction, the canonical map
\begin{equation}
(\TT_{\Phi^1} \GG^1) \times_M (\TT_{\Phi^2} \GG^2) 
\longrightarrow
\TT_{\Phi^1,\Phi^2} \GG^\times
\end{equation}
is a vector bundle  isomorphism over $M$.
Moreover, the obvious vector bundle injections of
$\TT_{\Phi^1} \GG^1$ and $\TT_{\Phi^2} \GG^2$
into  $(\TT_{\Phi^1} \GG^1) \times_M (\TT_{\Phi^2} \GG^2)$
induce an isomorphism
\begin{equation}
(\TT_{\Phi^1} \GG^1) \oplus_M (\TT_{\Phi^2} \GG^2) 
\longrightarrow
(\TT_{\Phi^1} \GG^1) \times_M (\TT_{\Phi^2} \GG^2) 
\label{rew1}
\end{equation}
of vector bundles on $M$, and
the derivative
\begin{equation}
(\TT \GG^1) \times \GG^2 \oplus_{\GG^\times} \GG^1 \times (\TT \GG^2) \longrightarrow \TT \GG
\end{equation}
of the multiplication map of $\GG$
induces the morphism
\begin{equation}
R_{\Phi^2} +L_{\Phi^1}\colon  (\TT_{\Phi^1} \GG^1) \oplus_M (\TT_{\Phi^2} \GG^2) 
\longrightarrow
\TT_{\Phi^1\Phi^2} \GG
\label{morvecb}
\end{equation}
of vector bundles on $M$.

\subsection{Fusion}
Consider the product group $\GG \times \GG$
with componentwise conjugation action on itself.
This is the situation in Example \ref{examp1},
with $\GG \times \GG$ substituted for $\GG$.
As before, for
book keeping purposes, write
the first copy of $\GG$ 
in $\GG \times \GG$
as $\GG^1$ and the second one as $\GG^2$ and, accordingly,
the first copy of $\gg$ 
in $\gg \oplus \gg$
as $\gg^1$ and the second one as $\gg^2$, and
let $\oomega^1 \in \gg^1 \otimes \gg^1$
and $\oomega^2 \in \gg^2 \otimes \gg^2$
denote the corresponding copy of $\oomega \in \gg \otimes \gg$.
Let 
\begin{equation}
\oomega^\times = \oomega^1+\oomega^2\in 
\gg^1 \otimes \gg^1 \oplus \gg^2 \otimes \gg^2
\subseteq
(\gg^1 \oplus \gg^2)\otimes (\gg^1 \oplus \gg^2) .
\label{oomegatimes}
\end{equation}
This yields 
$P_{\GG^1 \times \GG^2}= P_{\GG^1} +P_{\GG^2} 
(= - \chi_{\oomega,\GG^1} -\chi_{\oomega,\GG^2}) 
\in \LAL^{\mrc,2}[\GG^1 \times \GG^2]$.
Moreover,
the $(\GG^1 \times \GG^2)$-module
 $\LAL^{\mrc,3}[\gg^1 \oplus \gg^2]$ decomposes canonically as
\begin{equation}
\LAL^{\mrc,3}[\gg^1 \oplus \gg^2] 
= \LAL^{\mrc,3}[\gg^1]\oplus
\gg^1 \boxwedge \gg^2 \oplus \gg^2 \boxwedge \gg^1
\oplus \LAL^{\mrc,3}[\gg^2],
\label{canonically2}
\end{equation}
so that
\begin{align*}
\phi_{\gg^1 \oplus \gg^2} &=  \phi^1 + \phi^2 \in 
\LAL^{\mrc,3}[\gg^1]\oplus \LAL^{\mrc,3}[\gg^2],
\\
\phi_{\GG^1 \times \GG^2} &=  \phi_\GG^1 + \phi_\GG^2
\in \LAL^{\mrc,3}[\GG^1 \times \GG^2],
\\
[P_{\GG^1 \times \GG^2},P_{\GG^1 \times \GG^2}]&= 
[P_{\GG^1},P_{\GG^1}]  +[P_{\GG^2},P_{\GG^2}]=    \phi_\GG^1 + \phi_\GG^2 .
\end{align*}

Consider a $(\GG^1 \times \GG^2)$-quasi Poisson structure
$P$
on a $(\GG^1 \times \GG^2)$-manifold
$M$ relative to $\oomega^\times$.
Let $\chi_{\oomega,M} \in \LAL^{\mrc,2}[M]$
be the image of 
$\chi_{\oomega} \in \gg^1 \boxwedge \gg^2  \subseteq\LAL^{\mrc,2}[ \gg^1 \oplus \gg^2]$
under the resulting  infinitesimal $(\gg^1 \oplus \gg^2)$-action
$\gg^1 \oplus \gg^2 \to \Vect(M)$
of $\gg^1 \oplus \gg^2$ on $M$.
In terms of the restrictions
$\fund_M^1\colon M \times \gg^1 \to \TT M$ and
$\fund_M^2\colon M \times \gg^2 \to \TT M$ 
of the infinitesimal action
of $\gg^1 \oplus \gg^2$ on $M$,
\begin{equation}
2 \chi_{\oomega,M} = (\fund_M^1 \wedge \fund_M^2)(\oomega)
= (\fund_M^1 \otimes \fund_M^2 -\fund_M^2 \otimes \fund_M^1)(\oomega).
\end{equation}

\begin{thm}
\label{fusmom}
Let $M$ be a $\GG^\times$-manifold and
 $P$   a $\GG^\times $-invariant skew symmetric  bivector
on
$M$. Further, let
$(\Phi^1,\Phi^2)\colon M \to \GG^1 \times \GG^2 = \GG^\times$
be an admissible $\GG^\times$ equivariant map, and let
\begin{equation}
P_\fus = P-\chi_{\oomega,M} .
\label{Pfus}
\end{equation}
\begin{enumerate}
\item
When $(\Phi^1,\Phi^2)$ is a 
$\GG^\times$-momentum mapping for $P$ relative to $\oomega^\times$, 
with respect to  the diagonal $\GG$-action on $M$, the product 
 $\Phi^1\Phi^2\colon M \to \GG$ is a $\GG$-momentum mapping
for $P_\fus$ relative to $\oomega$ .
\item
When $P$ is a $\GG^\times$-quasi Poisson structure
relative to $\oomega^\times$,
with respect to
the
diagonal $\GG$-action on $M$, the bivector
$P_\fus$
is a $\GG$-quasi Poisson structure on $M$ relative to $\oomega$.
\end{enumerate}
\end{thm}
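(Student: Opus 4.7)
The plan is to treat (1) by unwinding the momentum diagram \eqref{PMPhi} via the product rule for $d(\Phi^1\Phi^2)$, and (2) by expanding the Schouten square of $P_\fus$ and invoking Proposition \ref{cartan}.

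For (1), I would first use that $\oomega^\times = \oomega^1 + \oomega^2$ sits in $\gg^1\otimes\gg^1 \oplus \gg^2\otimes\gg^2$ with no $\gg^1\otimes\gg^2$ cross terms. Via the two projections spelled out in diagrams \eqref{CDs}, the $\GG^\times$-momentum condition for $(P,(\Phi^1,\Phi^2))$ then decomposes into two separate conditions
\begin{equation*}
((d\Phi^i)_M\otimes\Id)(P) = (\Id \otimes \fund_M^i)\bigl(\tfrac 12(L_{\Phi^i}+R_{\Phi^i})\otimes\Id\bigr)(\oomega), \quad i=1,2,
\end{equation*}
valued in $\TT_{\Phi^i}\GG^i \otimes_M \TT M$. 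Next, the product rule $(d(\Phi^1\Phi^2))_M = R_{\Phi^2}\circ(d\Phi^1)_M + L_{\Phi^1}\circ(d\Phi^2)_M$ from diagrams \eqref{display27}, \eqref{display28} produces in $((d(\Phi^1\Phi^2))_M\otimes\Id)(P)$ the desired \lq\lq diagonal\rq\rq\ terms $(e_j^L+e_j^R)_{\Phi^1\Phi^2}$, but also spurious \lq\lq mixed\rq\rq\ contributions of the form $\Phi^1 e_j \Phi^2 = L_{\Phi^1}R_{\Phi^2}e_j$ coming from $R_{\Phi^2}L_{\Phi^1}$ and $L_{\Phi^1}R_{\Phi^2}$. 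Then I would compute $((d(\Phi^1\Phi^2))_M\otimes\Id)(\chi_{\oomega,M})$ using $\GG^\times$-equivariance, which forces $(d\Phi^i)_M\circ \fund_M^j = \delta_{ij}(L-R)_{\Phi^i}$; unwinding the two wedge summands in $\chi_{\oomega,M} = \tfrac{1}{2}\eeta^{j,k}\fund_M^1(e_j)\wedge\fund_M^2(e_k)$, using $\Ad$-invariance ($\eeta^{j,k}=\eeta^{k,j}$) to relabel, and accounting for the sign in $P_\fus = P - \chi_{\oomega,M}$, the mixed terms cancel precisely, leaving
\begin{equation*}
((d(\Phi^1\Phi^2))_M\otimes\Id)(P_\fus) = \tfrac{1}{2}\eeta^{j,k}(e_j^L + e_j^R)_{\Phi^1\Phi^2}\otimes\fund_M^{\mathrm{diag}}(e_k),
\end{equation*}
i.e.\ exactly the momentum condition \eqref{PMPhi} for $\Phi^1\Phi^2$ with $P_\fus$, $\oomega$, and the diagonal $\GG$-action. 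Diagonal $\GG$-invariance of $P_\fus$ and diagonal $\GG$-equivariance of $\Phi^1\Phi^2$ are immediate from the $\GG^\times$-versions.

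For (2), I would expand
\begin{equation*}
[P_\fus, P_\fus] = [P, P] - 2[P, \chi_{\oomega,M}] + [\chi_{\oomega,M}, \chi_{\oomega,M}].
\end{equation*}
The first term equals $\phi^1_{\oomega,M} + \phi^2_{\oomega,M}$, since $\oomega^\times$ has no $\gg^1\otimes\gg^2$ cross component so $\phi_{\oomega^\times} = \phi^1_\oomega + \phi^2_\oomega$ from \eqref{phiH}, and $[P,P] = \phi_{\oomega^\times, M}$ by hypothesis. The middle bracket vanishes: by the Gerstenhaber derivation rule \eqref{ger3} and the standard identity $[P,X] = -\mathcal L_X P$ for a vector field $X$, the $\GG^\times$-invariance of $P$ forces $[P, \fund_M^i(X)] = 0$, whence $[P, \chi_{\oomega,M}] = 0$. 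For the third term, Proposition \ref{cartan} gives $[\chi_\oomega, \chi_\oomega] = \Delta(\phi_\oomega) - \phi^1_\oomega - \phi^2_\oomega$ in $\Lc[\gg^1\oplus\gg^2]$, and the Gerstenhaber algebra morphism $\Lc[\gg^1\oplus\gg^2]\to\LAL^{\mrc,*}[M]$ induced by the infinitesimal $(\gg^1\oplus\gg^2)$-action sends $\Delta(\phi_\oomega)$ to $\phi^{\mathrm{diag}}_{\oomega,M}$ (because the composite $\gg \xrightarrow{\Delta}\gg^1\oplus\gg^2 \to \Vect(M)$ is the diagonal action) and $\phi^i_\oomega$ to $\phi^i_{\oomega,M}$. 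Summing, the $\phi^1_{\oomega,M}+\phi^2_{\oomega,M}$ contributions cancel and $[P_\fus, P_\fus] = \phi^{\mathrm{diag}}_{\oomega,M}$, which is the $\GG$-quasi Poisson condition for $P_\fus$ relative to the diagonal action.

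The main obstacle is the sign and identification bookkeeping in part (1): the repeated identifications $\gg^1\cong\gg\cong\gg^2$, the antisymmetrization built into $\chi_\oomega \in \gg^1\boxwedge\gg^2$, and the non-commutativity of the left and right translations in $(d(\Phi^1\Phi^2))_M$ all have to fit together so that precisely the spurious $\Phi^1 e_j\Phi^2$ terms cancel. Part (2) is essentially a one-line bracket computation, given Proposition \ref{cartan} and the $\GG^\times$-invariance of $P$.
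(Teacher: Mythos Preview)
Your proposal is correct and follows essentially the same approach as the paper. For part (2) the argument is identical; for part (1) the paper organizes the same computation diagrammatically by introducing auxiliary operators $A^{2,1}=R_{\Phi^2}(L_{\Phi^1}+R_{\Phi^1})$, $A^{1,2}=L_{\Phi^1}(L_{\Phi^2}+R_{\Phi^2})$ (encoding your product-rule expansion of $((d(\Phi^1\Phi^2))_M\otimes\Id)(P)$) and $B^{2,1}$, $B^{1,2}$ (encoding your equivariance computation of $((d(\Phi^1\Phi^2))_M\otimes\Id)(\chi_{\oomega,M})$), then verifies the algebraic identity $A^{2,1}+B^{2,1}=L_{\Phi^1\Phi^2}+R_{\Phi^1\Phi^2}=A^{1,2}-B^{1,2}$, which is exactly the cancellation of your ``mixed'' $\Phi^1 e_j\Phi^2$ terms. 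One minor slip: the relabeling you invoke uses the \emph{symmetry} $\eeta^{j,k}=\eeta^{k,j}$ of $\oomega$, not its $\Ad$-invariance.
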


\begin{proof}
By \eqref{calc1},
\begin{align*}
[P-\chi_{\oomega,M},P-\chi_{\oomega,M}]&= [P,P] - [\chi_{\oomega,M},P]-[P,\chi_{\oomega,M}] +[\chi_{\oomega,M},\chi_{\oomega,M}]
\\
&= \phi_M^1 + \phi_M^2 + \Delta (\phi)_M - \phi_M^1 - \phi_M^2 .
\end{align*}
Since $P$ is invariant under
$\GG^1 \times \GG^2$,
the terms $[P,\chi_{\oomega,M}]$  and $[\chi_{\oomega,M},P]$ vanish
and
$\Delta (\phi)_M$ coincides with the image 
$\phi^\diag_M$ 
of $\phi \in \LAL^{\mrc,3}[\gg]$
under the diagonal action of $\GG$ on $M$.
This proves (2).

To establish (1), we note first that,
since $(\Phi^1,\Phi^2)\colon M \to \GG^1 \times \GG^2$
is a $(\GG^1 \times \GG^2)$-momentum for the
$(\GG^1 \times \GG^2)$-quasi Poisson structure
$P$ on $M$,  the diagram
\begin{equation}
\begin{gathered}
\xymatrixcolsep{4.5pc}
\xymatrix{
M \ar@{=}[r] 
\ar[d]|-{2 P}
&M
\ar[d]|-{\oomega^\times}
\\
\TT^2 M \ar[d]|-{(d( {\Phi^1,\Phi^2}))_M \otimes_M \Id}
& M \times (\gg^\times \otimes \gg^\times)
\ar[d]|-{\left(L_{(\Phi^1,\Phi^2)}+R_{(\Phi^1,\Phi^2)}\right)\otimes_M \Id}
\\
(\TT_{(\Phi^1,\Phi^2)} \GG) \otimes_M \TT M 
&
(\TT_{(\Phi^1,\Phi^2)}\GG) \otimes \gg^\times
\ar[l]^{\phantom{aa}\Id \otimes_M \fund^\times_M}
}
\end{gathered}
\label{PMPhimultimesvar}
\end{equation}
commutes.
According to the decomposition \eqref{decomp1}, the right-hand vertical  
row of 
\eqref{PMPhimultimesvar} is the sum of the four respective constituents.
However, the canonical map
\begin{equation}
(\gg^1 \otimes \gg^1) \oplus 
(\gg^1 \otimes \gg^2) \oplus
(\gg^2 \otimes \gg^1) \oplus 
(\gg^2\otimes \gg^2) \longrightarrow
(\gg^1 \oplus \gg^2)\otimes(\gg^1 \oplus \gg^2)
\end{equation}
is an isomorphism, and the symmetric tensor $\oomega^\times$ lies in (the image of)
$(\gg^1 \otimes \gg^1) \oplus(\gg^2\otimes \gg^2)$. Hence at most the first 
two  constituents in \eqref{constit1} yield non-trivial contributions to the 
right-hand  vertical row of  \eqref{PMPhimultimesvar}.

Exploiting the diagrams \eqref{CDs} being commutative,
we rewrite  \eqref{PMPhimultimesvar}
in terms of \eqref{rew1}
as the outermost  diagram
of
\begin{equation}
\begin{gathered}
{
\xymatrix{
M\ar@{=}[r] 
\ar[d]|-{2P}
&M\ar[d]|-{\oomega}
\\
\TT^2 M \ar@/_5.5pc/[ddd]|-{\left((d\Phi^1)_M +(d\Phi^2)_M\right) \otimes_M \Id}
 \ar[d]|-{(d( {\Phi^1\Phi^2}))_M \otimes_M \Id}
&
 M \times (\gg \otimes \gg)
\ar@{.>}[d]
\ar@/^7.5pc/[ddd]|-{\left(\left(L_{\Phi^1}+R_{\Phi^1}\right)\otimes \iota^1,
\left(L_{\Phi^2}+R_{\Phi^2}\right)\otimes \iota^2
\right)}
\\
(\TT_{\Phi^1\Phi^2} \GG) \otimes_M \TT M 
&
(\TT_{\Phi^1\Phi^2}\GG) \otimes (\gg^1 \oplus \gg^2)
\ar[l]|-{\phantom{aa}\Id \otimes_M \fund^\times_M}
\\
&
\\
(\TT_{\Phi^1} \GG^1 \oplus_M \TT_{\Phi^2} \GG^2) \otimes_M \TT M 
\ar[uu]|-{\left(R_{\Phi^2} + L_{\Phi^1}\right)\otimes _M \Id}
&
(\TT_{\Phi^1} \GG^1 \oplus_M \TT_{\Phi^2} \GG^2)\otimes(\gg^1 \oplus \gg^2)  .
\ar@/^2pc/[l]^{\phantom{aa}\Id \otimes_M \fund^\times_M \phantom{aaa}}
\ar[uu]|-{\left(R_{\Phi^2} + L_{\Phi^1}\right)\otimes\Id}
}
}
\end{gathered}
\label{Pbig}
\end{equation}
This diagram is commutative,
but inserting 
\begin{equation}
\left(L_{\Phi^1\Phi^2}+R_{\Phi^1\Phi^2}\right)\otimes_M \diag
\colon
M \times (\gg \otimes \gg)
\longrightarrow (\TT_{\Phi^1 \Phi^2}\GG) \otimes (\gg^1 \oplus \gg^2)
\end{equation}
for the dotted arrow
does not render it commutative.
Let
\begin{align}
A^{2,1}& = R_{\Phi^2}\circ (L_{\Phi^1}+R_{\Phi^1}) \colon M \times \gg 
\longrightarrow
\TT_{\Phi^1\Phi^2}\GG,
\\
A^{1,2}& = L_{\Phi^1}\circ (L_{\Phi^2}+R_{\Phi^2}) \colon M \times \gg 
\longrightarrow
\TT_{\Phi^1\Phi^2}\GG.
\end{align}
The commutativity of the above diagram tells us that
the  diagram
\begin{equation}
\begin{gathered}
\xymatrixcolsep{6pc}
\xymatrix{
M \ar[r]^{2 P}\ar@{=}[d]  
& 
\TT^2 M\ar[r]^{(d( {\Phi^1\Phi^2}))_M \otimes_M \Id\phantom{aaaa}}
& 
(\TT_{\Phi^1\Phi^2} \GG)\otimes_M \TT M 
\\
M \ar[r]_{\oomega\phantom {aaaa}}&M \times (\gg\otimes \gg) 
\ar[r]_{
\left(
A^{2,1}\otimes \iota^1, A^{1,2}\otimes \iota^2
\right) \phantom{aaaa}
}
&
(\TT_{\Phi^1 \Phi^2} \GG) \otimes (\gg^1 \oplus \gg^2)
\ar[u]_{\Id \otimes_M \fund^\times_M} .
}
\end{gathered}
\label{PMPhimuvar}
\end{equation}
is commutative.
Let
\begin{align}
B^{2,1}&=    L_{\Phi^1}\circ L_{\Phi^2} - R_{\Phi^2}\circ L_{\Phi^1}\colon M \times \gg 
\longrightarrow
\TT_{\Phi^1\Phi^2}\GG,
\\
B^{1,2}&=  L_{\Phi^1}\circ R_{\Phi^2} -R_{\Phi^2}\circ R_{\Phi^1} 
\colon M \times \gg 
\longrightarrow
\TT_{\Phi^1\Phi^2}\GG.
\end{align}
Then
\begin{align}
A^{2,1}+B^{2,1}&= 
 L_{\Phi^1}\circ L_{\Phi^2}+R_{\Phi^2} \circ R_{\Phi^1}
= A^{1,2}- B^{1,2} \colon M \times\gg
\longrightarrow
\TT_{\Phi^1\Phi^2}\GG
\label{view111}
\\
&= 
 L_{\Phi^1 \Phi^2}+ R_{\Phi^1\Phi^2}\colon M \times\gg
\longrightarrow
\TT_{\Phi^1\Phi^2}\GG .
\label{view121}
\end{align}

The same kind of reasoning which leads to the commutative diagram
\eqref{PMPhimuvar} 
shows that the diagrams
\begin{align}
\begin{gathered}
\xymatrixcolsep{6pc}
\xymatrix{
\ar@{=}[d] 
M \times (\gg\otimes \gg) 
\ar[r]^{\fund_M^1 \otimes \fund_M^2}
& 
\TT^2 M\ar[r]^{(d( {\Phi^1\Phi^2}))_M \otimes_M \Id\phantom{aaaaaaaa}}
& 
(\TT_{\Phi^1\Phi^2} \GG)\otimes_M \TT M 
\\
M \times (\gg\otimes \gg) 
\ar[rr]_{B^{1,2}\otimes \iota^2 \phantom{aaaa}
}
&&
(\TT_{\Phi^1 \Phi^2} \GG) \otimes  \gg^2
\ar[u]_{\Id \otimes_M \fund_M^2} 
}
\end{gathered}
\label{PMPhimua1}
\\
\begin{gathered}
\xymatrixcolsep{6pc}
\xymatrix{
\ar@{=}[d] 
M \times (\gg\otimes \gg) 
\ar[r]^{\fund_M^2 \otimes \fund_M^1}
& 
\TT^2 M\ar[r]^{(d( {\Phi^1\Phi^2}))_M \otimes_M \Id\phantom{aaaaaaaa}}
& 
(\TT_{\Phi^1\Phi^2} \GG)\otimes_M \TT M 
\\
M \times (\gg\otimes \gg) 
\ar[rr]_{B^{2,1}\otimes \iota^1 \phantom{aaaa}
}
&&
(\TT_{\Phi^1 \Phi^2} \GG) \otimes  \gg^1
\ar[u]_{\Id \otimes_M \fund_M^1} 
}
\end{gathered}
\label{PMPhimua2}
\end{align}
are commutative.

Since $\chi_{\oomega,M} = (\fund_M^1 \wedge \fund_M^2)(\oomega) = 
 (\fund_M^1 \otimes \fund_M^2)(\oomega) - (\fund_M^2 \otimes \fund_M^1)(\oomega)
$, in view of \eqref{view111} and \eqref{view121}, 
since $\iota^1 + \iota^2 = \diag \colon \gg \to \gg^1 \oplus \gg^2$,
we conclude that
the diagram
\begin{equation}
\begin{gathered}
\xymatrixcolsep{6pc}
\xymatrix{
M \ar[r]^{2 P- 2 \chi_{\oomega,M}}\ar@{=}[d]  
& 
\TT^2 M\ar[r]^{(d( {\Phi^1\Phi^2}))_M \otimes_M \Id\phantom{aaaa}}
& 
(\TT_{\Phi^1\Phi^2} \GG)\otimes_M \TT M 
\\
M \ar[r]_{\oomega\phantom {aaaa}}&M \times (\gg\otimes \gg) 
\ar[r]_{
\left(
\left(L_{\Phi^1 \Phi^2}+ R_{\Phi^1\Phi^2}\right) 
\otimes \diag
\right) 
\phantom{aaaa}
}
&
(\TT_{\Phi^1 \Phi^2} \GG) \otimes (\gg^1 \oplus \gg^2)
\ar[u]_{\Id \otimes_M (\fund_M^\times)} 
}
\end{gathered}
\label{PMP}
\end{equation}
is commutative.
This shows that 
the product $\Phi^1 \Phi^2 \colon M \to \GG$
is a $\GG$-momentum mapping for $P_\fus$ relative to $\oomega$.
\end{proof}

\subsection{The group $\GG$ revisited} 
\label{revisited}

Under the situation of Example \ref{examp2},
applying the operation of fusion 
to the zero $(\GG \times \GG)$-quasi Poisson structure on $\GG$ 
relative to $\oomega^\times$
recovers,
with respect  to the conjugation action of $\GG$ on itself,
the $\GG$-quasi Poisson structure $P_\GG$ 
relative to $\oomega$
in Example \ref{examp1}.

\subsection{Double}
\label{doublep}

We extend the notion of double in
\cite{MR1880957}
(for $\GG$ compact and $\oomega$ arising from a positive
$2$-form on $\gg$)
and that in \cite{MR2642360}
(for general  $\GG$  and $\oomega$ arising from a 
non-degenerate
$2$-form on $\gg$)
to our more general setting.

Consider the product group $\GG^\times =\GG \times \GG$
and maintain the notation in Subsection \ref{predouble}. 
We denote
by $\ovoomega \in \ogg \otimes \ogg$
the corresponding $\Ad$-invariant symmetric $2$-tensor.
The $\Ad$-invariant symmetric $2$-tensor \eqref{oomegatimes} then reads
\begin{equation}
\oomega + \ovoomega \in \gg \otimes \gg \oplus 
\ogg \otimes \ogg \subseteq 
(\gg \oplus \ogg)\otimes (\gg \oplus \ogg).
\end{equation}
Here is the precise analog of Proposition \ref{momfusGs}.

\begin{thm}
\label{momfusG}
With respect to the action {\rm \eqref{act3}}
of the product group $ \GG\times \oGG$ on 
$\GG^\times=\GG^1\times \GG^2$,
the bivector field
\begin{equation}
\xymatrixcolsep{4pc}
\xymatrix{
P_{\oomega}^\times 
\colon
\GG^\times
\ar[r(0.6)]^{\phantom{aaaaaaa}\oomega}
& \GG^\times \times (\gg \otimes \gg)
\ar[r(1)]^
{\phantom{aaaaaaa} \tfrac 12 
\left(L^1 \wedge R^2 + R^1 \wedge L^2\right)}
&
\thickspace\thickspace\thickspace\thickspace\thickspace
\thickspace\thickspace\thickspace\thickspace\thickspace
\thickspace\thickspace\thickspace\thickspace\thickspace\thickspace
\TT^2 \GG^\times
}
\label{Ptimes}
\end{equation}
on $\GG^\times$
is a  $(\GG \times \oGG)$-quasi Poisson structure 
relative to $\oomega + \ovoomega$,
 and $(\mult,\omult)$, cf. \eqref{momfusGGoldd},
is a $(\GG \times \oGG)$-momentum mapping for 
$P_\oomega^\times$ relative to $\oomega + \ovoomega$,
the action of $ \GG \times \oGG$ on itself
being by conjugation.
\end{thm}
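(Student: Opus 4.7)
The plan is to establish the two claims of the theorem separately: the quasi Poisson identity $[P_\oomega^\times, P_\oomega^\times] = \phi_{\oomega + \ovoomega, \GG^\times}$ and the $(\GG \times \oGG)$-momentum property of $(\mult, \omult)$. The equivariance of $(\mult, \omult)$ with respect to the actions \eqref{act31}, \eqref{act32} on $\GG^\times$ and the conjugation action on $\GG \times \oGG$ is already verified in the proof of Proposition \ref{momfusGs}, so I would only address the two remaining points.

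For the quasi Poisson identity, I would use fusion (Theorem \ref{fusmom}(2)). View $\GG^\times = \GG^1 \times \GG^2$ as a $((\GG \times \oGG) \times (\GG \times \oGG))$-manifold, where the first factor acts on $\GG^1$ only (the $\GG$-part by left translation, the $\oGG$-part by right translation by the inverse) and the second factor acts on $\GG^2$ only (the $\GG$-part by right translation by the inverse, the $\oGG$-part by left translation). The four corresponding fundamental vector field maps are $-R^1$, $L^1$, $L^2$, and $-R^2$. Mirroring Example \ref{examp2}, since $\phi^L_{\GG,\oomega}$ and $\phi^R_{\GG,\oomega}$ coincide, the $3$-tensor on $\GG^\times$ arising from $(\oomega+\ovoomega)^{(1)} + (\oomega+\ovoomega)^{(2)}$ vanishes factor by factor, so the zero bivector is a quasi Poisson structure for this product action. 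Theorem \ref{fusmom}(2) applied to the diagonal $(\GG \times \oGG)$-action then produces $0 - \chi_{\oomega + \ovoomega, \GG^\times}$, and unwinding \eqref{chiH} together with the four fundamental vector fields gives
\[
-\chi_{\oomega + \ovoomega, \GG^\times} = \tfrac 12 \bigl(L^1 \wedge R^2 + R^1 \wedge L^2\bigr)(\oomega) = P_\oomega^\times,
\]
which is the required identity.

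For the momentum property, it suffices to show that $\mult$ is a $\GG$-momentum mapping for $P_\oomega^\times$ relative to $\oomega$; the corresponding assertion for $\omult$ relative to $\ovoomega$ then follows by the symmetry encoded in $\inv \times \inv$. Indeed, $\inv \times \inv$ intertwines the action \eqref{act31} with \eqref{act32} (under $\gg = \ogg$), swaps $L^i \leftrightarrow -R^i$ on $\GG^i$ and so preserves $P_\oomega^\times$, and satisfies $\omult = \mult \circ (\inv \times \inv)$. To verify the $\mult$ case, I would check diagram \eqref{PMPhi} directly using the standard formulas for $d\mult$ applied to left- and right-invariant vector fields on each factor of $\GG^\times$; after pairing with $\oomega$ and contracting through $\fund_{\GG^\times} = L^2 - R^1$, the $\Ad$-invariance of $\oomega$ collapses the two sides of the diagram to the same expression, in complete parallel to the calculation carried out in the proof of Proposition \ref{techn}.

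The main obstacle will be the bookkeeping in the fusion step: one must correctly match each of the four summands of $\chi_{\oomega + \ovoomega}$ in $(\gg \oplus \ogg)^{(1)} \boxwedge (\gg \oplus \ogg)^{(2)}$ with the correct pair of translation actions, track the signs coming from the two "left translation" actions (each contributing a minus sign via the fundamental vector field convention) versus the two "right translation by inverse" actions (no sign), and verify that these cancellations produce precisely $-P_\oomega^\times$. A secondary but routine task is the basis-level verification of \eqref{PMPhi} for $\mult$, whose structure replicates that in the proof of Proposition \ref{techn} with the Cartan form replaced by $\oomega$ throughout.
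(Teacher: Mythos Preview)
Your proposal is correct and takes essentially the same approach as the paper. The paper likewise obtains $P_\oomega^\times$ by fusion from the zero bivector on a fourfold-product action (it fuses in two steps via the diagonals in $\GG^\times \times \GG^\times$, whereas you do both at once via the diagonal in $(\GG\times\oGG)\times(\GG\times\oGG)$, which is a harmless repackaging), and it establishes the momentum property via the direct diagram check you describe (Proposition~\ref{technn}, whose proof is exactly the Proposition~\ref{techn} computation together with the $\inv\times\inv$ symmetry you invoke).
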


The  Hamiltonian $(\GG \times \oGG)$-quasi
Poisson manifold  $(\GG^1 \times \GG^2,P_{\oomega}^\times,(\mult,\omult))$ 
relative to 
$\oomega+ \ovoomega$
is the (external) {\em Hamiltonian   quasi 
Poisson double\/}
of $(\GG,\oomega)$.

The inversion map $\inv\colon \GG^\times \to \GG^\times$
induces a morphism 
$(d\, \inv)^\sharp \colon \TT_{\omult} \oGG \to \TT_{\mult} \GG$
of vector bundles on $\GG^\times$.
Let 
\begin{align}
\Theta^\times &=(d\,\inv)^\sharp \otimes_{\GG^\times} \Id
\colon \TT_{\omult} \oGG \otimes_{\GG^\times} \TT \GG^\times
\longrightarrow \TT_{\mult} \GG \otimes_{\GG^\times} \TT \GG^\times
\\
\Theta_\gg &=(d\,\inv)^\sharp \otimes_{\GG^\times} \Id
\colon \TT_{\omult} \oGG \otimes \ogg
\longrightarrow \TT_{\mult} \GG \otimes \gg .
\end{align}
The following is the precise analog of Proposition \ref{techn},
and the proof is essentially the same.

\begin{prop}
\label{technn}
The diagram
\begin{equation}
\begin{gathered}
\scalefont{0.7}
\xymatrix{
\GG^\times 
\ar[dddd]_{\oomega}
\ar[rr]^{\oomega\phantom{aaaaaa}}
\ar@{=}[dr]
&&
\GG^\times \times (\gg\otimes \gg)
\ar[rr]^{\phantom{aaa}L^1 \wedge R^2 + R^1\wedge L^2\phantom{}}
&&
\TT^2 \GG^\times
\ar[dd]|-{\left(d\, {\mult}\right)_{\GG^\times} \otimes _{\GG^\times} \Id}
\ar@{=}[dl]
\\
&\GG^\times 
\ar[dd]_{\ovoomega}
\ar[r]^{\oomega\phantom{aaaaaa}}
&
\GG^\times \times (\gg\otimes \gg)
\ar[r]^{\phantom{aaa}L^1 \wedge R^2 + R^1\wedge L^2\phantom{}}
&
\TT^2 \GG^\times
\ar[d]|-{\left(d\, {\omult}\right)_{\GG^\times} \otimes _{\GG^\times} \Id}
&
\\
&& &
\left(\TT_{\omult}\oGG\right) \otimes_{\GG^\times}\left(\TT \GG^\times\right)
\ar[r]^{\Theta^\times}
&
\left(\TT_{\mult}\GG\right) \otimes_{\GG^\times}\left(\TT \GG^\times\right)
\\
&\GG^\times \times \left( \ogg \otimes \ogg\right)
\ar[rr]_{\left(L_{\omult} + R_{\omult}\right)\otimes \Id\phantom
{aaaa}}
&&
\left(\TT_{\omult}\oGG\right)
\otimes \ogg
\ar[u]|-{\Id \otimes_{\GG^\times} \ofund_{\GG^\times} }
\ar[dr]_{\Theta_\gg}
&
\\
\GG^\times \times \left( \gg \otimes \gg\right)
\ar[rrrr]_{\left(L_{\mult} + R_{\mult}\right)\otimes \Id\phantom
{aaaa}}
\ar@{=}[ur]
&&&&
\left(\TT_{\mult}\GG\right)
\otimes \gg
\ar[uu]|-{\Id \otimes_{\GG^\times} \fund_{\GG^\times} }
}
\end{gathered}
\label{unveil03}
\end{equation}
is commutative. \qed
\end{prop}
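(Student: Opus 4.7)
The plan is to mimic the two-step strategy used in the proof of Proposition \ref{techn}. There, the large diagram \eqref{momintomultvarr} was handled by first verifying the outermost rectangle via an explicit Maurer--Cartan computation---expressing that $\mult$ is a $\GG$-momentum mapping for $\sigma^\times_{\form}$ relative to $\,\form\,$---and then propagating the commutativity of the remaining subdiagrams (and in particular of the innermost rectangle) from the fact that the two actions \eqref{act31} and \eqref{act32} of $\GG$ and $\oGG$ on $\GG^\times$ are intertwined by the componentwise inversion on $\GG^\times$. The present Proposition is the quasi Poisson counterpart, involving the bivector $P^\times_\oomega$ in place of the $2$-form $\sigma^\times_{\form}$, and the very same two-step plan applies.

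For the first step, the outermost rectangle of \eqref{unveil03} asserts that $\mult \colon \GG^\times \to \GG$ is a $\GG$-momentum mapping for $P^\times_\oomega$ relative to $\oomega$, with respect to the action \eqref{act31}. I would verify this at a point $(q_1, q_2) \in \GG^1 \times \GG^2$ using a basis $(e_j)$ of $\gg$ for which $\oomega = \eeta^{j,k} e_j \otimes e_k$. Evaluating gives
\begin{equation*}
2 P^\times_{\oomega,(q_1,q_2)} = \eeta^{j,k}\bigl((q_1 e_j) \wedge (e_k q_2) + (e_j q_1) \wedge (q_2 e_k)\bigr),
\end{equation*}
to which one applies $(d\,\mult)_{\GG^\times}$ on the first tensor factor. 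On the other route, the map $((L_\mult + R_\mult) \otimes \Id)(\oomega)$ produces tangent vectors at $q_1 q_2 \in \GG$ in the first factor, which $\fund_{\GG^\times}$ then turns into a section of $\TT \GG^\times$ in the second factor via the conjugation action. The two expressions are compared by the same short bookkeeping that appeared in the proof of Proposition \ref{techn}, now without the final pairing with $\,\form\,$; the $\Ad$-invariance relation $\eeta^{j,k} e_j \otimes e_k = \eeta^{j,k}(\Ad_x e_j) \otimes (\Ad_x e_k)$ is needed to reconcile left- and right-invariant trivializations at $q_1 q_2$.

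For the second step, the two trapezoidal subdiagrams of \eqref{unveil03} involving $\Theta^\times$ and $\Theta_\gg$ commute by the definitions of these maps: the inversion $\inv \colon \GG^\times \to \GG^\times$ intertwines the action \eqref{act31} with \eqref{act32} under the identification $\gg = \ogg$, and by definition of $\omult$ the morphism $(d\,\inv)^\sharp$ converts $(L_\omult + R_\omult)(\ovoomega)$ into $(L_\mult + R_\mult)(\oomega)$ up to this identification. A diagram chase of exactly the same shape as the final step in the proof of Proposition \ref{techn} then deduces the commutativity of the innermost rectangle---i.e.\ that $\omult$ is a $\oGG$-momentum mapping for $P^\times_\oomega$ relative to $\ovoomega$---from the commutativity of the outermost one.

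The main obstacle is the explicit calculation for the outermost rectangle, where one must organize the contributions of the two summands $L^1 \wedge R^2$ and $R^1 \wedge L^2$ consistently against those of $L_\mult + R_\mult$. Since multiplication on $\GG$ does not respect the natural splitting of $\TT \GG^1 \times \TT \GG^2$, the $\Ad$-invariance of $\oomega$ must be invoked to transport right-invariant tangent vectors at $q_1$ to left-invariant ones at $q_1 q_2$, and vice versa; once these transports are carried out, the cross-terms cancel in pairs and the remaining contributions match, by essentially the calculation transposed via $\psidot$ from the proof of Proposition \ref{techn}. Everything else in \eqref{unveil03} then falls out formally from pullback functoriality and the inversion symmetry relating the two actions.
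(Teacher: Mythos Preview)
Your proposal is correct and follows exactly the approach the paper intends: the paper explicitly states that Proposition~\ref{technn} is ``the precise analog of Proposition~\ref{techn}, and the proof is essentially the same,'' and your two-step strategy---first verifying the outermost rectangle by an explicit computation using the $\Ad$-invariance of $\oomega$, then deducing the innermost rectangle from the componentwise inversion intertwining the actions \eqref{act31} and \eqref{act32}---is precisely that proof transposed from the $2$-form side to the bivector side.
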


\begin{proof}[Proof of Theorem {\rm \ref{momfusG}}]
The reader will readily verify directly that
$P_{\oomega}^\times$ is a $(\GG \times \oGG)$-quasi Poisson
structure on $\GG^\times$
relative to $\oomega + \ovoomega$.
An alternate reasoning for the latter  goes as follows:
Consider the 
$(\GG^\times\times \GG^\times)$-action
\begin{equation}
\begin{aligned}
\GG^\times\times \GG^\times 
\times \GG^\times
&\longrightarrow
 \GG^\times
\\
((x_1,x_2),(y_1,y_2),(q_1,q_2)) &\longmapsto 
(x_1,x_2)(q_1,q_2)(y_1,y_2)^{-1}
=
(x_1 q_1 y_1^{-1}, y_2 q_2 x_2^{-1})
\end{aligned}
\label{act1}
\end{equation}
on $\GG^\times$.
With respect
 to this action,
the zero bivector 
 is  a 
 $(\GG^\times\times \GG^\times)$-quasi Poisson 
structure on $\GG^\times$ relative to
$\oomega^{\times \times}$, cf. \eqref{oomegatimes},
similarly as in Example \ref{examp2}.
The composite of  \eqref{act1} with the product 
$\Delta \times \Delta \colon \GG \times \GG \to \GG^\times \times \GG^\times$
of the diagonal maps
yields \eqref{act3}.
Fusing with respect to the first diagonal yields,
with respect to the resulting $(\GG \times \GG^\times)$-action on
$\GG^ \times$,
the 
$(\GG \times \GG^\times)$-quasi Poisson structure
$\tfrac 12 (L^1 \wedge R^2)(\oomega)$
on $\GG^\times$ relative to
$\oomega^{1,\times}$.
Fusing thereafter with respect to the second diagonal yields,
with respect to the resulting $(\GG \times \GG)$-action on
$\GG^ \times$, by construction precisely \eqref{act3},
the 
$(\GG \times \GG)$-quasi Poisson structure
$P^\times_{\oomega}=\tfrac 12 (L^1 \wedge R^2 + R^1 \wedge L^2)(\oomega)$
on $\GG^\times$ relative to
$\oomega^{\times}$.

The outermost diagram
of \eqref{unveil03} 
being commutative says that the map
$\mult \colon \GG^\times \to \GG$
is a $\GG$-momentum mapping for
$P^\times_\oomega$ relative to $\oomega$
and the innermost diagram
of \eqref{unveil03} 
being commutative says that
$\omult \colon \GG^\times \to \oGG$
is a $\oGG$-momentum mapping for
$P^\times_\oomega$ relative to $\widetilde \oomega$.
Consequently
$(\mult,\omult)$, cf.
\eqref{momfusGGoldd}, 
is a $(\GG \times \oGG)$-momentum mapping for $P^\times_\oomega$
relative to $\oomega +\widetilde \oomega$.
\end{proof}

\subsection{Internally fused double}
\label{internallyfused}
In view of  Theorem \ref{momfusG},
substitute $(\GG \times \GG, P^\times_\oomega, \mult, \omult)$ 
 for $(M,P,\Phi^1,\Phi^2) $ in
Theorem \ref{fusmom};
by that Theorem,
with respect to pairwise conjugation
of $\GG$ on $\GG \times \GG$,
the bivector $P_1=P_\fus =P^\times_\oomega - \chi_{\oomega,\GG \times \GG}$
yields a
$\GG$-quasi Poisson structure on $\GG \times \GG$ relative to $\oomega$,
 and
\begin{equation}
\xymatrixcolsep{4pc}
\xymatrix{
\Phi_1 \colon \GG \times \GG \ar[r]^{\phantom{a} (\mult,\omult)}
 &\GG \times \GG \ar[r]^{\mult} &\GG
}
\label{mom11}
\end{equation}
is a $\GG$-momentum mapping for
$P_1$
relative to $\oomega$.
The pieces of structure $P_1$ and $\Phi_1$ yield 
the {\em internally fused double\/}
$(\GG \times \GG, P_1,\Phi_1)$
of $\GG$
in the realm of  quasi Poisson  structures.

\section{Momentum duality and non-degeneracy}
\label{comparison}
\subsection{General case}
\label{gencase}
Let $M$ be  a $\GG$-manifold $M$ and $\Phi \colon M \to \GG$ an admissible  
$\GG$-equivariant map. We denote by  $\rho_\Phi \colon \TT M \to \TT M$
the morphism
\begin{equation}
\xymatrixcolsep{4pc}
\xymatrix
{
\rho_\Phi\colon \TT M  \ar[r]^{\phantom{aa}(d\Phi)_M} &
\TT_\Phi \GG
\ar[r]^{L_\Phi^{-1} - R_\Phi^{-1} }
&
M \times \gg \ar[r]^{\phantom{aa}\fund_M}
&\TT M 
}
\label{3rhowrite}
\end{equation}
of vector bundles on $M$. 
The dual thereof reads
\begin{equation}
\xymatrixcolsep{4.5pc}
\xymatrix
{
\rho^*_\Phi\colon \TT^* M \ar[r]^{\mathrm{fund^*_M}}  &M \times \gg^*
\ar[r]^{\phantom{aa}L^{*,-1}_\Phi - R^{*,-1}_\Phi}
&\TT^*_\Phi \GG \ar[r]^{(d \Phi)_M^*}
&\TT^* M .
}
\label{3rhod}
\end{equation}
The morphisms
\begin{align}
(\Id - \tfrac 14 \rho_\Phi) + \fund_M &\colon \TT M \oplus_M M \times \gg
\longrightarrow \TT M
\label{surj3}
\\
(\Id - \tfrac 14 \rho^*_\Phi) + (d\Phi)_M^* &\colon \TT^* M \oplus_M \TT^*_\Phi \GG
\longrightarrow \TT^* M
\label{surj4}
\end{align}
of vector bundles on $M$
are manifestly epimorphisms.

Let $\sigma$ be a  $\GG$-invariant $2$-form on $M$
and $P$  a 
$\GG$-invariant skew symmetric $2$-tensor on $M$.
Consider the adjoints $P^\sharp \colon \TT^* M \to \TT M$
of $P$ 
 and $\sigma^\fflat \colon \TT M \to \TT^* M$ of $\sigma$,
cf. Subsection \ref{wqH} and
\eqref{Psharp}.
We say that
$\sigma$ and $P$ are $\Phi$-{\em dual to each other\/} 
or {\em dual to each other
via the admissible $\GG$-equivariant map\/} 
$\Phi\colon M \to \GG$ 
when 
the adjoints $P^\sharp$
 and $\sigma^\fflat$
satisfy the identity
\begin{equation}
P^\sharp \circ \sigma^\fflat=
\Id_{\TT M} - \tfrac 14 \rho_\Phi \colon   \TT M \longrightarrow \TT M.
\label{31}
\end{equation}

Since, for a skew-symmetric $2$-tensor  $P$ and a $2$-form $\sigma$,
the dual $P^{\sharp,*}\colon \TT^* M \to \TT M$ 
of $P^\sharp$ coincides with $-P^\sharp$
and the dual
$\sigma^{\flat,*} \colon \TT M \to \TT^* M$
of $\sigma^\fflat$
coincides with $- \sigma^\fflat$,
the following is is immediate.

\begin{prop}
\label{associated1}
The admissible $\GG$-equivariant map $\Phi \colon M \to \GG$ being given,
the  $\GG$-invariant $2$-form $\sigma$ 
and the  skew-symmetric  
$\GG$-invariant $2$-tensor $P$ on $M$
are $\mmu$-dual to each other if and only if
the adjoints $P^\sharp$
 and $\sigma^\fflat$
satisfy the identity 
\begin{gather}
 \sigma^\fflat \circ P^\sharp=
\Id_{\TT^* M} - \tfrac 14 \rho^*_\Phi \colon   \TT^* M \longrightarrow \TT^* M.
\label{31d}
\end{gather}
 \end{prop}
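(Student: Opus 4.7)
The plan is to observe that this is a purely formal consequence of duality, using the antisymmetry of both $P^\sharp$ and $\sigma^\fflat$ as operators. First I would recall that, as noted in the proof of Proposition \ref{moms} and reiterated just above the statement, the dual $P^{\sharp,*}\colon \TT^* M \to \TT M$ of $P^\sharp$ equals $-P^\sharp$ because $P$ is skew-symmetric, and similarly $\sigma^{\fflat,*} \colon \TT M \to \TT^* M$ equals $-\sigma^\fflat$ because $\sigma$ is a $2$-form.

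Next, I would apply the duality functor $(\,\cdot\,)^*$ to identity \eqref{31}. The left-hand side becomes
\begin{equation*}
(P^\sharp \circ \sigma^\fflat)^* = \sigma^{\fflat,*} \circ P^{\sharp,*} = (-\sigma^\fflat) \circ (-P^\sharp) = \sigma^\fflat \circ P^\sharp,
\end{equation*}
while the right-hand side becomes $\Id_{\TT^* M} - \tfrac 14 \rho_\Phi^*$, where $\rho_\Phi^*$ is precisely the morphism \eqref{3rhod} (the dual of $\rho_\Phi$ as displayed in \eqref{3rhowrite}). This yields \eqref{31d}.

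For the converse, I would run the same argument in the opposite direction: apply $(\,\cdot\,)^*$ to \eqref{31d} and use the same sign identities $(P^\sharp)^* = -P^\sharp$, $(\sigma^\fflat)^* = -\sigma^\fflat$ to recover \eqref{31}. Since dualization is an involution on morphisms of (finitely generated projective) vector bundles on $M$, this establishes the equivalence.

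No serious obstacle is anticipated; the proposition is essentially a bookkeeping consequence of the skew-symmetry of $P$ and the alternating nature of $\sigma$, and the explicit description of $\rho_\Phi^*$ in \eqref{3rhod} ensures the sign conventions match without any further identification.
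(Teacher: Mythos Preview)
Your proposal is correct and matches the paper's own reasoning exactly: the paper simply notes that $P^{\sharp,*}=-P^\sharp$ and $\sigma^{\fflat,*}=-\sigma^\fflat$ and declares the result immediate, which is precisely the dualization argument you spell out.
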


\begin{rema}
{\rm
For a $2$-tensor $P$ and a $1$-form $\alpha$ on $M$,
the composite
\begin{equation*}
\langle P,\alpha\rangle \colon 
P \stackrel P\longrightarrow
\TT M \otimes_M \TT M
\stackrel {\alpha \otimes \Id}
\longrightarrow \bK \otimes \TT M
\cong \TT M
\end{equation*}
characterizes  a vector field on $M$.
The $2$-form  $\sigma$ and the alternating $2$-tensor $P$ are 
$\Phi$-dual to each other if and only if, for any $1$-form
$\alpha \colon \TT M \to \bK$,
the diagram
\begin{equation*}
\begin{gathered}
\xymatrix{
\TT M \ar[r]^{\Id - \tfrac 14 \rho_\Phi}
\ar[d]_{\langle P,\alpha\rangle \otimes_M \Id}
& \TT M
\ar[d]^\alpha
\\
\TT M \otimes _M \TT M
\ar[r]_{\phantom{aaaaaa}\sigma}
&
\bK
}
\end{gathered}
\end{equation*}
is commutative; indeed, the composite
$\sigma \circ (\langle P,\alpha\rangle \otimes_M \Id)$
recovers $(\sigma^\fflat \circ P^\sharp)(\alpha)$.
This characterization of  $\Phi$-duality
does not
explicitly  refer to the cotangent bundle of $M$
and does, perhaps, still work in infinite dimensions.
}
\end{rema}

Define a $\GG$-invariant $2$-tensor $P$ on $M$ to be
$\GG$-{\em quasi non-degenerate\/}
when the morphism 
\begin{equation}
P^\sharp+ \fund_M\colon
\TT ^* M \oplus_M M \times \gg \longrightarrow \TT M
\label{surj21}
\end{equation}
of vector bundles on $M$ is an epimorphism or, equivalently, when
the morphism
\begin{equation}
(P^\sharp, \fund_M^*)\colon
\TT^* M \longrightarrow 
\TT  M \oplus_M M \times \gg^* 
\label{inj22}
\end{equation}
of vector bundles on $M$ is a monomorphism.

\begin{prop}
\label{compar32}
Let  $\sigma$ be a $\GG$-invariant $2$-form on $M$ and $P$ a 
$\GG$-invariant skew-symmetric 
$2$-tensor on $M$. Suppose that $\sigma$ and $P$ are 
dual to each other via the $\GG$-equivariant 
 admissible map $\Phi \colon M \to \GG$.
\begin{enumerate}
\item
The morphisms
{\rm \eqref{surj11}} and {\rm \eqref{surj21}}
of vector bundles on $M$ are epimorphisms, that is, 
 the $2$-form $\sigma$ on $M$ is $\Phi$-non-degenerate,
and the $2$-tensor $P$ is $\GG$-quasi non-degenerate.
\item
For a $\GG$-invariant admissible function $f \colon M \to \bK$, the 
vector field $X_f = P^\sharp(df)$ is $\GG$-invariant and
satisfies the identities
\begin{align}
(\sigma^\fflat \circ P^\sharp)(df) &= df
\label{idqp1}
\\
 (P^\sharp \circ \sigma^\fflat) (X_f)&=X_f.
\label{idqp2}
\end{align}

\item
For two $\GG$-invariant admissible functions $f,h \colon M \to \bK$,
\begin{equation}
\sigma (X_f, X_h)  =P(dh,df).
\label{idqp3}
\end{equation}
\end{enumerate}
\end{prop}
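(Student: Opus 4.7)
The plan is to deduce all three parts directly from the two duality identities \eqref{31} and \eqref{31d} together with the tautological epimorphisms \eqref{surj3} and \eqref{surj4}. The main observation is that $\rho_\Phi$ factors through $\fund_M$ and $\rho_\Phi^*$ factors through $\fund_M^*$, so these operators vanish on objects tied to $\GG$-invariant data. No serious obstacle is expected; the content is essentially bookkeeping.

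For part (1), I would argue as follows. Given $\alpha \in \TT^*M$, by \eqref{surj4} write $\alpha = (\Id - \tfrac 14 \rho_\Phi^*)(\beta) + (d\Phi)_M^*(\gamma)$. By Proposition \ref{associated1} the first summand equals $\sigma^\fflat(P^\sharp(\beta))$, exhibiting $\alpha$ in the image of $(\sigma^\fflat, (d\Phi)_M^*)$; this proves that \eqref{surj11} is an epimorphism, i.e.\ $\sigma$ is $\Phi$-non-degenerate. Analogously, given $Z \in \TT M$, by \eqref{surj3} write $Z = (\Id - \tfrac 14 \rho_\Phi)(X) + \fund_M(Y)$, whence by \eqref{31} $Z = P^\sharp(\sigma^\fflat(X)) + \fund_M(Y)$, showing that \eqref{surj21} is an epimorphism and hence $P$ is $\GG$-quasi non-degenerate.

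For part (2), the $\GG$-invariance of $X_f = P^\sharp(df)$ is immediate from the $\GG$-invariance of $P$ and $f$. To establish \eqref{idqp1}, I would note that for $f$ a $\GG$-invariant function and $X\in\gg$,
\[
\fund_M^*(df)(X) = df(\fund_M(X)) = X_M(f) = 0,
\]
so $\fund_M^*(df) = 0$, and consequently the composite $\rho_\Phi^*(df) = (d\Phi)_M^*(L_\Phi^{*,-1} - R_\Phi^{*,-1})(\fund_M^*(df))$ vanishes. Plugging $df$ into \eqref{31d} then yields $\sigma^\fflat(P^\sharp(df)) = df$, i.e.\ \eqref{idqp1}. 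Identity \eqref{idqp2} follows at once by applying $P^\sharp$ to \eqref{idqp1}, since $P^\sharp(\sigma^\fflat(X_f)) = P^\sharp(df) = X_f$ by the very definition of $X_f$.

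For part (3), I would simply chain the definitions using \eqref{idqp1}:
\[
\sigma(X_f, X_h) = \sigma^\fflat(X_f)(X_h) = df(X_h) = X_h(f) = P^\sharp(dh)(f) = P(dh,df),
\]
where the last step invokes the convention $P^\sharp(d\alpha)(\beta) = P(d\alpha,d\beta)$ from \eqref{Psharp}. The whole proof is thus reduced to the observation that $\GG$-invariance of $f$ forces $\rho_\Phi^*(df) = 0$, after which everything is formal.
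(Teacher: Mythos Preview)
Your proof is correct and follows essentially the same line as the paper's own proof: both deduce (1) from the factorizations $\Id - \tfrac14\rho_\Phi = P^\sharp\circ\sigma^\fflat$ and $\Id - \tfrac14\rho_\Phi^* = \sigma^\fflat\circ P^\sharp$ together with the tautological surjections \eqref{surj3}, \eqref{surj4}, and both reduce (2)--(3) to the single observation that $\fund_M^*(df)=0$ for $\GG$-invariant $f$, which kills $\rho_\Phi^*(df)$ in \eqref{31d}. The paper merely states these reductions; you have spelled them out.
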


\begin{proof}
Since \eqref{surj3} is an epimorphism of vector bundles, so is \eqref{surj21} 
and, since \eqref{surj4} is an epimorphism of vector bundles, so is, likewise,
\eqref{surj11}.

Let $f$ be a $\GG$-invariant $\bK$-valued admissible function on $M$. Since 
$f$ is $\GG$-invariant,  $\fund_M^*(df)$ is zero. Hence  \eqref{31d} implies 
\eqref{idqp1} and \eqref{idqp2}. Moreover, for two $\GG$-invariant admissible 
functions $f,h \colon M \to \bK$, \eqref{31d} implies \eqref{idqp3}.
\end{proof}

\begin{rema}
{\rm
For a $\GG$-invariant admissible $\bK$-valued function
on $M$, identity \eqref{idqp1}, rewritten as 
$\sigma(X_f,\,\cdot\,) = df$,
recovers the classical definition of an ordinary
Hamiltonian vector field.
In classical mechanics,
an expression of the kind 
$\sigma (X_f, X_h)$ characterizes
the Poisson bracket of two functions $f$ and $h$, 
cf., e.g., \cite{MR515141}.
In the present paper,
for consistency with 
\cite{MR1880957},
we define the Poisson bracket of two functions $f$ and $h$
by an expression of the kind $P(df,dh)$, however,
cf. \eqref{pb2}.
Also, with this definition,
in the classical case,
the canonical map from 
a Poisson  algebra of smooth functions
on a smooth manifold
to the smooth vector fields thereof
which sends a function to its Hamiltonian vector field
is compatible with the Lie brackets, that is,
there is no sign coming in.
}
\end{rema}

Let $\sigma$ be a $\GG$-invariant $2$-form on $M$ and
$P$ a $\GG$-invariant skew-symmetric $2$-tensor on $M$, and let
$\Phi \colon M \to \GG$ be a $\GG$-equivariant admissible map.
When $\Phi$ is a $\GG$-momentum mapping for
$P$ relative to some $\Ad$-invariant symmetric $2$-tensor
in $\gg \otimes \gg$ and for $\sigma$ 
 relative to some $\Ad$-invariant symmetric $2$-form
on $\gg$,
we say $P$ and $\sigma$ are
 {\em $\Phi$-momentum dual to each other
via\/} $\Phi$.

\begin{prop}
\label{unique}
Suppose that $P$ and $\sigma$ are
  $\Phi$-momentum dual to each other
  via  an admissible  $\GG$-equivariant map $\Phi\colon M \to \GG$
relative to some
$\Ad$-invariant symmetric $2$-tensor $\oomega$
in $\gg \otimes \gg$ and 
 to some $\Ad$-invariant symmetric $2$-form
$\form$ on $\gg$.
Then the diagrams
\begin{equation}
\begin{gathered}
\xymatrixcolsep{4pc}
\xymatrix{
&\TT^*_\Phi \GG \oplus_M \TT M 
\ar@{}[d]|-{+}
\ar[dr]|-{\Id - \tfrac {\rho_\Phi}4}
\ar[r]^{\phantom{aaa}(d\Phi)^*_M + \sigma^\fflat}
\ar[dl]|-{\tfrac 12 (L_\Phi^* + R_\Phi^*)}
& \TT^* M \ar@{.>}[d]^{P^\sharp}
\\
M \times \gg^*
\ar[r]_{\Id \times \psi^\oomega} 
&M \times \gg \ar[r]_{\fund_M}& \TT M
}
\end{gathered}
\label{render1}
\end{equation}
and
\begin{equation}
\begin{gathered}
\xymatrixcolsep{4pc}
\xymatrix{
&M \times \gg \oplus_M \TT^* M \ar[dr]|-{\Id - \tfrac {\rho^*_\Phi}4}
\ar[r]^{\phantom{aaaaa}\fund_M + P^\sharp}
\ar[dl]|-{\Id \times \psidot}
\ar@{}[d]|-{+}
& \TT M \ar@{.>}[d]^{\sigma^\fflat}
\\
M \times \gg^* \ar[r]_{\tfrac 12 (L_\Phi^{*,-1} +R_\Phi^{*,-1})}& \TT_\Phi^* \GG \ar[r]_{(d\Phi)_M^*}& \TT^* M
}
\end{gathered}
\label{render2}
\end{equation}
are commutative. Hence $\sigma$ determines $P$ uniquely,
and $P$ determines $\sigma$ uniquely.
\end{prop}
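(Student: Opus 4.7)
The plan is to observe that each of the two diagrams in the statement is a direct assembly of two pieces of information already in hand, one expressing a momentum property and the other expressing the (dual of the) $\Phi$-duality identity \eqref{31}, and then to read off uniqueness from the epimorphism statements in Proposition \ref{compar32}(1).

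First I would decompose diagram \eqref{render1}. The top horizontal map $(d\Phi)^*_M + \sigma^\fflat$ on $\TT^*_\Phi\GG \oplus_M \TT M$ has two components. Applying $P^\sharp$ to the first component $(d\Phi)^*_M$ gives exactly the composite
\[
P^\sharp\circ (d\Phi)^*_M
= \fund_M\circ(\Id\times \psi^\oomega)\circ\tfrac 12\bigl(L^*_\Phi+R^*_\Phi\bigr),
\]
which is precisely the commutativity of diagram \eqref{CD01} in Proposition \ref{equival1}, i.e.\ the statement that $\Phi$ is a $\GG$-momentum mapping for $P$ relative to $\oomega$. Applying $P^\sharp$ to the second component $\sigma^\fflat$ gives $P^\sharp\circ\sigma^\fflat=\Id-\tfrac 14\rho_\Phi$, which is the $\Phi$-duality identity \eqref{31}. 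The commutativity of \eqref{render1} is the sum of these two sub-identities, and is therefore established.

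Next I would do the analogous decomposition for \eqref{render2}. The top horizontal map $\fund_M+P^\sharp$ on $M\times\gg\oplus_M \TT^*M$ has two components. Post-composing with $\sigma^\fflat$, the first component yields
\[
\sigma^\fflat\circ\fund_M
= (d\Phi)^*_M\circ\tfrac 12\bigl(L^{*,-1}_\Phi+R^{*,-1}_\Phi\bigr)\circ(\Id\times\psidot),
\]
which is exactly the commutativity of \eqref{PMudual} in Proposition \ref{moms}, i.e.\ the momentum mapping property of $\sigma$ relative to $\form$. The second component produces $\sigma^\fflat\circ P^\sharp=\Id-\tfrac 14\rho^*_\Phi$, which is identity \eqref{31d} of Proposition \ref{associated1}, equivalent to \eqref{31}. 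Summing gives the commutativity of \eqref{render2}.

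For the uniqueness assertions, the key input is Proposition \ref{compar32}(1): under the present hypotheses, \eqref{surj11} and \eqref{surj21} are epimorphisms of vector bundles on $M$. The top horizontal arrow $(d\Phi)^*_M+\sigma^\fflat$ in \eqref{render1} is precisely the morphism \eqref{surj11}, so the commutativity of \eqref{render1} determines the composite $P^\sharp$ on a surjective image of $\TT^*_\Phi\GG\oplus_M \TT M$, hence determines $P^\sharp$, and therefore $P$, uniquely from $(\sigma,\Phi,\oomega,\form)$. Symmetrically, the top horizontal arrow $\fund_M+P^\sharp$ in \eqref{render2} is the epimorphism \eqref{surj21}, so \eqref{render2} determines $\sigma^\fflat$, and hence $\sigma$, uniquely from $(P,\Phi,\oomega,\form)$. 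There is no genuine obstacle here beyond careful bookkeeping; the only step requiring attention is to match the signs and the factors of $\tfrac 12$ and $\tfrac 14$ across the three earlier diagrams \eqref{CD01}, \eqref{PMudual}, and the duality identities \eqref{31}, \eqref{31d}, and to confirm that the ``$+$'' decoration in \eqref{render1} and \eqref{render2} is really addition of the two routes through the Whitney sum on the upper-left corner.
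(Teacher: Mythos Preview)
Your proof is correct and follows essentially the same approach as the paper: the paper's own proof simply asserts that the commutativity of the two diagrams is immediate and then invokes Proposition~\ref{compar32} to conclude that the top rows are epimorphisms, yielding uniqueness. Your argument is a faithful unpacking of that one-line ``immediate'': you correctly identify each diagram as the sum of a momentum identity (diagram~\eqref{CD01} for $P$, diagram~\eqref{PMudual} for $\sigma$) and a $\Phi$-duality identity (\eqref{31} or \eqref{31d}), and your identification of the top rows with \eqref{surj11} and \eqref{surj21} is exactly what the paper means when it cites Proposition~\ref{compar32}.
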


\begin{proof}
It is immmediate that the two diagrams are commutative.
By Proposition \ref{compar32}, the top rows of the two diagrams
are epimorphisms of vector bundles on $M$.
Hence $\sigma$ and $P$ determine each other uniquely.
\end{proof}

Under the circumstances of Proposition \ref{unique} we say that
$\sigma$ is the $\Phi$-{\em momentum dual of\/}
$P$ and that
$P$ 
is the $\Phi$-{\em momentum dual of\/} $\sigma$.

As before, let 
$\,\form\,$ designate 
an $\Ad$-invariant symmetric bilinear form
on the Lie algebra $\gg$ of $\GG$ and
$\oomega$
an $\Ad$-invariant symmetric $2$-tensor
in $\gg \otimes \gg$.

\begin{prop}
\label{6.3}
Consider a $(\GG \times \GG)$-manifold $M$ together with 
  a $(\GG\times \GG)$-quasi Poisson structure $P^\times$ 
 relative to $\oomega^\times$
and  a weakly 
 $(\GG\times \GG)$-quasi closed $2$-form $\sigma^\times$
relative to $\,\form\,$,
and let
$(\Phi^1,\Phi^2)\colon M \to \GG \times \GG$ be a 
$(\GG \times \GG)$-momentum mapping
for $P^\times$ relative to $\oomega^\times$
as well as one for $\sigma^\times$ relative to $\,\form\,$.
Suppose that
$\sigma^\times$ and $P^\times$
are dual to each other via
$(\Phi^1,\Phi^2)$.
Then, with respect to the diagonal $\GG$-action on $M$,
the weakly  $\Phi$-quasi closed $2$-form
\begin{equation*}
\sigma=\sigma_\fus = \sigma^\times - (\Phi^1,\Phi^2)^*(\tfrac 12 \omega_1 \form \ovomega_2)
\end{equation*}
relative to $\,\form\,$,
cf. Propopsition {\rm \ref{fusmoms}}, and
the  $\GG$-quasi Poisson structure
$P = P_\fus$ on $M$ relative to $\oomega$,
cf. {\rm \eqref{Pfus}},
are dual to each other
via the 
 $\GG$-momentum mapping
\begin{equation*}
\Phi =\Phi_\fus= \Phi^1 \Phi^2 \colon M \to \GG
\end{equation*}
relative to $\oomega$ and $\,\form\,$
resulting from fusion,
cf. 
Proposition {\rm \ref{momfusGs}}
and Theorem {\rm \ref{momfusG}}.
\end{prop}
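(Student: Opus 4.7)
My plan is to verify the duality identity
\begin{equation*}
P_{\fus}^\sharp \circ \sigma_{\fus}^\fflat = \Id_{\TT M} - \tfrac{1}{4}\rho_{\Phi^1\Phi^2},
\end{equation*}
cf.\ \eqref{31}. By Propositions \ref{fusmoms} and \ref{fusmom}, the pieces $(P_{\fus},\Phi^1\Phi^2)$ and $(\sigma_{\fus},\Phi^1\Phi^2)$ individually inherit the momentum/quasi-closed/quasi Poisson properties, so only this single composition identity is at stake.

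First, I would write $\sigma_{\fus}^\fflat = \sigma^{\times,\fflat} - \tau^\fflat$ with $\tau = \tfrac{1}{2}(\Phi^1,\Phi^2)^*(\omega_1 \form \ovomega_2)$ and $P_{\fus}^\sharp = P^{\times,\sharp} - \chi^\sharp$ with $\chi = \chi_{\oomega,M}$. Expanding the composite yields four summands; by hypothesis the first, $P^{\times,\sharp} \circ \sigma^{\times,\fflat}$, equals $\Id - \tfrac{1}{4}\rho_{(\Phi^1,\Phi^2)}$. So the task reduces to establishing
\begin{equation*}
\chi^\sharp\tau^\fflat - P^{\times,\sharp}\tau^\fflat - \chi^\sharp\sigma^{\times,\fflat} = \tfrac{1}{4}\bigl(\rho_{(\Phi^1,\Phi^2)} - \rho_{\Phi^1\Phi^2}\bigr).
\end{equation*}

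Second, I would unpack the right-hand side. The Leibniz rule gives $(d(\Phi^1\Phi^2))_M = R_{\Phi^2}(d\Phi^1)_M + L_{\Phi^1}(d\Phi^2)_M$, cf.\ \eqref{display27}, \eqref{display28}. Combined with $R_{\Phi^j} = L_{\Phi^j}\circ \Ad_{\Phi^j}^{-1}$ and $\fund_M^{\diag} = \fund_M^1 + \fund_M^2$, a direct computation expresses $\rho_{(\Phi^1,\Phi^2)} - \rho_{\Phi^1\Phi^2}$ as a sum of mixed terms of the shape $\fund_M^j\circ A_{j,k}\circ (d\Phi^k)_M$ with $j \neq k$, the operators $A_{j,k}$ built from $\Ad_{\Phi^j}^{\pm 1}$ and $L_{\Phi^k}^{-1}\mp R_{\Phi^k}^{-1}$. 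Next, I would compute each correction term on the left: $\chi^\sharp = \tfrac{1}{2}(\fund_M^1\circ\oomega^\sharp\circ\fund_M^{2,*} - \fund_M^2\circ\oomega^\sharp\circ\fund_M^{1,*})$ and an analogous formula for $\tau^\fflat$ in terms of $(d\Phi^j)_M^*$, $L^*_{\Phi^j}$, $R^*_{\Phi^j}$ and $\psidot$. The momentum mapping property for $\sigma^\times$ (Proposition \ref{moms}, applied component-wise to $\Phi^1,\Phi^2$) then lets me replace $\sigma^{\times,\fflat}\fund_M^j$ by expressions in $(d\Phi^j)_M^*\circ(L^*_{\Phi^j}+R^*_{\Phi^j})\psidot$, and the momentum property \eqref{PMPhi} for $P^\times$ lets me replace $(d\Phi^j)_M P^{\times,\sharp}$ by expressions in $(L_{\Phi^j}+R_{\Phi^j})\circ\oomega^\sharp\circ\fund_M^{\times,*}$.

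After these substitutions each of the three correction terms becomes a sum of terms $\fund_M^j\circ B_{j,k}\circ (d\Phi^k)_M$, and upon using the cancellation $\oomega^\sharp\circ\psidot\circ\oomega^\sharp = \oomega^\sharp$ (which holds because $\oomega$ factors through $\gg_\oomega$ where $\psidot$ is invertible, cf.\ Proposition \ref{arise}), the operators $B_{j,k}$ collapse into combinations of $\Id\pm \Ad_{\Phi^j}^{\pm 1}$ matching the $A_{j,k}$ computed for $\rho_{(\Phi^1,\Phi^2)} - \rho_{\Phi^1\Phi^2}$. The main obstacle will be the bookkeeping: the four-block decomposition \eqref{decomp1} of $\TT^2\GG^\times$ interacts with the signs in $L^j\wedge R^k$ versus $R^j\wedge L^k$, and careful sign tracking is required. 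I would carry out the verification as a single commutative diagram assembled from the dual pair of diagrams \eqref{render1} and \eqref{render2} at the $(\Phi^1,\Phi^2)$-level, bridged via \eqref{display27}--\eqref{display28} to the corresponding diagrams at the $\Phi^1\Phi^2$-level; once the correction pieces $\chi^\sharp$ and $\tau^\fflat$ are inserted, the diagram commutes by the substitution above, proving the duality identity and hence the proposition.
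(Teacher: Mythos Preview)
The paper's own proof of this proposition is simply ``Left to the reader.'' Your outline therefore goes considerably further than the paper does, and the strategy you describe---expand $P_{\fus}^\sharp\sigma_{\fus}^\fflat$ into four pieces, use the assumed duality for the leading term, and match the three correction terms against $\tfrac14(\rho_{(\Phi^1,\Phi^2)}-\rho_{\Phi^1\Phi^2})$ via the momentum identities for $P^\times$ and $\sigma^\times$---is the natural computational route and is sound in outline.

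One caution: the identity $\oomega^\sharp\circ\psidot\circ\oomega^\sharp=\oomega^\sharp$ that you invoke from Proposition~\ref{arise} is established there for the particular form $\,\form\,$ \emph{induced by} $\oomega$ on the ideal $\gg_\oomega$, not for an arbitrary $\Ad$-invariant symmetric form. The statement of Proposition~\ref{6.3} does not explicitly assume any relationship between $\,\form\,$ and $\oomega$, so either you should check that the cross terms $\chi^\sharp\tau^\fflat$, $P^{\times,\sharp}\tau^\fflat$, $\chi^\sharp\sigma^{\times,\fflat}$ in fact organize themselves without needing that cancellation (they may, since the momentum identities already convert the $\psidot$- and $\oomega^\sharp$-factors separately into geometric operators like $L_\Phi^{\pm1}$, $R_\Phi^{\pm1}$, $\Ad_\Phi^{\pm1}$), or you should note that the proposition is only applied later (Proposition~\ref{alternate}) under the hypothesis that $\oomega$ arises from $\,\form\,$, in which case your identity is available.
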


\begin{proof}
Left to the reader.
\end{proof}

As before, consider a $\GG$-invariant  skew-symmetric  
$2$-tensor $P$ on $M$.
Suppose the symmetric $2$-tensor $\oomega$ in $\gg \otimes \gg$
is non-degenerate, and let
$\Phi \colon M \to \GG$ be a $\GG$-momentum mapping for
$P$ relative to $\oomega$.
Let $\psi_\Phi^\oomega \colon \TT _\Phi^* \GG \to  \TT _\Phi \GG$
denote the isomorphism of vector bundles on $M$
which $\oomega$ induces.
Recall that $\TT^* M^{P}$ denotes the kernel of the adjoint
 $P^\sharp\colon \TT^* M \to \TT M$ of $P$, a morphism of vector 
bundles on $M$.
From the commutative diagrams  \eqref{CD01} and \eqref{CD01d}, 
we concoct the commutative diagram
\begin{equation}
\begin{gathered}
\xymatrix{
&\ker(L_\Phi^{*}+ R_\Phi^{*})
\ar[r]
\ar@{>->}[d]
&\TT^* M^{P}\ar@{>->}[d]\ar[rr] 
&
&\ker(L_\Phi^{*,-1}+ R_\Phi^{*,-1})\ar@{>->}[d] 
\\
M \times \gg^*
\ar[d]|-{-\Id \times \psi^\oomega}
&\ar[l]_{L_\Phi^* + R_\Phi^*}\TT^* _\Phi\GG 
\ar[r]^{(d \Phi)^*_M} 
&
\TT^* M \ar[d]^{- P^\sharp}
 \ar[rr]^{\fund_M^*} 
&
&
 M \times \gg^* 
\ar[d]|-{\tfrac 12 (L_\Phi^{*,-1}+ R_\Phi^{*,-1})} 
\\
M \times \gg
\ar[rr]_{\fund_M}
&&\TT M \ar[r]_{(d \Phi)_M}
&
\TT _\Phi \GG
\ar[r]_{\psi_\Phi^{\oomega,-1}}
&
\TT^* _\Phi \GG ,
}
\end{gathered}
\label{PMukKK}
\end{equation}
the lower right-hand rectangle being a variant of \eqref{CD01d}.
The following is exactly analogous to
Proposition \ref{tech}.
 
\begin{prop}
\label{techP}
The symmetric $2$-tensor $\oomega$ in $\gg \otimes \gg$ being non-degenerate,
the restriction of
$
2 L_\Phi^* \colon \TT _\Phi^* \GG \longrightarrow
M \times \gg^* 
$
to $\ker(L_\Phi^{*}+ R_\Phi^{*})$
yields the
upper row of {\em \eqref{PMukKK}}.
Hence
\begin{equation*}
(d \Phi)^*_M|\colon \ker(L_\Phi^{*}+ R_\Phi^{*})\longrightarrow \TT^* M^P
\end{equation*}
is a monomorphism of distributions on $M$
and
\begin{equation*}
\TT^* M^P \longrightarrow \fund_M^*|\colon \ker(L_\Phi^{*,-1}+ R_\Phi^{*,-1})
\end{equation*}
is an epimorphism of distributions on $M$. \qed
\end{prop}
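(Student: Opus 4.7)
The plan is to mirror the proof of Proposition \ref{tech} exactly, but in the dual (cotangent) setting, exploiting the formal symmetry between the two commutative diagrams \eqref{PMu}, \eqref{PMudual2} on the one hand and \eqref{CD01}, \eqref{CD01d} on the other. The crucial algebraic input is the $\GG$-equivariance of $\Phi$ (with respect to conjugation), which, since the fundamental vector field of $\gg$ on $\GG$ via conjugation is $X \mapsto X^L - X^R$, yields the identity
\begin{equation*}
(d\Phi)_M \circ \fund_M = L_\Phi - R_\Phi \colon M \times \gg \longrightarrow \TT_\Phi \GG.
\end{equation*}
Dualizing and using $(\psidot)^* = \psidot$ (because $\oomega$ is symmetric), we obtain
\begin{equation*}
\fund_M^* \circ (d\Phi)_M^* = L_\Phi^* - R_\Phi^* \colon \TT_\Phi^* \GG \longrightarrow M \times \gg^*.
\end{equation*}
On the subbundle $\ker(L_\Phi^* + R_\Phi^*)$ we have $R_\Phi^* = -L_\Phi^*$, so the right-hand side restricts to $2L_\Phi^*$ there. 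This is precisely what the statement about \lq\lq the restriction of $2L_\Phi^*$ yielding the upper row\rq\rq\ means, and it is the exact dual counterpart of the identity $(d\mmu)_M \circ \fund_M = L_\Phi - R_\Phi$ used in Proposition \ref{tech}.

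Next, I would verify that the first and second segments of the top row really land where they are claimed to. The lower-left rectangle of \eqref{PMukKK} being commutative (which is just \eqref{CD01}) yields $P^\sharp \circ (d\Phi)_M^* = \fund_M \circ (\Id \times \psidot) \circ (L_\Phi^* + R_\Phi^*)$, so $(d\Phi)_M^*$ carries $\ker(L_\Phi^* + R_\Phi^*)$ into $\ker P^\sharp = \TT^* M^P$. Dually, the upper-right rectangle (equivalently, the dual of \eqref{CD01d}) yields $\tfrac 12 (L_\Phi^{*,-1} + R_\Phi^{*,-1}) \circ \fund_M^* = \psi^{\oomega,-1}_\Phi \circ (d\Phi)_M \circ (-P^\sharp)$, so $\fund_M^*$ carries $\TT^* M^P$ into $\ker(L_\Phi^{*,-1} + R_\Phi^{*,-1})$. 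A short computation, using only that $L_\Phi^*$ and $R_\Phi^*$ are vector bundle isomorphisms, shows $L_\Phi^*$ restricts to a bijection from $\ker(L_\Phi^*+R_\Phi^*)$ onto $\ker(L_\Phi^{*,-1}+R_\Phi^{*,-1})$ (given $\alpha$ in the first kernel, set $\beta = L_\Phi^*(\alpha)$ and observe $R_\Phi^{*,-1}(\beta) = -L_\Phi^{*,-1}(\beta) = -\alpha$; the converse is analogous).

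Combining these facts, the composite $\fund_M^* \circ (d\Phi)_M^*$ restricted to $\ker(L_\Phi^* + R_\Phi^*)$ equals $2L_\Phi^*$ restricted there, hence is a bundle isomorphism onto $\ker(L_\Phi^{*,-1} + R_\Phi^{*,-1})$. Injectivity of this composite forces the first factor $(d\Phi)_M^*|$ to be a monomorphism onto $\TT^* M^P$, while surjectivity forces the second factor $\fund_M^*|$ restricted to $\TT^* M^P$ to be an epimorphism onto $\ker(L_\Phi^{*,-1} + R_\Phi^{*,-1})$. The main point of technical care, not really an obstacle, will be bookkeeping around signs and the self-adjointness of $\psidot$ when dualizing the diagrams \eqref{CD01} and \eqref{CD01d}; once that is handled cleanly, the dual argument is forced upon us by perfect symmetry with Proposition \ref{tech}, the non-degeneracy of $\oomega$ entering precisely to guarantee that $\psi^{\oomega,-1}_\Phi$ in \eqref{PMukKK} is available.
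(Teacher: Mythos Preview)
Your proposal is correct and follows exactly the approach the paper intends: the proposition carries a \qed\ with no proof, signaling it is the formal dual of Proposition \ref{tech}, whose entire argument is the equivariance identity $(d\mmu)_M\circ \fund_M = L_\mmu - R_\mmu$, which you correctly dualize to $\fund_M^* \circ (d\Phi)_M^* = L_\Phi^* - R_\Phi^*$ and then restrict to the relevant kernel. Your write-up is in fact considerably more detailed than the paper's one-line proof of Proposition \ref{tech}; the only quibble is that the remark about $(\psidot)^* = \psidot$ is irrelevant to the dualization step you actually perform there.
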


Here is the exact analogue of Proposition \ref{nondegqh}.
\begin{prop}
\label{nondegqhP}
The symmetric $2$-tensor $\oomega$ in $\gg \otimes \gg$ being non-degenerate,
the following are equivalent.
\begin{enumerate}
\item
The morphism 
$(d \Phi)^*_M|\colon \ker(L_\Phi^{*}+ R_\Phi^{*})\to \TT^* M^P$
is an epimorphism of distributions on $M$.
\item
The morphism 
$(d \Phi)^*_M|\colon \ker(L_\Phi^{*}+ R_\Phi^{*})\to \TT^* M^P$
is an isomorphism of distributions on $M$
\item
The intersection
$
(\TT ^* M)^P \cap \ker(\fund_M^*)
$
is trivial.
\item
The $2$-tensor $P$ is $\GG$-quasi non-degenerate.
\item
The morphism 
$\fund_M^*|\colon \TT^* M^P \to\ker(L_\Phi^{*,-1}+ R_\Phi^{*,-1})$
is a monomorphism of distributions on $M$.
\item
The morphism 
$\fund_M^*|\colon \TT^* M^P \to\ker(L_\Phi^{*,-1}+ R_\Phi^{*,-1})$
is an isomorphism of distributions on $M$. \qed
\end{enumerate}
\end{prop}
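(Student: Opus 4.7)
The plan is to mirror the proof of Proposition~\ref{nondegqh}, exchanging the roles played there by $\sigma^\fflat$ and $\fund_M$ with the roles now played by $P^\sharp$ and $\fund_M^*$. Concretely, I intend to split the six equivalences into two overlapping clusters and to exploit the commutative diagram \eqref{PMukKK} together with Proposition~\ref{techP}.

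First I would unravel the definitions to see that \textup{(3)}, \textup{(4)}, \textup{(5)} are pairwise equivalent directly, independently of the non-degeneracy of $\oomega$. By definition, $P$ is $\GG$-quasi non-degenerate precisely when the morphism $(P^\sharp,\fund_M^*)\colon \TT^* M \to \TT M \oplus_M (M \times \gg^*)$ of vector bundles on $M$ is a monomorphism; its kernel coincides with $\TT^* M^P \cap \ker(\fund_M^*)$, whence \textup{(3)} $\Longleftrightarrow$ \textup{(4)}. Similarly, \textup{(5)} asserts that $\fund_M^*$ restricted to $\TT^* M^P$ has trivial kernel, which is again the intersection $\TT^* M^P \cap \ker(\fund_M^*)$, giving \textup{(3)} $\Longleftrightarrow$ \textup{(5)}.

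Next I would deduce the equivalence of \textup{(1)}, \textup{(2)}, \textup{(5)}, \textup{(6)} by invoking Proposition~\ref{techP}, which supplies, under the non-degeneracy hypothesis on $\oomega$, that $(d\Phi)^*_M|\colon \ker(L_\Phi^{*}+R_\Phi^{*}) \to \TT^* M^P$ is always a monomorphism and that $\fund_M^*|\colon \TT^* M^P \to \ker(L_\Phi^{*,-1}+R_\Phi^{*,-1})$ is always an epimorphism. Adjoining the epimorphism condition \textup{(1)} to the first assertion immediately yields the isomorphism condition \textup{(2)}, and analogously \textup{(5)} upgrades to \textup{(6)}. The remaining linkage between the two pairs---say \textup{(2)} $\Longleftrightarrow$ \textup{(5)}---I would extract by chasing the top row of the commutative diagram \eqref{PMukKK}; the composition $\fund_M^* \circ (d\Phi)^*_M$ on the restriction to $\ker(L_\Phi^{*}+R_\Phi^{*})$ is, by $\GG$-equivariance of $\Phi$, equal to $L_\Phi^{*}-R_\Phi^{*}=2L_\Phi^{*}$, and the non-degeneracy of $\oomega$, transferred to $\psi_\Phi^{\oomega,-1}$, identifies this image with $\ker(L_\Phi^{*,-1}+R_\Phi^{*,-1})$, forcing the two distributions to share the same fibrewise rank.

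The main obstacle I anticipate is precisely this last identification: bookkeeping the dual isomorphism $\psi^\oomega$ between $\gg$ and $\gg^*$ through the left- and right-translation trivializations and verifying that, at every point of $M$, the monomorphism in the upper-left of \eqref{PMukKK} and the epimorphism in the upper-right are related by the rank equality $\dim \ker(L_\Phi^{*}+R_\Phi^{*}) = \dim \ker(L_\Phi^{*,-1}+R_\Phi^{*,-1})$ needed to close the loop. Everything else is a straightforward diagram chase in \eqref{PMukKK}, following the template established for the weakly quasi Hamiltonian case.
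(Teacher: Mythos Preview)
Your approach is correct and mirrors exactly what the paper intends: the statement is presented as the exact analogue of Proposition~\ref{nondegqh} and carries a bare \qed, so the implied proof is precisely the two-cluster argument you describe, with Proposition~\ref{techP} playing the role of Proposition~\ref{tech}. Your anticipated obstacle about the rank equality is already absorbed into Proposition~\ref{techP}, since the assertion that $2L_\Phi^*$ restricted to $\ker(L_\Phi^*+R_\Phi^*)$ \emph{yields the upper row} of \eqref{PMukKK} means the composite $\fund_M^*|\circ (d\Phi)_M^*|$ is an isomorphism onto $\ker(L_\Phi^{*,-1}+R_\Phi^{*,-1})$, from which the equivalence of \textup{(2)} and \textup{(6)} follows immediately without a separate rank count.
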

 
\begin{thm}
\label{existence}
Let $M$ be a $\GG$-manifold and
$\Phi \colon M \to \GG$ a $\GG$-equivariant admissible map.
Suppose
the symmetric $\Ad$-invariant $2$-tensor 
 $\oomega$ in $\gg \otimes \gg$ arises from 
a non-degenerate  $\Ad$-invariant symmetric bilinear form $\,\form\,$ on $\gg$
as the image of
the corresponding $2$-tensor in $\gg^* \otimes \gg^*$
under the inverse of the  adjoint $\psidot \colon \gg \to \gg^*$
of $\,\form\,$.
Then the commutative diagrams
{\rm \eqref{render1}} and {\rm \eqref{render2}}
of morphisms of vector bundles on $M$
establish a bijective correspondence
between $\Phi$-non-degenerate
 $\GG$-invariant $2$-forms 
$\sigma$ on $M$ 
having $\Phi$ as $\GG$-momentum mapping relative to
$\,\form\,$
and $\GG$-quasi non-degenerate  
$\GG$-invariant skew-symmetric $2$-tensors $P$ on $M$ having $\Phi$ as 
$\GG$-momentum mapping relative to $\oomega$.
That is to say: 
\begin{enumerate}
\item
Given the $2$-form $\sigma$ on $M$,
the upper row of \eqref{render1}
being an epimorphism of vector bundles on $M$,
the identity
\begin{equation}
P^\sharp \circ \left((d \Phi)_M^* + \sigma^\fflat \right) =
\tfrac 12\fund_M \circ(\Id \times\psi^\oomega)\circ(L_\Phi^* + R_\Phi^*)
+(\Id - \tfrac 14 \rho_\Phi)
\end{equation}
characterizes the adjoint $P^\sharp$ of the skew-symmetric $2$-tensor $P$ on $M$.
\item
Given the skew-symmetric $2$-tensor $P$ on $M$, the 
upper row of \eqref{render2} being an epimorphism of vector bundles on $M$,
the identity 
\begin{equation}
\sigma^\fflat \circ(\fund_M + P^\sharp) =
\tfrac 12 (d \Phi)^*_M \circ (L_\Phi^{*,-1} + R_\Phi^{*,-1})
\circ(\Id \times \psi^{\form}) + (\Id - \tfrac 14 \rho_\Phi^*)
\end{equation}
characterizes the adjoint $\sigma^\fflat$ of the $2$-form  $\sigma$ on $M$.
\end{enumerate}
Under this correspondence, the $2$-form $\sigma$ is $\Phi$-quasi closed
and hence $(\sigma,\Phi)$
a $\GG$-quasi Hamiltonian structure relative to $\,\form\,$ on $M$ 
if and only if
the $2$-tensor $P$ is $\GG$-quasi Poisson and hence
$(P,\Phi)$ a Hamiltonian $\GG$-quasi Poisson structure relative to $\oomega$
on $M$.
\end{thm}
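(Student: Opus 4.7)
The plan is to translate the statement into one about twisted Dirac structures on $M$ and invoke the later Theorems \ref{dvsq1}, \ref{dvsq2}, \ref{dvsq3}, which link twisted Dirac structures with quasi Hamiltonian and Hamiltonian quasi Poisson structures. Before passing to Dirac-theoretic language, however, I would first establish the purely bundle-theoretic content of (1) and (2) directly from the diagrams \eqref{render1} and \eqref{render2} and the momentum properties.

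First I would check that (1) unambiguously defines a vector bundle morphism $P^\sharp \colon \TT^* M \to \TT M$. Since $\sigma$ is $\Phi$-non-degenerate, the top row of \eqref{render1}, namely $(d\Phi)_M^* + \sigma^\fflat \colon \TT^*_\Phi\GG \oplus_M \TT M \to \TT^* M$, is an epimorphism of vector bundles on $M$; so one only needs the prescribed composite $\tfrac12\fund_M\circ(\Id\times\psi^\oomega)\circ(L_\Phi^* + R_\Phi^*) + (\Id - \tfrac14\rho_\Phi)$ to vanish on the kernel. For a pair $(\alpha,X)$ with $(d\Phi)_M^*\alpha + \sigma^\fflat(X)=0$, I would pair the identity with the momentum mapping diagrams \eqref{PMudual} and \eqref{PMudual2} for $\sigma$, using that $\psidot \colon \gg \to \gg^*$ is the isomorphism inverse to $\psi^\oomega$ (non-degeneracy of $\form$); a direct computation then gives the required vanishing. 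The symmetric argument handles (2). Skew-symmetry of the resulting $P^\sharp$ (resp.\ antisymmetry of $\sigma^\fflat$) follows by dualising the defining relation and invoking the same momentum identities.

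Second, I would verify that $\Phi$ is a $\GG$-momentum mapping for the constructed $P$ relative to $\oomega$ (resp.\ for $\sigma$ relative to $\form$). Substituting the formula in (1) into diagram \eqref{CD01} reduces the check to the identity $\fund_M^* \circ (d\Phi)_M^* = (L_\Phi^* - R_\Phi^*)$ built into $\rho_\Phi^*$, combined with the momentum condition for $\sigma$; the reverse direction is analogous. Uniqueness and mutual inverseness of the two constructions are then automatic by Proposition \ref{unique} together with the epimorphism property of \eqref{surj3} and \eqref{surj4}: substituting (1) into (2) reproduces $\sigma^\fflat$ because both morphisms agree after composition with the epimorphism, and likewise for the opposite composition. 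The non-degeneracy statement, namely that $\Phi$-non-degeneracy of $\sigma$ corresponds to $\GG$-quasi non-degeneracy of $P$, is already implicit in the construction: each condition is precisely the statement that the respective upper row is an epimorphism, and these two epimorphism conditions are transported into each other by the very formulas of (1) and (2).

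The principal obstacle will be the last equivalence: that $\Phi$-quasi closedness of $\sigma$ (i.e.\ $d\sigma = \Phi^*\lambda$) is equivalent to the quasi Poisson identity $[P,P] = \phi_{\oomega,M}$. This is where the Dirac-theoretic machinery is essential, since neither side lends itself to a direct computation under a merely non-degenerate, not positive, bilinear form. The idea is to associate to $(\sigma,\Phi)$ the subbundle
\begin{equation*}
L_\sigma = \bigl\{ (X,\,\xi) \in \TT M \oplus_M \TT^* M : \exists\, \alpha \in \TT^*_\Phi\GG,\ \xi = \sigma^\fflat(X) + (d\Phi)_M^*(\alpha) \bigr\}
\end{equation*}
subject to a compatibility constraint with $\fund_M$ encoding the momentum property, and to associate to $(P,\Phi)$ a subbundle $L_P$ dually defined via $(P^\sharp, \fund_M)$. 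The non-degeneracy assumptions make these Lagrangian subbundles of $\TT M \oplus_M \TT^* M$, the momentum duality formulas of (1) and (2) force $L_\sigma = L_P$, and both $d\sigma = \Phi^*\lambda$ and $[P,P] = \phi_{\oomega,M}$ translate into closedness of $L_\sigma = L_P$ under the Courant bracket twisted by $\Phi^*\lambda$. This is the content of Theorems \ref{dvsq1} and \ref{dvsq2}, and Theorem \ref{dvsq3} packages the joint statement. The delicate work will be verifying that, even without non-degeneracy of $\oomega$ built into the Dirac framework of \cite{MR2068969, MR2103001, MR2642360}, the hypothesis that $\oomega$ arises from a non-degenerate $\form$ is enough to import those Dirac-theoretic results in the form required here.
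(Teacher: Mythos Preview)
Your overall architecture matches the paper's: both routes ultimately invoke Theorems \ref{dvsq1}, \ref{dvsq2}, and \ref{dvsq3} for the equivalence between $\Phi$-quasi closedness of $\sigma$ and the quasi Poisson identity for $P$. Where you diverge is in establishing the \emph{existence} of the $\Phi$-momentum dual. You attempt to define $P^\sharp$ directly from diagram \eqref{render1} by checking that the prescribed composite vanishes on the kernel of the epimorphism $(d\Phi)_M^* + \sigma^\fflat$, and then verifying skew-symmetry and the momentum property by further direct computation. The paper instead uses the Dirac machinery from the outset: Complement \ref{comp2} constructs $P$ as the bivector defined by the projection to $\Gr_\sigma$ relative to the Lagrangian splitting $\Gr_\sigma \oplus_M F^\Phi$ of $\TT M \oplus_M \TT^* M$, and Complement \ref{comp1} constructs $\sigma$ from $P$ via the associated almost Dirac structure $E_P$. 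Once these exist, Proposition \ref{unique} guarantees that diagrams \eqref{render1} and \eqref{render2} characterize them, with no further check needed.

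The advantage of the paper's route is that the Dirac-theoretic construction produces a skew-symmetric $P$ with the momentum property automatically (it comes from a Lagrangian splitting), whereas your direct route leaves ``a direct computation then gives the required vanishing'' and the skew-symmetry verification as unjustified assertions; these are not obviously short, and you give no indication of how they go. A minor point: your subbundle $L_\sigma$ is not the object the paper uses---the relevant Lagrangian is simply the graph $\Gr_\sigma$, and the content of Theorem \ref{dvsq3} is that $\Gr_\sigma$ coincides with the almost Dirac structure $E_P$ associated to $P$ in Theorem \ref{dvsq2}. Your final paragraph's concern about importing the Dirac results under merely non-degenerate (not positive) $\,\form\,$ is unwarranted: the cited Dirac-theoretic theorems in \cite{MR2068969, MR2103001, MR2642360} are already formulated for a general non-degenerate $\Ad$-invariant form, which is exactly the present hypothesis.
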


\begin{cor}
\label{momdalg}
Under the circumstances of Theorem {\rm \ref{existence}},
suppose $\GG$ is an algebraic group,
$M$ an affine algebraic variety,
and $\Phi$ a morphism of affine varieties.
When $(P,\Phi)$ is a $\GG$-quasi Poisson structure
and $(\sigma,\Phi)$ a weakly $\GG$-quasi Hamiltonian structure
(and hence, by Proposition {\rm \ref{compar32} (1)}, 
a $\GG$-quasi Hamiltonian structure, i.e.,
$\sigma$ is $\Phi$-non-degenerate),
the $2$-form
$\sigma$ 
is algebraic if and only if its 
$\Phi$-momentum dual $P$ is algebraic. \qed
\end{cor}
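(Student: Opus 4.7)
The plan is to extract algebraicity directly from the explicit formulas in Theorem \ref{existence}(1)--(2), which express each of $P^\sharp$ and $\sigma^\fflat$ entirely in terms of the other and data canonically attached to $(\GG,\gg,\,\form\,,\Phi)$. So first I will check that all the building blocks of those formulas are algebraic morphisms of algebraic vector bundles on $M$. Since $\GG$ is algebraic and $\Phi\colon M\to\GG$ is a morphism of affine varieties, the pullback bundles $\TT_\Phi\GG$ and $\TT^*_\Phi\GG$ are algebraic, the trivializations $L_\Phi,R_\Phi$ and their duals and inverses are algebraic isomorphisms, and the morphisms $(d\Phi)_M$, $(d\Phi)_M^*$, $\fund_M$, $\fund_M^*$ are algebraic. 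The linear map $\psi^\oomega\colon\gg^*\to\gg$ is the inverse of the adjoint $\psidot\colon\gg\to\gg^*$ of $\,\form\,$, and both are algebraic automorphisms of finite-dimensional $\bK$-vector spaces. Consequently $\rho_\Phi$, cf.\ \eqref{3rhowrite}, is algebraic, and so are both sides of the identities in \eqref{render1} and \eqref{render2} apart from the factors $\sigma^\fflat$ and $P^\sharp$ themselves.

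Next I will algebraically split the upper rows of \eqref{render1} and \eqref{render2}. Proposition \ref{compar32}(1) together with the non-degeneracy hypotheses guarantees that
\begin{equation*}
\alpha_1 = (d\Phi)_M^*+\sigma^\fflat\colon \TT^*_\Phi\GG\oplus_M \TT M\longrightarrow \TT^* M,
\end{equation*}
\begin{equation*}
\alpha_2 = \fund_M+P^\sharp\colon (M\times\gg)\oplus_M \TT^* M\longrightarrow \TT M
\end{equation*}
are epimorphisms of vector bundles on the affine variety $M$. Under the equivalence between algebraic vector bundles on $M$ and finitely generated projective modules over the coordinate ring $\bK[M]$, such an epimorphism necessarily splits in the algebraic category because the target module is projective. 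I will pick algebraic sections $s_1$ of $\alpha_1$ (when $\sigma$ is assumed algebraic) and $s_2$ of $\alpha_2$ (when $P$ is assumed algebraic). This is the only non-formal input in the argument.

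Finally, given $\sigma$ algebraic, Theorem \ref{existence}(1) reads $P^\sharp\circ\alpha_1=\beta_1$, where $\beta_1$ is the algebraic morphism on the right-hand side of that identity. Composing on the right with $s_1$ and using $\alpha_1\circ s_1=\Id$ gives
\begin{equation*}
P^\sharp=(P^\sharp\circ\alpha_1)\circ s_1=\beta_1\circ s_1,
\end{equation*}
exhibiting $P^\sharp$, and hence $P$, as algebraic. The reverse implication follows in the same manner from Theorem \ref{existence}(2), expressing $\sigma^\fflat$ as $\beta_2\circ s_2$ for the corresponding algebraic $\beta_2$. I expect no serious obstacle beyond the splitting step, which is automatic in the affine algebraic setting.
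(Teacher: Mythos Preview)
Your argument is correct and is precisely the elaboration the paper intends: the corollary is marked \qed\ immediately after its statement, so the paper treats it as a direct consequence of the explicit characterizing identities in Theorem \ref{existence}(1)--(2), and your splitting of the surjective upper rows of \eqref{render1} and \eqref{render2} (using that $\TT^* M$ and $\TT M$ are finitely generated projective over $\bK[M]$ for a nonsingular affine $M$) is the standard way to make this explicit.
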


Our proof of Theorem \ref{existence} in Subsection \ref{diracst} below
relies on explicit descriptions of bijective  correspondences
between quasi structures and twisted Dirac structures
in the literature.
We cannot simply quote the theorems, however,
since only the proofs render the correspondences explicit.
Theorems \ref{dvsq1} and \ref{dvsq2}
offer explicit descriptions
of these correspondences.

\begin{rema}
\label{lurking}
{\rm
For the special case where
$\GG$ is compact and $\,\form\,$ positive definite,
the claim of Theorem \ref{existence}
is lurking behind
{\rm \cite[Theorem 10.3 p.~24]{MR1880957}} and the proof thereof.
}
\end{rema}

\subsection{Dirac structures and the proof of Theorem \ref{existence}}
\label{diracst}

For intelligibility,
we recollect a bare minimum.
The reader can find more details in
\cite{MR2642360, MR2103001, MR2068969,
MR998124, MR2023853}.

\subsubsection{Linear Dirac structures}

Let $V$ and $W$ be a $\bK$-vector spaces.
Write  a linear endomorphism $\chi$ of $V \oplus W$ as
\begin{equation}
\chi = \left[ 
\begin{matrix}
\chi_{1,1} & \chi_{1,2}\\ \chi_{2,1} & \chi_{2,2} 
\end{matrix}
\right]\colon V \oplus W
\to V \oplus W,
\ 
\chi\left[ 
\begin{matrix}
X\\ \alpha
\end{matrix}
\right]
=
\left[ 
\begin{matrix}
\chi_{1,1}(X) + \chi_{1,2}(\alpha) \\ \chi_{2,1}(X) +\chi_{2,2}(\alpha) 
\end{matrix}
\right],\ X \in V, \alpha \in W.
\label{write}
\end{equation}

The direct sum $V \oplus V^*$
carries the standard non-degenerate split bilinear form
\begin{equation}
\hinn \colon (V \oplus V^*) \otimes (V \oplus V^*) \to \bK,
\ 
\langle (v,\alpha),(w,\beta) \rangle = \alpha(w) + \beta(v),
\ 
v,w \in V,\ \alpha, \beta \in V^*.
\label{hinnn}
\end{equation}
For an endomorphism $p \colon V \oplus V^* \to V \oplus V^*$, let
 $p^t \colon V \oplus V^* \to V \oplus V^*$ 
denote the adjoint of $p$ relative to $\hinn$.
A {\em linear Dirac structure\/} on 
$V$  is a linear subspace $E$ of $ V \oplus V^*$ 
that is Lagrangian relative to $\hinn$.
A {\em Dirac space\/} is a vector space 
 together with a linear Dirac structure.

\subsubsection{Lagrangian splittings}
There is a bijective correspondence between 
projection operators
\begin{align}
\pp&
 \colon V \oplus V^* 
\twoheadrightarrow E
\rightarrowtail
V \oplus V^*
\label{ppphiV}
\\
\pp^t& 
 \colon V \oplus V^* 
\twoheadrightarrow F
\rightarrowtail
V \oplus V^*
\label{qqpphiV}
\end{align}
enjoying the property
$\pp + \pp^t =\Id$ and Lagrangian splittings 
$V \oplus V^*
=
E \oplus F
$
of $V \oplus V^*$
relative to 
$\hinn$.

Consider a projection operator
$
\pp= 
\left[ 
\begin{matrix}
\pp_{1,1} & \pp_{1,2}\\ \pp_{2,1} & \pp_{2,2} 
\end{matrix}
\right] \colon V \oplus V^* 
\to
V \oplus V^*
$
such that
$\pp + \pp^t =\Id$.
Then ${\pp_{1,2}\colon V^* \to V}$ is the adjoint of 
a skew-symmetric $2$-tensor $P \in \LAL^{\mrc,2}[V]$
and
 ${\pp_{2,1}\colon V \to V^*}$ is the adjoint of 
an alternating $2$-form $\sigma \colon V \otimes V \to \bK$.
It is common to say that the
 projection 
$\pp$ 
{\em defines\/} the
 skew-symmetric $2$-tensor $P$
and the alternating $2$-form $\sigma$.

The graph
\begin{equation*} 
\Gr_\sigma
= \{(v,\sigma^\fflat(v)); v \in V \} 
\subseteq  V \oplus V^* 
\end{equation*}
of an alternating $2$-form $\sigma$
on $V$ is a Lagrangian subspace of $V \oplus V^*$,
and the projection 
\begin{equation}
{
\pp_\sigma= 
\left[ 
\begin{matrix}
\Id & 0\\ \sigma^\fflat & 0 
\end{matrix}
\right] \colon V \oplus V^* 
\twoheadrightarrow \Gr_\sigma \rightarrowtail
V \oplus V^*
}
\label{projsigma}
\end{equation}
to $\Gr_\sigma$ 
determines the Lagrangian splitting  $\Gr_\sigma \oplus V^*$ of $V \oplus V^*$
and defines $\sigma$.
The graph
\begin{equation*} 
\Gr_P
= \{(P^\sharp(\alpha),\alpha); \alpha \in V^* \} 
\subseteq  V \oplus V^* 
\end{equation*}
of a skew-symmetric $2$-tensor $P$
in $V \otimes V$ is, likewise, a Lagrangian subspace of $V \oplus V^*$,
and the projection 
\begin{equation}{
\pp_P= 
\left[ 
\begin{matrix}
0 & P^\sharp \\ 0 & \Id 
\end{matrix}
\right] \colon V \oplus V^* 
\twoheadrightarrow \Gr_P \rightarrowtail
V \oplus V^*
}
\label{projP}
\end{equation}
to $\Gr_P$
determines the Lagrangian splitting  $V \oplus \Gr_P$ of $V \oplus V^*$
and defines $P$.
For later reference, we spell out the following,
whose proof is immediate.

\begin{prop}
\label{latref1}
The graph $\Gr_\sigma \subseteq V \oplus V^*$ of an alternating 
$2$-form $\sigma$ on $V$ is transverse to $V\subseteq V \oplus V^*$
if and only if $\sigma$ is non-degenerate, 
the projection 
\begin{equation}
{
\pp= 
\left[ 
\begin{matrix}
0 & \sigma^{\fflat,-1} \\ 0 & \Id 
\end{matrix}
\right] \colon V \oplus V^* 
\twoheadrightarrow \Gr_\sigma \rightarrowtail
V \oplus V^* 
}
\label{projPsigma}
\end{equation}
relative to the
Lagrangian splitting 
$\Gr_\sigma \oplus V$ of $V \oplus V^*$
then defines the 
skew-symmetric  $2$-tensor $P$ in 
$V\otimes V$ dual to $\sigma$ via the adjoints
as $P^\sharp = \sigma^{\fflat,-1}$, and
$\Gr_\sigma$ coincides with $\Gr_P$.

The graph $\Gr_P \subseteq V \oplus V^*$ of a skew-symmetric 
$2$-tensor $P$ in $V$ is, likewise, 
transverse to $V^*\subseteq V \oplus V^*$
if and only if $P$ is non-degenerate, 
the projection 
\begin{equation}
{
\pp= 
\left[ 
\begin{matrix}
\Id & 0 \\ P^{\sharp,-1}  & 0 
\end{matrix}
\right] \colon V \oplus V^* 
\twoheadrightarrow \Gr_P \rightarrowtail
V \oplus V^* 
}
\label{projPP}
\end{equation}
relative to the
Lagrangian splitting 
$\Gr_P \oplus V^*$ of $V \oplus V^*$
then defines  the
alternating   $2$-form $\sigma$ on 
$V$ dual to $P$
via the adjoints as  $ \sigma^{\fflat}=P^{\sharp,-1} $, and
$\Gr_P$ coincides with $\Gr_\sigma$. \qed
\end{prop}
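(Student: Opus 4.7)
The plan is a routine exercise in symplectic linear algebra; I would establish the first half of the proposition and deduce the second by interchanging the roles of $V$ and $V^*$.

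First I would verify the transversality characterization. An element of $\Gr_\sigma \cap V$ (with $V$ viewed as the subspace $V \times \{0\} \subseteq V \oplus V^*$) has the form $(v,\sigma^\fflat(v))$ with $\sigma^\fflat(v) = 0$, so $\Gr_\sigma \cap V$ identifies with $\ker(\sigma^\fflat)$; this is trivial precisely when $\sigma^\fflat$ is injective, equivalently (as $V$ is finite-dimensional) when $\sigma$ is non-degenerate. A dimension count then gives $V\oplus V^* = \Gr_\sigma \oplus V$, and since $\Gr_\sigma$ is Lagrangian by construction while $V$ is plainly isotropic for the pairing \eqref{hinnn} (which vanishes on pairs of the form $(v,0), (w,0)$) and of half-dimension, the splitting is Lagrangian.

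Next I would compute the projection along $V$ explicitly. For $(v,\alpha) \in V\oplus V^*$, the unique decomposition $(v,\alpha) = (x,\sigma^\fflat(x)) + (y,0)$ forces $x = \sigma^{\fflat,-1}(\alpha)$ and $y = v - \sigma^{\fflat,-1}(\alpha)$, so the $\Gr_\sigma$-component is $(\sigma^{\fflat,-1}(\alpha),\alpha)$. In the block-matrix notation \eqref{write} this is exactly the matrix displayed in \eqref{projPsigma}. Reading off the $(1,2)$-block identifies the associated skew-symmetric $2$-tensor $P$ via $P^\sharp = \sigma^{\fflat,-1}$, so $P$ is dual to $\sigma$ via the adjoints. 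The equality $\Gr_\sigma = \Gr_P$ is then immediate from the identity $(v,\sigma^\fflat(v)) = (P^\sharp(\sigma^\fflat(v)),\sigma^\fflat(v))$ since $P^\sharp \circ \sigma^\fflat = \Id$, with $\sigma^\fflat$ bijective ensuring that $\alpha = \sigma^\fflat(v)$ ranges over all of $V^*$.

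The second half is the transpose of the first: substituting the pair $(P,V^*)$ for $(\sigma,V)$ and rerunning the computation yields the projection matrix \eqref{projPP} together with the identifications $\sigma^\fflat = P^{\sharp,-1}$ and $\Gr_P = \Gr_\sigma$. No step presents any real obstacle; the only care needed is to match the sign and index conventions of the block-matrix notation \eqref{write} and to keep track of which summand of the Lagrangian splitting one is projecting onto.
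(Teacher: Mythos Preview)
Your proposal is correct and matches the intended verification; the paper itself omits the argument entirely, recording the proposition ``for later reference'' with the remark that its ``proof is immediate'' and closing with \qedsymbol. Your routine symplectic-linear-algebra check is exactly what one would supply if asked to fill in the details.
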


\subsubsection{Forward and backward images}
\label{fbi}

Consider two vector spaces $V_1$ and $V_2$, and let
$\sigma$ be an alternating  $2$-form on $V_1$
and $\Phi \colon V_1 \to V_2$  a linear map.
For a Lagrangian subspace  $E\subseteq V_1 \oplus V_1^*$, define
the $(\Phi,\sigma)$-{\em forward image\/} 
$E_{\Phi,\sigma}\subseteq V_2 \oplus V_2^*$
of $E$, necessarily a Lagrangian subspace,
by
\begin{equation}
E_{\Phi,\sigma} = \{ (\Phi(v),\alpha);  
(v,\alpha \circ \Phi -\sigma^\flat(v))  \in E\} \subseteq V_2 \oplus V_2^*.
\end{equation}
When $\sigma$ is zero, we write
$E_\Phi$ and refer to it as the $\Phi$-{\em forward image\/}
of $E$.
For a vector space $V$
and an alternating  $2$-form $\sigma$ on $V$,
we write the
 $(\Id,\sigma)$-forward image 
of a Lagrangian subspace $E$ of $V \oplus V^*$ as $E_\sigma$ and
refer to it as the $\sigma$-{\em forward image\/}
of $E$. In particular, the $\sigma$-forward image
 \begin{equation}
V_\sigma =
\{ (v,\alpha); 
(v,\alpha -\sigma^\flat(v))  \in V
\}
\subseteq V \oplus V^*
\end{equation}
of $V$  coincides with 
the graph
$\Gr_\sigma \subseteq V \oplus V^*$
of $\sigma$ in $V \oplus V^*$.
The $\Phi$-{\em forward image\/} 
$\Gr_{\sigma;\Phi}\subseteq V_2 \oplus V_2^*$
of the graph $\Gr_\sigma \subseteq V_1 \oplus V_1^*$ 
of an alternating $2$-form $\sigma$ on $V_1$
reads
\begin{equation}
\Gr_{\sigma;\Phi} 
=
\{ (\Phi(v),\alpha); (v, \alpha \circ \Phi) \in E_\sigma
\}
= \{ (\Phi(v),\alpha);  \alpha \circ \Phi= 
\sigma(v,\,\cdot\,)\}
\subseteq V_2 \oplus V_2^*
\end{equation}
and coincides with the 
 $(\Phi,\sigma)$-{\em forward image\/} 
\begin{equation}
V_{1,\Phi,\sigma} = \{ (\Phi(v),\alpha);  
(v,\alpha \circ \Phi -\sigma^\flat(v))
 \in V_1
\} \subseteq V_2 \oplus V_2^*
\end{equation}
of $V_1$.

For a Lagrangian subspace $E \subseteq V_2\oplus V_2^*$
and an alternating $2$-form $\sigma$ on $V_1$,
define
 the $(\Phi,\sigma)$-{\em backward image\/}
$E^{\Phi,\sigma}\subseteq V_1 \oplus V_1^*$
of $E$, necessarily a Lagrangian subspace, 
by
\begin{equation}
E^{\Phi,\sigma} = \{ (v, \alpha);\   
\Phi^*(\beta) = \alpha + \sigma^\flat(v)
\text{\ for\ some\ } \beta \in V_2^* \text{\ such\ that\ }
(\Phi(v),\beta) \in E \}
\end{equation}
or, equivalently,
\begin{equation}
E^{\Phi,\sigma} = \{ (v, \Phi^*(\beta)-\sigma^\flat(v)); \ 
(\Phi(v),\beta) \in E \} .
\end{equation}
When $\sigma$ is zero, we write $E^\Phi$ rather than $E^{\Phi,\sigma}$
and refer to it as the 
 $\Phi$-{\em backward image\/}
of $E$.

\subsubsection{Morphisms}
Consider two Dirac spaces  $(V_1,E_1)$ and $(V_2,E_2)$. A linear map 
$\Phi \colon V_1 \to V_2$ is a {\em Dirac morphism\/}  
 $\Phi \colon (V_1,E_1) \to (V_2,E_2)$
when $E_2 = E_{1;\Phi}$, that is
\begin{equation*}
E_2 = \{ (\Phi(v),\alpha);  
(v,\alpha \circ \Phi)  \in E_1\} \subseteq V_2 \oplus V_2^*.
\end{equation*}
More generally, a {\em Dirac morphism\/}  
 $(\Phi,\sigma) \colon (V_1,E_1) \to (V_2,E_2)$
consists of a linear map $\Phi \colon V_1 \to V_2$
and an alternating  $2$-form $\sigma$ on $V_1$
such that $E_2 = E_{1;\Phi,\sigma}$, that is,
\begin{equation*}
 E_2= \{ (\Phi(v),\alpha);  
(v,\alpha \circ \Phi -\sigma^\flat(v))  \in E_1\} \subseteq V_2 \oplus V_2^*.
\end{equation*}
Relative to a  Dirac morphism  
 $\Phi \colon (V_1,E_1) \to (V_2,E_2)$,
recall the notation 
\begin{align}
\ker(\Phi,\sigma)&= \{(v,-\sigma^\flat(v));\  \Phi(v) = 0\} \subseteq 
V_1 \oplus V_1^* .
\end{align}
A   Dirac morphism  
 $(\Phi,\sigma) \colon (V_1,E_1) \to (V_2,E_2)$
 is {\em strong\/}
when
\begin{equation}
\ker(\Phi,\sigma) \cap E_1 =\{0\}, 
\end{equation}
that is, $(v,-\sigma^\flat(v)) \in E_1$ and $\Phi(v)=0$
implies $v=0$.

\begin{prop}
\label{cruc}
Let  $(\Phi,\sigma)\colon (V_1, E_1) \to (V_2, E_2)$
 be a Dirac morphism
and $F$ a Lagrangian subspace of $V_2\oplus V_2^*$
that is transverse to $E_2$. 
The following are equivalent:
\begin{enumerate}
\item
The Dirac morphism $(\Phi,\sigma)$ is strong. 
\item
The Dirac morphism
$\Phi\colon (V_1, E_{1;\sigma}) \to (V_2, E_2)$ is strong. 
\item
The $(\Phi,\sigma)$-backward image
$F^{\Phi,\sigma}$ of $F$ is transverse to $E_1$, and hence
$E_1 \oplus F^{\Phi,\sigma}$ is a Lagrangian splitting of
$V_1 \oplus V_1^*$.
\item
The $\Phi$-backward image
$F^\Phi$ of $F$ is transverse to 
the $\sigma$-forward image
$E_{1;\sigma}$ of $E_1$, and hence
$E_{1,\sigma} \oplus F^\Phi$ is a Lagrangian splitting of
$V_1 \oplus V_1^*$.
\end{enumerate}
\end{prop}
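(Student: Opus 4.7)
\medskip

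\noindent\emph{Proof plan.} The plan is to reduce the fourfold equivalence to the case $\sigma=0$ by means of the shear automorphism
\[
\tau_\sigma\colon V_1\oplus V_1^*\longrightarrow V_1\oplus V_1^*,\qquad (v,\alpha)\longmapsto (v,\alpha+\sigma^\flat(v)),
\]
of $V_1\oplus V_1^*$.
A direct verification, using only that $\sigma$ is alternating, shows that $\tau_\sigma$ is an isometry of the split pairing $\hinn$ on $V_1\oplus V_1^*$. Moreover, unwinding the definitions of Subsection~\ref{fbi} yields that $\tau_\sigma$ carries $E_1$ to $E_{1;\sigma}$, it carries $\ker(\Phi,\sigma)$ to $\ker(\Phi)$, and it carries $F^\Phi$ to $F^{\Phi,\sigma}$. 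Consequently $\tau_\sigma$ induces bijections
\[
\ker(\Phi,\sigma)\cap E_1\ \cong\ \ker(\Phi)\cap E_{1;\sigma},\qquad F^{\Phi,\sigma}\cap E_1\ \cong\ F^\Phi\cap E_{1;\sigma},
\]
which gives (1)$\Leftrightarrow$(2) and (3)$\Leftrightarrow$(4) at once.

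Next I would observe that a short computation from the definition of the $\sigma$-forward image yields the identity
$(E_{1;\sigma})_\Phi=E_{1;\Phi,\sigma}=E_2,$
so that $\Phi\colon (V_1,E_{1;\sigma})\to (V_2,E_2)$ is a plain Dirac morphism. The task thus reduces to proving (2)$\Leftrightarrow$(4), that is, to establishing, for a Dirac morphism $\Phi\colon (V_1,E_1')\to (V_2,E_2)$ and a Lagrangian $F\subseteq V_2\oplus V_2^*$ transverse to $E_2$, that $\Phi$ is strong if and only if $F^\Phi$ is transverse to $E_1'$.

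The heart of the matter is then the identity
\[
F^\Phi\cap E_1'\ =\ \ker(\Phi)\cap E_1' .
\]
The inclusion $\supseteq$ is immediate since $(0,0)\in F$ forces $(v,0)\in F^\Phi$ whenever $\Phi(v)=0$. For the converse, let $(v,\alpha)\in F^\Phi\cap E_1'$, and write $\alpha=\Phi^*(\beta)$ with $(\Phi(v),\beta)\in F$; the Dirac-morphism identity $E_2=(E_1')_\Phi$ gives $(\Phi(v),\beta)\in E_2\cap F=\{0\}$, whence $\Phi(v)=0$ and $\beta=0$, so $\alpha=0$. As $F^\Phi$ and $E_1'$ are both Lagrangian in $V_1\oplus V_1^*$, trivial intersection is equivalent to transversality; combined with the identity above, this translates exactly into the equivalence of (2) and (4).

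I do not expect a serious obstacle here; the only subtlety is the bookkeeping of which of the four Lagrangians ($E_1$, $E_{1;\sigma}$, $F^\Phi$, $F^{\Phi,\sigma}$) appears in each of the four conditions, and this is automatic once the naturality of $\tau_\sigma$ with respect to $\sigma$-forward and $\Phi$-backward images is recorded.
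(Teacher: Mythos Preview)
Your argument is correct and in fact more complete than the paper's. One small slip: $\tau_\sigma$ carries $F^{\Phi,\sigma}$ to $F^\Phi$, not the other way around (apply $\tau_\sigma$ to $(v,\Phi^*(\beta)-\sigma^\flat(v))$ and you get $(v,\Phi^*(\beta))$). Since you also have $\tau_\sigma(E_1)=E_{1;\sigma}$, the bijection $F^{\Phi,\sigma}\cap E_1\cong F^\Phi\cap E_{1;\sigma}$ you state is nonetheless correct, so the equivalence (3)$\Leftrightarrow$(4) stands.

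The paper proceeds differently in organisation: it proves (1)$\Leftrightarrow$(2) by the same observation you use (that $(v,-\sigma^\flat(v))\in E_1$ iff $(v,0)\in E_{1;\sigma}$, which is your $\tau_\sigma$ in disguise), then \emph{cites} \cite[Prop.~1.15]{MR2642360} for (1)$\Leftrightarrow$(3), and deduces (2)$\Leftrightarrow$(4) as the special case $\sigma=0$ of that citation. Your direct computation of the identity $F^\Phi\cap E_1'=\ker(\Phi)\cap E_1'$ is precisely what that cited proposition contains, so you have effectively inlined the external reference. The upshot is that your write-up is self-contained where the paper's is not, at the cost of a few extra lines; the underlying mathematics is the same.
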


\begin{proof}
For $v \in V_1$,
the vector $(v, -\sigma^\flat(v))$ belongs to $E_1$ if and only if
$(v,0)$ belongs to $E_{1;\sigma}$.
This implies that (1) and (2) are equivalent.
For a proof that
(1) and (3) are equivalent,
see, e.g.,  \cite[Prop.1.15]{MR2642360}.
Accordingly, (2) and (4) are equivalent.
\end{proof}

\begin{rema}
{\rm
Consider the Dirac morphism $(0,\sigma) \colon (V,E) \to (0,0)$.
The backward image $F^{0,\sigma}$ of the zero Lagrangian subspace
$F$ of $0$ coincides with  
\begin{equation*}
\ker(0,\sigma)=
F^{0,\sigma}=\{(v,-\sigma^\flat(v)); v \in V\}\subseteq V \oplus V^*.
\end{equation*}
The Dirac morphism $(0,\sigma)$
is strong if and only if 
$\ker(0,\sigma)$ is transverse to $V$, that is,
$\sigma$ non-degenerate.
}
\end{rema}

\subsubsection{Dirac structures over manifolds}
\label{dsom}
Let $M$ be a manifold and $\zeta \colon T \to M$ a
vector bundle.
The non-degenerate split bilinear form 
\eqref{hinnn} extends in an obvious manner to a
non-degenerate split bilinear form 
\begin{equation}
\hinn
 \colon (T \oplus_M T^*) \otimes_M(T \oplus_M T^*) \longrightarrow \bK.
\label{hinnTn}
\end{equation}
An {\em almost Dirac structure\/} on $\zeta\colon  T \to M$ 
is a subbundle $E\to M $ of $T \oplus T^*\to M$ that is
 maximally isotropic relative to $\hinn$.
An {\em almost Dirac manifold\/}
$(M,E)$
consists of a manifold $M$ and an almost Dirac structure $E \to M$
on $M$.

Let $\lambda$ be a closed $3$-form on $M$.
The notation in  \cite{MR2642360} is $\eta$
for the present $\lambda$. 
The $\lambda$-{\em twisted Courant algebroid\/} over $M$
is the vector bundle
$\tau_M \oplus_M \tau^*_M\colon \TT M \oplus_M \TT^* M \to M$
together with the non-degenerate split  bilinear form
\eqref{hinnTn} (for $T = \TT M$),
anchor $\rho$ coming from the projection to $\TT M$, and
$\lambda$-{\em twisted Courant  bracket\/} 
$\bra_\lambda$ on the space $\Gamma(\tau_M \oplus_M \tau^*_M)$ 
of sections of $\tau_M \oplus_M \tau^*_M$
which the identity
\begin{equation}
[(X_1,\alpha_1), (X_2, \alpha_2)]_\lambda =
\left([X_1,X_2], \mathcal L_{X_1} \alpha_2 
- i_{X_2}d \alpha_1
-i_{X_1 \wedge X_2}\lambda\right)
\label{CB1t}
\end{equation} 
characterizes. Here the sign convention for $\lambda$ is that
in \cite{MR2642360}, opposite to the sign in
\cite{MR2023853},
 see footnote 3 in \cite{MR2642360}.
A $\lambda$-{\em twisted Dirac structure\/}
on $M$ is an almost Dirac structure
 ${\eta \colon E \to M}$ on $\tau_M \colon \TT M \to M$
such that the space $\Gamma(\eta)$ of sections of $\eta$
is closed under  $\bra_\lambda$
\cite[(3)~p.~147]{MR2023853},
\cite{MR2068969},
\cite[(3.2)~p.~10]{MR2103001}.
When $\lambda$ is zero,
\eqref{CB1t} comes down to the ordinary
 {\em Courant
bracket\/} $\bra$,
and
${(\tau_M \oplus_M \tau^*_M, \bra, \rho)}$ is the ordinary
{\em Courant algebroid\/} over $M$
\cite{MR998124, MR2023853};
further, 
a $0$-twisted Dirac structure
on $M$ is an ordinary Dirac structure.
A {\em twisted Dirac manifold\/}
$(M,E, \lambda)$
consists of a manifold $M$, an almost Dirac structure $E \to M$
on $M$, and a closed $3$-form on $M$ that turns
$(M,E, \lambda)$ into a $\lambda$-twisted Dirac structure;
a  twisted Dirac manifold of the kind
$(M,E, 0)$ is an ordinary  Dirac manifold.

Let $\sigma$ be a $2$-form on $M$.
The graph $\Gr_\sigma \subseteq \TT M \oplus \TT^* M$
of the adjoint $\sigma^\fflat \colon \TT M \to \TT^* M$
of $\sigma$ is an almost Dirac structure on $M$;
it is also common to say
$\sigma$ {\em defines\/} the almost Dirac structure
$\Gr_\sigma$. 
Such an  almost Dirac structure $\Gr_\sigma$ on $M$
is
a $\lambda$-twisted Dirac structure 
 if and only if $d \sigma=\lambda$
\cite[Section 2]{MR2642360}
(with a minus sign in
\cite{MR2023853}).
In the same vein,
consider an skew-symmetric bivector  $P$ over $M$.
The graph $\Gr_P \subseteq \TT M \oplus \TT^* M$
of the adjoint $P^\sharp \colon \TT^* M \to \TT^* M$
of $P$ is an almost Dirac structure on $M$,
and $P$ is said to {\em define\/}
this almost Dirac structure. 
The adjoint $P^\sharp \colon \TT^* M \to \TT M$
of $P$
determines the alternating $3$-tensor
$(\LAL^{\mrc,3}P^\sharp) (\lambda)$ on $M$,
and the almost Dirac structure
 $\Gr_P$
is a $\lambda$-twisted Dirac structure on $M$
 if and only if 
$[P,P]+2 (\LAL^{\mrc,3}P^\sharp) (\lambda)=0$
\cite[Section 2]{MR2642360}; see also
\cite[Section 2  p.~554 ff]{MR2068969},
\cite{MR2023853}, with the sign convention for $\lambda$ there.

An {\em almost Dirac morphism\/} 
\begin{equation}
(\Phi, \sigma)\colon (M_1,E_1) \longrightarrow
(M_2,E_2)
\end{equation}
between two almost Dirac manifolds $(M_1,E_1)$ and
$(M_2,E_2)$ 
consists of an admissible map
$\Phi \colon M_1 \to M_2$ and a $2$-form $\sigma$
on $M_1$ such that
\begin{equation*}
(d\Phi_q,\sigma_q)\colon(\TT _q M_1, E_{1,q})\longrightarrow
(\TT _{\Phi(q)} M_2, E_{2,\Phi(q)})
\end{equation*}
is a  morphism of Dirac spaces for every $q \in M_1$;
the morphism  $(\Phi, \sigma)$ is {\em strong\/}
when
$(d\Phi_q, \sigma_q)$ is strong for every $q \in M_1$.
A {\em Dirac morphism\/}
\begin{equation}
(\Phi, \sigma)\colon (M_1,E_1, \lambda_1) \longrightarrow
(M_2,E_2, \lambda_2)
\end{equation}
between two twisted Dirac manifolds
$(M_1,E_1, \lambda_1)$ and
$(M_2,E_2, \lambda_2)$
 is
an  almost Dirac morphism $(\Phi, \sigma)\colon (M_1,E_1) \to
(M_2,E_2)$
such that
$\lambda_1 + d \sigma = \Phi^* \lambda_2$, and a Dirac morphism 
is {\em strong\/} when it is strong as an almost Dirac morphism.

For a $\GG$-manifold $M$ 
and two vector bundles $E \to M$ and $F \to M$, extending the notation
\eqref{write},
write  a vector bundle endomorphism $\chi$ of $E \oplus_M F$ 
over $M$ as
\begin{equation}
\chi =\left[ 
\begin{matrix}
\chi_{1,1} & \chi_{1,2}\\ \chi_{2,1} & \chi_{2,2} 
\end{matrix}
\right] \colon E \oplus_M F \to E \oplus_M F,
\end{equation}
for vector bundle endomorphisms
\begin{equation*}
\chi_{1,1} \colon E \longrightarrow E,\ 
 \chi_{1,2}\colon F \longrightarrow E,\ 
 \chi_{2,1} \colon E \longrightarrow F,\ 
 \chi_{2,2} \colon F \longrightarrow F 
\end{equation*}
over $M$.

\subsubsection{Group case}
\label{grcase}
View the group $\GG$ as a $\GG$-manifold via conjugation.
A straightforward verification establishes the following.

\begin{prop}
\label{projsEF}
The vector bundle endomorphism
\begin{equation}
\begin{aligned}
\pp&= 
\left[ 
\begin{matrix}
\pp_{1,1} & \pp_{1,2}\\ \pp_{2,1} & \pp_{2,2} 
\end{matrix}
\right]= \tfrac 14
\left[ 
\begin{matrix}
(L-R)(L^{-1}- R^{-1}) &  (L-R)(L^{-1}+R^{-1})
\\ (L+R)(L^{-1}- R^{-1}) &  (L+R)(L^{-1}+R^{-1}) 
\end{matrix}
\right]
\end{aligned}
\label{pp}
\end{equation}
of  $\TT \GG \oplus_\GG \TT \GG$ in $\gg \otimes \gg$
satisfies the identity
\begin{equation*}
\pp \circ\left[ \begin{matrix} L-R\\ L+R\end{matrix}\right] 
=\left[ \begin{matrix} L-R\\ L+R\end{matrix}\right]
\end{equation*}
and is therefore idempotent,
and
the vector bundle endomorphism
\begin{equation}
\begin{aligned}
\qqp&= 
\left[ 
\begin{matrix}
\qqp_{1,1} & \qqp_{1,2}\\ \qqp_{2,1} & \qqp_{2,2} 
\end{matrix}
\right]= \tfrac 14
\left[ 
\begin{matrix}
(L+R)(L^{-1}+R^{-1}) & (L+R)(L^{-1}- R^{-1}) 
\\  
 (L-R)(L^{-1}+R^{-1})& (L-R)(L^{-1}- R^{-1})  
\end{matrix}
\right]
\end{aligned}
\label{qqp}
\end{equation}
of  $\TT \GG \oplus_\GG \TT \GG$ in $\gg \otimes \gg$
satisfies the identity
\begin{equation*}
\qqp \circ\left[ \begin{matrix} L+R\\ L-R\end{matrix}\right] 
=\left[ \begin{matrix} L+R\\ L-R\end{matrix}\right]
\end{equation*}
and is therefore idempotent.
Furthermore,
\begin{equation*}
\pp + \qqp = \Id\colon  \TT \GG \oplus_\GG \TT \GG 
\to\TT \GG \oplus_\GG \TT \GG.
\end{equation*}
\end{prop}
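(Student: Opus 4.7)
The proof is a direct algebraic verification with no conceptual obstacles; the only ingredients are that $L,R\colon \GG\times\gg\to \TT\GG$ are mutually inverse to $L^{-1},R^{-1}\colon \TT\GG\to \GG\times\gg$ (after appropriate identification) and basic block-matrix manipulation. The plan is to check the two fixing identities, deduce idempotence from them, and finally verify $\pp+\qqp=\Id$ entry by entry.

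For the first fixing identity, I would compute the two components of $\pp \circ \bigl[{}^{L-R}_{L+R}\bigr]$ separately. The first component equals
\begin{equation*}
\tfrac14(L-R)\bigl[(L^{-1}-R^{-1})(L-R)+(L^{-1}+R^{-1})(L+R)\bigr],
\end{equation*}
and the bracketed factor expands to $LL^{-1}+RR^{-1}+LL^{-1}+RR^{-1}=4\,\Id$ after cancellation of the cross terms $\pm L^{-1}R\mp R^{-1}L$, so the first component equals $L-R$. The second component is handled by the identical cancellation with the outer factor $L+R$ in place of $L-R$, giving $L+R$. The analogous identity for $\qqp\circ\bigl[{}^{L+R}_{L-R}\bigr]$ follows by the same calculation after swapping the roles of the sums and differences.

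Idempotence is then automatic: a direct inspection of the matrix \eqref{pp} shows that, for any section $\bigl[{}^X_Y\bigr]$ of $\TT\GG\oplus_\GG\TT\GG$, both components of $\pp\bigl[{}^X_Y\bigr]$ share the common section
$Z=\tfrac14\bigl[(L^{-1}-R^{-1})(X)+(L^{-1}+R^{-1})(Y)\bigr]$ of $M\times\gg$, namely $\pp\bigl[{}^X_Y\bigr]=\bigl[{}^{(L-R)(Z)}_{(L+R)(Z)}\bigr]$. Hence the image of $\pp$ is contained in the image of $\bigl[{}^{L-R}_{L+R}\bigr]$, on which $\pp$ acts as the identity by the first step; thus $\pp^2=\pp$. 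The same argument, with $\bigl[{}^{L+R}_{L-R}\bigr]$ in place of $\bigl[{}^{L-R}_{L+R}\bigr]$, handles $\qqp$.

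Finally, $\pp+\qqp=\Id$ is verified in each of the four block positions: the diagonal entries give $\tfrac14[(L\mp R)(L^{-1}\mp R^{-1})+(L\pm R)(L^{-1}\pm R^{-1})]=\tfrac14[2LL^{-1}+2RR^{-1}]=\Id$, and the off-diagonal entries give $\tfrac14[(L-R)(L^{-1}+R^{-1})+(L+R)(L^{-1}-R^{-1})]=\tfrac14[2LL^{-1}-2RR^{-1}]=0$, with the symmetric computation for the other off-diagonal block. The whole verification is formal and there is no genuine obstacle; the only point worth flagging is to keep track that $L$ and $R$ and their inverses are morphisms of vector bundles on $\GG$ (not on $\gg$), so all compositions above take place in $\End(\TT\GG\oplus_\GG\TT\GG)$ and $\End(\GG\times(\gg\oplus\gg))$ as appropriate.
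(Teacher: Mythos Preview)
Your verification is correct and is precisely the ``straightforward verification'' that the paper invokes without spelling out. The only cosmetic slips are writing $LL^{-1}$ where the composition order in the first fixing identity is actually $L^{-1}L$ (harmless, since both equal the identity on the appropriate bundle), and referring to ``$M\times\gg$'' where here $M=\GG$; neither affects the argument.
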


Let $\,\form\,$ be a non-degenerate  $\Ad$-invariant
symmetric bilinear form on $\gg$,
let $\psidot\colon \gg \to \gg^*$ denote its adjoint
as before,
 let
 $\psi_\GG^{\form}\colon \TT \GG \to \TT^* \GG$
denote the isomorphism of vector bundles on $\GG$
which $\,\form\,$ induces,
and let the $2$-tensor
$\oomega$ in $\gg \otimes \gg$ arise from 
 $\,\form\,$ 
as the image of
the corresponding $2$-tensor in $\gg^* \otimes \gg^*$
under the inverse of the  adjoint $\psidot \colon \gg \to \gg^*$.
For later reference we note that the diagrams
\begin{equation}
\begin{gathered}
\xymatrix{
\GG \times \gg \ar[d]_{\Id \times \psidot} \ar[r]^L & 
\TT \GG
\ar[d]^{\psi_\GG^{\form}}
&\GG \times \gg \ar[d]_{\Id \times \psidot} \ar[r]^R 
& \TT \GG\ar[d]^{\psi_\GG^{\form}}
\\
\GG \times \gg^* \ar[r]_{L^{*,-1}} & \TT^* \GG
&\GG \times \gg^* \ar[r]_{R^{*,-1}} & \TT^* \GG
}
\end{gathered}
\label{laterr}
\end{equation}
are commutative.

Let $\Delta^{\form} \colon \gg \to \gg \oplus \gg^*$ denote the composite
\begin{equation}
\Delta^{\form} \colon \gg 
\stackrel{\Delta}\longrightarrow  \gg \oplus \gg
\stackrel{\left(\Id, \tfrac 12 \psidot\right)}
\longrightarrow  \gg \oplus \gg^* .
\end{equation}
The image $E_\GG \subseteq \TT \GG \oplus_\GG \TT^* \GG$ of the injection
\begin{gather}
\xymatrixcolsep{8pc}
\xymatrix{
\mathrm e \colon \GG \times \gg \ar[r]^{\Id \times \Delta^{\form}}& 
\GG \times (\gg \oplus \gg^*)
\ar[r]^{\left (L-R, L^{*,-1} + R^{*,-1}\right) }& 
\TT \GG \oplus_\GG \TT^* \GG
}
\end{gather}
and the image  $F_\GG \subseteq \TT \GG \oplus_\GG \TT^* \GG$
of the injection
\begin{gather}
\xymatrixcolsep{8pc}
\xymatrix{
\mathrm f \colon \GG \times \gg \ar[r]^{\Id \times  \Delta^{\form}}& 
\GG \times (\gg \oplus \gg^*)
\ar[r]^{\left (L+R, L^{*,-1} - R^{*,-1}\right) }&  
\TT \GG \oplus_\GG \TT^* \GG
}
\end{gather}
yield vector bundles $E_\GG \to \GG$ and $F_\GG \to \GG$, respectively, on 
$\GG$.
The vector bundle
$E_\GG \to \GG$ on $\GG$ is referred to in the literature as the
{\em Cartan-Dirac structure\/} of $\GG$
(with respect to $\,\form\,$)
\cite[Section 7.2~p.~591]{MR2068969},
\cite[(3.7)~p.~12]{MR2103001}, \cite[Section 3]{MR2642360},
\cite[Ex. 5.2~p.~151]{MR2023853}.
Relative to the non-degenerate bilinear pairing
$\pairi$
 on
${\TT \GG \oplus_\GG \TT^* \GG}$,
cf. \eqref{hinnTn},
the constituents $E_\GG$ and $F_\GG$
are mutually orthogonal, and the splitting
$ E_\GG \oplus_G F_\GG$
of $\TT \GG \oplus_\GG \TT^* \GG$
is  Lagrangian.

Let $\Phi \colon M \to \GG$
be an admissible $\GG$-equivariant map and let
 $\psi_\Phi^{\form}\colon \TT_\Phi \GG \to \TT_\Phi^* \GG$
denote the isomorphism of vector bundles on $M$
which $\,\form\,$ induces.
The following is immediate.
\begin{prop}
\label{via1}
Via $\Phi$, the vector bundles $E_\GG \to \GG$ and
$F_\GG \to \GG$ induce vector subbundles
$E_\Phi \to M$ and
$F_\Phi \to M$
of
$\TT_\Phi \GG \oplus_M \TT_\Phi^* \GG \to M$
in such a way that,
relative to the non-degenerate bilinear pairing
$\pairi$
 on
${\TT_\Phi \GG \oplus_M\TT^*_\Phi \GG}$,
cf. {\rm \eqref{hinnTn}}, the
splitting
$E_\Phi \oplus_M F_\Phi$
of $\TT_\Phi \GG \oplus_M \TT_\Phi^* \GG$ is Lagrangian.
Furthermore, the
idempotent vector bundle endomorphisms \eqref{pp}
and \eqref{qqp} over $\GG$ induce the   retractions
\begin{gather}
\begin{aligned}
\pp_\Phi&= 
\left[ 
\begin{matrix}
\pp_{\Phi,1,1} & \pp_{\Phi,1,2}\\ \pp_{\Phi,2,1} & \pp_{\Phi,2,2} 
\end{matrix}
\right] \colon \TT_\Phi \GG \oplus_M \TT^*_\Phi \GG 
\twoheadrightarrow E_\Phi
\rightarrowtail
\TT_\Phi \GG \oplus_M \TT^*_\Phi \GG
\\
&= \tfrac 14
\left[ 
\begin{matrix}
(L_\Phi-R_\Phi)(L_\Phi^{-1}- R_\Phi^{-1}) 
&  
2(L_\Phi-R_\Phi)(L_\Phi^{-1}+R_\Phi^{-1})\psi^{\form,-1}_\Phi
\\ 
\tfrac 12 \psi^{\form}_\Phi(L_\Phi+R_\Phi)(L_\Phi^{-1}- R_\Phi^{-1}) 
&  
(L^{*,-1}_\Phi+R^{*,-1}_\Phi)(L_\Phi^*+R_\Phi^*) 
\end{matrix}
\right]
\end{aligned}
\label{ppphi}
\\
\begin{aligned}
\qqp_\Phi&= 
\left[ 
\begin{matrix}
\qqp_{\Phi,1,1} & \qqp_{\Phi,1,2}\\ \qqp_{\Phi,2,1} & \qqp_{\Phi,2,2} 
\end{matrix}
\right] \colon \TT_\Phi \GG \oplus_M \TT^*_\Phi \GG 
\twoheadrightarrow F_\Phi
\rightarrowtail
\TT_\Phi \GG \oplus_M \TT^*_\Phi \GG
\\
&
= \tfrac 14
\left[ 
\begin{matrix}
(L_\Phi+R_\Phi)(L_\Phi^{-1}+R_\Phi^{-1}) 
& 
2(L_\Phi+R_\Phi)(L_\Phi^{-1}- R_\Phi^{-1}) \psi^{\form,-1}_\Phi
\\  
\tfrac 12 \psi^{\form}_\Phi (L_\Phi-R_\Phi)(L_\Phi^{-1}+R_\Phi^{-1})
& (L_\Phi-R_\Phi)(L_\Phi^{-1}-R_\Phi^{-1}) 
\end{matrix}
\right]
\end{aligned}
\label{qqpphi}
\end{gather}
of vector bundles on $M$, and
\begin{equation*}
\pp_\Phi + \qqp_\Phi = \Id \colon 
\TT_\Phi \GG \oplus_M \TT^*_\Phi \GG 
\longrightarrow
\TT_\Phi \GG \oplus_M \TT^*_\Phi \GG .
\end{equation*}
\end{prop}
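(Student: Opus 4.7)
The plan is to reduce everything to two objects already in hand on $\GG$: the Lagrangian splitting $E_\GG\oplus_\GG F_\GG$ of $\TT\GG\oplus_\GG\TT^*\GG$ and the complementary idempotents $\pp,\qqp$ of Proposition \ref{projsEF}. First I would simply \emph{define} $E_\Phi=\Phi^*E_\GG$ and $F_\Phi=\Phi^*F_\GG$, viewed as subbundles of $\Phi^*(\TT\GG\oplus_\GG\TT^*\GG)=\TT_\Phi\GG\oplus_M\TT^*_\Phi\GG$. Since the pairing \eqref{hinnTn} is fiberwise and algebraic in character, it pulls back to the pairing on $\TT_\Phi\GG\oplus_M\TT^*_\Phi\GG$, so it suffices to establish that $E_\GG\oplus_\GG F_\GG$ is a Lagrangian splitting of $\TT\GG\oplus_\GG\TT^*\GG$. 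A rank count reduces this to: (a) $E_\GG$ and $F_\GG$ are mutually complementary, and (b) each is isotropic. Isotropy is a direct computation: writing a generic pair of sections of $E_\GG$ over $q\in\GG$ in the form $((L_q-R_q)X,\tfrac12(L_q^{*,-1}+R_q^{*,-1})\psidot(X))$ and evaluating the pairing using \eqref{laterr}, the four resulting terms combine via the $\Ad$-invariance and symmetry of $\,\form\,$ to cancel; the computation for $F_\GG$ is identical. Complementarity will come free from step two.

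Next I would transport $\pp,\qqp$ from $\TT\GG\oplus_\GG\TT\GG$ to $\TT\GG\oplus_\GG\TT^*\GG$ along the vector bundle isomorphism
\[
\Psi=\mathrm{diag}(\Id_{\TT\GG},\tfrac12\psi^{\form}_\GG)\colon \TT\GG\oplus_\GG\TT\GG\longrightarrow \TT\GG\oplus_\GG\TT^*\GG,
\]
the factor $\tfrac12$ being dictated by $\Delta^{\form}=(\Id,\tfrac12\psidot)$ in the definitions of $\mathrm e$ and $\mathrm f$. Using \eqref{laterr} a short calculation shows $\Psi$ carries the image of $\pp$ (the subbundle $\{((L-R)X,(L+R)X)\}$) onto $E_\GG$ and the image of $\qqp$ onto $F_\GG$. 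Hence $\pp_E:=\Psi\pp\Psi^{-1}$ and $\pp_F:=\Psi\qqp\Psi^{-1}$ are complementary idempotents on $\TT\GG\oplus_\GG\TT^*\GG$ projecting onto $E_\GG$ and $F_\GG$ respectively, and $\pp_E+\pp_F=\Id$ follows from $\pp+\qqp=\Id$. Pulling back along $\Phi$ gives $\pp_\Phi=\Phi^*\pp_E$ and $\qqp_\Phi=\Phi^*\pp_F$, satisfying $\pp_\Phi+\qqp_\Phi=\Id$, and the Lagrangian splitting is preserved under pullback.

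Finally I would verify that $\pp_\Phi$ agrees with the matrix in \eqref{ppphi}. With $\Psi=\mathrm{diag}(\Id,\tfrac12\psi^{\form}_\GG)$ and $\Psi^{-1}=\mathrm{diag}(\Id,2\psi^{\form,-1}_\GG)$, block multiplication gives
\[
\Psi\pp\Psi^{-1}=\begin{pmatrix}\pp_{1,1} & 2\pp_{1,2}\psi^{\form,-1}_\GG\\ \tfrac12\psi^{\form}_\GG\pp_{2,1} & \psi^{\form}_\GG\pp_{2,2}\psi^{\form,-1}_\GG\end{pmatrix}.
\]
Inserting the entries of \eqref{pp} reproduces the top-left, top-right, and bottom-left entries of \eqref{ppphi} immediately. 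For the bottom-right, the identities \eqref{laterr} rewrite $\psi^{\form}_\GG(L+R)(L^{-1}+R^{-1})\psi^{\form,-1}_\GG$ as $(L^{*,-1}+R^{*,-1})(L^*+R^*)$, matching \eqref{ppphi}. Pulling back along $\Phi$ commutes with all these algebraic manipulations, producing the displayed formula. The analogous computation with $\qqp$ and $\qqpphi$ in place of $\pp$ and $\ppphi$ yields \eqref{qqpphi}. The main obstacle will be the careful bookkeeping in this last paragraph: keeping track of the factor $\tfrac12$ coming from $\Delta^{\form}$, the composition orders in the off-diagonal entries, and the passage from $(L,R)$-expressions to $(L^*,R^*)$-expressions via \eqref{laterr} in the bottom-right entries, since this is where the explicit matrices acquire their somewhat asymmetric form.
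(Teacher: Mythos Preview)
Your proposal is correct. The paper treats this proposition as immediate and gives no proof beyond the sentence ``The following is immediate''; the implicit reasoning is exactly your first step, namely that $E_\Phi$, $F_\Phi$, the pairing, and the idempotents are all obtained by pulling back fiberwise constructions along $\Phi$, so the Lagrangian splitting and the relation $\pp_\Phi+\qqp_\Phi=\Id$ are inherited automatically. Your explicit transport via $\Psi=\mathrm{diag}(\Id,\tfrac12\psi^{\form}_\GG)$ and the block-matrix computation that follows supply the detailed verification of the displayed formulas \eqref{ppphi} and \eqref{qqpphi} that the paper omits; this is a clean way to see where the factors of $2$ and $\tfrac12$ and the passage to the starred operators in the $(2,2)$ entries come from.
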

In view of {\rm \eqref{laterr}},
the off-diagonal terms of \eqref{ppphi} and
\eqref{qqpphi} admit the following equivalent expressions:
\begin{align*}
\pp_{\Phi,2,1}&=\tfrac 18
(L^{*,-1}_\Phi+R^{*,-1}_\Phi)(\Id \times \psidot) (L_\Phi^{-1}- R_\Phi^{-1})
\colon& \TT_\Phi \GG \to \TT^*_\Phi \GG
\\
\pp_{\Phi,1,2}&=\tfrac 12
(L_\Phi-R_\Phi)(\Id \times \psi^{\form,-1})(L_\Phi^*+R_\Phi^*)
\colon& \TT^*_\Phi \GG \to \TT_\Phi \GG
\\
\qqp_{\Phi,2,1}&= \tfrac 18 
(L_\Phi^{*,-1}-R^{*,-1}_\Phi)(\Id \times  \psidot) (L_\Phi^{-1}+R_\Phi^{-1})
\colon& \TT_\Phi \GG \to \TT^*_\Phi \GG
\\
\qqp_{\Phi,1,2}&=\tfrac 12
(L_\Phi+R_\Phi)(\Id \times\psi^{\form,-1})(L_\Phi^*- R_\Phi^*)
\colon& \TT^*_\Phi \GG \to \TT_\Phi \GG 
\end{align*}

\begin{rema}
{\rm
The expressions in \eqref{ppphi} and \eqref{qqpphi}
for the off-diagonal terms of
 $\pp_\Phi$ and $\qqp_\Phi$, respectively,
involve the non-degeneracy of $\,\form\,$
explicitly.
Hence there is no way to subsume
the theory of weakly quasi Hamiltonian structures
and Hamiltonian weakly quasi Poisson structures 
under Dirac structures.
}
\end{rema}
For $\Phi = \Id$, write the projection \eqref{ppphi} as
$ \pp \colon\TT \GG \oplus_\GG \TT^* \GG \to E_\GG$.
By construction, \eqref{ppphi}
determines the $\GG$-invariant bivector
$P_\GG \in \LAL^{\mrc,2}[\GG]$
as 
\begin{equation}
P_\GG^\sharp = \pp_{1,2} = \tfrac 12 (L-R)(L^{-1}+R^{-1}) \psi_\GG^{\form,-1}
= \tfrac 12 (L-R)(\Id \times \psi^{\form,-1})(L^*+R^*), 
\end{equation}
cf. 
\eqref{Psharp} for the notation ${\cdot\,}^\sharp$ and
\eqref{laterr} for the third equality;
indeed,  diagram \eqref{CD03}
being commutative characterizes  $P_\GG^\sharp$,
and the adjoint  $\psi^\oomega \colon \gg^* \to \gg$  
of the $2$-tensor  $\oomega$ in $\gg \otimes \gg$ coincides with
 $ \psidot^{,-1}$.

\subsubsection{Dirac  vs quasi structures}
\label{dvsq}
Maintain the hypthesis that the $2$-form  $\form$ on $\gg$ be non-degenerate
and retain the notation
 $\oomega$ be the symmetric $\Ad$-invariant $2$-tensor in 
$\gg \otimes \gg$ which $\form$ determines.
Theorems \ref{dvsq1} and \ref{dvsq2} below implicitly involve the fact that,
cf. Proposition \ref{cruc},
 an  almost Dirac morphism $(\Phi,\sigma) \colon (M,E_M) \to (\GG, E_\GG)$ is strong
if and only if
the $(\Phi,\sigma)$-backward image $F^{\Phi,\sigma}$ of $F_\GG$
is transverse to $E_M$.

We summarize a 
version of \cite[Theorem 7.6(i)]{MR2068969} (statement (ii) 
in this theorem is not relevant
in the present paper), see also
\cite[Theorem 3.15]{MR2103001},
reproved and generalized as
\cite[Theorem 5.2]{MR2642360},
together with the reasoning in the proofs
in a form tailored to our purposes:

\begin{thm}
\label{dvsq1}
Let $M$ be a $\GG$-manifold together with an admissible $\GG$-equivariant
map $\Phi \colon M \to \GG$ and a $\GG$-invariant $2$-form
$\sigma$. 

\begin{enumerate}
\item
The map $\Phi$ is a $\GG$-momentum for
$\sigma$ 
relative to $\form$
if and only if
\begin{equation}
(\Phi,\sigma) \colon (M, \TT M) \longrightarrow (\GG, E_\GG)
\label{adm1}
\end{equation}
is an almost Dirac morphism.

\item
When $(\Phi,\sigma)$ is an almost Dirac morphism, 
the $2$-form
$\sigma$ is $\Phi$-quasi closed if and only if
\begin{equation}
(\Phi,\sigma) \colon (M, \TT M,0) \longrightarrow (\GG, E_\GG,\lambda_\GG)
\label{adm1l}
\end{equation}
is a Dirac morphism (of twisted Dirac manifolds).
\item
The following are equivalent:
\begin{enumerate}
\item
The map $\Phi$ is a $\GG$-momentum for
$\sigma$ 
relative to $\form$ and the $2$-form
$\sigma$ on $M$ is $\Phi$-non-degenerate;

\item
the map $\Phi$ and $2$-form $\sigma$ combine to a strong
almost Dirac morphism of the kind
{\rm \eqref{adm1}};

\item
the
 $(\Phi,\sigma)$-backward image $F^{\Phi,\sigma}$ of $F_\GG$
is transverse to $\TT M$ and hence yields the  $\GG$-invariant
Lagrangian splitting
$\TT M \oplus _M F^{\Phi,\sigma}$ 
of $\TT M \oplus _M \TT^* M$;

\item
the
 $\Phi$-backward image $F^\Phi$ of $F_\GG$
is transverse to the graph $\Gr_\sigma$ of $\sigma$
and hence yields the  $\GG$-invariant
Lagrangian splitting
$\Gr_\sigma \oplus _M F^\Phi$ 
of $\TT M \oplus _M \TT^* M$. \qed
\end{enumerate}

\end{enumerate}
\end{thm}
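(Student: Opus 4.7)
The three parts of the theorem translate the quasi Hamiltonian notions of Subsection \ref{wqH} into the Dirac language of Subsection \ref{dsom}; my plan is to verify this dictionary pointwise. For (1), the fiber $E_{\GG,\Phi(q)}$ is, by the defining embedding $\mathrm e$ of Subsection \ref{grcase}, the linear span of the pairs $((L-R)X,\tfrac 12(L^{*,-1}+R^{*,-1})\psidot(X))$ as $X$ ranges over $\gg$, while the $(d\Phi_q,\sigma_q)$-forward image of $\TT_qM\subseteq \TT_qM\oplus \TT^*_qM$ is the set $\{(d\Phi_q(v),\alpha):(d\Phi_q)^*(\alpha)=\sigma_q^\fflat(v)\}$, by direct unwinding of Subsection \ref{fbi}. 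Using $\GG$-equivariance of $\Phi$, which gives $d\Phi_q\circ\fund_M(X)=(L_{\Phi(q)}-R_{\Phi(q)})X$, every generator of $E_{\GG,\Phi(q)}$ lies in the forward image when one takes $v=\fund_M(X)$ and the identity
\begin{equation*}
\sigma_q^\fflat(\fund_M(X))=(d\Phi_q)^*\bigl(\tfrac12(L_{\Phi(q)}^{*,-1}+R_{\Phi(q)}^{*,-1})\psidot(X)\bigr),\quad X\in\gg,
\end{equation*}
holds; this identity is precisely \eqref{qh2var} of Proposition \ref{moms}. Since the forward image of a Lagrangian subspace under a linear map is always Lagrangian, both sides are Lagrangian subspaces of the split space $\TT_{\Phi(q)}\GG\oplus \TT^*_{\Phi(q)}\GG$ of the same rank $\dim \GG$, so the inclusion promotes to equality by a dimension count; the converse amounts to unwinding the resulting set equality, and this packaging is exactly the content of \cite[Theorem 7.6(i)]{MR2068969}, \cite[Section 3]{MR2103001}, and \cite[Section 5]{MR2642360}, which we re-derive in our setting.

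For (2), once $(\Phi,\sigma)$ is an almost Dirac morphism, it is a morphism of twisted Dirac manifolds $(M,\TT M,0)\to(\GG,E_\GG,\lambda_\GG)$ if and only if $0+d\sigma=\Phi^*\lambda_\GG$; by construction the twist of the Cartan–Dirac structure is the biinvariant Cartan form $\lambda$ of \eqref{lambda}, so this equation is precisely the $\Phi$-quasi closedness condition \eqref{qh1}. For (3), strength of \eqref{adm1} says $\ker(d\Phi_q,\sigma_q)\cap\TT_qM=\{0\}$ at every point; an element of the intersection has the form $(v,-\sigma_q^\fflat(v))$ with $(v,-\sigma_q^\fflat(v))\in \TT_qM\subseteq \TT_qM\oplus\TT_q^*M$, which forces $\sigma_q^\fflat(v)=0$, so strength is equivalent to injectivity of the morphism \eqref{inj1}, which is the definition of $\Phi$-non-degeneracy in Subsection \ref{nondegeneracy}; together with (1) this delivers (a)$\Leftrightarrow$(b). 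Finally, Proposition \ref{via1} furnishes the $\GG$-invariant Lagrangian splitting $E_\GG\oplus_\GG F_\GG$, so that $F=F_\GG$ is transverse to $E_2=E_\GG$ fibrewise, and Proposition \ref{cruc} applied with $E_1=\TT M$ and $\sigma$ yields (b)$\Leftrightarrow$(c)$\Leftrightarrow$(d), using that the $\sigma$-forward image $(\TT M)_\sigma$ coincides with the graph $\Gr_\sigma$ as noted in Subsection \ref{fbi}.

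The hardest step is the bookkeeping in (1): one must align the explicit description of $E_\GG$, with its factor $\tfrac12$ and the combination $L^{*,-1}+R^{*,-1}$, against the momentum identity \eqref{qh2var}, keeping track of the identifications from \eqref{laterr} that relate $\psidot$ to $\psi_\GG^\form$, the pullback rewriting $\Phi^*(X\form(\omega+\ovomega))$ as $(d\Phi)_M^*\bigl(\tfrac12(L_\Phi^{*,-1}+R_\Phi^{*,-1})\psidot(X)\bigr)$, and the adjoints of the left and right translation isomorphisms. Once this alignment is secured, parts (2) and (3) reduce respectively to the defining formula for the twist of a Dirac morphism and to an elementary unpacking of the kernel condition together with Proposition \ref{cruc}.
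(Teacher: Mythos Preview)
Your proposal is essentially correct and in fact goes well beyond what the paper itself does: the paper states Theorem \ref{dvsq1} with a terminal \qed\ and no proof, presenting it as a summary of results from \cite{MR2068969}, \cite{MR2103001}, and \cite{MR2642360}; the subsequent Remark merely records that (a)$\Leftrightarrow$(b) in part (3) is \cite[Theorem 5.2]{MR2642360} and that the equivalence with (c) and (d) follows from Proposition \ref{cruc}. You actually sketch the verifications, and your treatment of (2) and (3) is clean and matches the paper's attribution exactly---in particular your direct unpacking of $\ker(d\Phi_q,\sigma_q)\cap\TT_qM$ as $\ker(\sigma_q^\fflat)\cap\ker(d\Phi_q)$, hence as the kernel of \eqref{inj1}, is the right elementary argument for (a)$\Leftrightarrow$(b).

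One small point to tighten in (1): the forward direction (momentum $\Rightarrow$ almost Dirac morphism) is exactly as you describe, via the inclusion $E_{\GG,\Phi(q)}\subseteq (\TT_qM)_{d\Phi_q,\sigma_q}$ and the dimension count. For the converse you write that it ``amounts to unwinding the resulting set equality,'' but this is where a little care is needed: from $E_{\GG,\Phi(q)}\subseteq$ forward image one obtains, for each $X$, only some $v$ with $d\Phi_q(v)=(L-R)X$ and $\sigma_q^\fflat(v)=(d\Phi_q)^*\bigl(\tfrac12(L^{*,-1}+R^{*,-1})\psidot(X)\bigr)$, not a priori $v=\fund_M(X)$. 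The clean way to recover \eqref{qh2var} is instead to use the reverse inclusion (forward image $\subseteq E_{\GG,\Phi(q)}$) together with the Lagrangian (self-orthogonal) property of $E_\GG$, or equivalently to appeal directly to the cited references as you do in your closing clause. Since the paper itself defers entirely to those references, your handling is adequate; just be aware that the converse is the step that requires the full force of the equality rather than a single inclusion.
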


\begin{rema}
{\rm
As for Theorem \ref{dvsq1}(3), the equivalence
of (a) and (b) is precisely
\cite[Theorem 5.2]{MR2642360} for the group case.
The equivalence with (c) and (d)
is a consequence of Proposition \ref{cruc}.
}
\end{rema}

In the statement of Theorem \ref{dvsq1}, when the group is trivial,
$(0,\sigma)$ is necessarily an almost Dirac morphism,
the Lagrangian subbundle $F^{\Phi,\sigma}$ of $\TT M \oplus_M \TT^* M$
comes down to $\ker(0,\sigma) \subseteq \TT M \oplus_M \TT^* M$,
and (3) says that $\sigma$ is non-degenerate if and only if
$\TT M$ is transverse to  $\ker(0,\sigma)$.

\begin{compl}
\label{comp2}
Under the circumstances of Theorem {\rm \ref{dvsq1}},
when $\sigma$ is $\Phi$-non-degenerate, 
the $\GG$-invariant skew-symmetric $2$-tensor $P$ on $M$ which
the projection from
$\TT M \oplus_M \TT^* M$ to $\Gr_\sigma$
relative to the $\GG$-invariant Lagrangian splitting
$\Gr_\sigma \oplus _M F^\Phi$ 
of $\TT M \oplus _M \TT^* M$ defines is
the $\Phi$-momentum dual of $\sigma$.
\end{compl}

\begin{proof}
The projection 
\begin{equation}
\pp = \left[ 
\begin{matrix}
\pp_{1,1} & \pp_{1,2}\\ \pp_{2,1} & \pp_{2,2} 
\end{matrix}
\right] \colon \TT M\oplus_M \TT^* M \longrightarrow \Gr_\sigma \subseteq \TT M\oplus_M \TT^* M
\label{projjj}
\end{equation}
to $\Gr_\sigma$
relative to the Lagrangian splitting $\Gr_\sigma \oplus_M F^\Phi$
of $\TT M \oplus_M \TT^* M$
defines
the $\GG$-invariant skew-symmetric $2$-tensor $P$ on $M$ as 
$P^\sharp =\pp_{1,2}$.

Since $\Gr_\sigma =\{(X, \sigma^\fflat(X)\} \subseteq \TT M\oplus_M \TT^* M$,
\begin{align*}
\pp_{2,1} &= \sigma^\fflat \circ \pp_{1,1} 
\\
\pp_{2,2} &= \sigma^\fflat \circ \pp_{1,2}= \sigma^\fflat \circ P^\sharp.
\end{align*}
The  projection to $F^\Phi$ 
relative to the Lagrangian splitting $\Gr_\sigma \oplus_M F^\Phi$
of $\TT M \oplus_M \TT^* M$
reads
\begin{align*}
\qqp = \Id - \pp =\left[ 
\begin{matrix}
\Id -\pp_{1,1} & -P^\sharp\\-\sigma^\fflat\circ\pp_{1,1}&\Id- \sigma^\fflat \circ P^\sharp 
\end{matrix}
\right]
&\colon \TT M \oplus_M \TT^* M 
\twoheadrightarrow F^\Phi \rightarrowtail \TT M \oplus_M \TT^* M.
\end{align*}
Let $\alpha \in \TT^* M$. Then
$
\qqp\left[ 
\begin{matrix}
0\\ \alpha
\end{matrix}
\right]
=
\left[ 
\begin{matrix}
 - P^\sharp(\alpha) \\ 
\alpha -(\sigma^\fflat \circ P^\sharp)(\alpha)
\end{matrix}
\right] \in F^\Phi .
$
Hence, by the definition of $F^\Phi$, for some $\beta \in\TT^*_\Phi\GG$,
necessarily unique, such that
$ (-(d \Phi)_M( P^\sharp(\alpha)),\beta) \in F_\Phi$,
\begin{align*}
\alpha -(\sigma^\fflat \circ P^\sharp)(\alpha) &= \beta \circ (d \Phi)_M .
\end{align*}
Since $\Phi$ is a $\GG$-momentum mapping for
$P$ relative to $\oomega$, 
diagram \eqref{CD01d} being commutative,
\begin{align*}
-(d \Phi)_M( P^\sharp(\alpha))&= 
\tfrac 12 (L_\Phi + R_\Phi)(\Id \times \Psi^\oomega)(\fund_M^*(\alpha)) .
\end{align*}
In view of the definition of 
the almost Dirac structure
$F_\Phi \subseteq  \TT_\Phi\GG \oplus_M \TT^*_\Phi\GG$,
cf. Proposition \ref{via1},
necessarily
\begin{align*}
\beta&=\tfrac 14 (L^{*,-1}_\Phi + R^{*,-1}_\Phi)(\fund_M^*(\alpha))
\\
\alpha -\sigma^\fflat P^\sharp(\alpha) &= \beta \circ (d \Phi)_M 
= (d \Phi)^*_M (\beta)
\\
&=\tfrac 14 \left((d \Phi)^*_M \circ
(L^{*,-1}_\Phi + R^{*,-1}_\Phi) \circ \fund_M^*\right)(\alpha).
\end{align*}
Since $\alpha$ is arbitrary, we conclude
\begin{align*}
\Id -\sigma^\fflat P^\sharp &=\tfrac 14 \rho_\Phi^*
\\
\Id - P^\sharp \sigma^\fflat&=\tfrac 14 \rho_\Phi . \qedhere
\end{align*}
\end{proof}

The following restates a version of
\cite[Theorem 3.16]{MR2103001}, reproved and generalized as
 \cite[Theorem 5.22]{MR2642360},
tailored to our purposes; our version is
weaker than \cite[Theorem 5.22]{MR2642360} since we already build in
the requisite $\GG$-symmetries (which in that theorem
result from infinitesimal symmetries)
but spells out a precise bijective correspondence
which those theorems  claim to exist and which
only the proofs render explicit.

\begin{thm}
\label{dvsq2}
Let 
 $M$  be a $\GG$-manifold and 
 $\Phi \colon M \to \GG$  an admissible $\GG$-equivariant map.
The assignment to a $\GG$-invariant almost Dirac structure
$E_M\subseteq \TT M \oplus \TT^* M$ on $M$
having
$\Phi \colon(M,E_M) \to (\GG, E_\GG)$
as a $\GG$-invariant strong almost Dirac morphism
of the $\GG$-invariant skew-symmetric $2$-tensor $P$ 
on $M$ 
which
the projection from  $\TT M \oplus_M \TT^*M$ to $E_M$
relative to the $\GG$-invariant
Lagrangian splitting  $E_M \oplus_M F^\Phi$ of
$\TT M \oplus_M \TT^*M$
defines
(cf. {\rm \eqref{projP}})
establishes a bijective correspondence
between such $\GG$-invariant 
almost Dirac structures
$E_M$ on $M$ and $\GG$-invariant skew-symmetric $2$-tensors $P$ 
on $M$ having $\Phi$ as $\GG$-momentum mapping relative to $\oomega$
and enjoying the following property:

The image of $E_M$
in $\TT M$ under the projection from $\TT M \oplus_M \TT^*M$ to $\TT M$
coincides with
the image of the 
vector bundle morphism \eqref{surj21}, viz.
\begin{equation*}
P^\sharp+ \fund_M\colon
\TT ^* M \oplus_M M \times \gg \to \TT M,
\end{equation*}
{\rm (the property  \lq $\ran(E_M)= \ran(\fund_M) + \ran(P^\sharp)$\rq\ 
in \cite[Theorem 5.22]{MR2642360})}.

Under this correspondence, the data 
$(M,E_M, \Phi^* \lambda_\GG)$ constitute a twisted
Dirac manifold and
$\Phi \colon(M,E_M, \Phi^* \lambda_\GG ) \to (\GG, E_\GG,\lambda_\GG)$
is a $\GG$-invariant strong  Dirac morphism
if and only if the associated skew-symmetric $2$-tensor $P$ on $M$
is a $\GG$-quasi Poisson structure relative to $\oomega$. \qed
\end{thm}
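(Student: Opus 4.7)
The plan is to establish both directions of the assignment and verify mutual inverseness, mirroring the projection-based computation already carried out in the proof of Complement~\ref{comp2}. The key tools are the transversality criterion of Proposition~\ref{cruc} (which recasts \emph{strong almost Dirac morphism} as transversality of the backward image $F^\Phi$ to $E_M$), the explicit formulas of Proposition~\ref{via1} for the $\Phi$-pullback of the Cartan--Dirac projections $\pp_\Phi,\qqp_\Phi$, and the characterization of twisted Dirac integrability via closure of sections under the Courant bracket \eqref{CB1t}.

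For the direction $E_M \rightsquigarrow P$: a $\GG$-invariant strong almost Dirac morphism $\Phi\colon (M,E_M) \to (\GG, E_\GG)$ yields via Proposition~\ref{cruc} a $\GG$-invariant Lagrangian splitting $\TT M \oplus_M \TT^*M = E_M \oplus_M F^\Phi$; write $\pp,\qqp$ for the associated projections and define $P^\sharp := \pp_{1,2}$. Skew-symmetry of $P$ is forced by Lagrangian-ness of $E_M$ with respect to \eqref{hinnTn}, and $\GG$-invariance is inherited from the splitting. To recover the momentum diagram \eqref{CD01d}, I trace $\qqp(0,\alpha) \in F^\Phi$ for $\alpha \in \TT^*M$: its first component is $-P^\sharp(\alpha)$ by construction, and the definition of $F^\Phi$ as $\Phi$-backward image of $F_\Phi$ combined with the explicit form of $\qqp_\Phi$ in Proposition~\ref{via1} forces the $F_\Phi$-partner to equal $\tfrac14(L^{*,-1}_\Phi + R^{*,-1}_\Phi)\fund_M^*(\alpha)$, whence $(d\Phi)_M(-P^\sharp(\alpha)) = \tfrac12(L_\Phi + R_\Phi)(\Id\times\psi^\oomega)\fund_M^*(\alpha)$, which is exactly \eqref{CD01d}. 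The range condition $\pr_1(E_M) = \ran(P^\sharp) + \ran(\fund_M)$ then follows from a short additional computation combining $\GG$-invariance of $E_M$ with the momentum identity just established.

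For the reverse direction $P \rightsquigarrow E_M$: I define $E_M$ as the image of the vector bundle morphism
\[
\TT^*M \oplus_M (M \times \gg) \longrightarrow \TT M \oplus_M \TT^*M,\quad (\alpha, Y) \longmapsto \bigl(P^\sharp(\alpha) + \fund_M(Y),\ \alpha + \tfrac12 (d\Phi)^*_M(L^{*,-1}_\Phi + R^{*,-1}_\Phi)\psidot(Y)\bigr).
\]
Isotropy with respect to \eqref{hinnTn} is a direct computation combining skew-symmetry of $P$ with the momentum identity \eqref{CD01d}; the range condition supplies the rank count that upgrades isotropy to Lagrangianity, so $E_M$ is a genuine almost Dirac subbundle. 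That $\Phi\colon (M,E_M) \to (\GG, E_\GG)$ is a strong almost Dirac morphism follows by comparing the forward image of the above generators along $\Phi$ with the explicit formula for $E_\GG$ in Subsection~\ref{grcase}, and then invoking Proposition~\ref{cruc}. Mutual inverseness of the two constructions amounts to observing that the projection $\pp$ associated with the splitting $E_M \oplus_M F^\Phi$ has off-diagonal block precisely $P^\sharp$.

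For the twisted half: testing closure of $E_M$ under the $\Phi^*\lambda_\GG$-twisted Courant bracket \eqref{CB1t} on the natural generators $(\fund_M(Y),\cdot)$ and $(P^\sharp(\alpha),\alpha + \cdot)$ exhibits the integrability condition as follows. The $(\fund_M,\fund_M)$-brackets reproduce the Lie bracket on $\gg$ by $\Ad$-invariance of $\oomega$; the cross-terms $(\fund_M, P^\sharp)$ close by $\GG$-invariance of $P$; and the $(P^\sharp,P^\sharp)$-brackets yield, after applying \eqref{CD01d} to convert $\Phi^*\lambda_\GG = \tfrac1{12}\Phi^*([\omega,\omega]\form\omega)$ into an expression in $\fund_M$ and $\oomega$, the Schouten square $[P,P]$ plus a correction equal to the image $\phi_{\oomega,M}$ of the Cartan element of Subsection~\ref{symtot} under the infinitesimal $\gg$-action. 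Integrability is therefore equivalent to $[P,P] = \phi_{\oomega,M}$, i.e., to $P$ being $\GG$-quasi Poisson. The main obstacle is precisely this last Maurer--Cartan computation: matching the pulled-back Cartan $3$-form to the Cartan element $\phi_\oomega$ under the momentum hypothesis requires combinatorial bookkeeping of the kind exhibited in the proof of Proposition~\ref{cartan} and Lemma~\ref{calc}, and care must be taken so that the $\tfrac1{12}$ in \eqref{lambda} aligns with the $\tfrac12$ normalization in \eqref{phiH}.
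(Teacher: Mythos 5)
The paper does not actually prove this theorem — it is stated with a terminal \qed and presented as a restatement of \cite[Theorem 3.16]{MR2103001} and \cite[Theorem 5.22]{MR2642360} — so you are attempting something the text only quotes. Your attempt, however, has a concrete error in the direction $P\rightsquigarrow E_M$: the second component of your generators must carry the additional term $-\tfrac14\rho^*_\Phi(\alpha)$, i.e.\ the correct subbundle is spanned by
\[
\bigl(P^\sharp(\alpha)+\fund_M(Y),\ \alpha-\tfrac14\rho^*_\Phi(\alpha)+\tfrac12(d\Phi)^*_M(L^{*,-1}_\Phi+R^{*,-1}_\Phi)\psidot(Y)\bigr).
\]
With your formula the proposed $E_M$ is not even isotropic for $\hinn$: setting $Z=\psi^\oomega(\fund_M^*\alpha)$ and using the momentum identity \eqref{CD01d} together with $(d\Phi)_M\circ\fund_M=L_\Phi-R_\Phi$, the cross-pairing of $(P^\sharp\alpha,\alpha)$ with $(\fund_M(Y'),\tfrac12(d\Phi)^*_M(L^{*,-1}_\Phi+R^{*,-1}_\Phi)\psidot Y')$ comes out to $\tfrac14\,Y'\form\bigl(2-\Ad_\Phi-\Ad_\Phi^{-1}\bigr)Z$, which is nonzero in general; exactly the omitted $-\tfrac14\rho^*_\Phi(\alpha)$ cancels it. That the correction is forced can also be read off from the paper's own Complement \ref{comp1} together with \eqref{31d}: in the non-degenerate case $E_P=\Gr_\sigma$ contains $(P^\sharp\alpha,\sigma^\fflat P^\sharp\alpha)=(P^\sharp\alpha,\alpha-\tfrac14\rho^*_\Phi\alpha)$.

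In the direction $E_M\rightsquigarrow P$ your derivation of the momentum property is also incomplete. Membership of $\qqp(0,\alpha)=(-P^\sharp\alpha,\alpha-\pp_{2,2}\alpha)$ in $F^\Phi$ only says that $(d\Phi)_M(-P^\sharp\alpha)=(L_\Phi+R_\Phi)(Y)$ for \emph{some} $Y$, with the covector component tied to that same $Y$; nothing in the backward-image condition identifies $Y$ with $\tfrac12\psi^\oomega(\fund_M^*\alpha)$. (Moreover, the $\qqp_\Phi$ of Proposition \ref{via1} is the projection inside $\TT_\Phi\GG\oplus_M\TT^*_\Phi\GG$ onto $F_\Phi$ along $E_\Phi$, not the projection onto $F^\Phi$ along $E_M$ inside $\TT M\oplus_M\TT^* M$, so invoking it here is a category error.) The missing step is to pair $\qqp(0,\alpha)$ against the sections $\widehat{Y}'=(\fund_M(Y'),\tfrac12(d\Phi)^*_M(L^{*,-1}_\Phi+R^{*,-1}_\Phi)\psidot Y')$ of $E_M$: isotropy of $E_M$ gives $\langle\qqp(0,\alpha),\widehat{Y}'\rangle=\alpha(\fund_M Y')$, and evaluating the left-hand side pins down $Y$. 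But the fact that $\widehat{Y}'$ lies in $E_M$ at all is itself where the hypothesis that $\Phi\colon(M,E_M)\to(\GG,E_\GG)$ is a (strong) Dirac morphism enters, via the forward-image condition and equivariance — a condition your write-up never actually uses. Finally, the twisted half (Courant integrability of $E_M$ for $\Phi^*\lambda_\GG$ being equivalent to $[P,P]=\phi_{\oomega,M}$) is only sketched; that Maurer--Cartan computation is the substance of the cited theorems and cannot be waved through.
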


Under the circumstances of Theorem \ref{dvsq2},
given the $\GG$-invariant skew-symmetric $2$-tensor $P$ 
on $M$ having $\Phi$ as $\GG$-momentum mapping relative to $\oomega$,
we denote by
$E_P$  the corresponding 
$\GG$-invariant almost Dirac structure on $M$
written there as $E_M$ and refer to it as
being {\em associated to\/} $P$.

\begin{compl}
\label{comp1}
Under the circumstances of Theorem {\rm \ref{dvsq2}},
the $\GG$-invariant skew-sym\-metric $2$-tensor $P$ 
on $M$ is $\GG$-quasi non-degenerate if and only if
the $\GG$-invariant almost Dirac structure
$E_P$ associated to $P$ is transverse to $\TT ^* M$.
Furthermore,
if this happens to be the case,
the $\GG$-invariant  $2$-form $\sigma$ on $M$ which
the projection from
$\TT M \oplus_M \TT^* M$ to $E_P$
relative to the Lagrangian splitting
$E_P \oplus_M\TT ^* M$
of $\TT M \oplus _M \TT^* M$ defines is
the $\Phi$-momentum dual of $P$, and
$E_P$ coincides with the graph $\Gr_\sigma$ of $\sigma$.
\end{compl}

\begin{proof}
By definition, the skew-sym\-metric $2$-tensor $P$ 
is $\GG$-quasi non-degenerate if and only if
the morphism \eqref{surj21}
of vector bundles is onto $\TT M$.
Since, by Theorem \ref{dvsq2}, 
the image of  \eqref{surj21} in $\TT M$ coincides with
the image of $E_P$ in $\TT M$ under the projection from
$\TT M \oplus_M\TT ^* M$ to $\TT M$,
we conclude that
$P$ 
is $\GG$-quasi non-degenerate if and only if $E_P$
is transverse to $\TT ^*M$.

Suppose that $P$ is $\GG$-quasi non-degenerate. 
The projection to $E_P$
relative to the Lagrangian splitting
$E_P \oplus_M\TT ^* M$
of $\TT M \oplus _M \TT^* M$ reads
\begin{equation}
\pp= \left[ 
\begin{matrix}
\Id & 0
\\ 
\sigma^\fflat & 0 
\end{matrix}
\right] \colon  \TT M \oplus_M \TT^* M \twoheadrightarrow E_P
\rightarrowtail \TT M \oplus_M \TT^* M
\end{equation}
and defines the $\GG$-equivariant $2$-form $\sigma$
on $M$ 
having $\Phi$ as $\GG$-momentum mapping  relative to $\,\form\,$.
It is immediate that $E_P$ coincides with the graph $\Gr_\sigma$
of $\sigma$.
By Complement \ref{comp2},
$\sigma^\fflat \circ P^\sharp =\Id - \tfrac 14 \rho_\Phi^*$,
that is,
the $2$-form $\sigma$
on $M$ 
is
the $\Phi$-momentum dual of $P$.
\end{proof}

In  Theorem \ref{dvsq2}, when the group is trivial,
the backward image $F^0$ amounts to $\TT M$,
the almost Dirac structures of the kind $E_M$ amount to
graphs  of  skew-symmetric $2$-tensors, 
and the statement comes down  the bijective correspondence
between  skew-symmetric $2$-tensors $P$ on $M$ and the graphs
$\Gr_P$ defining them via the projection from
$\TT M \oplus_M \TT^* M$ to $\Gr_P$ relative to the Lagrangian 
splitting $\TT M \oplus_M \Gr_P$ of $\TT M \oplus_M \TT^* M$, cf.
\eqref{projP}.

The following is a Dirac theoretic version of Theorem \ref{existence}.

\begin{thm}
\label{dvsq3}
Let
 $M$  be a $\GG$-manifold and 
 $\Phi \colon M \to \GG$  an admissible $\GG$-equivariant map.
Further, let $\sigma$ be a $\GG$invariant $2$-form 
and $P$ a
$\GG$ invariant skew-symmetric $2$-tensor
on $M$ both having $\Phi$ as $\GG$-momentum mapping
(relative to $\form$ and $\oomega$, respectively).
Then $\sigma$ and $P$ are $\Phi$-momentum dual to each other 
(and hence, in particular, $\sigma$ is  $\Phi$-non-degenerate
and $P$ is $\GG$-quasi non-degenerate)
if and only if
the graph 
$\Gr_\sigma$ of $\sigma$ coincides with the
$\GG$-invariant almost Dirac structure $E_P$ associated to $P$
in Theorem {\rm \ref{dvsq2}}.

Furthermore, when
$\sigma$ and $P$ are $\Phi$-momentum dual to each other,
the $2$-form $\sigma$ is $\Phi$-quasi closed if and only if
the skew-symmetric $2$-tensor $P$ is $\GG$-quasi Poisson.

\end{thm}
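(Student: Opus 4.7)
The plan is to translate $\Phi$-momentum duality, on either side of the equivalence, into the identification $\Gr_\sigma = E_P$, using Complements \ref{comp1} and \ref{comp2} as the bridges between the two formalisms. For the forward direction, assume $\sigma$ and $P$ are $\Phi$-momentum dual. Then Proposition \ref{compar32}(1) forces $P$ to be $\GG$-quasi non-degenerate, so Complement \ref{comp1} applies: the associated $\GG$-invariant almost Dirac structure $E_P$ is transverse to $\TT^* M$, and the $\Phi$-momentum dual of $P$ is the $2$-form $\sigma'$ whose graph coincides with $E_P$. By the uniqueness clause of Proposition \ref{unique}, $\sigma' = \sigma$, whence $\Gr_\sigma = E_P$.

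For the converse, suppose $\Gr_\sigma = E_P$. As a graph of a section, $E_P$ is automatically transverse to $\TT^* M$, so the hypotheses of Complement \ref{comp1} are met and $P$ is $\GG$-quasi non-degenerate with $\Phi$-momentum dual the $2$-form $\sigma'$ satisfying $\Gr_{\sigma'} = E_P = \Gr_\sigma$. Hence $\sigma' = \sigma$, and $\sigma$ is the $\Phi$-momentum dual of $P$; the pair $(\sigma,P)$ is thus $\Phi$-momentum dual, and Proposition \ref{compar32}(1) supplies the supplementary non-degeneracy assertions.

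For the final claim, continue to assume $\Gr_\sigma = E_P$. From the discussion in Subsection \ref{dsom} of graphs as twisted Dirac structures, $\Gr_\sigma$ is a $\Phi^*\lambda_\GG$-twisted Dirac structure on $M$ if and only if $d\sigma = \Phi^*\lambda_\GG$, that is, if and only if $\sigma$ is $\Phi$-quasi closed. On the other side, Theorem \ref{dvsq2} asserts that $(M,E_P,\Phi^*\lambda_\GG)$ being a twisted Dirac manifold with $\Phi$ a strong Dirac morphism is equivalent to $P$ being $\GG$-quasi Poisson relative to $\oomega$; the strong-morphism refinement is automatic because $E_P$ is already the strong almost Dirac morphism image arising from Theorem \ref{dvsq2}, and its strongness is unaffected by introducing the twisting $3$-form. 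Combining these two equivalences under the identification $\Gr_\sigma = E_P$ yields the asserted equivalence between $\sigma$ being $\Phi$-quasi closed and $P$ being $\GG$-quasi Poisson.

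The main obstacle I anticipate is the careful verification that the Lagrangian splitting $\Gr_\sigma \oplus_M F^\Phi$ supplied by Theorem \ref{dvsq1}(3) and the splitting $E_P \oplus_M F^\Phi$ supplied by Theorem \ref{dvsq2} literally coincide under the assumption $\Gr_\sigma = E_P$, so that the projection operations defining $P$ from $\sigma$ (Complement \ref{comp2}) and $\sigma$ from $P$ (Complement \ref{comp1}) are genuinely mutually inverse. This requires bookkeeping of the off-diagonal entries in the matrix formulas \eqref{ppphi}--\eqref{qqpphi} and checking compatibility with the momentum-mapping diagrams \eqref{CD01} and \eqref{CD01d}; once this reconciliation is in place, Theorem \ref{dvsq3} reduces to a formal assembly of Theorems \ref{dvsq1} and \ref{dvsq2} with their respective Complements.
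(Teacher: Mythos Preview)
Your argument is correct and follows essentially the same route as the paper's proof, leaning on Proposition~\ref{compar32}, Proposition~\ref{unique}, and Complement~\ref{comp1} together with Theorem~\ref{dvsq2}; the paper additionally invokes Theorem~\ref{dvsq1}(3) and Complement~\ref{comp2} in both directions, but your more economical use of Complement~\ref{comp1} alone (plus uniqueness) already suffices, since the $\Phi$-duality relation is symmetric by Proposition~\ref{associated1}.

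One remark: the ``main obstacle'' you flag in your final paragraph is a non-issue and is not actually needed for your own argument. Once $\Gr_\sigma = E_P$, the two Lagrangian splittings $\Gr_\sigma \oplus_M F^\Phi$ and $E_P \oplus_M F^\Phi$ are \emph{literally identical}, so there is nothing to reconcile; and your proof as written never appeals to Complement~\ref{comp2}, so the mutual-inverse check between the two projection recipes is irrelevant to what you have done. You may safely drop that paragraph.
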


\begin{proof}
Suppose that
$\sigma$ and $P$ are $\Phi$-momentum dual to each other.
By Proposition \ref{compar32},
the $2$-form $\sigma$ is $\Phi$-non-degenerate and
$P$ is $\GG$-quasi-non-degenerate.
By Theorem \ref{dvsq1}(3),
the
 $\Phi$-backward image $F^\Phi$ of $F_\GG$
is transverse to the graph $\Gr_\sigma$ of $\sigma$
 and,
by Complement \ref{comp1},
the $\GG$-invariant
almost Dirac structure
$E_P$ on $M$ 
 is transverse to $\TT ^* M$. 
Hence 
$\Gr_\sigma \oplus _M F^\Phi$ 
and
$E_P \oplus_M\TT ^* M$
yield   $\GG$-invariant
Lagrangian splittings
of $\TT M \oplus _M \TT^* M$.
In view of the uniqueness of the 
$\Phi$-momentum duals, cf. Proposition \ref{unique},
by Complement \ref{comp2},
the projection to 
$\Gr_\sigma$ relative to the former decomposition
defines the $\Phi$-momentum dual $P$ of $\sigma$ and,
by Complement \ref{comp1},
the projection to 
$E_P$ relative to the latter decomposition
defines the $\Phi$-momentum dual $\sigma$ of $P$.
By  Complement \ref{comp1},
the graph $\Gr_\sigma$ of $\sigma$ coincides with $E_P$.

Conversely, suppose
the graph 
$\Gr_\sigma$ of $\sigma$ coincides with the
$\GG$-invariant almost Dirac structure $E_P$ associated to $P$.
Then
$\Gr_\sigma \oplus _M F^\Phi$ 
and
$E_P \oplus_M\TT ^* M$
yield   $\GG$-invariant
Lagrangian splittings
of $\TT M \oplus _M \TT^* M$,
and the projection to 
$\Gr_\sigma$ relative to the former decomposition
defines  $P$  and
the projection to 
$E_P$ relative to the latter decomposition
defines $\sigma$.
By Complement \ref{comp2},
the $2$-tensor $P$
is the $\Phi$-momentum dual of $\sigma$ and,
by Complement \ref{comp1},
the $2$-form $\sigma$
is the $\Phi$-momentum dual of $P$.

Suppose that $\sigma$ and $P$ are $\Phi$-dual to each other.
Suppose that, furthermore,
$\sigma$ is $\Phi$-quasi closed,
that is, $d \sigma = \Phi^* \lambda_\GG$, cf. \eqref{qh1}.
Then $(M,\Gr_\sigma, \Phi^*\lambda_\GG)$
is a twisted Dirac manifold, cf. Subsection \ref{dsom}, 
and
\begin{equation*}
\Phi\colon (M,\Gr_\sigma, \Phi^*\lambda_\GG) \longrightarrow (\GG,E_\GG, \lambda_\GG)
\end{equation*}
is  a strong 
Dirac morphism.
By Theorem \ref{dvsq2},
the $2$-tensor
$P$ is a $\GG$-quasi Poisson structure.

In the same vein,
suppose that, furthermore, the $2$-tensor
$P$ on $M$ is $\GG$-quasi Poisson. 
By Theorem \ref{dvsq2}, the data 
$(M,\Gr_\sigma,\Phi^* \lambda_\GG)$
then constitute a twisted Dirac manifold, 
that is, the $2$-form $\sigma$ on $M$ satisfies the identity
$d\sigma =\Phi^* \lambda_\GG$,
cf. Subsection \ref{dsom}, i.e., 
the $2$-form $\sigma$ is $\Phi$-quasi closed.
\end{proof}

In  Theorem \ref{dvsq3}, when the group is trivial,
the statement comes down to a Dirac theoretic 
characterization of the non-degeneracy of a $2$-form on $M$
and of that of the corresponding skew-symmetric $2$-tensor on $M$
of the kind in
Proposition \ref{latref1}.

\subsubsection{Proof of Theorem {\rm \ref{existence}}}
Complement \ref{comp2} establishes the
existence of the $\Phi$-momentum dual $P$ of $\sigma$
and 
Complement \ref{comp1} that  of the $\Phi$-momentum dual $\sigma$ 
of $P$. Once the existence of $P$ and that of $\sigma$
is established, it is immediate that
\eqref{render1} characterizes
$P$ in terms of $\sigma$ and 
 that
\eqref{render2} characterizes
$\sigma$ in terms of $P$.
Furthermore,
when
$\sigma$ and $P$ are $\Phi$-momentum dual to each other,
by Theorem \ref{dvsq3},
the $2$-form $\sigma$ is $\Phi$-quasi closed if and only if
the skew-symmetric $2$-tensor $P$ is $\GG$-quasi Poisson.
This completes the proof of Theorem \ref{existence}.

\subsection{Double}
\label{double}
As before, we use the notation $\GG$ for a Lie group and $\gg$ for 
its Lie algebra.

\begin{prop}
Suppose the $\Ad$-invariant symmetric $2$-tensor $\oomega$ on $\gg$
is non-degenerate.
Then the (external)  Hamiltonian   quasi 
Poisson double of $(\GG,\oomega)$
is $(\GG \times \GG)$-quasi non-degenerate,
and the internally fused  Hamiltonian   quasi 
Poisson double
of $(\GG,\oomega)$
is $\GG$-quasi non-degenerate.
\end{prop}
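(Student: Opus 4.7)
The strategy is to invoke momentum duality: by Theorem \ref{existence}, in the non\nobreakdash-degenerate setting, quasi Poisson structures that admit a quasi Hamiltonian partner are automatically $\GG$-quasi non-degenerate. More precisely, by Proposition \ref{compar32}(1), once $\sigma$ and $P$ are known to be $\Phi$-momentum dual to each other, $P$ is $\GG$-quasi non-degenerate and $\sigma$ is $\Phi$-non-degenerate. Thus the plan is to identify the quasi Poisson doubles constructed in Theorem \ref{momfusG} and in Subsection \ref{internallyfused} as the momentum duals of the corresponding quasi Hamiltonian doubles from Proposition \ref{momfusGs} and Subsection \ref{comparex}, and to appeal to Proposition \ref{compar32}(1).

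First I would handle the external double. Starting from $(\GG^\times, \sigma^\times_\form, (\mult,\omult))$, which by Proposition \ref{momfusGs} is a genuine $(\GG \times \oGG)$-quasi Hamiltonian structure when $\form$ is non-degenerate, I would verify that $P^\times_\oomega$ defined in \eqref{Ptimes} is its $(\mult,\omult)$-momentum dual, i.e., that the adjoints satisfy
\[
P^{\times,\sharp}_\oomega \circ \sigma^{\times,\flat}_\form = \Id_{\TT \GG^\times} - \tfrac{1}{4} \rho_{(\mult,\omult)}.
\]
This is a calculation with the operators $L^1, R^1, L^2, R^2$ restricted to the decomposition \eqref{decomp1} of $\TT^2 \GG^\times$, exploiting commutativity of \eqref{momintomultvarr} and \eqref{unveil03}, the identification $\psi^\oomega = \psidot^{,-1}$, and the fact that the infinitesimal action \eqref{mapsto1c} combined with the projection to $\TT_{(\mult,\omult)}(\GG\times\oGG)$ reproduces $\rho_{(\mult,\omult)}$ up to the factor $\tfrac 14$. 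Once this identity is checked, Proposition \ref{compar32}(1) yields that $P^\times_\oomega$ is $(\GG\times\oGG)$-quasi non-degenerate, which is the first claim.

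For the internally fused double I would invoke Proposition \ref{6.3}: since $\sigma^\times_\form$ and $P^\times_\oomega$ are dual via $(\mult,\omult)$, the fused $2$-form
\[
\sigma_1 = \sigma^\times_\form - (\mult,\omult)^*\bigl(\tfrac 12\,\omega_1 \form \ovomega_2\bigr)
\]
and the fused bivector $P_1 = P^\times_\oomega - \chi_{\oomega,\GG^\times}$ are dual to each other via the momentum mapping $\Phi_1 = \mult\cdot\omult$ of \eqref{mom11}, both with respect to the diagonal $\GG$-action on $\GG^\times$. Combining with Proposition \ref{fusmoms}(1)(2) (and the non-degeneracy preservation (3), which guarantees $\sigma_1$ is $\Phi_1$-non-degenerate), a second application of Proposition \ref{compar32}(1) gives that $P_1$ is $\GG$-quasi non-degenerate, completing the proof.

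The main obstacle will be the momentum-duality identity for the external double: it is the only step that requires explicit manipulation of the decomposition of $\TT^2 \GG^\times$ and of the four wedge constituents $L^{1,-1}\wedge R^{2,-1}$, $R^{1,-1}\wedge L^{2,-1}$, $L^1 \wedge R^2$, $R^1 \wedge L^2$; everything else is formal transport along fusion via Proposition \ref{6.3}. An attractive alternative would be to route the proof through the Dirac-theoretic reformulation Theorem \ref{dvsq3}, in which case one would instead verify that the graph $\Gr_{\sigma^\times_\form}$ coincides with the almost Dirac structure $E_{P^\times_\oomega}$ associated to $P^\times_\oomega$ as in Theorem \ref{dvsq2}; this shifts the computation to checking a single Lagrangian splitting but rests on the same underlying identities.
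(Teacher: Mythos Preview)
Your proposal is correct and matches the paper's approach exactly: the paper's proof simply defers to Proposition~\ref{compar32}(1) together with Proposition~\ref{d.4}, and Proposition~\ref{d.4} is precisely the momentum-duality identity $P^{\times,\sharp}_\oomega \circ \sigma^{\times,\flat}_\form = \Id - \tfrac14 \rho_{(\mult,\omult)}$ you describe for the external double, followed by the fused statement obtained via Proposition~\ref{6.3}. The only superfluous ingredient in your write-up is the initial appeal to Theorem~\ref{existence}: once you establish momentum duality directly (as in Proposition~\ref{d.4}), Proposition~\ref{compar32}(1) alone yields the non-degeneracy, and Theorem~\ref{existence} is not needed.
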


\begin{proof}
Left to the reader.
In view of Proposition \ref{compar32} (1),
the claim is also a consequence of Proposition \ref{d.4}
below.
\end{proof}

While the following observation is a consequence of Theorem \ref{existence},
we give a proof that is independent of that theorem.
\begin{prop}
\label{d.4}
Suppose
the symmetric $\Ad$-invariant $2$-tensor 
 $\oomega$ in $\gg \otimes \gg$ arises from 
a non-degenerate  $\Ad$-invariant symmetric bilinear form $\,\form\,$ on $\gg$
as the image of
the corresponding $2$-tensor in $\gg^* \otimes \gg^*$
under the inverse of the  adjoint $\psidot \colon \gg \to \gg^*$
of $\,\form\,$.
The 
weakly
$(\GG \times \oGG)$-quasi closed $2$-form
$\sigma^\times$ relative to $\,\form\,$ on $\GG \times \GG$, 
cf. {\rm \eqref{sigmatimes}},
and the 
$(\GG \times \oGG)$-quasi
Poisson structure
$P^\times_\oomega$ relative to $\oomega^\times$
on $\GG \times \GG$,
cf. Theorem {\rm \ref{momfusG}},
are dual to each
other via the $(\GG \times \oGG)$-momentum mapping 
\begin{equation*}
(\mult,\omult)\colon \GG \times \GG \to \GG \times \oGG
\end{equation*}
for 
$\sigma^\times$
relative to $\,\form\,$ and for $P^\times_\oomega$ 
relative to  $\oomega^\times$.
The same statement holds for
 the
internally fused double
$(\GG \times \GG,P_1,\sigma_1,\Phi_1)$:
The 
weakly
$\Phi$-quasi closed $2$-form
$\sigma_1$ relative to $\,\form\,$ on $\GG \times \GG$, 
cf. Subsection {\rm \eqref{fusq}},
and the 
$\GG$-quasi
Poisson structure
$P_1$ relative to $\oomega$
on $\GG \times \GG$,
cf. Subsection {\rm \ref{internallyfused}},
are dual to each
other via the $\GG $-momentum mapping 
$\Phi_1\colon \GG \times \GG \to \GG$
for 
$\sigma_1$
relative to $\,\form\,$ and for $P_1$ 
relative to  $\oomega$.
\end{prop}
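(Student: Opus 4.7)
The plan is to verify the duality identity directly for the external double and then deduce the internally fused case from Proposition \ref{6.3}. For the external double the task is to check the identity \eqref{31}, namely
$P^{\times, \sharp}_\oomega \circ \sigma^{\times, \fflat} = \Id_{\TT \GG^\times} - \tfrac{1}{4}\rho_{(\mult, \omult)}$
as morphisms of vector bundles on $\GG^\times$. To this end I would trivialize $\TT \GG^\times \cong \GG^\times \times (\gg \oplus \gg)$ and $\TT^* \GG^\times \cong \GG^\times \times (\gg^* \oplus \gg^*)$ by left translation. In these trivializations, the explicit formulas \eqref{sigmatimes} and \eqref{Ptimes} exhibit $\sigma^{\times, \fflat}$ and $P^{\times, \sharp}_\oomega$ as $(2 \times 2)$-block operators with vanishing diagonal and off-diagonal blocks built from $\psi^{\form} \circ \Ad_{q_i^{\pm 1}}$ and $\Ad_{q_i^{\pm 1}} \circ \psi^{\oomega}$, respectively. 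Since $\psi^\oomega = (\psi^\form)^{-1}$ by the hypothesis relating $\oomega$ and $\form$, the composite $P^{\times, \sharp}_\oomega \circ \sigma^{\times, \fflat}$ simplifies significantly.

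Next I would compute $\rho_{(\mult, \omult)}$ in the same trivialization. This requires the standard formulas for $d \mult$ and $d \omult$ in left trivialization (the latter inheriting a sign from inversion via $\omult = \mult \circ (\inv \times \inv)$), followed by the trivialization-of-target maps $L^{-1}_{(\mult, \omult)} - R^{-1}_{(\mult, \omult)}$ and the fundamental vector field map \eqref{mapsto1c} of the $(\GG \times \oGG)$-action \eqref{act3}. This produces $\rho_{(\mult, \omult)}$ as a third $(2 \times 2)$-block operator whose entries are again linear combinations of the adjoints $\Ad_{q_i^{\pm 1}}$, and the desired identity reduces to a block-matrix check using $\psi^\oomega \circ \psi^\form = \Id_\gg$ together with the $\Ad$-equivariance relation $\psi^\form \circ \Ad_q = \Ad^*_{q^{-1}} \circ \psi^\form$. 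The internally fused case then follows immediately by applying Proposition \ref{6.3} with $(M, \sigma^\times, P^\times, \Phi^1, \Phi^2) = (\GG^\times, \sigma^\times_\form, P^\times_\oomega, \mult, \omult)$, identifying $\oGG$ with $\GG$ at the group level so that $(\mult, \omult) \colon \GG^\times \to \GG \times \GG$ fits the hypothesis: the external case supplies the duality assumption, and the conclusion yields that $\sigma_\fus = \sigma_1$ and $P_\fus = P_1$ are $\Phi_1$-dual via $\Phi_1 = \mult \cdot \omult$.

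The main obstacle I expect is bookkeeping: the block-matrix verification for the external double is algebraically intricate due to the sign changes induced by inversion in $\omult$ and the need to keep the contributions of $\GG^1$ and $\GG^2$ consistently separated throughout. A more conceptual alternative is to invoke Theorem \ref{dvsq3}: it suffices to show $\Gr_{\sigma^\times} = E_{P^\times_\oomega}$ as $\GG^\times$-invariant almost Dirac structures on $\GG^\times$, and by Theorems \ref{dvsq1} and \ref{dvsq2} both arise, via strong Dirac morphisms, from the Cartan--Dirac structure on $\GG \times \oGG$ pulled back by $(\mult, \omult)$. This alternative route avoids the explicit matrix arithmetic but essentially transfers the same bookkeeping into the comparison of the two Dirac-theoretic characterizations.
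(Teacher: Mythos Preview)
Your proposal is correct and follows essentially the same route as the paper: a direct verification of the duality identity \eqref{31} for the external double by explicit computation, with the internally fused case then following formally. The paper carries out the computation term by term in a chosen basis $e_1,\dots,e_d$ of $\gg$ rather than in your block-matrix language, but the content is the same; the paper's written proof also treats only the external case explicitly, so your appeal to Proposition~\ref{6.3} for the internally fused double makes explicit what the paper leaves implicit. Your Dirac-theoretic alternative via Theorem~\ref{dvsq3} is precisely what the paper deliberately avoids here, noting just before the statement that although the result follows from Theorem~\ref{existence}, an independent proof is given.
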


\begin{proof}
Let $e_1,\ldots, e_\ell, e_{\ell +1},\ldots, e_d$
be a basis of $\gg$
and, cf. \eqref{Ptimes},
write
\begin{align*}
\oomega &= \eeta^{j,k} e_j \otimes e_k
\\
P^\times_\oomega &= \tfrac 12 \left(L^1 \wedge R^2 + R^1 \wedge L^2 \right)(\oomega)  
= \tfrac 12 
\eeta^{j,k}\left (e^{1,L}_j \wedge  e^{2,R}_k + e^{1,R}_j \wedge  e^{2,L}_k\right).
\end{align*}
Let $\eta^1,\ldots, \eta^\ell, \eta^{\ell +1},\ldots, \eta^d$
be the dual  basis of $\gg^*$ and,
for $1 \leq j \leq d$,
 introduce the notation
$\omega^j = \eeta^j \circ \omega$
and
$\ovomega^j = \eeta^j \circ \ovomega$
for the components of $\omega$ and $\ovomega$ in the basis
$e_1,\ldots, e_d$ of $\gg$, so that
\begin{equation*}
\omega = \omega^j e_j \colon \TT \GG \to \gg,\quad 
\ovomega = \ovomega^j e_j\colon \TT \GG \to \gg.
\end{equation*}
In terms of this notation, for  $\alpha \in \Form^1(\GG^1 \times \GG^2)$,
\begin{align*}
P^{\times, \sharp}_\oomega(\alpha)&=\tfrac 12 \eeta^{j,k}
\left ( 
\langle e^{1,L}_j,\alpha\rangle e^{2,R}_k 
-
\langle  e^{2,R}_k,\alpha\rangle e^{1,L}_j
+ 
\langle e^{1,R}_j, \alpha\rangle e^{2,L}_k
-
\langle e^{2,L}_k, \alpha\rangle e^{1,R}_j
\right).
\end{align*}
Next, recall
\begin{align*}
\sigma^\times &=-\tfrac 12
\left( \omega_1 \form \ovomega_2 + \ovomega_1 \form \omega_2
\right)
=
-\tfrac 12
\left(
(\omega_1^j e_j) \form(\ovomega_2^k e_k)
+(\ovomega_1^j e_j) \form(\omega_2^k e_k)
\right)
\\
&=
-\tfrac 12
\eeta_{j.k}
\left(
\omega_1^j \wedge \ovomega_2^k +\ovomega_1^j \wedge \omega_2^k
\right).
\end{align*}
Let $X \in \Vect(\GG^1 \times \GG^2)$. Then
\begin{align*}
\sigma^{\times,\flat}(X)&=-\tfrac 12
\eeta_{j.k}
\left(
\langle \omega_1^j,X\rangle\ovomega_2^k 
-
\langle \ovomega_2^k ,X\rangle\omega_1^j
+
\langle\ovomega_1^j,X\rangle \omega_2^k
-
\langle\omega_2^k,X\rangle \ovomega_1^j
\right),\ 
\\
&=
-\tfrac 12
\eeta_{u.v}
\left(
\langle \omega_1^u,X\rangle\ovomega_2^v 
-
\langle \ovomega_2^v ,X\rangle\omega_1^u
+
\langle\ovomega_1^u,X\rangle \omega_2^v
-
\langle\omega_2^v,X\rangle \ovomega_1^u
\right),
\\
&=
\tfrac 12
\eeta_{u.v}
\left(
 \langle \ovomega_2^v ,X\rangle\omega_1^u
-\langle \omega_1^u,X\rangle\ovomega_2^v 
+\langle\omega_2^v,X\rangle \ovomega_1^u
-\langle\ovomega_1^u,X\rangle \omega_2^v
\right).
\end{align*}
A calculation yields the following:
\begin{align*}
P^{\times, \sharp}_\oomega(\sigma^{\times,\flat}(X))
&=\tfrac 14 \eeta^{j,k} \eeta_{u,v}
\left( 
\begin{cases}
\phantom{-}
\delta_j^u \langle \ovomega_2^v ,X\rangle
e^{2,R}_k 
\\
+
\delta_k^v \langle \omega_1^u,X\rangle 
e^{1,L}_j 
\\
+
\delta_j^u\langle\omega_2^v,X\rangle
e^{2,L}_k 
\\
+
\delta_k^v \langle\ovomega_1^u,X\rangle
e^{1,R}_j
\end{cases}
+
\begin{cases}
\phantom{-}
\langle e^{1,L}_j, \ovomega_1^u\rangle\langle\omega_2^v,X\rangle
e^{2,R}_k 
\\
+
\langle e^{2,R}_k, \omega_2^v\rangle \langle\ovomega_1^u,X\rangle
e^{1,L}_j 
\\
+
\langle e^{1,R}_j,\omega_1^u\rangle  \langle \ovomega_2^v ,X\rangle
e^{2,L}_k 
\\
+
\langle e^{2,L}_k,\ovomega_2^v \rangle  \langle \omega_1^u,X\rangle
e^{1,R}_j
\end{cases}
\right)
\end{align*}

\begin{align*}
\tfrac 14 \eeta^{j,k} \eeta_{u,v}
\begin{cases}
\phantom{-}
\delta_j^u \langle \ovomega_2^v ,X\rangle
e^{2,R}_k 
\\
+
\delta_k^v \langle \omega_1^u,X\rangle 
e^{1,L}_j 
\\
+
\delta_j^u\langle\omega_2^v,X\rangle
e^{2,L}_k 
\\
+
\delta_k^v \langle\ovomega_1^u,X\rangle
e^{1,R}_j
\end{cases}
&= \tfrac 12 X
\end{align*}

\begin{align*}
\tfrac 14 \eeta^{j,k} \eeta_{u,v}
\begin{cases}
\phantom{-}
\langle e^{1,L}_j, \ovomega_1^u\rangle\langle\omega_2^v,X\rangle
e^{2,R}_k 
\\
+
\langle e^{2,R}_k, \omega_2^v\rangle \langle\ovomega_1^u,X\rangle
e^{1,L}_j 
\\
+
\langle e^{1,R}_j,\omega_1^u\rangle  \langle \ovomega_2^v ,X\rangle
e^{2,L}_k 
\\
+
\langle e^{2,L}_k,\ovomega_2^v \rangle  \langle \omega_1^u,X\rangle
e^{1,R}_j
\end{cases}
&= \tfrac 12 X - \tfrac 14 \rho_{\mult,\omult}(X) \qedhere
\end{align*}
\end{proof}

\subsection{Conjugacy classes}
\label{conjclasses}
As before, $\GG$ denotes a Lie group and $\gg$ its Lie algebra.

\begin{prop}
Suppose the $\Ad$-invariant symmetric $2$-tensor $\oomega$ on $\gg$
is non-degenerate.
Then the $\GG$-quasi Poisson structure $P_{\CcC}$
relative to $\oomega$ on a conjugacy class $\CcC$ in $\GG$
is $\GG$-quasi non-degenerate.
\end{prop}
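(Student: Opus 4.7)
The plan is to reduce the claim to an elementary observation about transitivity of the conjugation action. By definition of a conjugacy class, $\GG$ acts transitively on $\CcC$ by conjugation, so the fundamental vector field map $\fund_\CcC \colon \CcC \times \gg \to \TT \CcC$ is a pointwise surjective morphism of vector bundles on $\CcC$: at a point $q \in \CcC$, its fiber is the derivative at the identity of the orbit map $\GG \to \CcC$, $x \mapsto xqx^{-1}$, whose image is the whole of $\TT_q \CcC$.

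Granting this, the morphism
\begin{equation*}
P_{\CcC}^\sharp + \fund_\CcC \colon \TT^* \CcC \oplus_\CcC \CcC \times \gg \longrightarrow \TT \CcC
\end{equation*}
of vector bundles on $\CcC$ will be an epimorphism already via its second summand $\fund_\CcC$, regardless of the rank of $P_\CcC^\sharp$. By the definition of $\GG$-quasi non-degeneracy recalled in Subsection \ref{gencase}, cf. \eqref{surj21}, this is precisely the desired conclusion. The argument involves no genuine obstacle; the only step is to unwind the definitions, and in particular the non-degeneracy assumption on $\oomega$ is not used in this direct verification.

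It is illuminating, however, to indicate where the hypothesis on $\oomega$ is natural, via an alternate route through Theorem \ref{existence}. Proposition \ref{prop1c} equips $\CcC$ with the $\GG$-invariant $2$-form $\tau_\CcC$ and the inclusion $\iota \colon \CcC \hookrightarrow \GG$ as a $\GG$-momentum mapping relative to $\,\form\,$; the $\iota$-non-degeneracy of $\tau_\CcC$ in the sense of \eqref{inj1} is automatic because the factored derivative $(d\iota)_\CcC \colon \TT \CcC \to \TT_\iota \GG$ is already a monomorphism. When $\oomega$ is non-degenerate, Theorem \ref{existence} then produces a $\GG$-quasi non-degenerate $\GG$-quasi Poisson structure on $\CcC$ as the $\iota$-momentum dual of $\tau_\CcC$, which by the uniqueness of the $\iota$-momentum dual (Proposition \ref{unique}) coincides with $P_\CcC$. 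This alternative viewpoint recovers the same conclusion and explains why the non-degeneracy of $\oomega$ is the natural hypothesis to impose even though the direct transitivity argument does without it.
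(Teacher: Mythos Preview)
Your direct argument via transitivity of the conjugation action is correct and is precisely what the paper intends by ``left to the reader'': since $\fund_\CcC$ is already fiberwise surjective, the epimorphism condition \eqref{surj21} holds regardless of $P_\CcC^\sharp$, and indeed regardless of whether $\oomega$ is non-degenerate. Your remark that the hypothesis is superfluous for this route is on point.

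As to the alternate route, the paper's own alternate is slightly different from yours: it invokes Proposition~\ref{compar32}(1) together with Proposition~\ref{d.5}, the latter being a direct calculation that $\tau_\CcC$ and $P_\CcC$ satisfy the $\iota$-duality identity \eqref{31}. Your version via Theorem~\ref{existence} has a small gap: Theorem~\ref{existence} manufactures \emph{some} $\GG$-quasi non-degenerate $2$-tensor as the $\iota$-momentum dual of $\tau_\CcC$, but to conclude that this tensor coincides with $P_\CcC$ you must know that $P_\CcC$ itself is $\iota$-momentum dual to $\tau_\CcC$, and Proposition~\ref{unique} only gives uniqueness once that duality is already established. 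Establishing it is exactly the content of Proposition~\ref{d.5}, so your alternate route implicitly needs the same computation the paper performs.
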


\begin{proof}
Left to the reader.
In view of Proposition \ref{compar32} (1),
the claim is also a consequence of Proposition \ref{d.5}
below.
\end{proof}

Again  the following observation is a consequence of Theorem \ref{existence},
but we give a proof that is independent of that theorem.
\begin{prop}
\label{d.5}
Suppose
the symmetric $\Ad$-invariant $2$-tensor 
 $\oomega$ in $\gg \otimes \gg$ arises from 
a non-degenerate  $\Ad$-invariant symmetric bilinear form $\,\form\,$ on $\gg$
as the image of
the corresponding $2$-tensor in $\gg^* \otimes \gg^*$
under the inverse of the  adjoint $\psidot \colon \gg \to \gg^*$
of $\,\form\,$ and let ${\CcC}$ be a conjugacy class in $\GG$.
Then the 
$\iota$-quasi closed $2$-form
$\tau_\CcC$ relative to $\,\form\,$ on ${\CcC}$, 
cf. Propositiion {\rm \ref{prop1c}},
and the 
$\GG$-quasi
Poisson structure
$P_{\CcC}$ relative to $\oomega$ on ${\CcC}$,
cf. Proposition {\rm \ref{momG}},
are dual to each
other via the $\GG$-momentum mapping 
$\iota \colon {\CcC} \to \GG$
for 
$\tau_\CcC$
relative to $\,\form\,$ and for $P_{\CcC}$ 
relative to  $\oomega$.
\end{prop}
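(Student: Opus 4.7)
The plan is to verify the duality identity $P_\CcC^\sharp \circ \tau_\CcC^\fflat = \Id_{\TT \CcC} - \tfrac 14 \rho_\iota$, cf.~\eqref{31}, by a direct pointwise calculation at an arbitrary $g \in \CcC$, in the spirit of the proof of Proposition \ref{d.4}. Because only a single copy of $\GG$ is involved, the computation will be considerably lighter than in \ref{d.4}, but the underlying mechanism is the same: one exploits the $\Ad$-invariance of $\,\form\,$ and the identity $\psi^\oomega = \psidot^{-1}$ coming from the hypothesis relating $\oomega$ and $\,\form\,$.

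At the point $g \in \CcC$ the fundamental vector field map $\fund_\CcC\colon \gg \to \TT_g \CcC$ sends $X$ to $X^L_g - X^R_g$, with kernel the centralizer subalgebra $\zz_g \subseteq \gg$. The trivializations $L_\iota, R_\iota$ of $\TT_\iota \GG$ on $\CcC$ satisfy, for a tangent vector $Y = X^L_g - X^R_g \in \TT_g \CcC$, the formulas $L_\iota^{-1}(Y) = X - \Ad_{g^{-1}}(X)$ and $R_\iota^{-1}(Y) = \Ad_g(X) - X$. Diagram \eqref{momintvar} characterizes $\tau_\CcC^\fflat$ via
\begin{equation*}
\tau_\CcC\bigl(X^L_g - X^R_g,\, Y\bigr) = \tfrac 12\, X \form \bigl(L_\iota^{-1} + R_\iota^{-1}\bigr)(Y),\qquad X \in \gg,\ Y \in \TT_g \CcC,
\end{equation*}
while Proposition \ref{momG} together with formula \eqref{tB1} and diagram \eqref{CD03} identifies $P_\CcC^\sharp$ as the restriction to $\CcC$ of the composite $\tfrac 12 (L - R) \circ (\Id \times \psi^\oomega)\circ (L^* + R^*)$.

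The heart of the argument is to substitute the formula for $\tau_\CcC^\fflat$ into $P_\CcC^\sharp$. For $Y = Z^L_g - Z^R_g$ with $Z \in \gg$, one finds that $\tau_\CcC^\fflat(Y)$ pairs with $X^L_g - X^R_g$ to $\tfrac 12\, X \form \bigl((\Ad_g - \Ad_{g^{-1}})Z\bigr)$. Feeding this through $P_\GG^\sharp$, converting the $\psi^\oomega$-contraction to an evaluation through $\psi^\oomega = \psidot^{-1}$, and invoking the $\Ad$-invariance of $\,\form\,$ to recombine the $\Ad_g$ and $\Ad_{g^{-1}}$ contributions, one obtains after collecting terms
\begin{equation*}
P_\CcC^\sharp \circ \tau_\CcC^\fflat (Y) = Y - \tfrac 14 \fund_\CcC\bigl((2 - \Ad_g - \Ad_{g^{-1}})Z\bigr),
\end{equation*}
and the second summand is precisely $-\tfrac 14 \rho_\iota(Y)$ in view of \eqref{3rhowrite} and the identity $(L_\iota^{-1} - R_\iota^{-1})(Y) = (2 - \Ad_g - \Ad_{g^{-1}})Z$.

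The main obstacle is bookkeeping: both the representative $Z$ of $Y = Z^L_g - Z^R_g$ and the lift of $\tau_\CcC^\fflat(Y)$ to $\TT_g^* \GG$ required by the formula for $P_\GG^\sharp$ are only defined modulo the centralizer data, so one must check that the intermediate expressions descend to well-defined maps on $\TT_g \CcC$ and $\TT_g^* \CcC$. The $\Ad$-invariance of $\oomega$ (equivalently, of $\,\form\,$) ensures that elements of $\zz_g$, and covectors vanishing on $\TT_g \CcC \subseteq \TT_g \GG$, contribute trivially at the relevant stage, so the ambiguity cancels. This parallels, in a substantially simpler setting, the analogous bookkeeping in the proof of Proposition \ref{d.4}.
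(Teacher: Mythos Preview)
Your approach is correct and essentially the same as the paper's: a direct pointwise verification of the $\Phi$-duality identity using the explicit formulas for $\tau_\CcC$, $P_\GG$, and $\rho_\iota$. The only difference is that you verify $P_\CcC^\sharp \circ \tau_\CcC^\fflat = \Id - \tfrac14 \rho_\iota$ (identity \eqref{31}), whereas the paper verifies the dual identity $\tau_\CcC^\fflat \circ P_\CcC^\sharp = \Id - \tfrac14 \rho_\iota^*$ (identity \eqref{31d}); by Proposition \ref{associated1} these are equivalent, and the paper's order has the mild advantage of sidestepping the lift-to-$\TT_g^*\GG$ bookkeeping issue you flag, since one can start from an arbitrary $\alpha_q \in \TT_q^*\GG$ rather than having to extend $\tau_\CcC^\fflat(Y)$.
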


\begin{proof}
Let $q$ be a point of $\CcC$ and $X,Y \in \gg$.
The morphism $\rho_\iota \colon \TT \CcC \to \TT \CcC$
of vector bundles on $\CcC$, cf. \eqref{3rhowrite},
sends the vector
$Xq-qX \in \TT_q \CcC$
to the vector
\begin{align*}
{}&2(Xq-qX) 
- \left( (\Ad_{q^{-1}} X)q-q\Ad_{q^{-1}}X \right)
- \left( (\Ad_q X)q-q\Ad_qX \right)
\\
&=2(Xq-qX) 
- \left( (\Ad_{q^{-1}} X)q-q\Ad_{q^{-1}}X \right)
- \left( (\Ad_q X)q-q\Ad_qX \right)
\end{align*}
in $\TT_q \CcC$. Hence
\begin{align*}
(4\Id - \rho_\iota)(Yq-qY)&=2 (Yq-qY)
+\left( (\Ad_{q^{-1}} Y)q-q\Ad_{q^{-1}}Y \right)
+ \left( (\Ad_q Y)q-q\Ad_qY \right)
\\
&=
2 (Yq-qY)
+ q^{-1} Yq^2- Yq
+ qY -q^2Y q^{-1}
\\
&=
Yq-qY+ q^{-1} Yq^2 -q^2Y q^{-1} .
\end{align*}

By
construction,
cf.  \eqref{PG} and \eqref{PG2}
as well as
Proposition \ref{momG},
\begin{align*}
2P_G&=   (R \otimes L - L \otimes R)(\oomega)
\\
2 P_{\CcC}&=  (R \otimes L - L \otimes R)(\oomega).
\end{align*}
Let $\alpha_q, \beta_q \in \TT^*_q \GG$. Then
\begin{align*}
2P_G(\alpha_q,\beta_q)&
=\langle \psi^\oomega(\alpha_q \circ R_q),\beta_q \circ L_q\rangle
-\langle \psi^\oomega(\alpha_q \circ L_q),\beta_q \circ R_q\rangle .
\end{align*}
The $\GG$-momentum property of $\Id \colon \GG \to \GG$ for $P_\GG$
relative to $\oomega$, equivalently diagram \eqref{CD03}
being commutative, implies
\begin{align*}
2P_G^\sharp(\alpha_q)&=
q(\Psi^\oomega(\alpha_q\circ (L_q + R_q)))
  -(\Psi^\oomega(\alpha_q\circ (L_q + R_q)))q .
\end{align*}
By construction, cf.  Proposition \ref{prop1c},
\begin{align*}
2 (\tau^\flat_\CcC(Xq-qX))(Yq-qY)=2 \tau_\CcC(Xq-qX,Yq-qY)&=X \form \Ad_q Y - Y \form \Ad_q X
\end{align*}
Let $X=\psi^\oomega(\alpha_q\circ (L_q + R_q))$, so that $2P_\GG^\sharp(\alpha_q)=qX-Xq$.
Then
\begin{align*}
4\tau^\flat_\CcC(P_\GG^\sharp(\alpha_q))(Yq-qY)&=
-2\tau_\CcC(Xq-qX,Yq-qY)
\\
&=
-(\psi^\oomega(\alpha_q\circ (L_q + R_q)))\form\Ad_q Y
+
(\psi^\oomega(\alpha_q\circ (L_q + R_q)))\form\Ad_{q^{-1}} Y
\\
&=
-(\alpha_q\circ (L_q + R_q))(\Ad_q Y)
+
(\alpha_q\circ (L_q + R_q))(\Ad_{q^{-1}} Y)
\\
&=
\alpha_q(-q^2 Yq^{-1} -qY +Yq +q^{-1} Y q^2)
\\
&=
\alpha_q(Yq -qY +q^{-1} Y q^2-q^2 Yq^{-1})
\\
&=
\alpha_q(
2 (Yq-qY)
+ q^{-1} Yq^2- Yq
+ qY -q^2Y q^{-1})
\\
&=
\alpha_q\left((4 \Id - \rho_\iota)(Yq-qY)\right).
\end{align*}
Consequently
\begin{equation*}
4\tau^\flat_\CcC \circ P_\GG^\sharp =4 \Id - \rho_\iota^*
\colon \TT^* \CcC \to \TT^* \CcC,
\end{equation*}
that is, \eqref{31d} holds for $(\CcC,\tau_\CcC, P_\CcC,\iota)$,
whence
$\tau_\CcC$ and $ P_\CcC$ are dual to each other via $\iota$
as asserted.
\end{proof}

\section{Moduli spaces over a Riemann surface revisited}
\label{msrrev}

As before, $\GG$ denotes a Lie group and $\oomega$ an $\Ad$-invariant 
symmetric $2$-tensor in the tensor quare $\qq \otimes \gg$ of
 the Lie algebra $\gg$ of $\GG$.
We recall 
from Subsection \ref{internallyfused}
that the internally fused double $(\GG \times \GG, P_1,\Phi_1)$
is a $\GG$-quasi Poisson manifold.
Successively fusing $\ell \geq 2$ copies of 
$(\GG \times \GG, P_1,\Phi_1)$ of $\GG$ yields a $\GG$-quasi Poisson 
structure $P_\ell$ on $\GG^{2 \ell}$ relative to $\oomega$ and a
$\GG$-momentum mapping $\Phi_\ell \colon \GG^{2 \ell} \to \GG$ for $P_\ell$
relative to $\oomega$. Including in the fusion process $n\geq 1$ 
conjugacy classes 
$(\CcC_1,P_{\CcC_1}, \iota_1),\ldots, (\CcC_n,P_{\CcC_n}, \iota_n)$ in 
$\GG$, cf. Proposition \ref{momG}, we obtain a
$\GG$-quasi Poisson structure $P_{\ell,n}$ on 
$\GG^{2 \ell} \times \CcC_1 \times \ldots \times \CcC_n$ relative to $\oomega$  and a
$\GG$-momentum mapping  
\begin{equation}
\Phi_{\ell,n} \colon \GG^{2 \ell}\times \CcC_1 \times \ldots \times \CcC_n 
\longrightarrow \GG
\label{momelln}
\end{equation}
for $P_{\ell,n}$ relative to $\oomega$.
Including, instead,
in the fusion process $n\geq 1$ copies of 
$(\GG,P_\GG, \Id)$, cf. Proposition \ref{momG}, we obtain a
$\GG$-quasi Poisson structure ${}^\GG P_{\ell,n}$ on 
$\GG^{2 \ell} \times \GG^n$ relative to $\oomega$  and a
$\GG$-momentum mapping  
\begin{equation}
{}^\GG\Phi_{\ell,n} \colon \GG^{2 \ell}\times \GG^n 
\longrightarrow \GG
\label{momellnG}
\end{equation}
for ${}^\GG P_{\ell,n}$ relative to $\oomega$.
In the same vein, successively fusing $n \geq 2$  
conjugacy classes 
$(\CcC_1,P_{\CcC_1}, \iota_1),\ldots, (\CcC_n,P_{\CcC_n}, \iota_n)$ 
in 
$\GG$ yields
 a
$\GG$-quasi Poisson structure $P_{0,n}$ on 
$\CcC_1 \times \ldots \times \CcC_n$ relative to $\oomega$  and a
$\GG$-momentum mapping  
\begin{equation}
\Phi_{0,n} \colon  \CcC_1 \times \ldots \times \CcC_n 
\longrightarrow \GG
\label{momellnn}
\end{equation}
for $P_{0,n}$ relative to $\oomega$.

Let $\pi$ be the fundamental group of a compact, connected, and oriented
(real) topological surface of genus $\ell\geq 0$ with 
$n \geq 0$ boundary circles.
Recall the presentation \eqref{standpre2}
of $\pi$  in Section \ref{reps} and, 
as there, suppose $n \geq 3$ when $\ell = 0$.
Consider the twisted representation spaces of the kind
$\mathrm{Rep}_X(\Gamma,\GG)$
and
$\mathrm{Rep}(\pi,\GG)_{\mathbf C}$
explained there,
cf.  as well  Subsection \ref{recognition} and Remark \ref{qhreduction} above. 
Recall that 
the $\GG$-quasi Poisson structures yield ordinary
Poisson structures on 
$\AA[\GG^{2 \ell}\times \CcC_1 \times \ldots \times \CcC_n]^\GG$
and on 
$\AA[\GG^{2 \ell}\times \GG^n]^\GG$, cf. Proposition \ref{prop1}.
We denote by
 $\Rep(\pi,\GG)$ the quotient of $\Hom(\pi, \GG)$ by $\GG$, the quotient
being suitably defined when $\GG$ is not compact, e.g., as 
a categorical (analytic or algebraic as the case may be) quotient;
see also Theorem \ref{qprt}.
For $n=0$, the space $\Rep(\pi,\GG)$ comes down to
a space of the kind $\mathrm{Rep}_X(\Gamma,\GG)$ for $X=0$.
The following is an immediate consequence of Theorem \ref{qprt};
it applies
to the analytic as well as  to the algebraic case.

\begin{prop}
\label{ms2}

\begin{enumerate}
\item
Applying the quasi Poisson reduction procedure in Theorem {\rm \ref{qprt}}
to the Hamiltonian $\GG$-quasi Poisson manifold
$(\GG^{2 \ell},P_\ell, \Phi_\ell)$ relative to $\oomega$ with respect to a 
point $z$ of the center of $\GG$ which lies in the image of $\Phi_\ell$
yields a Poisson algebra 
$(\AA[\mathrm{Rep}_X(\Gamma,\GG)],\pbra)$
of functions on a twisted representation space
of the kind $\mathrm{Rep}_X(\Gamma,\GG)$. 
\item
The canonical algebra map 
$
\AA[\GG^{2 \ell}]^\GG\to
\AA[\mathrm{Rep}_X(\Gamma,\GG)]
$
is compatible with the Poisson structures.

\item
Applying that quasi Poisson reduction 
procedure to the Hamiltonian $\GG$-quasi Poisson manifold
$(\GG^{2 \ell} \times \CcC_1 \times \ldots \times \CcC_n,P_{\ell,n}, 
\Phi_{\ell,n})$ relative to $\oomega$ with respect to  $e\in \GG $
yields a Poisson algebra 
$(\AA[\mathrm{Rep}(\pi,\GG)_{\mathbf C}],\pbra)$
of functions on a representation space of the kind
$\mathrm{Rep}(\pi,\GG)_{\mathbf C}$.
\item
The canonical algebra map 
$
\AA[\GG^{2 \ell}\times \CcC_1 \times \ldots \times \CcC_n]^\GG\to
\AA[\mathrm{Rep}(\pi,\GG)_{\mathbf C}]
$
is compatible with the Poisson structures.

\item
Applying that quasi Poisson reduction 
procedure to the Hamiltonian $\GG$-quasi Poisson manifold
$(\GG^{2 \ell} \times \GG^n,{}^\GG P_{\ell,n}, 
{}^\GG \Phi_{\ell,n})$ relative to $\oomega$ with respect to  $e\in \GG $
yields a Poisson algebra 
$(\AA[\mathrm{Rep}(\pi,\GG)],\pbra)$
of functions on a representation space of the kind
$\mathrm{Rep}(\pi,\GG)$.
\item
The canonical algebra map 
$
\AA[\GG^{2 \ell}\times \GG^n]^\GG\to
\AA[\mathrm{Rep}(\pi,\GG)]
$
is compatible with the Poisson structures.
\item
The canonical algebra map 
$
\AA[\mathrm{Rep}(\pi,\GG)]
\to \AA[\mathrm{Rep}(\pi,\GG)_{\mathbf C}]
$
by restriction
is compatible with the Poisson structures. 

\item
The canonical algebra map 
$
\AA[\GG^{2 \ell}\times \GG^n]^\GG\to
\AA[\GG^{2 \ell}\times \CcC_1 \times \ldots \times \CcC_n]^\GG
$
is compatible with the Poisson structures. \qed

\end{enumerate}

\end{prop}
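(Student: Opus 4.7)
The plan is to handle the eight claims in three batches. Statements (1), (3), and (5) — the existence of Poisson brackets on the invariant coordinate rings of the reduced spaces — follow immediately from Theorem \ref{qprt}(2) applied to the Hamiltonian $\GG$-quasi Poisson manifolds $(\GG^{2\ell},P_\ell,\Phi_\ell)$, $(\GG^{2\ell}\times \CcC_1\times\ldots\times \CcC_n, P_{\ell,n},\Phi_{\ell,n})$, and $(\GG^{2\ell}\times \GG^n, {}^\GG P_{\ell,n}, {}^\GG\Phi_{\ell,n})$, reduced at a central value $z=\exp(X)$ in the image of $\Phi_\ell$, at $e\in\GG$, and at $e\in\GG$, respectively. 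The identification of the reduced coordinate rings with $\AA[\mathrm{Rep}_X(\Gamma,\GG)]$, $\AA[\mathrm{Rep}(\pi,\GG)_{\mathbf C}]$, and $\AA[\mathrm{Rep}(\pi,\GG)]$ is the content of Section \ref{reps}.

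Statements (2), (4), and (6) are built into the construction itself: Theorem \ref{qprt}(2) exhibits $I_{\{z\}}$ (resp.\ $I_{\{e\}}$) as a Poisson ideal of $\AA[M]^\GG$, and the canonical quotient algebra map $\AA[M]^\GG \twoheadrightarrow \AA[M]^\GG/I$ is then by definition a homomorphism of Poisson algebras.

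The substantive content lies in (7) and (8), which concern \emph{restriction} maps. For (8) the approach is to compare the fusion bivectors on the ambient space $\GG^{2\ell}\times \GG^n$ and on the submanifold $\GG^{2\ell}\times \CcC_1 \times\ldots\times \CcC_n$. By Proposition \ref{momG} the $\GG$-quasi Poisson bivector $P_\GG$ is tangent to each conjugacy class $\CcC_j$ and restricts there to $P_{\CcC_j}$; moreover the momentum mapping $\Id\colon\GG\to \GG$ restricts on $\CcC_j$ to the inclusion $\iota_j$. Unwinding the definition \eqref{Pfus} of the fused bivector and noting that the correction term $\chi_{\oomega,M}$ is manufactured from the fundamental vector fields of the $\GG$-action, which are automatically tangent to the $\GG$-stable subvariety $\GG^{2\ell}\times \CcC_1\times\ldots\times \CcC_n$, one obtains that ${}^\GG P_{\ell,n}$ is tangent to that subvariety and restricts there to $P_{\ell,n}$. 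Consequently the inclusion pulls back $\GG$-invariant functions to $\GG$-invariant functions and the two brackets \eqref{pb2} agree on pulled back pairs; this is precisely (8). Statement (7) then follows formally: the commutative square relating (2), (4), (6), and (8) together with the surjectivity of the canonical maps $\AA[\GG^{2\ell}\times \GG^n]^\GG \twoheadrightarrow \AA[\mathrm{Rep}(\pi,\GG)]$ and $\AA[\GG^{2\ell}\times \CcC_1\times\ldots\times \CcC_n]^\GG \twoheadrightarrow \AA[\mathrm{Rep}(\pi,\GG)_{\mathbf C}]$ forces the restriction $\AA[\mathrm{Rep}(\pi,\GG)]\to \AA[\mathrm{Rep}(\pi,\GG)_{\mathbf C}]$ to be a Poisson map.

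The main obstacle is the bookkeeping required to verify (8) precisely, since both ${}^\GG P_{\ell,n}$ and $P_{\ell,n}$ are constructed by iterated fusion and one must track how the respective momentum mappings and correction terms interact with the inclusion $\CcC_j \hookrightarrow \GG$ at each fusion step. An induction on the number of fusion operations, using Theorem \ref{fusmom} at each stage together with the tangency observation above, should reduce the issue to the single-copy comparison of $(\GG,P_\GG,\Id)$ with $(\CcC_j,P_{\CcC_j},\iota_j)$, which is the content of Proposition \ref{momG}.
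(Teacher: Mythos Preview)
Your proposal is correct and follows the paper's approach: the paper simply declares the entire proposition an immediate consequence of Theorem~\ref{qprt} and ends with a \qed, without further argument. For (7) and (8) you supply the detail the paper leaves implicit---that ${}^\GG P_{\ell,n}$ is tangent to $\GG^{2\ell}\times\CcC_1\times\ldots\times\CcC_n$ and restricts there to $P_{\ell,n}$, via Proposition~\ref{momG} and the naturality of fusion---and your inductive reduction to the single-copy comparison $(\GG,P_\GG,\Id)$ versus $(\CcC_j,P_{\CcC_j},\iota_j)$ is exactly right.
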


\begin{rema}
{\rm
Maintain the circumstances of
Proposition \ref{ms2} and suppose 
the symmetric $\Ad$-invariant $2$-tensor  $\oomega$ in 
$\gg \otimes \gg$ is non-degenerate, i.e.,  arises from 
an $\Ad$-invariant symmetric bilinear form
on $\gg$.
Then the
various 
$\GG$-quasi Poisson structures under discussion
in Proposition \ref{ms2} are 
$\GG$-quasi non-degenerate.
One can think of
the spaces of the kind
$\mathrm{Rep}_X(\Gamma,\GG)$ 
as symplectic leaves in the Poisson variety
$\GG^{2 \ell}/\GG$
and of those of the kind
$\mathrm{Rep}(\pi,\GG)_{\mathbf C}$
 as symplectic leaves in 
$\mathrm{Rep}(\pi,\GG)$
and in
a Poisson variety of the kind
\begin{equation}
\left(\GG^{2 \ell}\times \CcC_1 \times \ldots \times \CcC_n\right)/\GG
\subseteq
\left(\GG^{2 \ell}\times \GG^n\right)/\GG .
\end{equation}
One can render this observation precise
in terms of suitable regularity assumption and/or
suitable smooth open submanifolds of
$\mathrm{Rep}_X(\Gamma,\GG)$ and
$\mathrm{Rep}(\pi,\GG)_{\mathbf C}$, etc.
Proposition \ref{ms2}  (7)
includes a description of 
the variation of the Poisson structure
in the transverse directions when the conjugacy classes
$\mathbf C$ are allowed to move, cf. \cite{MR1815112}.
This answers in particular a question raised at the bottom of
p.~271 of \cite{MR1815112}.
One can also view
Proposition \ref{ms2}  (2)
 as describing 
the variation of the Poisson structure
in the transverse directions when the parameter $X$
is  allowed to move
but the union in $\GG^{2 \ell}/\GG$ of the spaces of the kind
$\mathrm{Rep}_X(\Gamma,\GG)$
is disconnected in the classical topology.

}
\end{rema}

Write the bracket \eqref{pb2} on $\GG$-invariant admissible $\bK$-valued
functions on $\GG^{2\ell}\times \CcC_1 \times \ldots \times \CcC_n$
which the bivector $P_{\ell,n}$ determines as $\pbra_{\ell,n}$ such that
$\{f,h\}_{\ell,n} =\langle P_{\ell,n}, d f \wedge d h\rangle$, for two 
functions $f$ and $h$, and interpret the notation $\pbra_{\ell,0}$
as the bracket $\pbra_{\ell}$  relative to $P_\ell$.

\begin{prop}
\label{alternate}
Let $\,\form\,$ be a non-degenerate 
$\Ad$-invariant symmetric bilinear form on $\gg$,
and suppose the symmetric $\Ad$-invariant $2$-tensor  $\oomega$ in 
$\gg \otimes \gg$ arises from 
$\,\form\,$ on $\gg$ as the image of the corresponding 
$2$-tensor in $\gg^* \otimes \gg^*$ under the inverse of the adjoint 
$\psidot \colon \gg \to \gg^*$ of $\,\form\,$.
Then the Hamiltonian $\GG$-quasi Poisson and weakly $\GG$-quasi 
Hamiltonian structures
$(P_{\ell,n},\Phi_{\ell,n})$
and
$(\sigma_{\ell,n},\Phi_{\ell,n})$
are $\Phi_{\ell,n}$-momentum dual to each other. Hence,
by Proposition {\rm \ref{compar32} (1)},
they are necessarily non-degenerate in the quasi sense.
\end{prop}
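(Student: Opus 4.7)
My plan is to show that momentum duality is preserved under the fusion construction that defines $(P_{\ell,n}, \sigma_{\ell,n}, \Phi_{\ell,n})$, with Proposition \ref{6.3} as the main tool; the non-degeneracy claim will then be immediate from Proposition \ref{compar32}(1). First I would record the momentum duality of the elementary building blocks: by Proposition \ref{d.4}, the internally fused double $(\GG \times \GG, P_1, \sigma_1, \Phi_1)$ is a $\Phi_1$-momentum dual pair, and by Proposition \ref{d.5}, each conjugacy class $(\CcC_j, P_{\CcC_j}, \tau_{\CcC_j}, \iota_j)$ is an $\iota_j$-momentum dual pair.

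Next I would verify the essentially formal observation that products of momentum dual pairs are momentum dual for the product group. If $(M_i, P_i, \sigma_i, \Phi_i)$, $i=1,2$, are momentum dual pairs with respect to $\GG_i$-actions, then on $M_1 \times M_2$ with the product $(\GG_1 \times \GG_2)$-action the defining identity \eqref{31} for $(P_1+P_2,\, \sigma_1+\sigma_2,\, (\Phi_1,\Phi_2))$ holds blockwise: under the Whitney-sum decomposition $\TT(M_1 \times M_2) = \TT M_1 \oplus_{M_1 \times M_2} \TT M_2$, the adjoints $P^\sharp$, $\sigma^\fflat$, together with the fundamental vector field map, the operators $L_{(\Phi_1,\Phi_2)}, R_{(\Phi_1,\Phi_2)}$, and hence $\rho_{(\Phi_1,\Phi_2)}$, all split as direct sums, so duality on each factor assembles into duality on the product. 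Combining these two ingredients, the disjoint product of $\ell$ copies of $(\GG \times \GG, P_1, \sigma_1, \Phi_1)$ with the $n$ conjugacy classes furnishes a momentum dual pair on $(\GG \times \GG)^\ell \times \CcC_1 \times \ldots \times \CcC_n$ with respect to the corresponding product group, with product momentum map taking values in the corresponding product of copies of $\GG$.

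I would then invoke the evident extension of Proposition \ref{6.3} to a $(\GG \times \GG \times H)$-setting, in which fusion is carried out along a single diagonal $\GG$-subaction while the $H$-action is left inert; the proof of Proposition \ref{6.3} transfers verbatim, because the $H$-components of $\sigma_{\fus}$ and $P_{\fus}$ are unaffected by the diagonal fusion and the momentum computations on the $\GG$-side are identical. Iterating this extended statement collapses the product group down to $\GG$ in the prescribed order and produces, at the final stage, exactly $(P_{\ell,n}, \sigma_{\ell,n}, \Phi_{\ell,n})$, with momentum duality preserved throughout, since both $P_{\ell,n}$ and $\sigma_{\ell,n}$ are built from the same elementary pieces by the same sequence of fusions.

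The main point requiring care is purely combinatorial bookkeeping: at each fusion stage one must check that the two momentum components being multiplied and the diagonal $\GG$-subaction along which fusion is performed match on the quasi Poisson and on the quasi Hamiltonian sides. This matching is automatic from the parallel fusion recipes that define $P_{\ell,n}$ and $\sigma_{\ell,n}$. Once momentum duality of $(P_{\ell,n}, \sigma_{\ell,n})$ via $\Phi_{\ell,n}$ is secured, Proposition \ref{compar32}(1) yields simultaneously the $\Phi_{\ell,n}$-non-degeneracy of $\sigma_{\ell,n}$ and the $\GG$-quasi non-degeneracy of $P_{\ell,n}$, completing the proof.
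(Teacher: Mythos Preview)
Your proposal is correct and follows essentially the same approach as the paper, whose proof is the single line ``This results from combining Propositions \ref{6.3}, \ref{d.4}, and \ref{d.5}.'' You have simply unpacked that sentence: the building blocks are momentum dual by Propositions \ref{d.4} and \ref{d.5}, products preserve duality blockwise, and the (mildly extended) Proposition \ref{6.3} propagates duality through each fusion step.
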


\begin{proof}
This results from combining Propositions \ref{6.3}, \ref{d.4}, and  \ref{d.5}.
\end{proof}

\subsection{Analytic case}
\label{ac}
Suppose $\,\form\,$ non-degenerate. Then a twisted representation space
of the kind $\mathrm{Rep}_X(\Gamma,\GG)$ also arises by ordinary symplectic 
reduction applied to an extended moduli space of the kind {\rm \eqref{exten2}}
and a representation space of the kind $\mathrm{Rep}(\pi,\GG)_{\mathbf C}$
arises by ordinary symplectic reduction applied to an extended moduli space 
of the kind  {\eqref{exten3}}, and thereby such a space acquires a Poisson 
algebra of functions\cite[Section 6 p.~754]{MR1370113},
 \cite[Theorem 9.1 p.~403]{MR1460627}.
Propositions \ref{ms2} an \ref{alternate} entail at once the following.

\begin{thm}
\label{ms1}
Suppose the symmetric $\Ad$-invariant $2$-tensor  $\oomega$ in 
$\gg \otimes \gg$ arises from the non-degenerate  $\Ad$-invariant symmetric 
bilinear form $\,\form\,$ on $\gg$ as the image of the corresponding 
$2$-tensor in $\gg^* \otimes \gg^*$ under the inverse of the adjoint 
$\psidot \colon \gg \to \gg^*$ of $\,\form\,$. Then the $\Phi$-quasi 
closed $2$-form $\sigma_{\ell,n}$ relative to $\,\form\,$ and the  
$\GG$-quasi Poisson structure $P_{\ell,n}$ relative to $\oomega$  are dual 
to each other via $\Phi_{\ell,n}$. Hence:
\begin{enumerate}
\item
For a $\GG$-invariant $\bK$-valued admissible function $f$ on 
$\GG^{2 \ell} \times \CcC_1 \times \ldots \times \CcC_n$, the quasi 
Hamiltonian 
vector field $X_f = \{f,\,\cdot\,\}_{\ell,n}$ satisfies the identity
\begin{equation}
\sigma_{\ell,n}(X_f,\,\cdot\,) = df
\end{equation}
and, for two $\GG$-invariant $\bK$-valued admissible functions $f,h$ on 
$\GG^{2 \ell} \times \CcC_1 \times \ldots \times \CcC_n$, the quasi
 Hamiltonian 
vector fields $X_f = \{f,\,\cdot\,\}_{\ell,n}$ and 
$X_h = \{h,\,\cdot\,\}_{\ell,n}$ satisfy the identity
\begin{equation}
\sigma_{\ell,n}(X_f,X_h) = \{h,f\}_{\ell,n}.
\end{equation}
\item
The reduced Poisson algebra of functions on $\mathrm{Rep}_X(\Gamma,\GG)$
arising from Theorem {\rm \ref{qprt} (3)} via $\oomega$ coincides with the 
reduced Poisson algebra of functions arising, via $\,\form\,$, from
symplectic reduction applied to the extended moduli space of the kind 
{\rm \eqref{exten2}};  the reduced Poisson algebra of functions on 
$\mathrm{Rep}(\pi,\GG)_{\mathbf C}$ arising from Theorem {\rm \ref{qprt} (3)}
via $\oomega$ likewise coincides with the reduced Poisson algebra of functions
arising, via $\,\form\,$, from symplectic reduction applied to
the  extended moduli space of the kind {\rm \eqref{exten3}}. \qed
\end{enumerate}
\end{thm}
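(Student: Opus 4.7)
The proof will proceed by first disposing of claim (1) as a direct corollary of the momentum duality already established, and then leveraging the comparison results of Section \ref{comparex} for claim (2).

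For claim (1), my plan is to apply Proposition \ref{compar32}. By Proposition \ref{alternate}, the pair $(P_{\ell,n},\sigma_{\ell,n})$ is $\Phi_{\ell,n}$-momentum dual. For a $\GG$-invariant admissible function $f$ on $\GG^{2\ell}\times\CcC_1\times\ldots\times\CcC_n$, the quasi Hamiltonian vector field $X_f = \{f,\,\cdot\,\}_{\ell,n} = P_{\ell,n}^\sharp(df)$ is $\GG$-invariant, so identity \eqref{idqp1} yields $\sigma_{\ell,n}^\fflat(X_f) = df$, which is precisely $\sigma_{\ell,n}(X_f,\,\cdot\,) = df$. For two such invariant functions $f,h$, identity \eqref{idqp3} yields $\sigma_{\ell,n}(X_f,X_h) = P_{\ell,n}(dh,df) = \{h,f\}_{\ell,n}$, completing (1).

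For claim (2), the strategy rests on Conclusions \ref{concl2} and \ref{concl3}: the extended moduli spaces \eqref{exten2} and \eqref{exten3} arise from applying the exponentiation construction of Subsection \ref{exponentiation} to the (necessarily weakly non-degenerate) weakly $\GG$-quasi Hamiltonian manifolds $(\GG^{2\ell},\sigma_\ell,\Phi_\ell)$ and $(\GG^{2\ell}\times\CcC_1\times\ldots\times\CcC_n,\sigma_{\ell,n},\Phi_{\ell,n})$, for a suitable $2$-chain $c\in C_2(F)$. I would then exploit the $\GG$-equivariant codimension-zero immersion $\eta\colon \mathcal M(\ldots) \to \GG^{2\ell}\times\CcC_1\times\ldots\times\CcC_n$ of Theorem \ref{summarize}, which satisfies $\eta^*\sigma_{\ell,n} = \omega_{\sigma,\lambda} + \mmu_O^*\beta$ and identifies the level sets $\mmu^{-1}(X)$ (resp.\ $\Phi_{\ell,n}^{-1}(e)$) on either side up to the map $\exp$. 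This produces an identification, as $\GG$-spaces, of the pre-image $\mmu^{-1}(X)/\GG$ (resp.\ $\Phi_{\ell,n}^{-1}(\mathbf C)/\GG$) that underlies the extended symplectic reduction with the quotient entering the quasi Poisson reduction of Theorem \ref{qprt}; at the algebra level this yields a canonical isomorphism $\mathcat A[\mathrm{Rep}_X(\Gamma,\GG)] \cong \mathcat A[\mmu^{-1}(X)]^\GG$ (and similarly for $\mathrm{Rep}(\pi,\GG)_{\mathbf C}$).

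To verify the brackets coincide under this identification, I would compare them on $\GG$-invariant representatives near the relevant fiber. By (1), the quasi Poisson bracket equals $\sigma_{\ell,n}(X_f,X_h)$, while the extended-moduli-space bracket equals $\omega_{\sigma,\lambda}(Y_f,Y_h)$ for the ordinary Hamiltonian vector fields $Y_f,Y_h$ on $\mathcal M(\ldots)$. Pulling back via $\eta$ and using $\eta^*\sigma_{\ell,n} = \omega_{\sigma,\lambda} + \mmu_O^*\beta$, together with the fact that $d\mmu_O$ annihilates both $Y_f$ and $Y_h$ (since $f,h$ are $\GG$-invariant, so their Hamiltonian flows preserve $\mmu_O$), the correction term $\mmu_O^*\beta$ contributes nothing when evaluated on $(Y_f,Y_h)$; thus $\omega_{\sigma,\lambda}(Y_f,Y_h) = \eta^*\sigma_{\ell,n}(Y_f,Y_h)$. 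A final matching of $\eta_*Y_f$ with $X_f$ modulo fundamental vector fields along $\mmu^{-1}(X)$, which is legitimate since fundamental vector fields pair trivially with $d$-invariant functions, completes the comparison.

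The main obstacle will be the last step: making rigorous the identification $\eta_*Y_f \equiv X_f$ along the level set up to fundamental vector fields, and checking this independently in the smooth, analytic, and (for the discussion that will reappear in Theorem \ref{ms3}) algebraic frameworks where the exponentiation construction itself is only analytic. For the purely analytic statement formulated here this is manageable by a direct infinitesimal calculation using diagram \eqref{PMukKK} and the defining relation $\eta^*\sigma_{\ell,n} = \omega_{\sigma,\lambda}+\mmu_O^*\beta$; the reader may also consult \cite[Section 6 p.~754]{MR1370113} and \cite[Theorem 9.1 p.~403]{MR1460627} where compatible brackets on these reduced spaces were first written down via the extended moduli space route.
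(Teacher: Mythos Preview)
Your treatment of part (1) is exactly what the paper does: it is an immediate consequence of Proposition \ref{alternate} (establishing $\Phi_{\ell,n}$-momentum duality) together with Proposition \ref{compar32}(2),(3).

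For part (2) your argument is correct but more elaborate than the paper's. The paper regards the entire theorem as immediate from Propositions \ref{ms2} and \ref{alternate} (the statement carries a \qedsymbol\ with no displayed proof). The intended logic is shorter than yours: once part (1) shows that the quasi Poisson bracket on $\GG$-invariant functions coincides with the bracket \eqref{unpois1} coming from $\sigma_{\ell,n}$ via Theorem \ref{unpois}, one invokes the identification already recorded in Subsection \ref{redsp} (the paragraph following Proposition \ref{wqhr}), namely that the map $\eta$ of \eqref{PB} identifies the quasi Hamiltonian reduced space with the ordinary symplectic reduction of the extended moduli space as symplectic spaces. That single sentence replaces your explicit comparison of $\eta_* Y_f$ with $X_f$ modulo fundamental vector fields and your analysis of the correction term $\mmu_O^*\beta$. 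Your route has the merit of being self-contained and makes the mechanism visible; the paper's route is terser because the relevant comparison between quasi Hamiltonian reduction and extended-moduli-space reduction was already banked earlier.
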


\begin{rema}
{\rm
While Theorem \ref{ms1} is also a consequence of Theorem \ref{existence},
the above reasoning  does not involve this theorem.
}
\end{rema}

\subsection{Algebraic case}

Let $\bK$ be an algebraically closed field of characteristic zero. Suppose 
that $\GG$ is a reductive algebraic group defined over $\bK$. Let 
$\oomega$ be an $\Ad$-invariant symmetric $2$-tensor in the tensor square
$\gg \otimes \gg$ of the  Lie algebra 
$\gg$ of $\GG$, not necessarily non-degenerate. 
Consider the resulting algebraic Hamiltonian
$\GG$-quasi Poisson manifold
\begin{equation}
\left(\GG^{2 \ell} \times \CcC_1 \times \ldots \times \CcC_n,
\sigma_{\ell,n},\Phi_{\ell,n}\right).
\label{algeb}
\end{equation}
Proposition \ref{ms2} implies at once the following.
\begin{thm}
\label{ms3}
The choice of the $\Ad$-invariant symmetric $2$-tensor $\oomega$ 
in $\gg \otimes \gg$
 determines, on a twisted representation space
of the kind $\mathrm{Rep}_X(\Gamma,\GG)$ (realized as an algebraic quotient)
and on a representation space of the kind 
$\mathrm{Rep}(\pi,\GG)_{\mathbf C}$ (realized as an algebraic quotient),
an affine  Poisson variety structure. \qed
\end{thm}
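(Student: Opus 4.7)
The plan is to deduce the theorem directly from Proposition \ref{ms2} together with Theorem \ref{qprt} (2)(d), the algebraic version of quasi Poisson reduction, since everything of interest has already been constructed algebraically. The point is that the entire chain of constructions leading to \eqref{algeb}, namely the internally fused double $(\GG \times \GG, P_1, \Phi_1)$, successive fusion of $\ell$ copies thereof, and additional fusion with the conjugacy classes $(\CcC_j, P_{\CcC_j}, \iota_j)$, is purely algebraic when $\GG$ is an algebraic group and the conjugacy classes are algebraic subvarieties: the bivector $P_{\ell,n}$ is a regular section of $\LAL^{\mathrm{c},2}[\GG^{2\ell} \times \CcC_1 \times \ldots \times \CcC_n]$ and the momentum mapping $\Phi_{\ell,n}$ of \eqref{momelln} is a morphism of affine varieties.

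First I would invoke Theorem \ref{qprt} applied to the Hamiltonian $\GG$-quasi Poisson manifold $(\GG^{2\ell}, P_\ell, \Phi_\ell)$: part (1) produces a Poisson bracket on $\AA[\GG^{2\ell}]^\GG$, and part (2)(d), whose hypotheses are satisfied because $\GG$ is reductive and $\GG^{2\ell}$ is a non-singular affine variety, shows that for any central element $z = \exp(X)$ in the image of $\Phi_\ell$ the ideal $I_{\{z\}}$ of $\GG$-invariant regular functions vanishing on $\Phi_\ell^{-1}(z)$ is a Poisson ideal. The quotient $\AA[\GG^{2\ell}]^\GG / I_{\{z\}}$ is precisely the coordinate ring of the affine categorical quotient $\Phi_\ell^{-1}(z)//\GG$, which by Section \ref{reps} coincides (as an affine variety) with $\mathrm{Rep}_X(\Gamma, \GG)$. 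This yields the affine Poisson variety structure in the closed-surface case.

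For the punctured case I would apply the same reduction procedure to the algebraic Hamiltonian $\GG$-quasi Poisson manifold \eqref{algeb}, taking the conjugacy class $\CcC = \{e\} \subseteq \GG$: Theorem \ref{qprt} (2)(d) again applies, since $\GG^{2\ell} \times \CcC_1 \times \ldots \times \CcC_n$ is a non-singular affine $\GG$-variety and $\GG$ is reductive, so the ideal of $\GG$-invariant regular functions on this variety vanishing on $\Phi_{\ell,n}^{-1}(e)$ is a Poisson ideal in $\AA[\GG^{2\ell} \times \CcC_1 \times \ldots \times \CcC_n]^\GG$. The resulting quotient Poisson algebra is the coordinate ring of $\Phi_{\ell,n}^{-1}(e)//\GG$, which by the discussion in Section \ref{reps} is an affine categorical quotient model of $\mathrm{Rep}(\pi, \GG)_{\mathbf C}$.

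The main work is therefore hidden in Theorem \ref{qprt} (2)(d) itself, whose proof in the algebraic setting already required a slightly delicate argument via extension of scalars to $\CC$ to justify the tangency of quasi Hamiltonian vector fields to level subvarieties, so no new ideas are needed here; the remaining obstacle is purely bookkeeping, namely to verify that the fusion construction of $(P_{\ell,n}, \Phi_{\ell,n})$ (and, when $n=0$, of $(P_\ell, \Phi_\ell)$) is carried out entirely within the category of affine algebraic varieties, which is immediate from the explicit formulas \eqref{Pfus}, \eqref{mom11}, and the algebraicity of $P_\GG$ and of the inclusions $\iota_j \colon \CcC_j \hookrightarrow \GG$.
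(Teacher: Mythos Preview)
Your proposal is correct and follows essentially the same route as the paper: the paper states that Theorem~\ref{ms3} is an immediate consequence of Proposition~\ref{ms2}, which in turn is declared an immediate consequence of Theorem~\ref{qprt} in both the analytic and the algebraic case. You have simply unpacked this chain, making explicit the algebraicity of the fusion construction and the identification of the reduced quotients with the representation spaces of Section~\ref{reps}, details the paper leaves implicit in the lead-up to \eqref{algeb}.
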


\subsection{Loop group}
\label{loop}
The loop group is a special case of a group of gauge transformations,
and the construction in Example \ref{examp3}
endows the loop group with a quasi Poisson structure.

Thus consider a compact Lie group $\GG$ and let $L\GG$
be the loop group of $\GG$, that is, $L \GG = \Map(S^1,\GG)$,
the space of smooth maps from the circle $S^1$ to $\GG$,
endowed with the Fr\'echet topology and pointwise 
group operations.
Consider the trivial principal $\GG$-bundle $\prin$
on $S^1$ and let $\Conn$ denote its space of
smooth connections, endowed with the Fr\'echet topology,
by construction an affine space.

Maintain the choice of an  $\Ad$-invariant symmetric $2$-tensor 
$\oomega$ in the tensor square 
of the  Lie algebra $\gg$  of $\GG$ and, as in Example 
\ref{examp3},  write the resulting $3$-tensor
over $\Map(S^1,\GG)$ as $\phi_\oomega \colon S^1 \to \LAL^{\mrc,3}[\gg]$
and consider the resulting 
$\Map(S^1,\GG)$-quasi Poisson structure
$P_{\Map(S^1,\GG)}$ on $\Map(S^1,\GG)$ relative to 
$\phi_\oomega$.
It is now an interesting task to 
reinterpret and extend,
for the special case where
$\oomega$ arises from
a non-degenerate  $\Ad$-invariant symmetric 
bilinear form on $\gg$, 
the equivalence theorem
\cite[Theorem 8.3]{MR1638045}
in our setting.
Also  I suspect one can, even for general $\oomega$,
substitute $\Conn$
for $L \gg^*$ in \cite[Appendix A]{MR1880957}
and render
the formal Poisson structure
on $L \gg^*$ 
rigorous  on
$\Conn$ in the Fr\'echet setting.

Last but not least, perhaps the  quasi Poisson structure
on the loop group sheds new light on the results in
\cite{MR1289830}.

\subsection{Stokes data revisited}
\label{stokesd}
The quasi Poisson approach
applies to \lq\lq wild character varieties\rq\rq\ 
and leads, on such a space, to 
 an
 algebraic Poisson variety structure
relative to a not necessarily
quasi non-degenerate
quasi Poisson structure,
that is, Poisson structures
that do not necessarily arise from a quasi Hamiltonian structure
as in \cite{MR3126570}:

Consider a complex algebraic group $\GG$ and let $\oomega$ be
an $\Ad$-invariant symmetric $2$-tensor
in the tensor square  of  the Lie algebra of $\GG$.
Return to the circumstances of Subsection \ref{alternateq}.
After suitably readjusting the data if need be, 
the $2$-tensor $\oomega$ determines 
an $\Ad$-invariant symmetric $2$-tensor
in the tensor square  the Lie algebra of $\mathbf H$
and
 a Hamiltonian $\mathbf H$-quasi Poisson structure
$(P, \Phi)$ on the $\mathbf H$-manifold $\Hom_{\mathbb S}(\Pi, \GG)$
(notation in  \cite{MR3126570}), and
the corresponding bracket \eqref{pb2}
yields a Poisson structure on the affine coordinate ring that
turns the
affine algebraic
quotient
$\mathbf M_B(\Sigma) =\Hom_{\mathbb S}(\Pi,\GG)// \mathbf H$,
the \lq\lq wild character variety\rq\rq,
into an algebraic Poisson variety.
When 
$\oomega$ arises from
an $\Ad$-invariant symmetric bilinear form $\,\form\,$
on the Lie algebra of $\GG$,
the $2$-tensor
$P$ is $\Phi$-quasi non-degenerate, and
this $2$-tensor and
the $2$-form $\sigma$ which underlies the resulting
 $\mathbf H$-quasi Hamiltonian structure $(\sigma,\mmu)$
are $\Phi$-dual to each other, 
cf. Section \ref{comparison} for the terminology.
Justifying this observation does not involve
Theorem \ref{existence}.
In this particular case, the Poisson structure on
$\mathbf M_B(\Sigma) =\Hom_{\mathbb S}(\Pi,\GG)// \mathbf H$
which $P$ determines coincides with that
in \cite[Corollary 8.3 p.~43]{MR3126570}
which the $\GG$-quasi Hamiltonian structure
$(\sigma,\mmu)$ determines,
reproduced in Subsection \ref{alternateq}.
The algebraic Poisson structure on
$\Hom_{\mathbb S}(\Pi,\GG)// \mathbf H$
that arises from 
a general degenerate $\Ad$-invariant symmetric $2$-tensor
$\oomega$ 
is not available via an
 $\mathbf H$-quasi Hamiltonian structure,
 however.

Retain the choice
$\CcC_1\subseteq C_{\GG(Q_1)}, \ldots,\CcC_m\subseteq C_{\GG(Q_m)}$ of
$m$ conjugacy classes
and consider the 
affine algebraic
quotient
$\mmu^{-1}(\CcC_1 \times \ldots \times \CcC_m)// \mathbf H$.
By
Theorem \ref{qprt},
the Poisson bracket on
 the affine coordinate ring
of $\Hom_{\mathbb S}(\Pi,\GG)// \mathbf H$
induces  a Poisson structure on the affine coordinate ring 
of the
affine algebraic
quotient
$\mmu^{-1}(\CcC_1 \times \ldots \times \CcC_m)// \mathbf H$
that 
turns 
it into an affine algebraic Poisson variety.
When the quotient
$\mmu^{-1}(\CcC_1 \times \ldots \times \CcC_m)// \mathbf H$
arises within the quasi Hamiltonian approach, i.e.,
the $2$-tensor $\oomega$ at the start arises from
a non-degenerate  $\Ad$-invariant symmetric bilinear form on the Lie algebra
of the target group, 
that Poisson structure is not available 
by quasi Hamiltonian reduction unless
the variety under discussion is non-singular.
In particular, in the presence of singularities,
the reasoning in \cite{MR3126570} 
in terms of Thm 1.1  and Prop. 2.8 of that paper
does not explain whether and how
the Poisson structure descends
to one on the coordinate ring
 of
$\mmu^{-1}(\CcC_1 \times \ldots \times \CcC_m)// \mathbf H$.
Theorem \ref{qprt} implies 
that the vanishing ideal of $\mmu^{-1}(\CcC_1 \times \ldots \times \CcC_m)$
is a Poisson ideal and thereby
settles this issue.
Indeed, the search for 
a conclusive construction of
such an algebraic Poisson structure
in the singular case
prompted me to compose the present paper.

\section{Scholium}
\label{schol}
This section complements Subsections \ref{alternateq} and \ref{stokesd}
as well as Remarks 
\ref{qhreduction}, \ref{interp1}, \ref{reconcile}, and \ref{lurking}.

\subsection{}
Let $\GG$ be a complex algebraic group
and
consider an affine complex  algebraic  quasi Hamiltonian 
$\cgroup$-manifold $(M,\sigma,\mmu)$. 
Write the coordinate ring
of $M$ as $\CC[M]$
(rather than $\mathcat A[M]$).
Suppose that, furthermore, $\GG$ is reductive.
\cite[Proposition 2.8 p.~11]{MR3126570}
says that the affine geometric invariant theory quotient
$M// \GG$ \lq\lq is a Poisson variety\rq\rq, that is,
{\em the quasi Hamiltonian structure 
induces a Poisson bracket\/} on the affine coordinate ring
of $M// \GG$, by definition,
the algebra 
$\CC[M]^\GG$ of $\GG$-invariant functions in $\CC[M]$. 
The proof of \cite[Proposition 2.8]{MR3126570}
attributes the existence of the kind of Poisson structure under discussion  to
\cite[\S 5.4]{MR2642360},
\cite[\S 6]{MR1880957},
\cite[Proposition 4.6]{MR1638045}.
\cite{MR1880957} and \cite{MR1638045} apply to
the case where the group is compact.
Proposition \ref{4.6} above 
straightforwardly extends
\cite[Proposition 4.6]{MR1638045} 
to the case of a general Lie group 
(including an algebraic group)
with a non-degenerate
invariant symmetric bilinear form on its Lie algebra
and thereby
settles the existence of Hamiltonian vector fields
of invariant functions when the group is not necessarily compact.
The injectivity of
\eqref{inj1}
then entails that the vector field 
of an invariant  algebraic function is algebraic.
Hence the
 bracket \eqref{unpois1} of two invariant algebraic functions
is an invariant algebraic function.
By  Theorem \ref{unpois}, this bracket
satisfies the Jacobi identity.
Some care is necessary here
since constructing the Poisson bracket
via exponentiation does not lead
to an algebraic Poisson bracket for invariant algebraic functions.

The paper \cite{MR1880957}
develops quasi Poisson structures with respect to  a compact Lie group;
 \cite[Theorem 10.3 p.~24]{MR1880957}
establishes a
bijective correspondence between
quasi Hamiltonian structures and non-degenerate
Hamiltonian quasi Poisson structures for that case. 
The proof involves a construction 
which, locally,  factors through 
exponentiation
and uses, furthermore, the slice theorem. This proof
 does therefore not carry over for non-compact groups;
indeed, the etale slice theorem 
\cite{MR0342523}
applies only to
points in the group generating a semisimple conjugacy class.
\cite{MR2642360} does not say anything about the kind of
Poisson structure under discussion here.

Theorem \ref{existence} yields
that bijective correspondence
relative to a general group in an explicit manner.
An immediate consequence of Theorem \ref{existence}
is the fact that the bracket of two algebraic functions is algebraic.
Constructing such an algebraic  Poisson structure 
as in Theorem \ref{ms3}
directly from
an algebraic quasi Poisson manifold 
of the kind \eqref{algeb}
completely  gets away with those difficulties.
This also yields Poisson structures
arising from more general Hamiltonian quasi Poisson structures,
not necessarily relative to a
 non-degenerate  $\Ad$-invariant symmetric bilinear form on the Lie algebra.

\subsection{}
\label{8.2}
For  $\GG$ compact,
\cite[Section 9]{MR1638045}
contains a comparison of the moduli spaces arising in
 \cite{MR1638045} from quasi Hamiltonian spaces
with the corresponding gauge theory description
in  \cite{MR702806}, cf. Theorem \ref{visible} above.
The paper \cite{MR1370113}
offers a comparison of 
the construction of
such  moduli spaces
from extended moduli spaces 
with the corresponding gauge theory description
in  \cite{MR702806}.
Conclusions \ref{concl1}, \ref{concl2} and \ref{concl3}
above
render the comparison between the extended moduli spaces
and quasi Hamiltonian spaces explicit,
including 
the comparison for non-compact $\GG$.

For non-compact $\GG$, it is not clear 
to what extent
 a gauge theoretic description
of such moduli spaces is in general available, however:
Let $\prin\colon \Prin \to M$ be a principal $\GG$-bundle,
 $\Conn$ the affine space of connections on
$\prin$, $\mathrm{Flat}_\prin \subseteq \Conn$
the subspace of flat connections (supposed to be non-empty),  $\GG_\prin$
the group of gauge transformations, and
let $\pi$ denote the fundamental group of $M$.
In terms of suitable holonomies, it is  straightforward
to construct a bijective map
$\beta\colon \mathrm{Flat}_\prin / \GG_\prin \to \Hom(\pi,G)/\GG$
that is continuous relative to the 
 $ C^{\infty}$ topology
 but the continuity of the inverse map is more subtle.
Also, in general, the problem of interpreting the quotients 
on both sides of $\beta$ arises.
For $\GG$ and $M$ compact, there is no such problem, and
 Uhlenbeck compactness \cite{MR648356}
implies that a suitable Sobolev version of
$\mathrm{Flat}_\prin / \GG_\prin$
is compact 
whence $\beta$ is then a homeomorphism.
A quote from  the second paragraph of the introduction  of 
\cite{MR2030823}  says:
\lq\lq An elementary observation in gauge theory is that the moduli space
 of flat connections over a compact manifold with compact structure group
 is compact in the $ C^{\infty}$ topology. This is obvious from the fact
 that the gauge equivalence classes of flat connections are in one-to-one
 correspondence with conjugacy classes of representations of the
 fundamental group.\lq\lq\  I do not see how
to prove this directly without further technology.
\cite[14.11]{MR702806} quotes \cite{MR648356}.
Using more sophisticated Fr\'echet space slice techniques,
one can actually avoid Uhlenbeck's compactness theorem
or, more precisely, recover that statement 
in the  Fr\'echet setting for ordinary smooth functions,
but only for  $M$ and $\GG$ compact;
see \cite[Section 7]{MR3836789} and the references there for details.
For $\GG$ complex reductive,
the gauge theoretic description in \cite{MR887284}
of the self-duality moduli spaces under discussion
overcomes such analytical  difficulties
by reducing to a maximal compact subgroup;
indeed this is how hyperkaehler reduction works
in the infinite-dimensional setting in 
\cite{MR887284}.
See also \cite{self}.
In light of these observations, there is a question mark whether
the argument in
\cite{MR762512} 
(quoted again in \cite[Section 1.2]{MR2094117})
for the closedness of the symplectic structure under discussion
is complete when the target Lie group
is not compact.
 \cite[p.133]{MR3155540} explicitly claims
\lq\lq Goldman extended this construction to non-compact groups\rq\rq,\ and
\cite{MR1461568} naively extends the Atiyah-Bott approach
to principal bundles with complex structure group.
Also, \cite[p.~2]{MR3126570} refers to the \lq\lq complexification
of the viewpoint of Atiyah-Bott \cite{MR702806}\rq\rq\ 
(without further explanation)
but, fortunately, this \lq\lq complexification\rq\rq\ 
is only of heuristic significance in  \cite{MR3126570}.
To my understanding,
the closedness of that symplectic structure
is a consequence of the results in \cite{MR887284}
for $\GG$ complex reductive (\cite{MR887284} 
considers the case $\GG = \mathrm{SL}_2(\CC)$ 
but the reasoning is valid for a general 
complex reductive group),
and the first conclusive argument for that closedness
in the general case
is in \cite{MR1112494}. 
The results in 
\cite{MR1460627}, \cite{MR1370113}, \cite{MR1670408}, \cite{MR1815112}, 
\cite {MR1277051}, \cite{MR1470732} also include this closedness.

\subsection{}
Let $\pi$ be a finitely presented discrete group.
The terminology {\em representation variety\/}
for $\Hom(\pi,\mathrm{GL}(n,\CC))$ 
goes back at least to 
\cite[Definition 1.18 p.~18]{MR818915};
more generally, for a complex algebraic group $\GG$, 
the terminology {\em representation variety\/}
for $\Hom(\pi,\GG)$ 
or for the affine categorical
quotient $\Rep(\pi, \GG)= \Hom(\pi,\GG)// \GG$
as well as  {\em character variety\/} for
$\Rep(\pi, \GG)$ is nowadays standard.
The paper \cite{MR818915} extensively studies this quotient for
$\GG =\mathrm{GL}(n,\CC)$ without introducing a name for it.
It has become common,
for a not necessarily complex Lie group $\GG$,
also to refer to a quotient space
of the kind $\Rep(\pi,\GG)$
or even to a subspace
thereof as a 
{\em representation variety\/}
\cite{MR1880957}, \cite{MR762512}
 or as a {\em character variety\/} \cite{MR3155540}.
This is unfortunate since, e.g.,
when $\GG$ is compact,
$\Rep(\pi,\GG)$ is locally semi-algebraic
(even globally) but
$\Rep(\pi,\GG)$ does not coincide with the real points of the variety
$\Rep(\pi,\GG^\CC)$.
Some clean up of the situation is in \cite{MR1938554}
and in \cite{MR2883413}.

\subsection{} Once
\cite{MR1638045}
had appeared, among the Poisson community
a belief has evolved to the effect that the quasi Hamiltonian 
spaces supersede the extended moduli spaces
developed in
\cite{MR1460627}, \cite{MR1370113}, \cite {MR1277051}, 
\cite{MR1470732}.
The more recent papers 
\cite{MR3838878},
\cite{MR2884041}
indicate that this belief is not called for.
In fact, these papers 
exploit extended moduli spaces to
elaborate on an approach to
equivariant Floer theory.
This is interesting  because
\cite{MR1362845} in particular arose out of an attempt to
understand Floer theory by
using the symplectic structure of an appropriate finite-dimensional
moduli space to 
search for a replacement 
of the gradient \lq flow\rq\  of the Chern-Simons invariant
on a space of connections.
Moreover,
the generalization of extended moduli spaces in
\cite{MR1670408} does not match the quasi Hamiltonian picture.
An application of this generalization
is a purely finite-dimensional
characterization of the Chern-Simons function, 
solving a problem posed in \cite{MR1078014}.
Within this framework,
one can as well develop 
a purely finite-dimensional approach to Donaldson polynomials
(mentioned in the Introduction of \cite{MR1670408} but unpublished).
Also, extended moduli spaces play a major role in \cite{2007.13285}
and
\cite{MR4220999}.

\section*{Acknowledgements}
I am indebted to A. Weinstein for sharing insight
and for generous support over the years.
His paper \cite{MR1362845}
prompted the development of 
 extended moduli spaces and thereafter that of
 quasi Hamiltonian structures.
Also, A. Weinstein has been a driving force behind 
 Dirac structures.
I gratefully acknowledge support by the CNRS and by the
Labex CEMPI (ANR-11-LABX-0007-01).

\section*{Data availability statement}

Data sharing is not applicable to this article as no datasets were generated 
or analyzed during the current study.

\section*{No conflict of interest statement}

The author has no conflicts of interest to declare.

%\bibliographystyle{plain}
%\bibliography{$HOME/johannes/huebschm/refs}

%\end{document}

\def\cprime{$'$} \def\cprime{$'$} \def\cprime{$'$} \def\cprime{$'$}
  \def\cprime{$'$} \def\cprime{$'$} \def\cprime{$'$} \def\cprime{$'$}
  \def\dbar{\leavevmode\hbox to 0pt{\hskip.2ex \accent"16\hss}d}
  \def\cprime{$'$} \def\cprime{$'$} \def\cprime{$'$} \def\cprime{$'$}
  \def\cprime{$'$} \def\Dbar{\leavevmode\lower.6ex\hbox to 0pt{\hskip-.23ex
  \accent"16\hss}D} \def\cftil#1{\ifmmode\setbox7\hbox{$\accent"5E#1$}\else
  \setbox7\hbox{\accent"5E#1}\penalty 10000\relax\fi\raise 1\ht7
  \hbox{\lower1.15ex\hbox to 1\wd7{\hss\accent"7E\hss}}\penalty 10000
  \hskip-1\wd7\penalty 10000\box7}
  \def\cfudot#1{\ifmmode\setbox7\hbox{$\accent"5E#1$}\else
  \setbox7\hbox{\accent"5E#1}\penalty 10000\relax\fi\raise 1\ht7
  \hbox{\raise.1ex\hbox to 1\wd7{\hss.\hss}}\penalty 10000 \hskip-1\wd7\penalty
  10000\box7} \def\polhk#1{\setbox0=\hbox{#1}{\ooalign{\hidewidth
  \lower1.5ex\hbox{`}\hidewidth\crcr\unhbox0}}}
  \def\polhk#1{\setbox0=\hbox{#1}{\ooalign{\hidewidth
  \lower1.5ex\hbox{`}\hidewidth\crcr\unhbox0}}}
  \def\polhk#1{\setbox0=\hbox{#1}{\ooalign{\hidewidth
  \lower1.5ex\hbox{`}\hidewidth\crcr\unhbox0}}}

\end{document}